\newtheorem{theorem}{Theorem}[section]
\newtheorem{lemma}[theorem]{Lemma}
\newtheorem{proposition}[theorem]{Proposition}
\newtheorem{corollary}[theorem]{Corollary}
\theoremstyle{definition}
\newtheorem{definition}[theorem]{Definition}
\newtheorem{remark}[theorem]{Remark}
\newtheorem{example}[theorem]{Example}
\newtheorem*{acknowledgement}{Acknowledgement}
\newcommand{\To}{\longrightarrow}
\newcommand{\PP}{\mathcal{P}}
\newcommand{\V}{\mathcal{V}_k}
\newcommand{\X}{\mathfrak{X}}
\newcommand{\R}{\mathcal{R}}
\newcommand{\minus}{\smallsetminus}
\newcommand{\C}{\mathcal{C}}
\newcommand{\Schur}{\bold{S}}
\newcommand{\E}{\bold{E}}
\newcommand{\F}{\bold{F}}
\newcommand{\Z}{\mathbb{Z}}
\newcommand{\HH}{\mathcal{H}}
\newcommand{\g}{\mathfrak{g}}
\newcommand{\I}{\bold{I}}
\newcommand{\RT}{R\bold T^*}
\newcommand{\LT}{L^* \bold T}
\newcommand{\Frob}{ {(1)}}
\begin{document}

\title{Polynomial functors and  \\categorifications of Fock space II}

\author{Jiuzu Hong and Oded Yacobi}
\date{}

\maketitle

\begin{abstract}

We categorify various Fock space representations on the algebra of symmetric
functions via the category of polynomial functors.  In a
prequel,  we used polynomial functors to categorify  the Fock space representations
of type A
affine Lie algebras.  In the current work we continue the study of polynomial functors from the point of view of higher representation theory.  Firstly, we categorify the Fock space representation of the
Heisenberg
algebra on the category of polynomial functors.  Secondly, we construct commuting actions of the affine Lie algebra and the
level $p$ action of the Heisenberg algebra on the derived category of polynomial functors over a field of characteristic
$p>0$, thus weakly
categorifying the Fock space representation of $\widehat{\mathfrak{gl}}_p$.
Moreover, we study the relationship
between these categorifications and Schur-Weyl duality.
The duality is formulated as a functor from the category of polynomial functors
to the category of
linear species.   The category of linear species is known to carry actions
of the Kac-Moody algebra and the Heisenberg algebra.  We prove that Schur-Weyl
duality is a morphism of these categorification structures.

\end{abstract}


\section{Introduction}

It is oftentimes advantageous to study the symmetric group by considering properties of the family of all symmetric groups $\{\mathfrak{S}_n, n\geq0\}$.  This viewpoint already appears in the classical work of Zelevinsky, where the representation theory of the symmetric groups is developed from first principles via the natural Hopf algebra structure on the direct sum of their representation rings \cite{Zel}.

Sometimes completely new and surprising symmetries manifest from these considerations.  The direct sum of representation rings of symmetric groups is again an example of such a phenomenon.  Here a Kac-Moody algebra  $\g$ appears, which naturally acts on this algebra via so-called $i$-induction and $i$-restriction.

In recent years, focus has shifted away from the representation rings to the categories themselves.  For instance, instead of looking at the representation rings of the symmetric groups, we consider the \emph{category}
$$
\R=\bigoplus_{d\geq0}Rep(k\mathfrak{S}_d),
$$
where $Rep(k\mathfrak{S}_d)$ denotes the category of finite-dimensional representations of $\mathfrak{S}_d$ over a field $k$.  The aim is then to describe an action of $\g$ on this category.  Making this notion precise is subtle, and the representation theory of seemingly disparate objects, such as  affine Hecke algebras, must be invoked.

In fact, the category $\R$ has served as a motivating example for much of this theory.  In their landmark work, Chuang and Rouquier defined a notion of $\mathfrak{sl}_2$-categorification, and endowing $\R$ with such structure, proved Broue's abelian defect conjecture for the symmetric groups \cite{CR}.  This idea was later generalized  to Kac-Moody algebra of arbitrary type  in the works of Khovanov-Lauda \cite{KL1,KL2,KL3} and Rouquier \cite{R}.

We thus see that the category $\R$ is situated  at the center of important recent developments in categorification and representation theory.  Since the representations of symmetric groups and general linear groups are intimately related, it is natural to expect that an analogue of $\R$ for the general linear groups will also be closely connected to many categorification theories.  This analogue, namely the category $\PP$ of strict polynomial functors, is our central object of study.

In a prequel to this work we showed that $\PP$ is naturally endowed with a $\g$-action (in the sense of Chuang and Rouquier) categorifying the Fock space representation of $\g$ \cite{HTY}.  In the current work we continue the study of $\PP$ from the perspective of higher representation theory.  One of the main new ingredients that we emphasize  is that Schur-Weyl duality is a functor $\Schur:\PP\to\R$ which preserves various categorification structures.

The classical story that we study begins with the algebra $B$ of symmetric functions in infinitely many variables.  There are several interesting algebras  acting on $B$.  First of all, $B$ is the natural carrier for the Fock space representation of the Heisenberg algebra $H$.  Secondly, fixing a nonnegative integer $m$, $B$ carries also the Fock space representation of the affine Kac-Moody algebra $\g$, where
\begin{equation*}
\mathfrak{g}= \left\{
\begin{array}{lr}
\mathfrak{sl}_{\infty} \text{   if } m=0  &  \\
\widehat{\mathfrak{sl}}_{m} \text{  if }  m>0
\end{array}
\right.
\end{equation*}
When $m=0$ this is an irreducible representation, but for $m>0$ this is no longer the case.  In the case $m>0$, there is a copy of the Heisenberg algebra acting on $B$ which commutes with the action of $\g'=[\g,\g]$, which we term the ``twisted'' Heisenberg algebra and denote $H^{(1)}$.  Then in this case $B$ is an irreducible $U(\g')\otimes H^{(1)}$-module.  In fact, the $U(\g')\otimes H^{(1)}$-action can be viewed as an action of $U(\widehat{\mathfrak{gl}}_m)$ on $B$.

It is well known that $\PP$ categorifies the algebra $B$ of symmetric functions, and, as mentioned above, we've shown in previous work that $\PP$ categorifies the action of $\g$ on $B$.   Our aim in the current work is to show $\PP$ also categorifies the other representations on $B$ mentioned in the previous paragraph.  We show that $\PP$ categorifies the $H$ action on $B$ in the sense of Khovanov, and that the derived cateogry of $\PP $ weakly categorifies the $H^{(1)}$ action on $B$. We also prove that the actions of $H^{(1)}$ and $\g'$ on $D^b(\PP)$ commute, thus categorifying the $U(\g')\otimes H^{(1)}$ action on $B$, i.e. the Fock space representation of $\widehat{\mathfrak{gl}}_p$. Moreover, we  show that the Schur-Weyl duality functor $\Schur:\PP\to\R$ is naturally compatible with all the categorification structures that we can endow on both categories.  That is, $\Schur$ can be enriched to a morphism of both $\g$-categorifications and $H$-categorifications.  Let us now describe contents of the paper in more detail.

 In Section \ref{SectionClassical} we review the classical picture, and, in particular, the representations that we categorify in this work.  We recall a less familiar presentation of the Heisenberg algebra from Khovanov's work.

Our main object of study, the category $\PP$, is introduced in Section \ref{SectionP}.  From this section on we fix an infinite field $k$ of characteristic $p$.  We let $\V$ denote the category of finite dimensional vector spaces over $k$.  Then $\PP$ is a functor category whose objects are polynomial  endofunctors on $\V$ which satisfy finiteness conditions.

When $p=0$ the category $\PP$ already appears in MacDonald's book \cite{Mac}.  In positive characteristic, and in greater generality, $\PP$ is studied in the works of Friedlander and Suslin, where they use this category to prove the finite generation of the cohomology of finite group schemes \cite{FS97}.  Polynomial functors have subsequently found many applications in algebraic topology (see p.2 of $\cite{HTY}$ for a brief discussion and references on these applications).

In Section \ref{SectionR} we introduce the category $\R$.  Instead of using as a model for $\R$ the representations of  symmetric groups, it is more natural for us to define $\R$ using Joyal's theory of linear species.  Thus, objects in $\R$ are functors from the groupoid of finite sets to $\V$ which satisfy some finiteness conditions.  (Some authors refer to these as ``vector species''.  A modern account of the theory of linear species appears in \cite{AM}.)  The category $\R$  is naturally equivalent to the direct sum of representations of symmetric groups introduced above, but is more amenable to the higher structures we are interested in.

Schur-Weyl duality enters the picture in Section \ref{SectionS}, where we define a tensor functor $\Schur:\PP\to\R$.  We will see throughout the course of this work that $\Schur$ is compatible with all the higher structures inherent to both categories.

Our study of categorifications begins in Section \ref{Sectiong}, where we consider the theory of $\g$-categorifications.  First we review Rouquier's definition of $\g$-categorification and morphisms of such structures \cite{R}.  The definition of $\g$-categorification consists of functors on some category $\C$ that descend to an action of $\g$ on the Grothendieck group, and   isomorphisms between these functors and satisfying certain conditions.  Subsequently, a morphism of $\g$-categorifications has to preserve all these higher structures in a suitable sense.

After this review of  Rouquier's theory, we briefly recall the $\g$-categorification on $\PP$, which categorifies the Fock space representation of $\g$.  This result appears in the prequel to this work \cite{HTY}.  Next we recall the $\g$-categorification on $\R$ which categorifies the basic representation of $\g$.  Here everything is stated in the language of linear species, although it is equivalent to the well-known formulation in terms of representations of symmetric groups.    Finally, we conclude Section 6 with the first new theorem in this work (Theorem \ref{Thm_Smorphgcat}).  This theorem states that $\Schur:\PP\to\R$ is a morphism of $\g$-categorifications which categorifies a projection from Fock space onto the basic representation of $\g$. In other words, $\Schur$ not only induces an operator on the Grothendieck groups which is $\g$-equivariant, it also preserves all the higher structures on the categories themselves.

In Section \ref{SectionHCat} we move on to consider $H$-categorifications.  First we review Khovanov's category $\HH$ \cite{Kh}. We opt to define the category algebraically, although in Khovanov's original work it is defiend graphically.   In characteristic zero Khovanov shows that the Grothendeick group of $\HH$ contains $H_\Z$, an integral form of $H$.

Using this category, one can define a notion of a ``strong'' $H$-action on linear categories defined over characteristic zero.  We introduce also a notion of a morphism of $H$-categorifications, analogous to Rouquier's definition of morphism of $\g$-categorifications.   Morphisms of $H$-categorifications categorify morphisms of $H$-modules (Proposition \ref{PropHmorph}).  In positive characteristic, Khovanov's theory does not provide a notion of ``strong'' $H$-categorification, and so in this case the defining relations of $H_\Z$ define a notion of weak $H$-categorification.  (In forthcoming work, we extend the Khovanov and Licata-Savage theory of  $H$-categorification to categories defined over positive characteristic \cite{HYII}.)

Having reviewed the general theory of Heisenberg categorification, we prove our second main result (Propositions \ref{PropWellDef} and \ref{PropIdem}), which  is that $\HH$ naturally acts on $\PP$.  More precisely, we construct a natural functor $\HH \to End_L(\PP)$, where $End_L(\PP)$ is the category of  endo-functors on $\PP$ admitting left adjoint functors.  When $p=0$ we show that this gives an  $H$-categorification of the Fock space representation (Theorem \ref{Thm_HcatonP}).  When $p>0$ we get a weak $H$-categorification of the Fock space representation (Theorem \ref{Thm_Hweakcat}).  In the forthcoming paper $\cite{HYII}$ this categorification is enhanced to a strong $H$-categorification.

Next, in Section \ref{SectionHCatonR}, we briefly recall Khovanov's $H$-categorification of the Fock space representation.  Again, we state this result in the language of linear species which is more suitable for our purposes.  We then prove that when $p=0$ the Schur-Weyl duality functor $\Schur:\PP\to\R$ is an equivalence of $H$-categorifications (Theorem \ref{Thm_SequivHcat}).

The virtue of having a weak $H$-categorification when $p>0$ is that it leads us  to categorify the twisted $H^{(1)}$-action on $B$, which is taken up in Section \ref{SectionTwist}.  Recall that when $p>0$, there is a copy of the Heisenberg algebra acting on $B$ which commutes with the $\g'$-action.  Using the Frobenius twist functor on $\PP$ we categorify the endomorphisms on $B$ commuting with the $\g'$-action.  More precisely, we prove Theorems \ref{Commutativity_Theorem} and \ref{Adjunction_Commuting_Action}, which state that the endofunctors on $\PP$ given by tensoring by Frobenius twists of polynomial functors (and their adjoints) are   morphisms of $\g'$-categorifications on $\PP$. These endomorphisms of the $\g'$-categorification on $\PP$  categorify the $H^{(1)}$-action on $B$, but, in general, the adjoint functors are not exact.  This forces us to move to the bounded derived category $D^b(\PP)$.  In Theorem \ref{TheoremCommutingAction} we describe a $U(\g')\otimes H^{(1)}$-action on $D^b(\PP)$, which categorifies the irreducible $U(\g')\otimes H^{(1)}$-action on $B$.  We note in Remark \ref{Remarkglhat} that this is an instance of $\widehat{\mathfrak{gl}}_p$-categorification.

In the Appendix we collect some technical  results which are necessary for the main part of the paper. In particular  we  prove Proposition \ref{Adjunction_Morphism_Categorification} which states that an adjoint of a morphism of $\g$-categorifications (suitably enriched)  is also a morphism of $\g$-categorifications.

\begin{acknowledgement}
We are grateful to A. Touz\'e  and P. Tingley for helpful discussions at the early stage of this work, and to J. Bernstein for his encouragement and  insights throughout the course of this work.
J.H. would like to also thank M. Fang for some useful discussions on Schur-Weyl duality.
\end{acknowledgement}

\section{The classical picture}
\label{SectionClassical}

We begin by reviewing various  structures related to the algebra of symmetric functions.

\subsection{The algebra $B$}
\label{ThealgebraB}
Let $\mathfrak{S}_n$ denote the symmetric group on $n$ letters.  $\mathfrak{S}_n$ acts on the polynomial algebra $\Z[x_1,...,x_n]$ by permuting variables, and we denote by $B_n=\Z[x_1,...,x_n]^{\mathfrak{S}_n}$  the polynomials invariant under this action.  There is a natural projection
$
B_n \twoheadrightarrow B_{n-1}
$
given by setting the last variable to zero.  Consequently, the rings $B_{n}$ form an inverse system; let $B_\Z$ denote the subspace of finite degree elements in the inverse limit
$\varprojlim B_n$.  This is the \emph{algebra of symmetric functions} in infinitely many variables $\{x_1,x_2,... \}$.  Let $B=B_\Z\otimes_\Z\mathbb{C}$ denote the  \emph{(bosonic) Fock space}.

The algebra $B_\Z$ has many  well-known bases.  Perhaps the nicest is the basis of \emph{Schur functions} (see e.g. \cite{Mac}).  Let $\wp$ denote the set of all partitions, and for $\lambda\in\wp$ let $s_\lambda\in B_\Z$ denote the corresponding Schur function.
These form a $\Z$-basis of the algebra of symmetric functions:
$$
B_\Z=\bigoplus_{\lambda\in\wp}\Z s_{\lambda}.
$$

\subsection{The Heisenberg action on $B$}
Let $(\cdot,\cdot)$ be the inner product on $B$ defined by declaring the Schur functions to be orthonormal: $(s_\lambda,s_\mu)=\delta_{\lambda,\mu}$.  Any $b\in B$ defines an operator on $B$ via multiplication which we continue to denote by $b:B\to B$.  Let $b^*:B\to B$ denote the adjoint operator with respect to the inner product.

\begin{definition}
The \emph{Heisenberg algebra} $H$ is the subalgebra of $End(B)$ generated by $b,b^*$ for all $b\in\ B$.  The natural representation of $H$ on $B$ is the \emph{Fock space representation} of $H$.
\end{definition}

The Fock space representation of $H$ is irreducible, and, up to isomorphism, it is the unique irreducible (level 1) representation of $H$ (cf. \cite{Ka}, \cite{Lec}).

The definition of $H$ above is redundant, in the sense that of course we don't need to take \emph{all} $b \in B$ to define $H$.  It's enough to take some algebraically independent generating set, and different choices of such sets lead to different presentations of $H$.  This point will become important later when we discuss categorifications of $H$-modules.  Here are some examples.

\begin{example}
\label{example_H}
\begin{enumerate}
\item Let $p_r$ be the $r^{th}$ power symmetric function.  Then the operators $p_r$ and $p_r^*$ on $B$ satisfy the following relations
\begin{eqnarray*}
p_rp_s&=&p_sp_r \\ p_r^*p_s^* &=& p_s^*p_r^* \\ p_r^*p_s &=& p_sp_r^* +s\delta_{r,s}1
\end{eqnarray*}
This leads to the usual presentation of the Heisenberg algebra $H$.  In other words, we can take as a definition for $H$ to be the unital algebra generated by symbols $p_r$ and $p_r^*$ subject to the above relations.

\item Let $e_r$ be the $r^{th}$ elementary symmetric function, and $h_r$ the $r^{th}$ complete  symmetric function.   Then operators  $e_r$ and $h_r^*$ on $B$ satisfy the following relations
\begin{eqnarray*}
e_re_s&=&e_se_r \\ h_r^*h_s^* &=& h_s^*h_r^* \\ h_r^*e_s &=& e_sh_r^* +e_{s-1}h_{r-1}^*.
\end{eqnarray*}
This gives another presentation of $H$.

\end{enumerate}
\end{example}

\begin{definition}
Let $H_\Z$ denote the unital ring over $\Z$ with generators $e_r,h_r^* $ (as symbols),$r\geq1$ subject to the relations as in part two of the above example.  The ring $H_\Z$ is an \emph{integral form of $H$}, i.e. $H_\Z\otimes_\Z\mathbb{C}\cong H$ (see e.g. \cite{Kh}).  We refer to the action of $H_\Z$ on $B_\Z$ as the \emph{Fock space representation} of $H_\Z$.
\end{definition}

\subsection{More symmetries on $B$}
\label{MoresymmetriesonB}

Fix an integer $m\geq0$.  Let $\mathfrak{g}$ denote the following complex Kac-Moody algebra:
\begin{equation*}
\mathfrak{g}= \left\{
\begin{array}{lr}
\mathfrak{sl}_{\infty} \text{   if } m=0  &  \\
\widehat{\mathfrak{sl}}_{m} \text{  if }  m>0
\end{array}
\right.
\end{equation*}
For the  precise relations defining $\g$ see \cite{Ka}. The Lie
algebra $\g$ has standard Chevalley generators $\{e_i,f_i\}_{i \in
\Z /m\Z}$.

Set $h_i=[e_i,f_i]$. The Cartan subalgebra of $\g$ is spanned by the $h_i$ along with an element $d$.  Let $\{\Lambda_i:i \in
\Z/m\Z\}\cup\{\delta\}$ be the dual weights.  We let $Q$ denote the root lattice and $P$ the
weight lattice of $\g$.  Let $V(\Lambda)$ be the irreducible integrable $\g$-module
of highest weight $\Lambda \in P$ (so $\Lambda$ is necessarily dominant integral).  In particular, the \emph{basic
representation} is $V(\Lambda_0)$.  Let $\g'=[\g,\g]$ be the derived algebra of $\g$.

Let us review some combinatorial notions related to Young diagrams. Firstly, we identify partitions with their Young diagram (using English
notation).  For example the partition $(4,4,2,1)$ corresponds to the
diagram
 $$
 \yng(4,4,2,1)
 $$
The \emph{content} of a box in position $(k,l)$ is the integer $l-k
\in \Z/m\Z$.  Given $\mu,\lambda \in \wp$, we write
$
\xymatrix{ \mu \ar[r] & \lambda}
$
if $\lambda$ can be obtained from $\mu$ by adding some box.  If the
arrow is labelled $i$ then $\lambda$ is obtained from $\mu$ by
adding a box of content $i$ (an $i$-box, for short).  For instance,
if $m=3$, $\mu=(2)$ and $\lambda=(2,1)$ then
$
\xymatrix{
 \mu \ar[r]^{2} & \lambda}.
$
An $i$-box of $\lambda$ is \emph{addable} (resp. \emph{removable})
if it can be added to (resp. removed from) $\lambda$ to obtain
another partition.

Of central importance to us is the \emph{Fock space} representation
of $\g$ on $B$ (or $B_\Z$).  The action of $\g$ on $B$ is given by the following formulas:
$
e_i.s_{\lambda}=\sum s_{\mu}
$,
the sum over all $\mu$ such that $\xymatrix{ \mu \ar[r]^{i} &
\lambda}$, and
 $
f_i.s_{\lambda}=\sum s_{\mu}
$,
the sum over all $\mu$ such that $\xymatrix{ \lambda \ar[r]^{i} &
\mu}$.
Moreover,  $d$ acts on $s_\lambda$ by $m_0(\lambda)$, where $m_0(\lambda)$ is the number of boxes of content zero in $\lambda$.  These equations define an integral  representation of
$\mathfrak{g}$.(see e.g. \cite{Lec},\cite{LLT})..


The basic representation $V(\Lambda_0)$ is isomorphic to $U(\g).1\subset B$, i.e. the $\g$-submodule of $B$ generated by the unit.  We identify $V(\Lambda_0)$ as a submodule of $B$\ in this manner.

The Fock space representation of $\g$ is  semisimple, and the basic representation occurs with multiplicity one in $B$.  Hence there is a natural projection onto $V(\Lambda_0)$, $\pi: B \twoheadrightarrow V(\Lambda_0)$, which we term the \emph{standard projection}.

 When $m=0$, $V(\Lambda_0)=B$, but when $m>0$ this is no longer the case.  Indeed suppose $m>0$.  For $b\in B$, let $b^{(1)}\in B$ be defined by $b^{(1)}(x_1,x_2,...)=b(x_1^m,x_2^m,...)$.  We refer to $b^{(1)}$ as the \emph{twist} of $b$.  Let $H^{(1)}\subset End(B)$ be the subalgebra generated by $b^{(1)},(b^{(1)})^*$ for all $b\in B$. We refer to this as the ``twisted'' Heisenberg algebra.  The following lemma is well-known, and can be found e.g. in Section 2.2.8 of \cite{Lec}.

\begin{lemma}
\label{Lemma_CommutingH}
\begin{enumerate}
\item As algebras, $H^{(1)} \cong H$.
\item The $H^{(1)}$ action on $B$ commutes with the $\g'$ action on $B$.
\item As a $U(\g')\otimes H^{(1)}$-module $B$ is irreducible, moreover $End_{\g'}(B)\simeq H^{(1)}$.
\end{enumerate}
\end{lemma}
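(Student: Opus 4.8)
The plan is to reduce all three statements to the well-understood theory of the Frobenius twist on symmetric functions. First recall that the twist map $b \mapsto b^{(1)}$ is the ring homomorphism $B \to B$ induced by the $m$-fold power substitution on variables; concretely, in terms of power sums it sends $p_r \mapsto p_{mr}$. This immediately gives part (1): the operators $p^{(1)}_r := (p_r)^{(1)} = p_{mr}$ and their adjoints $(p^{(1)}_r)^*$ satisfy the defining Heisenberg relations $[(p^{(1)}_r)^*, p^{(1)}_s] = mr\,\delta_{rs}$ (the level changes from $1$ to $m$, but over $\mathbb{C}$ the algebra $H^{(1)}$ generated by these is abstractly isomorphic to $H$). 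One must also check that $(b^{(1)})^* = (b^*)^{(1)}$ up to the obvious rescaling — equivalently that the twist map is, after a diagonal rescaling of the power-sum basis, an isometry onto its image — which follows from the Hall inner product formula $(p_\lambda, p_\mu) = \delta_{\lambda\mu} z_\lambda$ together with $z_{m\lambda} = m^{\ell(\lambda)} z_\lambda$ where $m\lambda$ denotes the partition obtained by multiplying each part by $m$.

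For part (2), the cleanest route is to observe that $B$ with the $\g$-action decomposes, via the boson-fermion type correspondence (or directly via Littlewood's $m$-quotient/$m$-core bijection), as a tensor product $B \cong V(\Lambda_0) \otimes B^{(1)}$, where $B^{(1)} = \mathrm{Im}(b \mapsto b^{(1)})$ is a copy of the algebra of symmetric functions on which $H^{(1)}$ acts as its own Fock space. Under this identification $\g'$ acts only on the first factor $V(\Lambda_0)$ and $H^{(1)}$ acts only on the second factor, so the two commute. Since the excerpt permits me to cite this as well-known (Section 2.2.8 of \cite{Lec}), I would set it up precisely enough to make the tensor decomposition visible: the $m$-core of a partition indexes a weight vector of $V(\Lambda_0)$ and the $m$-quotient (a tuple of $m$ partitions) is repackaged by $H^{(1)}$. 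Alternatively, and more in the spirit of ``elementary'' arguments, one checks directly on the Schur basis that $e_i$ and $f_i$ (which add/remove $i$-boxes) commute with multiplication by $p^{(1)}_r = p_{mr}$ and with $(p^{(1)}_r)^*$; the key combinatorial fact is that adding an $i$-box is ``independent of the $m$-quotient data'' in the appropriate sense.

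For part (3), irreducibility of $B$ as a $U(\g') \otimes H^{(1)}$-module follows from the tensor decomposition in part (2): $V(\Lambda_0)$ is irreducible over $\g'$, $B^{(1)}$ is irreducible over $H^{(1)}$ (it is a level-$m$, hence up to rescaling level-$1$, Fock space, which is irreducible by the uniqueness statement recalled just after the definition of $H$), and an irreducible-tensor-irreducible is irreducible. For $\mathrm{End}_{\g'}(B) \cong H^{(1)}$: the inclusion $H^{(1)} \hookrightarrow \mathrm{End}_{\g'}(B)$ is part (2); for the reverse, decompose $B = \bigoplus_\Lambda V(\Lambda) \otimes M_\Lambda$ as a $\g'$-module with multiplicity spaces $M_\Lambda$, so that $\mathrm{End}_{\g'}(B) = \prod_\Lambda \mathrm{End}(M_\Lambda)$ (a priori a product, but the finiteness/grading on $B$ cuts it down), and then identify $\bigoplus_\Lambda M_\Lambda$ with $B^{(1)}$ as an $H^{(1)}$-module and show $H^{(1)}$ already surjects onto $\prod \mathrm{End}(M_\Lambda)$ by a density/Jacobson argument — here one uses that each $M_\Lambda$ is an irreducible $H^{(1)}$-module and distinct $\Lambda$ give non-isomorphic ones (distinguished, say, by the $d$-grading, i.e. the number of content-zero boxes modulo the twist), so the double commutant theorem applies factor by factor.

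The main obstacle is the last assertion $\mathrm{End}_{\g'}(B) \cong H^{(1)}$: getting from ``$H^{(1)}$ commutes with $\g'$'' to ``$H^{(1)}$ is \emph{all} of the commutant'' requires control of the full $\g'$-isotypic decomposition of $B$ and a Jacobson-density argument across infinitely many isotypic components, so one must be careful that the finite-dimensionality of graded pieces of $B$ makes each $\mathrm{End}(M_\Lambda)$-computation legitimate and that no ``extra'' endomorphisms appear from the completion/product. Everything else — parts (1) and (2) and the irreducibility half of (3) — is a direct unwinding of the power-sum description of the twist and the standard $m$-core/$m$-quotient combinatorics, and I would present those crisply with citations to \cite{Lec}, \cite{LLT}, spending the bulk of the written proof on the commutant computation.
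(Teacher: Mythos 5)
The paper does not actually prove this lemma: the sentence immediately preceding it reads ``The following lemma is well-known, and can be found e.g.\ in Section 2.2.8 of \cite{Lec},'' and no argument is given. So there is no in-paper proof to compare against, and your sketch supplies the argument that the paper defers to a reference. What you outline is, in substance, that standard argument: $p_r^{(1)} = p_{mr}$ gives the Heisenberg presentation of $H^{(1)}$ (at level $m$, hence abstractly isomorphic to $H$ over $\mathbb{C}$ after rescaling), and the boson--fermion (equivalently, $m$-core/$m$-quotient) tensor decomposition $B \simeq V(\Lambda_0)\otimes B^{(1)}$, with $\g'$ acting on the first factor and $H^{(1)}$ on the second, yields (2) and the irreducibility half of (3). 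You are also right to single out $End_{\g'}(B)\simeq H^{(1)}$ as the step that genuinely needs care: the finite-dimensionality of each graded piece of $B$ is what makes the factor-by-factor double-commutant argument across the $\g'$-isotypic decomposition legitimate, and without it the commutant could a priori be strictly larger. A minor quibble: the aside that ``$(b^{(1)})^* = (b^*)^{(1)}$ up to rescaling'' is not really well-posed as written (the twist is defined on elements of $B$, not on operators), but nothing in your argument leans on it --- for part (1) all you use is that the $p_{mr}$ and $p_{mr}^*$ generate $H^{(1)}$ and satisfy the level-$m$ Heisenberg relations, which you already establish.
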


\begin{remark}
The action of $U(\g')\otimes H^{(1)}$ on $B$ is equivalent to the action of  $U(\widehat{\mathfrak{gl}}_n')$ on $B$, where $\widehat{\mathfrak{gl}}_n'$ is the derived algebra of $\widehat{\mathfrak{gl}}_n$ (i.e. $\widehat{\mathfrak{gl}}_n'=\mathfrak{gl}_n\otimes \mathbb{C}[t,t^{-1}]\oplus\mathbb{C}c$).  For our purposes it is more natural to use the tensor product realization since we categorify these commuting actions sparately.
\end{remark}

To summarize, we've introduced the algebra $B$, and considered the following structures

on it: the (irreducible) $H$-action on $B$, the $\g$-action on $B$, and the (irreducible) $U(\g')\otimes H^{(1)}$-action on $B$ for  $m>0$.  One of our main goals is to show that the category $\PP$ of polynomial functors (which we introduce below) naturally categorifies all this structure when $m=char(k)$ is  the characteristic of some field $k$.

\section{Strict polynomial functors}
\label{SectionP}
In this section we define the category $\PP$ and recall some of its basic features.  Our basic reference is \cite{FS97}; see also \cite{HTY} for a more concise recollection of the category $\PP$ and its  properties that we utilize.

For the remainder of the paper, fix an infinite field $k$ of characteristic $p\geq0$.  The characteristic $p$ plays the role of the fixed nonnegative integer $m$ of the previous section.  So for example, from now on anytime we refer to the Kac-Moody algebra $\g$, it is either $\mathfrak{sl}_\infty $ or $\widehat{\mathfrak{sl}}_p$ depending on whether $p=0$ or $p>0$.

For a $k$-linear abelian or triangulated category $\C$, let $K_0(\C)$ denote the Grothendieck group of $\C$, and let $K(\C)$ denote the complexification of $K_0(\C)$.  If $A\in\C$ we let $[A]$ denote its image in $K_0(\C)$. Simiarly, for an exact functor $F:\C\to\C'$ we let $[F]:K_0(\C)\to K_0(\C')$ denote the induced operator on the Grothendieck groups.  Slightly abusing notation, the complexification of $[F]$ is also denoted by $[F]$.

 Let $\V$ denote the category of finite dimensional vector spaces over $k$.  For $V,W \in \V$, \emph{polynomial maps} from $V$ to $W$ are by definition elements of $S(V^{*})\otimes W$, where $S(V^*)$ denotes the symmetric algebra of the linear dual of $V$.  Elements of $S^{d}(V^{*})\otimes W$ are said to be \emph{homogeneous} of degree $d$.

\subsection{The category $\PP$}

The objects of the category $\PP$ are strict polynomial functors of finite degree, i.e. functors $M:\V \rightarrow \V$ that satisfy the following two properties: for any $V,W \in \V$, the map of vector spaces
$$
Hom_{k}(V,W) \rightarrow Hom_{k}(M(V),M(W))
$$
is polynomial, and the degree of the map
$
End_{k}(V) \rightarrow End_{k}(M(V))
$
is uniformly bounded for any $V \in \V$.
The morphisms in $\PP$ are natural transformations of functors.  For $M \in \PP$ we denote by $1_M\in Hom_{\PP}(M,M)$ the identity natural transformation.  $\PP$ is a $k$-linear abelian category.

Let $I \in \PP$ be the identity functor from $\V$ to $\V$ and let $k\in \PP$ denote the constant functor with value $k$.
Tensor products in $\V$ define a tensor structure $\otimes$ on $\PP$, with unit $k$.

The $d$-fold tensor product, $\otimes^d$, is an example of a polynomial functor.  Similarly, the exterior and symmetric powers, $\Lambda^d$ and $S^d$, are also polynomial functors.

The $d^{th}$ \emph{divided power}, $\Gamma^d$, is a polynomial functor that plays an important role.  It is defined on $V\in\V$ by $\Gamma^d(V)=(\otimes^dV)^{\mathfrak{S}_d}$, the $\mathfrak{S}_d$-invariants in $\otimes^dV$.  When $p=0$ then $\Gamma^d \cong S^d$, but in positive characteristic this is no longer the case.

For $W\in\V$ we also define $\Gamma^{W,d}\in\PP$ by $\Gamma^{W,d}(V)=\Gamma^d(Hom(W,V))$.  Similarly, $S^{W,d}$ is given by $S^{W,d}(V)=S^d(Hom(W,V))$.

Finally, when $p>0$ we have the \emph{Frobenius twist} functor $(\cdot)^{(1)}\in\PP$ which maps $V\in\V$ to $V^{(1)}$, the Frobenius twist of $V$.  (Recall that $V^{(1)}=k\otimes_\phi V$, where $\phi:k\to k$ maps $z \mapsto z^p$.)

Let $M \in \PP$ and $V \in \V$.  By functoriality, evaluation on $V$ yields a functor from
$\PP$ to  $Pol(GL(V))$, the category of polynomial representations of $GL(V)$.

The \emph{degree} of a functor $M\in\PP$ is the upper bound of the degrees of the polynomials $End_k(V)\to End_k(M(V))$ for $V\in\V$. A functor $M\in\PP$ is \emph{homogeneous of degree $d$} if all the polynomials $End_k(V)\to End_k(M(V))$ are homogeneous polynomials of degree $d$.

The category $\PP$ splits as the direct sum of its subcategories $\PP_d$ of homogeneous functors of degree $d$:
\begin{equation}
\label{Pdecomp}
\PP=\bigoplus_{d\ge 0} \PP_d\;.
\end{equation}
Any $M\in\PP$ can thus be expressed as $M=\oplus M_d$, where $M_d$ is homogeneous of degree $d$ \cite[Proposition 2.6]{FS97}.

If $M\in\PP$, we define its \emph{Kuhn dual} $M^\sharp\in\PP$ by  $M^\sharp(V)=M(V^*)^*$, where `$*$' refers to $k$-linear duality in the category of vector spaces. Since $(M^{\sharp})^\sharp\simeq M$, Kuhn duality yields an equivalence of categories
$^\sharp:\PP\xrightarrow[]{\simeq} \PP^{\mathrm{op}}$, op. cit.
  A routine check shows that $^\sharp$ respect degrees, i.e. $M^\sharp$ is homogeneous of degree $d$ if and only if $M$ is homogeneous.

The following lemma shows  the category $\PP $ is a model for the stable category of  polynomial $GL_n$-modules.  Let $Pol_d(GL(V))$ denote the category of polynomial representations of $GL(V)$ of degree $d$.
\begin{lemma}[\cite{FS97}, Lemma 3.4]\label{thm-FS}
Let $V\in\V$ be a vector space of dimension $n\ge d$. Evaluation on $V$ induces an equivalence of categories
$\PP_d\to Pol_d(GL(V)).$
\end{lemma}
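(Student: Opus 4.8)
The plan is to prove the equivalence $\PP_d \to Pol_d(GL(V))$ for $n = \dim V \geq d$ by exhibiting a quasi-inverse and checking the standard criteria (essential surjectivity and full faithfulness), reducing everything to classical facts about the Schur algebra $S(n,d) = S_k(n,d)$ and divided powers. First I would recall that $\PP_d$ has a projective generator: the functor $\Gamma^{V_0,d}$ where $V_0 = k^d$ (or more generally $\bigoplus_{\dim W \leq d} \Gamma^{W,d}$), and that $\mathrm{End}_{\PP_d}(\Gamma^{V_0,d}) \cong S(d,d)$, while more generally $\mathrm{Hom}_{\PP_d}(\Gamma^{W,d}, M) \cong M(W)$ naturally in $M$ (the ``Yoneda-type'' lemma for divided powers, cf.\ \cite{FS97}). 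Under evaluation on $V$, the functor $\Gamma^{V,d}$ goes to the polynomial $GL(V)$-module $\Gamma^d(\mathrm{End}(V)) = \Gamma^d(V \otimes V^*)$, whose endomorphism algebra as a $GL(V)$-representation is exactly the Schur algebra $S(n,d)$ by classical Schur--Weyl theory.

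The key steps, in order: (1) Identify a projective generator $P$ of $\PP_d$ and show $A := \mathrm{End}_{\PP_d}(P)$ is the Schur algebra $S(n,d)$ for our chosen $V$ with $n \geq d$ — here the hypothesis $n \geq d$ is what guarantees that $\Gamma^{V,d}$ (equivalently the $GL(V)$-module $\Gamma^d(V\otimes V^*)$) is a projective generator of $Pol_d(GL(V))$, since all the ``fundamental'' divided powers $\Gamma^{W,d}$ with $\dim W \leq d$ appear as summands. (2) Observe that evaluation on $V$, call it $\mathrm{ev}_V : \PP_d \to Pol_d(GL(V))$, sends $P$ to a projective generator $\mathrm{ev}_V(P)$ and induces a ring isomorphism on endomorphism algebras $\mathrm{End}_{\PP_d}(P) \xrightarrow{\sim} \mathrm{End}_{GL(V)}(\mathrm{ev}_V(P))$; the injectivity is the non-trivial direction and follows because a natural transformation of degree-$d$ homogeneous functors is determined by its value on a vector space of dimension $\geq d$ (polynomial data of degree $d$ is detected on $k^d$). (3) Conclude by the standard Morita-theoretic argument: both $\PP_d$ and $Pol_d(GL(V))$ are equivalent to $\mathrm{mod}\text{-}A$ via $\mathrm{Hom}(P,-)$ and $\mathrm{Hom}(\mathrm{ev}_V(P),-)$ respectively, and $\mathrm{ev}_V$ is compatible with these equivalences, hence is itself an equivalence.

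An alternative, more self-contained route avoids invoking Morita theory abstractly: show directly that $\mathrm{ev}_V$ is (a) faithful — a nonzero natural transformation $M \to N$ between degree-$d$ functors cannot vanish on a space of dimension $\geq d$, because its components are themselves given by polynomial (degree $\leq d$) formulas and such formulas vanishing on $k^d$ vanish identically; (b) full — given a $GL(V)$-equivariant map $M(V) \to N(V)$, reconstruct a natural transformation using the generators $\Gamma^{W,d} \to M$ corresponding to elements of $M(W)$ for $\dim W \leq d$, and the fact that every $M \in \PP_d$ is a cokernel of a map between sums of such $\Gamma^{W,d}$; (c) essentially surjective — every polynomial $GL(V)$-module of degree $d$ is a quotient of a sum of copies of $\Gamma^d(V\otimes V^*) = \mathrm{ev}_V(\Gamma^{V,d})$, and one lifts the presentation to $\PP_d$ using fullness.

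The main obstacle is step (2)/(a): proving that $\mathrm{ev}_V$ is faithful, i.e.\ that a natural transformation of homogeneous degree-$d$ functors which vanishes on a single vector space $V$ of dimension $n \geq d$ must vanish identically. This is where the numerical hypothesis $n \geq d$ is essential and where one must use the polynomiality structure carefully: the component $\eta_W : M(W) \to N(W)$ for arbitrary $W$ is recovered from $\eta_V$ by naturality along the (polynomial) maps induced by all linear maps $W \to V$ and $V \to W$, and the point is that for $\dim W \leq d \leq \dim V$ one can find a linear section $W \hookrightarrow V$ making $M(W)$ a direct summand of $M(V)$ compatibly, so $\eta_W = 0$; for $\dim W > d$ one reduces to the previous case using that any element of $M(W)$ (with $M$ of degree $d$) lies in the image of $M(W')$ for some $W' \subseteq W$ of dimension $\leq d$. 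I would carry this reduction out explicitly, as it is the technical heart; the remaining steps are formal consequences of the existence of enough projectives in both categories (guaranteed by \cite{FS97} on the $\PP$ side and by classical Schur-algebra theory on the $GL(V)$ side).
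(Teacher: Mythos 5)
The paper does not supply a proof of this lemma---it is cited directly to Friedlander--Suslin \cite{FS97}, Lemma 3.4---so there is no ``paper's own proof'' to compare against. Your proposal reconstructs essentially the Friedlander--Suslin argument: both $\PP_d$ and $Pol_d(GL(V))$ are identified with $\mathrm{mod}$-$S(n,d)$ via the respective projective generators $\Gamma^{V,d}$ and $\Gamma^d(V\otimes V^*)$, evaluation on $V$ being compatible with these Morita equivalences, and the Yoneda-type isomorphism $\mathrm{Hom}_{\PP_d}(\Gamma^{W,d},M)\cong M(W)$ doing the real work. That is the intended proof, and your outline of it is correct.

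Two small points worth tightening. First, in step (1) you start with $P=\Gamma^{V_0,d}$, $V_0=k^d$, so $\mathrm{End}_{\PP_d}(P)\cong S(d,d)$; but you then assert $A\cong S(n,d)$ ``for our chosen $V$.'' You should commit to $P=\Gamma^{V,d}$ for the given $V$ of dimension $n\ge d$ so that both $\mathrm{End}_{\PP_d}(P)$ and $\mathrm{End}_{GL(V)}(\mathrm{ev}_V P)$ land on the same Schur algebra $S(n,d)$; the case $V_0=k^d$ gives a generator too, but then the two Morita algebras would not match the way your step (3) uses them. Second, in the faithfulness argument the heuristic ``polynomial formulas of degree $\le d$ vanishing on $k^d$ vanish identically'' is not really the operative fact and shouldn't be leaned on; what carries the argument is exactly what you say afterwards: for $\dim W\le n$, a split injection $W\hookrightarrow V$ with retraction gives $\eta_W = N(p)\circ\eta_V\circ M(i)=0$; and for $\dim W>d$, the weight-space decomposition of $M(W)$ under the diagonal torus shows that $M(W)$ is the sum over $d$-dimensional coordinate subspaces $W'\subset W$ of the images of $M(W')\to M(W)$ (because weights of a degree-$d$ polynomial functor have at most $d$ nonzero entries, and $M$ applied to the idempotent projecting onto $W'$ fixes exactly those weight spaces), reducing to the first case. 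With those two clarifications the argument is complete and matches the cited source.
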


It is classically known that the representation ring of  $Pol_d(GL(V))$ is $B_d$, when $dim(V)\geq d$ (cf. Section \ref{ThealgebraB}).  This suggests that one can view the tensor category $\PP$ as a categorification of $B$, and this is the perspective we take in the current work.  Let us make this explicit.

For $\lambda \in \wp$ let $S_\lambda \in \PP$ denote the corresponding \emph{Schur functor}.  For precise definitions and more details on the following proposition see Section 4.3 of \cite{HTY}  and references therein.

\begin{proposition}
The map $\varrho:K_0(\PP)\to B_\Z$ determined by $[S_\lambda]\mapsto s_\lambda$ defines an algebra isomorphism.
\end{proposition}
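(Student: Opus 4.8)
The plan is to reduce the statement to classical facts about the representation ring $B_d$ of $\mathrm{Pol}_d(GL(V))$ together with the structural decomposition of $\PP$. First I would use the direct sum decomposition $\PP=\bigoplus_{d\ge 0}\PP_d$ from \eqref{Pdecomp} to split the Grothendieck group as $K_0(\PP)=\bigoplus_{d\ge 0}K_0(\PP_d)$; this is the categorical counterpart of the grading $B_\Z=\bigoplus_{d\ge0}(B_\Z)_d$. The Schur functor $S_\lambda$ lies in $\PP_{|\lambda|}$, so the map $\varrho$ is visibly graded, and it suffices to analyze it degree by degree. Fixing $d$ and a vector space $V$ with $\dim V\ge d$, Lemma \ref{thm-FS} gives an equivalence $\PP_d\xrightarrow{\simeq}\mathrm{Pol}_d(GL(V))$, hence an isomorphism of Grothendieck groups $K_0(\PP_d)\cong K_0(\mathrm{Pol}_d(GL(V)))$. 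Under this equivalence $S_\lambda$ corresponds to the irreducible polynomial $GL(V)$-module of highest weight $\lambda$ (for $|\lambda|=d$, $\dim V\ge d$), and classically the classes of these irreducibles form a $\Z$-basis of $K_0(\mathrm{Pol}_d(GL(V)))\cong (B_\Z)_d$, with the class of the irreducible of highest weight $\lambda$ mapping to the Schur function $s_\lambda$. Summing over $d$, $\varrho$ sends the $\Z$-basis $\{[S_\lambda]\}_{\lambda\in\wp}$ of $K_0(\PP)$ bijectively onto the $\Z$-basis $\{s_\lambda\}_{\lambda\in\wp}$ of $B_\Z$, so $\varrho$ is an isomorphism of abelian groups.

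It remains to check that $\varrho$ is a ring homomorphism, where the multiplication on $K_0(\PP)$ is induced by the tensor structure $\otimes$ on $\PP$. For this I would again work degree by degree: the tensor product $S_\lambda\otimes S_\mu$ lies in $\PP_{|\lambda|+|\mu|}$, and under evaluation on a sufficiently large $V$ the tensor product of polynomial functors goes to the tensor product of polynomial $GL(V)$-modules (pointwise tensor product in $\V$ is exactly what defines $\otimes$ on $\PP$). Thus $[S_\lambda\otimes S_\mu]$ is the class of the $GL(V)$-module $S_\lambda(V)\otimes S_\mu(V)$, whose decomposition into irreducibles is governed by the Littlewood–Richardson rule — precisely the same rule that computes $s_\lambda s_\mu$ in $B_\Z$. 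Hence $\varrho([S_\lambda][S_\mu])=\varrho([S_\lambda\otimes S_\mu])=s_\lambda s_\mu=\varrho([S_\lambda])\varrho([S_\mu])$, and since $\varrho(1)=\varrho([k])=s_\emptyset=1$, the map $\varrho$ is an algebra isomorphism. One subtlety to address is that the equivalence of Lemma \ref{thm-FS} depends on the choice of $V$; I would note that the Littlewood–Richardson coefficients and the identification $[S_\lambda]\leftrightarrow s_\lambda$ are stable (independent of $V$ once $\dim V$ is large enough relative to the degrees involved), so the ring structure is well-defined on the inverse limit and the argument is consistent across degrees.

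The main obstacle, to the extent there is one, is purely bookkeeping: making sure that all the classical identifications (irreducible polynomial $GL_n$-modules $\leftrightarrow$ Schur functions, tensor product multiplicities $\leftrightarrow$ Littlewood–Richardson coefficients) are invoked uniformly in $n=\dim V$ and patched together compatibly with the grading $\PP=\bigoplus_d\PP_d$, and that the Schur functor $S_\lambda\in\PP$ as defined in \cite{HTY} really does evaluate to the irreducible (or, in positive characteristic, to the appropriate object whose class is $s_\lambda$ — here one uses that even when $S_\lambda(V)$ is no longer irreducible for $p>0$, its class in $K_0$ still equals $s_\lambda$, e.g. via a Weyl-module filtration argument). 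Since all of these are standard and are precisely the references pointed to in Section 4.3 of \cite{HTY}, the proof is essentially a matter of assembling them; no genuinely new input is required beyond Lemma \ref{thm-FS} and the decomposition \eqref{Pdecomp}.
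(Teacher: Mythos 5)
Your proposal is essentially correct and takes the route one would expect; note that the paper itself does not prove this proposition but refers to Section~4.3 of \cite{HTY}, where the same reduction via Lemma~\ref{thm-FS} is the key step. The one place that needs tightening is the middle paragraph: in positive characteristic $S_\lambda(V)$ is \emph{not} irreducible (it is a costandard object whose socle is the simple $L_\lambda$, as the paper recalls later), and the class of the irreducible of highest weight $\lambda$ does \emph{not} map to $s_\lambda$ under the character isomorphism --- it is the class of the (dual) Weyl module that does. You acknowledge this in your final paragraph, but as written the middle paragraph contradicts it, and a reader working in characteristic $p$ could object to the phrase ``classes of these irreducibles \ldots\ mapping to $s_\lambda$.'' The clean version works with the costandard objects throughout: the classes $[S_\lambda(V)]$ form a $\Z$-basis of $K_0(\mathrm{Pol}_d(GL(V)))$ because they are unitriangular with respect to the simples $[L_\lambda]$, and the formal character --- which is a ring homomorphism since $\mathrm{ch}(M\otimes N)=\mathrm{ch}(M)\,\mathrm{ch}(N)$ --- sends $[S_\lambda(V)]$ to $s_\lambda(x_1,\ldots,x_n)$. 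Once this is in place, the ring-homomorphism property of $\varrho$ is automatic and you need not invoke the Littlewood--Richardson rule explicitly; its role is already subsumed in the multiplicativity of characters. Your stability remark (identifications compatible across $\dim V$, patched by the inverse-limit structure of $B_\Z$) is exactly the right thing to say and closes the argument.
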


We note  that the Euler characteristic of the derived Hom functor on $\PP$ categorifies the standard inner product on $B$.  More precisely, let $\left< \cdot , \cdot \right>$ be the bilinear form on $K_0(\PP)$ given by
$$
\left<[M],[N]\right>=\sum(-1)^idimExt^i(M,N).
$$
Then, under the identification $\varrho$, $\left< \cdot , \cdot \right>$ corresponds to $(\cdot, \cdot)$ (for details see \cite[Propsition 6.4]{HTY}).

Let $M\in\PP$.  Tensor product defines a functor $\bold{T}_M:\PP\to\PP$ given by $\bold{T}_M(N)=M\otimes N$.  The functor $\bold{T}_M$ admits the right adjoint  $\bold{T}^*_M$, which is given by a general formula as follows: for  $N\in\PP$ and $V\in\V$,
$$\bold{T}^*_M(N)(V)=Hom_{\PP}(M,N(V\oplus\cdot)).
$$
Here $N(V\oplus\cdot)\in\PP$ denotes the functor $W\mapsto N(V \oplus W)$.  Therefore we obtain the following lemma (originally communicated to us by Antoine Touz\'e):
\begin{lemma}
Let $M\in\PP$.  Then $\bold{T}_M^*$ is exact if and only if $M$ is projective.
\end{lemma}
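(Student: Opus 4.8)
The plan is to analyze the formula $\bold{T}^*_M(N)(V) = Hom_{\PP}(M, N(V\oplus\cdot))$ and determine exactly when the assignment $N \mapsto \bold{T}^*_M(N)$ is exact, i.e. preserves short exact sequences in $\PP$. Since exactness can be checked after evaluating on all $V \in \V$, and since the evaluation functors $\PP \to Pol(GL(V))$ are themselves exact and jointly faithful, it suffices to understand for which $M$ the functor $N \mapsto Hom_{\PP}(M, N(V\oplus\cdot))$ is exact for every fixed $V$.

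First I would observe that the operation $N \mapsto N(V \oplus \cdot)$ is itself an exact endofunctor of $\PP$ for each fixed $V$: evaluating a short exact sequence of polynomial functors at the vector space $V \oplus W$ gives a short exact sequence in $\V$ for every $W$, and exactness in $\PP$ is pointwise. (One should also note that $N(V\oplus \cdot)$ is again a polynomial functor of finite degree — this follows from the binomial-type expansion $N(V\oplus\cdot) \cong \bigoplus_{e} (\text{something built from } N)$, or simply from the fact that the degree does not increase.) Therefore $\bold{T}^*_M$ is exact if and only if, for every $V$, the functor $N \mapsto Hom_{\PP}(M, N(V\oplus\cdot))$ is exact, and by pre-composing with the exact equivalence-like operation above this reduces to the statement that $Hom_\PP(M, -)$ is exact, i.e. that $M$ is a projective object of $\PP$. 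The forward direction (if $\bold{T}^*_M$ exact then $M$ projective) I would get by specializing: take $V = 0$, so that $N(0 \oplus \cdot) = N$, giving $\bold{T}^*_M(N) = Hom_\PP(M,N)$ as a vector space (one must check the evaluation bookkeeping makes the value at $V=0$ literally $Hom_\PP(M,N)$), whence $Hom_\PP(M,-)$ exact, i.e. $M$ projective. Conversely, if $M$ is projective then $Hom_\PP(M,-)$ is exact, and composing with the exact functor $N \mapsto N(V\oplus\cdot)$ shows $\bold{T}^*_M(N)(V)$ is exact in $N$ for every $V$, hence $\bold{T}^*_M$ is exact.

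The main obstacle, I expect, is the bookkeeping at the two ends: making precise that "$\bold{T}^*_M$ is exact" (a statement about functors $\PP \to \PP$) is equivalent to pointwise exactness of the vector-space-valued functors $N \mapsto \bold{T}^*_M(N)(V)$, and that the latter genuinely reduces to exactness of $Hom_\PP(M,-)$ rather than to something weaker like exactness of $Hom_\PP(M, -)$ restricted to functors of a bounded degree. For the degree issue one uses that $M$ has finite degree $d$, so $Hom_\PP(M, -)$ only sees the degree-$\le d$ part, and projectivity of $M$ in $\PP$ is equivalent to projectivity in $\PP_{\le d} = \bigoplus_{e\le d}\PP_e$ by the block decomposition \eqref{Pdecomp}; combined with the fact that $N(V\oplus\cdot)$ has degree no larger than that of $N$, the argument closes without any infinite-degree pathology. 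Everything else is a routine unwinding of adjunction and of the pointwise nature of exactness in functor categories.
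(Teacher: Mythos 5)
Your proof is correct and matches the paper's intended argument: the paper states the formula $\bold{T}^*_M(N)(V)=Hom_\PP(M,N(V\oplus\cdot))$ and then records the lemma as an immediate consequence, relying on exactly the two facts you articulate, namely that for each fixed $V$ the endofunctor $N\mapsto N(V\oplus\cdot)$ of $\PP$ is exact (pointwise on objects) and that exactness of $Hom_\PP(M,-)$ is the definition of projectivity of $M$, with the specialization $V=0$ yielding the forward implication. The degree bookkeeping you flag is indeed a non-issue for the reason you give, since $N(V\oplus\cdot)$ has the same degree as $N$ and $\PP$ decomposes by degree.
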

Now suppose $\varrho([M])=b$.  Then, under the identification $\varrho$, $[\bold{T}_M]=b$, where here we regard $b$ as an operator on $B$. Recall that $b^*$ denotes the operator on $B$ adjoint to $b$.  When $M$ is projective, by the above lemma, $[\bold{T}_M^*]$ defines an operator on $K_0(\PP)$.  Then $[\bold{T}_M^*]$ is adjoint to $[\bold{T}_M]$ with respect to $\left<\cdot,\cdot\right>$, and, since $\left< \cdot , \cdot \right>$ corresponds to $(\cdot, \cdot)$, we obtain:
\begin{lemma}
\label{LemmaAdjoint}
Let $M\in\PP$ be projective and suppose $\varrho([M])=b$.  Then, under the identification $\varrho$, $[\bold{T}^*_M]=b^*$.
\end{lemma}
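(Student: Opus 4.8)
The plan is to deduce Lemma \ref{LemmaAdjoint} directly from Lemma \ref{LemmaAdjoint}'s predecessor (the statement that $[\bold T_M]=b$ under $\varrho$), from the exactness of $\bold T^*_M$ for projective $M$, and from the compatibility of $\left<\cdot,\cdot\right>$ with the standard inner product $(\cdot,\cdot)$ on $B$ established via \cite[Proposition 6.4]{HTY}. The point is that adjunction of functors passes to adjunction of the induced operators on Grothendieck groups, provided both functors are exact, and then uniqueness of the adjoint operator with respect to a nondegenerate form pins down $[\bold T^*_M]$.

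First I would record that, since $M$ is projective, $\bold T^*_M:\PP\to\PP$ is exact by the preceding lemma, hence descends to an operator $[\bold T^*_M]$ on $K_0(\PP)$ and thus (after complexification) on $K(\PP)$. Second, I would verify the key adjunction identity on Grothendieck groups: for all $N,N'\in\PP$,
\[
\left<[\bold T_M N],[N']\right> = \left<[N],[\bold T^*_M N']\right>.
\]
This follows because $(\bold T_M,\bold T^*_M)$ is an adjoint pair of exact functors, so $\mathrm{Ext}^i(\bold T_M N,N')\cong \mathrm{Ext}^i(N,\bold T^*_M N')$ for every $i$ (exactness guarantees these derived-functor identifications, e.g.\ via a projective resolution of $N$ or an injective resolution of $N'$, which $\bold T_M$ resp.\ $\bold T^*_M$ carries to acyclic-for-Hom resolutions by the adjunction); summing the alternating dimensions gives the displayed equality. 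Third, transporting everything through $\varrho$ and the identification of $\left<\cdot,\cdot\right>$ with $(\cdot,\cdot)$, and using $[\bold T_M]=b$, we get $(b\cdot v, w)=(v,[\bold T^*_M]w)$ for all $v,w\in B_\Z$ (equivalently in $B$). Since $(\cdot,\cdot)$ is nondegenerate, the operator $[\bold T^*_M]$ is uniquely determined by this identity, and by definition $b^*$ is precisely the operator satisfying $(b\cdot v,w)=(v,b^* w)$; hence $[\bold T^*_M]=b^*$ under $\varrho$.

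The only genuinely nontrivial point — and the one I would be most careful about — is the second step, namely that the adjunction $\mathrm{Hom}_{\PP}(\bold T_M N,N')\cong \mathrm{Hom}_{\PP}(N,\bold T^*_M N')$ lifts to the $\mathrm{Ext}$-groups so that the Euler forms agree. When $M$ is projective this is clean: $\bold T_M$ is exact and sends projectives to projectives (as $M\otimes(-)$ preserves projectivity here), so a projective resolution $P_\bullet\to N$ yields $\mathrm{Ext}^i(\bold T_M N, N')=H^i(\mathrm{Hom}_{\PP}(\bold T_M P_\bullet, N'))=H^i(\mathrm{Hom}_{\PP}(P_\bullet,\bold T^*_M N'))=\mathrm{Ext}^i(N,\bold T^*_M N')$, where the middle equality is the adjunction applied termwise. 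Everything else is formal: exactness to get a well-defined operator, and nondegeneracy of $(\cdot,\cdot)$ together with the definition of the adjoint to conclude. I would present the argument in essentially this order, keeping the first and third steps brief and spelling out only the resolution argument in the second step.
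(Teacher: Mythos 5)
Correct, and essentially the same approach as the paper: the paper's argument is precisely the paragraph preceding the lemma statement, which asserts that the categorical adjunction $(\bold T_M,\bold T^*_M)$ descends to adjointness of $[\bold T_M]$ and $[\bold T^*_M]$ for the Euler form $\left<\cdot,\cdot\right>$ and then invokes the identification with $(\cdot,\cdot)$ and nondegeneracy. Your Ext-group argument via a projective resolution (using that $\bold T_M$ is exact and, having the exact right adjoint $\bold T^*_M$, preserves projectives) supplies the detail the paper leaves implicit, so the two proofs are in substance identical.
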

Finally, we note that the Frobenius twist can also be regarded as a \emph{functor} $(\cdot)^{(1)}:\PP\to\PP$, which assigns $M$ to $M^{(1)}$, where $M^{(1)}(V)=M(V)^{(1)}$.  If $[M]\mapsto b$ under $\varrho$, then $[M^{(1)}]\mapsto b^{(1)}$ (thus explaining why we termed $b^{(1)}$ the twist of $b$).  This is an example of a more general phenomenon, namely that composition of polynomial functors categorifies the plethysm of symmetric functions.



\subsection{Polynomial bifunctors}\label{SectionBi}
We shall also need the category $\PP^{[2]}$ of \emph{ strict polynomial bi-functors}. The objects of $\PP^{[2]}$ are functors $B:\V\times\V\to \V$ such that for every $V\in\V$, the functors $B(\cdot,V)$ and $B(V,\cdot)$ are in $\PP$ and their degrees are bounded with respect to $V$. Morphisms in $\PP^{[2]}$ are natural
transformations of functors.


A bifunctor $B$ is \emph{homogeneous of bidegree} $(d,e)$ if for all $V\in\V$, $B(V,\cdot)$ (resp. $B(\cdot,V)$) is a homogeneous strict polynomial functor of degree $d$, (resp. of degree $e$). The decomposition of strict polynomial functors into a finite direct sums of homogeneous functors generalizes to bifunctors. Indeed, $\PP^{[2]}$ splits as the direct sum of its full subcategories $\PP^{[2]}_{i,j}$ of homogeneous bifunctors of bidegree $(i,j)$. If $B\in\PP^{[2]}$, we denote by $B_{*,j}$ the direct sum:
\begin{equation}
\label{Bdecomp}
B_{*,j}=\bigoplus_{i\ge 0}B_{i,j}
\end{equation}
Note that we have also a duality for bifunctors
$$^\sharp:\PP^{[2]}\xrightarrow[]{\simeq} \PP^{[2]\,\mathrm{op}}$$
which sends $B$ to $B^\sharp$, with $B^\sharp(V,W)=B(V^*,W^*)^*$, and which respects the bidegrees.


There are natural functors:
\begin{align*}
&\boxtimes:\PP\times\PP\to \PP^{[2]} &&\otimes:\PP\times\PP\to \PP\\
&\nabla:\PP^{[2]}\to \PP && \Delta:\PP\to \PP^{[2]}
\end{align*}
respectively given by
\begin{align*}
&(M\boxtimes N)(V,W)=M(V)\otimes N(W) &&(M\otimes N)(V)= M(V)\otimes N(V)\\
&\nabla B (V)=B(V,V) && \Delta M(V,W)=M(V\oplus W)
\end{align*}

We conclude this section by introducing a construction of new functors in $\PP$ from old ones that will be used later.  Let $M\in\PP$ and consider the functor $\Delta M(\cdot,k)\in\PP$. By (\ref{Bdecomp}) we have a decomposition
$$
\Delta M(\cdot,k)=\bigoplus_{i\geq0} \Delta M_{*,i}(\cdot,k).
$$
Note that $\Delta M_{*,i}(V,k)$ is the subspace of weight $i$ of $M(V\oplus k)$ acted on by $GL(k)$ via the composition
$$GL(k)=1_V\times GL(k)\hookrightarrow GL(V\oplus k)\xrightarrow[]{} GL(M(V\oplus k))\;.$$
In other words, the action of $GL(k)$ induces a decomposition of $M(V\oplus k)$ indexed by the polynomial characters of $GL(k)$
$$
M(V\oplus k)=\bigoplus_{i\geq0}M(V\oplus k)_i,
$$
and $\Delta M_{*,i}(V,k)=M(V\oplus k)_i$.

Since evaluation on $V\oplus k$ and extracting weight spaces are both exact functors, the assignment $M \mapsto \Delta M_{*,i}(\cdot,k)$ defines an exact endo-functor on $\PP$.

\section{Linear species}
\label{SectionR}

\subsection{Finite sets}
It will be convenient for us to use as a model for the category of representations of all symmetric groups Joyal's theory of linear species \cite{AM}.
Let $\X$ be the groupoid of finite sets, and let $\X_k$ be the sub-groupoid of finite sets of cardinality $k$.  So, by definition, the morphisms in these categories are bijections.  Clearly, $\X=\bigcup_k \X_k$, and $\X_k$ is a connected groupoid with stabilizer isomorphic to the symmetric group $\mathfrak{S}_k$.

We fix once and for all a one-element set $\{*\}$.  We abuse notation and refer to the set $\{*\}$ simply by $*$, and we employ this convention for other one-element sets as well.

If $J,J',K,K'\in \X$ and $f:J\to K,f':J'\to K'$ are bijections, then
$$
f\sqcup f':J\sqcup J' \to K \sqcup K'
$$
is the canonical bijection on the disjoint unions.

Let us now introduce some  morphisms in $\X$. For $J\in\X$, $1_J:J\to J$ denotes the identity map.
To a bijection $f:J\to K$ and $j\in J$, we associate the map $f^j:J\minus j \to K\minus {f(j)}$.

For $i,j\in J$,  $s_{i,j}:J\to J$ is the ``transposition'':
\begin{equation*}
s_{i,j}(\ell)=
\left\{
\begin{array}{lr}
\ell \text{   if } \ell \neq i,j  &  \\
j \text{  if }  \ell = i & \\
i \text{  if }  \ell = j
\end{array}
\right.
\end{equation*}
The map $t_{i,j}:J\minus i \to J\minus j$ is given by:
\begin{equation*}
t_{i,j}(\ell)=
\left\{
\begin{array}{lr}
\ell \text{   if } \ell \neq j  &  \\

i \text{  if }  \ell = j
\end{array}
\right.
\end{equation*}
The map $u_{j}:J \to J\minus j\sqcup *$ is given by:
\begin{equation*}
u_{j}(\ell)=
\left\{
\begin{array}{lr}
\ell \text{   if } \ell \neq j  &  \\

* \text{  if }  \ell = j
\end{array}
\right.
\end{equation*}

We record for future reference the following elementary commutation. For $f:J\to K$ and $i,j \in J$:
\begin{equation}\label{commute}
f^j \circ t_{i,j}=t_{f(i),f(j)} \circ f^i.
\end{equation}
We also remark that these notations can be combined to construct new bijections.  For instance we have a bijection $u_j^i:J\minus i \to J\minus {\{u_j(i),j \}}\sqcup *$, and in particular $u_j^j:J\minus j \to J\minus j$ is the identity map.

\subsection{The category $\R$}
Let $\R_n$ be the category of  covariant functors from $\X_n$ to $\V$, morphisms being natural transformations of functors.   There is an equivalence of $\R_n$ and $Rep(\mathfrak{S}_n)$:
$$\R_n\To Rep(\mathfrak{S}_n)
$$
given by assigning $T$ to $T([n])$, where $[n]=\{1,2,...,n\}$.

Let $\R$ be the category of linear species of \emph{finite degree}, i.e. the category of covariant functors $T:\X\to\V$ such that $T(J)=\{0\}$ for $|J|\gg0$.  There is an equivalence of categories
$$
\R\to\bigoplus_{k=0}^{\infty}Rep(\mathfrak{S}_k)
$$
given by $T\mapsto\bigoplus_{n=0}^{\infty}T([n])$.  (This sum is finite.)

We note $\R$ is a tensor category, with the tensor product defined as follows: for $S,T\in \R$ and $J\in\X$, set $$(S\otimes T) (J)=\bigoplus_{K\subset J} S(K)\otimes T(J\backslash K).$$
The functor $U\in\R$ given by $U(\emptyset)=k$ and $U(J)=0$ for all $J\neq\emptyset$ is the unit of $\R$.

\section{Duality between $\PP$ and $\R$}
\label{SectionS}
Schur-Weyl duality can be reformulated as a duality between the categories $\PP$ and $\R$ as follows.  For $J\in\X$ and $\otimes^J$ denote the $|J|$-fold tensor product. Define the \emph{Schur-Weyl duality functor} $\Schur:\PP\to\R$ by
$$
\Schur(M)(J)=Hom_{\PP}(\otimes^J,M).
$$
Given a morphism $\phi:M\to N$ in $\PP$, we define $\Schur(\phi):\Schur(M)\to\Schur(N)$ in the obvious way: $\Schur(\phi)_J:\Schur(M)(J)\to\Schur(N)(J)$ maps $\psi\in\Schur(M)(J)$ to $\phi \circ \psi \in \Schur(N)(J)$.

One of the goals of this paper is to show that the functor $\Schur$ preserves the higher structures which are inherent to both categories $\PP$ and $\R$. In particular, we will show that it is a morphism of both the $\g$ and $H_\Z$ categorifications structures which we define below.  A more basic property of the Schur-Weyl duality functor is the following:

\begin{proposition}
The functor $\Schur:\PP\to\R$ is monoidal.
\end{proposition}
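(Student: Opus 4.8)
The plan is to exhibit a natural isomorphism $\Schur(M \otimes N) \cong \Schur(M) \otimes \Schur(N)$ and check that it respects the unit, the associativity constraints, and naturality in $M$ and $N$. First I would unwind both sides on a finite set $J \in \X$. The right-hand side is, by definition of the tensor product in $\R$,
$$
(\Schur(M) \otimes \Schur(N))(J) = \bigoplus_{K \subset J} Hom_\PP(\otimes^K, M) \otimes Hom_\PP(\otimes^{J \minus K}, N),
$$
while the left-hand side is $Hom_\PP(\otimes^J, M \otimes N)$. The key computational input is the behavior of the bifunctor machinery from Section \ref{SectionBi}: evaluating $M \otimes N$ means applying $\nabla \circ (M \boxtimes N)$, and there is a canonical decomposition $\otimes^J \cong \bigoplus_{K \subset J} \otimes^K \boxtimes \otimes^{J \minus K}$ once one applies $\Delta$ (or directly, $(\otimes^J)(V \oplus W) = \bigotimes_{j\in J}(V\oplus W) \cong \bigoplus_{K\subset J} (\otimes^K)(V) \otimes (\otimes^{J\minus K})(W)$, distributing the tensor product over the direct sum).

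The second step is to convert this evaluation-level decomposition into a Hom-level one. I would use the adjunction between $\nabla$ and $\Delta$ (equivalently, the elementary fact that $Hom_\PP(\otimes^J, M\otimes N)$ can be computed after restricting along $\Delta$, since $(\otimes^K)\boxtimes(\otimes^{J\minus K})$ corepresents the bidegree-$(|K|,|J\minus K|)$ part), together with the additivity of $Hom$ in the first variable, to obtain
$$
Hom_\PP(\otimes^J, M\otimes N) \cong \bigoplus_{K\subset J} Hom_\PP\big(\otimes^K \boxtimes \otimes^{J\minus K},\, M \boxtimes N\big) \cong \bigoplus_{K\subset J} Hom_\PP(\otimes^K, M)\otimes Hom_\PP(\otimes^{J\minus K}, N),
$$
where the last isomorphism is the standard "external Hom" identity for the box product $\boxtimes$ on polynomial functors. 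This produces the desired isomorphism of vector spaces for each $J$.

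Third, I would check naturality. Naturality in $J$ (i.e., that the collection of these isomorphisms is a morphism in $\R$) amounts to tracking how a bijection $f: J \to J'$ acts: it permutes the tensor factors of $\otimes^J$ and simultaneously relabels the subsets $K \subset J$, and one checks the two induced actions agree — this is a bookkeeping verification using the canonical bijections $f \sqcup f'$ and the decomposition above. Naturality in $M$ and $N$ is immediate from the construction since every map used is post-composition. Finally, I would verify the monoidal axioms: the unit isomorphism $\Schur(k) \cong U$ follows because $Hom_\PP(\otimes^J, k)$ is $k$ if $J = \emptyset$ and $0$ otherwise (as $\otimes^J$ is homogeneous of degree $|J|$ and $k$ of degree $0$), matching the unit $U$ of $\R$; the associativity pentagon reduces to the associativity of the coproduct decomposition $\otimes^J \cong \bigoplus \otimes^{K_1}\boxtimes\otimes^{K_2}\boxtimes\otimes^{K_3}$ over ordered set partitions of $J$.

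The main obstacle I anticipate is the second step: making precise and correctly normalized the passage from the evaluation-level splitting of $\otimes^J$ on $V \oplus W$ to the Hom-level statement about $\boxtimes$. One must be careful that $Hom_\PP(\otimes^J, M\otimes N)$ genuinely sees only the "diagonal" part and that the corepresentability of $\otimes^K \boxtimes \otimes^{J\minus K}$ in $\PP^{[2]}$ interacts correctly with $\nabla$ and $\Delta$ — in other words, that no cross terms or signs are lost. Everything else is either a direct definitional check or standard bookkeeping with the combinatorics of finite sets.
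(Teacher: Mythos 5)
Your argument follows essentially the same route as the paper's proof: unwind $\Schur(M\otimes N)(J)$ as $Hom_\PP(\otimes^J,\nabla(M\boxtimes N))$, apply the $(\Delta,\nabla)$ adjunction, decompose $\Delta(\otimes^J)\cong\bigoplus_{K\subset J}\otimes^K\boxtimes\otimes^{J\smallsetminus K}$, and use the external Hom identity for $\boxtimes$. The paper's proof consists of exactly this chain of isomorphisms (citing the $(\Delta,\nabla)$ adjunction to \cite{FFSS} and \cite{T}), and leaves the naturality in $J$, unit, and coherence checks implicit, whereas you spell them out --- a welcome addition, but no change of method.
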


\begin{proof}
Let $M,N\in\PP$ and $J\in\X$.  We have the following chain of natural isomorphisms:
\begin{align*}
\Schur(M\otimes N)(J)&=Hom_\PP(\otimes^J,M\otimes N)\\
&= Hom_\PP(\otimes^J,\nabla(M\boxtimes N))\\
&\simeq  Hom_{\PP^{[2]}}(\Delta (\otimes^J),M\boxtimes N)\\
&\simeq Hom_{\PP^{[2]}}(\bigoplus_{J_1\cup J_2=J} \otimes^{J_1}\boxtimes \otimes^{J_2}, M\boxtimes N)\\
&=\bigoplus_{J_1\cup J_2=J} Hom_{\PP^{[2]}}(\otimes^{J_1}\boxtimes \otimes^{J_2}, M\boxtimes N)\\
&\simeq \bigoplus_{J_1\cup J_2=J} Hom_{\PP}(\otimes^{J_1},M)\otimes Hom(\otimes^{J_2},N)\\
&=\bigoplus_{J_1\cup J_2=J} \Schur(M)(J_1)\otimes \Schur(N)(J_2)\\
&=(\Schur(M)\otimes\Schur(N))(J)
\end{align*}
Note that we used here that  $\Delta$ and $\nabla$ are adjoint (cf. the proof of Theorem 1.7 in \cite{FFSS} or Lemma 5.8 in \cite{T}).

\end{proof}

\section{The $\g$-categorification}
\label{Sectiong}

In this section we discuss the $\g$-categorification structures on the categories $\PP$ and $\R$, and their relationship via the duality functor.

In Section \ref{Section_Defgcat} we briefly recall the relevant notions, including the definitions of $\g$-categorifications and morphisms of such structures.

In Section \ref{Section_gcatonR} we review the $\g$-categorification on $\R$.  This is a categorification of the basic representation of $\g$, and is well-known, although here it is presented in setting of species.  Weak versions of this statement go back at least to the works of Lascoux, Leclerc, and Thibon \cite{LLT}.  The strong version which incorporates  the theory developed by Chuang and Rouquier, appears already in their  paper \cite{CR}.

In Section \ref{Section_gcatonP} the $\g$-categorification on $\PP$ is recalled \cite{HTY}.

Finally, in Section \ref{Section_Smorphofg}, we prove the following theorem: the Schur-Weyl duality functor $\bold{S}:\PP\to\R$ is a morphism of $\g$-categorifications, which categorifies the standard projection $\pi:B\twoheadrightarrow V(\Lambda_0)$ (cf. Section \ref{MoresymmetriesonB}).

\subsection{Definition of $\g$-categorification}
\label{Section_Defgcat}

To give the definition of $\g$-categorification we use here,  due to Chuang and Rouquier, we first recall the degenerate affine Hecke algebra.

\begin{definition}
Let $DH_n$ be the \emph{degenerate affine Hecke algebra} of $GL_n$.  As an abelian group
$$
DH_n=\Z[y_1,...,y_n]\otimes \Z \mathfrak{S}_n.
$$
The algebra structure is defined as follows: $\Z[y_1,...,y_n]$ and $\Z \mathfrak{S}_n$ are subalgebras, and the following relations hold between the generators of these subalgebras:
$$
\sigma_iy_j=y_j\sigma_i \text{ if }|i-j| \geq 1
$$
and
\begin{equation}
\label{HeckeRelation}
\sigma_iy_{i}-y_{i+1}\sigma_i=1
\end{equation}
(here $\sigma_1,...,\sigma_{n-1}$ are the simple generators of $\Z \mathfrak{S}_n$).
\end{definition}

\begin{definition}
\label{DefinitionCategorification}[Definition 5.29 in \cite{R}]
Let $\mathcal{C}$ be a  $k$-linear abelian category.  A \emph{$\g$-categorification} on $\mathcal{C}$ is the data of:
\begin{enumerate}
\item An adjoint pair $(E,F)$ of exact functors $\mathcal{C} \rightarrow \mathcal{C},$
\item morphisms of functors $X \in End(E)$ and $\sigma \in End(E^2), $ and
\item a decomposition $\mathcal{C}=\bigoplus_{\omega \in P} \mathcal{C}_{\omega}$.
\end{enumerate}
Let $X^{\circ} \in End(F)$ be the endomorphism of $F$ induced by adjunction.  Then given $a \in k$ let $E_a$ (resp. $F_a$) be the generalized $a$-eigensubfunctor of $X$ (resp. $X^{\circ}$) acting on $E$ (resp. $F$).  We assume that
\begin{enumerate}
\item[4.] $E=\bigoplus_{i \in \Z/p\Z} E_i$,
\item[5.]for all $i$, $E_i(\mathcal{C}_{\omega}) \subset \mathcal{C}_{\omega+\alpha_i}$ and $F_i(\mathcal{C}_{\omega}) \subset \mathcal{C}_{\omega-\alpha_i}$,
\item[6.]  the action of $\{ [E_i],[F_i]\}_{i \in \Z/p\Z}$ on $K_{0}(\mathcal{C})=\bigoplus_{\omega\in P} K_0(\C_\omega)$ gives rise to an integrable representation of $\g$,
\item[7.] the functor $F$ is isomorphic to the left adjoint of $E$, and
\item[8.] the degenerate affine Hecke algebra $DH_n$ acts on $End(E^n)$ via
\begin{equation}
\label{EqXX}
y_i \mapsto E^{n-i}XE^{i-1} \text{ for }1 \leq i \leq n,
\end{equation}
and
\begin{equation}
\label{EqTT}
\sigma_i \mapsto E^{n-i-1}\sigma E^{i-1} \text{ for }1 \leq i \leq n-1.
\end{equation}
\end{enumerate}
\end{definition}
\begin{remark}
The notion of $\g'$-categorification can be defined similarly, but without taking into account the compatibility with the  weight decomposition cf. Definition 5.32 in \cite{R}. It is immediate that a $\g$-categorification induces a $\g'$-categorification.
\end{remark}

Now that we introduced the notion of $\g$-categorification, we discuss morphisms of categorifications.

In their study of $\mathfrak{sl}_2$-categorifications, Chuang and Rouquier defined a notion of morphisms in their setting, and in fact the definition can be generalized to $\g$-categorifications.  Below, we adopt their definition of a morphism of categorifications to our setting, and show that it is a reasonable generalization.

\begin{definition}[cf. Section 5.2.1 in \cite{CR}]
\label{DefgMorphism}
Let $\mathcal{C}$ and $\mathcal{C}'$ be $\g$-categorifications with associated data $(E,F, X, \sigma,\mathcal{C}=\oplus_\omega\mathcal{C}_\omega)$ and $(E',F', X', \sigma',\mathcal{C}'=\oplus_\omega\mathcal{C}'_\omega)$.  Let $(\eta,\epsilon)$ (resp. $(\eta',\epsilon')$) be the unit/conuit for the adjunction $(E,F)$ (resp. $(E',F')$). A \emph{morphism of $\g$-categorifications} is the data of an additive  functor
$
\Phi:\mathcal{C} \rightarrow \mathcal{C'}
$
such that
\begin{equation}
\label{Additional_Condition_Morphism}
\Phi(\C_\omega)\subset \C'_\omega,
\end{equation}
along with isomorphisms of functors
\begin{eqnarray*}
\zeta_{+}:\Phi E \rightarrow E'  \Phi  \\ \zeta_{-}:\Phi  F \rightarrow F' \Phi
\end{eqnarray*}
such that the following diagrams commute:

\begin{enumerate}
\item
$$
\xymatrix{
 & \Phi \ar[dl]_{\Phi \eta } \ar[rd]^{\eta' \Phi} & \\
\Phi FE \ar[r]^{\zeta_{-}E} & F'\Phi E \ar[r]^{F' \zeta_{+}} &   F'E'\Phi
}
$$
\item
$$
\xymatrix{
\Phi E \ar[r]^{\zeta_+} \ar[d]_{\Phi X} & E' \Phi \ar[d]^{X'\Phi} \\ \Phi E \ar[r]^{\zeta_+} &E' \Phi
}
$$
\item
$$
\xymatrix{
\Phi EE \ar[r]^{\zeta_+E} \ar[d]_{\Phi \sigma'} & E'\Phi E \ar[r]^{E\zeta_+} & E'E'\Phi \ar[d]^{\sigma \Phi} \\
\Phi EE \ar[r]^{\zeta_+E} & E'\Phi E \ar[r]^{E\zeta_+} & E'E'\Phi}
$$
\end{enumerate}
\end{definition}
\begin{remark}
\label{gprimecateg}
A morphism of $\g'$-categorifications  is the same as a morphism of $\g$-categorifications, except one ignores condition (\ref{Additional_Condition_Morphism}).
\end{remark}

To argue that the above definition of morphism of categorifications is reasonable, we must show that it categorifies  morphisms of $\g$-modules.  With this in mind, suppose $\mathcal{C}$ is a $\g$-categorification, with all the corresponding data as in the above definition.  Then for every $i\in\Z/p\Z$, the functors $E_i$ and $F_i$ are adjoint, $X$ restricts to a morphism in $End(E_i)$, and $T$ restricts to a morphism in $End(E_i^2)$.  Moreover, it follows directly from the definition of $\g$-categorification that this data defines an $\mathfrak{sl}_2$-categorification as in \cite{CR}.  (This is analogous to restricting a $\g$-module to an $\mathfrak{sl}_2$ root subalgebra in $\g$.)  We denote this $\mathfrak{sl}_2$-categorification structure on $\C$ by $\C_i$.

\begin{proposition}\label{Morphism_Categorification}
In the setting of Definition \ref{DefMorphism}, suppose that $(\Phi,\zeta_+,\zeta_-):\mathcal{C}\to\mathcal{C}'$ is a morphism of $\g$-categorifications. Then for every $i\in\Z/p\Z$, $(\Phi,\zeta_+,\zeta_-)$ canonically induces morphisms $(\Phi,\zeta_{+,i},\zeta_{-,i}):\mathcal{C}_{i}\to\mathcal{C}_{i}'$ of $\mathfrak{sl}_2$-categorifications.  \end{proposition}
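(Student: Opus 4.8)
The plan is to extract from the $\g$-categorification data $(E,F,X,\sigma,\C=\oplus_\omega\C_\omega)$ and the morphism data $(\Phi,\zeta_+,\zeta_-)$ precisely the components indexed by a fixed $i\in\Z/p\Z$, and to check that the three commuting squares of Definition \ref{DefgMorphism} restrict to the corresponding squares of Chuang--Rouquier's definition of a morphism of $\mathfrak{sl}_2$-categorifications. First I would recall how $\C_i$ is built: one takes the functors $E_i$ and $F_i$ (the generalized $i$-eigensubfunctors of $X$ on $E$ and of $X^\circ$ on $F$), which are adjoint by the general theory, together with the restrictions $X|_{E_i}\in End(E_i)$ and $\sigma|_{E_i^2}\in End(E_i^2)$, and the weight grading of $\C$ regrouped along the $\mathfrak{sl}_2$-string through $\omega$. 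The first observation is that $\zeta_+:\Phi E\to E'\Phi$ intertwines $\Phi X$ and $X'\Phi$ by square (2), hence it carries the generalized $a$-eigenspace of $\Phi X$ into that of $X'\Phi$ for every $a$; since $\Phi$ is additive and exact-free manipulations are not needed here, $\zeta_+$ restricts to an isomorphism $\zeta_{+,i}:\Phi E_i\to E_i'\Phi$.

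Next I would produce $\zeta_{-,i}:\Phi F_i\to F_i'\Phi$. The cleanest route is to observe that $X^\circ\in End(F)$ is, by construction, the mate of $X$ under the adjunction $(E,F)$, and that square (1) expresses the compatibility of $\Phi$ with the units $\eta,\eta'$ (and dually, by uniqueness of mates, with the counits $\epsilon,\epsilon'$). From squares (1) and (2) together one deduces that $\zeta_-$ intertwines $\Phi X^\circ$ and $X'^{\,\circ}\Phi$ — this is the standard "mate of an intertwined square is intertwined" fact — and therefore $\zeta_-$ restricts to an isomorphism $\zeta_{-,i}:\Phi F_i\to F_i'\Phi$. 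One should also check that $(\Phi,\zeta_{+,i},\zeta_{-,i})$ is compatible with the adjunction $(E_i,F_i)$, i.e. the analogue of square (1); this follows by restricting square (1), using that the unit $\eta$ for $(E,F)$ restricts (after projecting onto the appropriate eigencomponents) to the unit for $(E_i,F_i)$, which is again part of the setup of the $\mathfrak{sl}_2$-categorification $\C_i$.

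It then remains to handle the $\sigma$-square (3): since $\zeta_{+,i}$ is just the restriction of $\zeta_+$ and $\sigma|_{E_i^2}$, $\sigma'|_{E_i'^2}$ are restrictions of $\sigma,\sigma'$, square (3) restricts verbatim to the $E_i^2$-components — one only needs that $\zeta_+E$ and $E\zeta_+$ preserve the relevant eigencomponents of the iterated $X$-action, which follows from square (2) applied twice. Finally, condition (\ref{Additional_Condition_Morphism}), $\Phi(\C_\omega)\subset\C'_\omega$, immediately gives the compatibility of $\Phi$ with the weight grading underlying $\C_i$ and $\C_i'$ (the $\mathfrak{sl}_2$-weight at $\omega$ being read off from the $\alpha_i$-string), so $\Phi$ together with $(\zeta_{+,i},\zeta_{-,i})$ satisfies all the axioms of a morphism of $\mathfrak{sl}_2$-categorifications in the sense of \cite{CR}. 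The main obstacle, and the only place requiring genuine care rather than bookkeeping, is the derivation of the $\zeta_-$-intertwining property for $X^\circ$ from squares (1) and (2): one must track the mate construction through the units and counits and verify that the eigenspace decompositions of $X$ and $X^\circ$ correspond under adjunction, so that restricting to the $i$-component on the $E$-side matches restricting to the $i$-component on the $F$-side. Everything else is a routine restriction of diagrams.
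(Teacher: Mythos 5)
Your proposal is correct and follows essentially the same route as the paper: use square (2) to show $\zeta_+$ respects the generalized eigenspace filtration of $X$ and hence restricts to $\zeta_{+,i}:\Phi E_i\to E_i'\Phi$, do likewise on the $F$-side, and then observe the three diagrams restrict componentwise. You are actually more explicit than the paper on the one genuinely delicate step — the paper dismisses $\zeta_{-,i}$ with ``similarly,'' whereas you correctly identify that one must first show $\zeta_-$ intertwines the mate operators $X^\circ$ and $X'^{\circ}$ via squares (1) and (2), which is precisely the content of the paper's Lemma~\ref{Abstract_Nonsense} in the appendix.
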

\begin{proof}
We have to construct $\zeta_{i,+}:\Phi E_i\rightarrow E'_i \Phi $
and $\zeta_{i,-}: \Phi F_i\rightarrow F'_i \Phi$ from $\zeta_+$ and
$\zeta_-$. Let $E_{i,n}$ be the kernel of $(X-i)^n: E\rightarrow E$.
Then $0\subset E_{i,1}\subset E_{i,2}\subset ...$. By hypothesis, for any $A\in \C$, there exists $N\gg 0$, such that $E_i(A)$  is equal to $E_{i,N}(A)=E_{i,N+1}(A)=\cdots$. The analogous statement holds also for $E'$. Then there exists $N'$, such that for any $\ell \geq N'$, $E_i(A)=E_{i,\ell}(A)$ and $E'_{i}(\Phi(A))=E'_{i,\ell}(\Phi(A))$.
 By diagram 2 in Definition \ref{DefgMorphism}, for any $n \geq 0$, the following diagram commutes:
\begin{equation}
\xymatrix{ \Phi E(A) \ar[r]^{\zeta_+} \ar[d]_{\Phi (X-i)^n} & E' \Phi(A) \ar[d]^{(X'-i)^n \Phi} \\
\Phi E(A) \ar[r]^{\zeta_+} & E' \Phi(A)
}
\end{equation}
In particular when $\ell\geq N'$, $E_{i,\ell}(A)$ is a direct summand of $E(A)$ and $E'_{i,\ell}(\Phi(A))$ is a direct summand of $E'(\Phi(A))$. Then $\Phi(E_{i,\ell}(A))$ is also a direct summand of $\Phi(E(A))$, since $\Phi$ is an additive functor. Then the above diagram induces an isomorphism $\zeta_{+,i,\ell}:
\Phi(E_{i,\ell}(A)) \rightarrow E'_{i,\ell}(\Phi(A))$. This isomorphism doesn't depend on $\ell\geq N'$, and it is  functorial in $A$. We thus obtain the isomorphism $\zeta_{+,i}:\Phi E_i\rightarrow E'_i\Phi$.    Similarly, we construct isomorphisms $\zeta_{-,i}: \Phi F_i \rightarrow F'_i \Phi$.  Since $(\Phi,\zeta_+,\zeta_-)$ is a morphism of $\g$-categorifications, it follows formally from the construction that $(\Phi,\zeta_{+,i},\zeta_{-,i})$ are morphisms of $\mathfrak{sl}_2$-categorifications.

\end{proof}

Let $\Phi: \C\to \C'$ be a morphism of $\g$-categorifications as in Definition \ref{DefgMorphism} and assume further that $\Phi$ is left exact. Let $R\Phi$ be the right derived functor of $\Phi$.  If for any $A\in \C$ the right derived functors $R^i\Phi(A)=0$ for $i\gg 0$, then $R\Phi: D^b(\C)\to D^b(\C')$ is well-defined.
\begin{corollary}
\label{Morphism_Representation}
Under the above assumptions, $R\Phi$ yields a $\g$-morphism from $K_{0}(\C)$ to $K_{0}(\C')$. In particular, if $\Phi$ is exact, then $\Phi$ yields a $\g$-morphism from $K_{0}(\C)$ to $K_{0}(\C')$.
\end{corollary}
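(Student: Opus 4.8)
The plan is to show that the triangulated functor $R\Phi\colon D^b(\C)\to D^b(\C')$ induces on Grothendieck groups an operator $[R\Phi]\colon K_0(\C)\to K_0(\C')$ which commutes with the whole $\g$-action. First I would record the formal facts: being triangulated, $R\Phi$ sends distinguished triangles to distinguished triangles and hence induces a homomorphism $K_0(D^b(\C))\to K_0(D^b(\C'))$; under the canonical identifications $K_0(D^b(\C))\cong K_0(\C)$ and $K_0(D^b(\C'))\cong K_0(\C')$ this is the map carrying $[A]$, for $A\in\C$, to $\sum_{j\ge 0}(-1)^j[R^j\Phi(A)]$. Since the $\g$-action on $K_0(\C)$ is generated by the operators $[E_i],[F_i]$ (which generate $\g'$) together with the Cartan, and the Cartan acts on $K_0(\C_\omega)$ through the weight $\omega$ by integrability (condition (6) of Definition \ref{DefinitionCategorification}), it is enough to check that $[R\Phi]$ intertwines $[E_i]$ with $[E'_i]$ and $[F_i]$ with $[F'_i]$, and that it preserves the weight grading.

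The key step is the observation that $E$ and $F$, and hence each eigen-subfunctor $E_i,F_i$, preserve injective objects. Indeed, in a $\g$-categorification $(E,F)$ is an adjoint pair and, by condition (7) of Definition \ref{DefinitionCategorification}, $F$ is isomorphic to a left adjoint of $E$; thus $E$ has the exact left adjoint $F$ and $F$ has the exact left adjoint $E$. For $I$ injective, $Hom_{\C}(-,E(I))\cong Hom_{\C}(F(-),I)$ is then exact, so $E(I)$ is injective, and similarly for $F$. Being a direct summand of the functor $E$, each $E_i$ inherits this property (a direct summand of an injective is injective) and is moreover exact; the same holds for $F_i$ and for $E',F',E'_i,F'_i$ on $\C'$.

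With this in hand I would argue as follows. Fix $A\in\C$ and an injective resolution $A\to I^\bullet$ (the existence of $R\Phi$ already presupposes enough injectives, or at least enough $\Phi$-acyclics, in $\C$; in our applications $\C$ has enough injectives). Since $E_i$ is exact and preserves injectives, $E_i(I^\bullet)$ is an injective resolution of $E_i(A)$, so $R\Phi(E_i(A))$ is computed by $\Phi E_i(I^\bullet)$; on the other side $\Phi(I^\bullet)$ computes $R\Phi(A)$ and $E'_i$ is exact, so $E'_i\Phi(I^\bullet)$ computes $E'_i(R\Phi(A))$. By Proposition \ref{Morphism_Categorification} the isomorphism $\zeta_+$ restricts to a natural isomorphism $\zeta_{+,i}\colon\Phi E_i\xrightarrow{\sim}E'_i\Phi$; applied to $I^\bullet$ it is an isomorphism of complexes, and taking cohomology (using exactness of $E'_i$) yields natural isomorphisms $R^j\Phi(E_i(A))\cong E'_i(R^j\Phi(A))$ for all $j$. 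Passing to Euler characteristics in $K_0(\C')$ gives $[R\Phi]\circ[E_i]=[E'_i]\circ[R\Phi]$, and the same argument with $\zeta_{-,i}$ gives $[R\Phi]\circ[F_i]=[F'_i]\circ[R\Phi]$. For the weight grading, an injective resolution of $A\in\C_\omega$ may be taken inside $\C_\omega$ (as $\C=\bigoplus_\omega\C_\omega$), and $\Phi(\C_\omega)\subset\C'_\omega$ by (\ref{Additional_Condition_Morphism}), so every $R^j\Phi$ preserves weight components and $[R\Phi]$ carries $K_0(\C_\omega)$ into $K_0(\C'_\omega)$; hence it commutes with the Cartan. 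Combining, $[R\Phi]$ is a morphism of $\g$-modules. The final clause is the special case $\Phi$ exact: then $R\Phi=\Phi$, the higher $R^j\Phi$ vanish, and the computation collapses to $[\Phi]\,[E_i]\,[A]=[\Phi E_i(A)]=[E'_i\Phi(A)]=[E'_i]\,[\Phi]\,[A]$ directly from $\zeta_{+,i}$ evaluated on objects, the weight compatibility being unchanged.

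The main obstacle I anticipate is the passage from the non-derived intertwiners $\zeta_\pm$ to intertwiners of $R\Phi$ with $E$ and $F$ at the level of $D^b$: one must be certain the comparison of complexes above is legitimate, and that is precisely where the biadjointness built into the notion of $\g$-categorification enters, through preservation of injectives. Everything past that point is routine Euler-characteristic bookkeeping.
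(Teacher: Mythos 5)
Your proof is correct and follows essentially the same route as the paper: use the natural isomorphisms $\Phi E_i \simeq E'_i \Phi$ from Proposition \ref{Morphism_Categorification} to produce an isomorphism $R\Phi\circ E_i\simeq E'_i\circ R\Phi$ at the derived level, pass to Euler characteristics on $K_0$, and handle the Cartan via the weight decomposition together with condition (\ref{Additional_Condition_Morphism}). Where you go beyond the paper is in justifying the derived-level isomorphism: the paper credits it only to the exactness of $E_i$ and $E'_i$, but exactness alone only lets $E_i$ descend to $D^b$; one also needs $E_i$ to carry $\Phi$-acyclics (e.g.\ injectives) to $\Phi$-acyclics so that applying $\Phi$ to $E_i$ of an injective resolution genuinely computes $R\Phi\circ E_i$. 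Your observation that $E_i$ preserves injectives because it is a direct summand of $E$, which has the exact left adjoint $F$ thanks to the biadjointness built into a $\g$-categorification (condition (7) of Definition \ref{DefinitionCategorification}), is exactly the right supplement and makes the paper's terse step fully rigorous. The remaining bookkeeping, including the reduction to $[E_i],[F_i]$ and the Cartan, matches the paper's intent.
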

\begin{proof}
The map $[R\Phi]: K_{0}(\C)\to K_{0}(\C')$ is defined as follows: for any $A\in \C$, $[R\Phi]([A])=\sum_i (-1)^i[R^i\Phi(A)]$. Consider the natural isomorphism $\Phi E_i\simeq E'_i \Phi$ from Proposition \ref{Morphism_Categorification}. It induces a natural isomorphism $R\Phi\circ E_i\simeq E'_i\circ R\Phi$, since $E_i$ and $E'_i$ are exact. The analogous statement holds also for functors $F_i$ and $F_i'$. From the definition of morphism of $\g$-categorifiations, $[R\Phi]$ also preserves weight decompositions. Then the corollary follows.
\end{proof}

\begin{remark}
There is an analogue of Corollary \ref{Morphism_Representation} when $\Phi$ is right exact and the left derived functor $L\Phi$ is defined on $D^b(\C)$.
\end{remark}

\subsection{The $\g$-categorification on $\PP$}
\label{Section_gcatonP}

In \cite{HTY} we defined a $\g$-categorification on $\PP$, categorifying the Fock space representation of $\g$.  In this section we briefly recall this result.

Let $\I:\PP\to\PP$ be the identity functor. Define $\E,\F:\PP\to\PP$ as follows.  For $M\in\PP$, set
$$
\E(M)=\Delta M_{*,1}(\cdot,k)\text{, and}
$$
$$
\F(M)=M\otimes I.
$$
These functors are adjoint.  In fact, they are bi-adjoint, i.e. both $(\E,\F)$ and $(\F,\E)$ are adjoint pairs (Propositions 5.1 and 5.3 in \cite{HTY}).  Although for the purposes of $\g$-categorification we only need to make explicit the first adjunction, later on when we discuss Heisenberg categorification the bi-adjunction data will be crucial.  Therefore we take the opportunity now to present all this data.

For $V\in\V$ and $v\in V$, associate a map $\tilde{v}:V\oplus k\to V$ by $\tilde{v}(w,a)=w+av$.  To $\xi \in V^*$ associate $\tilde{\xi}:V\to V\oplus k$ by $\tilde{\xi}(v)=(v,\xi(v))$.  Note that
$$
\F\E(M)(V)=M(V\oplus k)_1\otimes V
$$
$$
\E\F(M)(V)=M(V\oplus k)_1\otimes V \oplus M(V\oplus k)_0\otimes k
$$
We proceed to define units/counits
$$
\xymatrix{
\I \ar[r]^{\eta_1} & \F\E \ar[r]^{\epsilon_2} & \I \\ \I \ar[r]^{\eta_2} & \E\F \ar[r]^{\epsilon_1} & \I
}
$$
as follows:
\begin{enumerate}
\item[$\eta_1$:]Choose a basis $\{e_i\}$ of V and let $\xi_i$ be the dual basis.  Then for $m\in M(V)$ let  $\eta_{M,V}(m)=\sum_i (M(\tilde{\xi}_i)(m))_1\otimes e_i$.  Here we employ the notation $(\cdot)_1$ to denote the weight one component of an element of $M(V\oplus k)$ relative to the action of $GL(k)$.
\item[$\epsilon_2$:]  For $m\otimes v \in \F\E(M)(V)$ let $(\epsilon_2)_{M,V}(m\otimes v)=M(\tilde{v})(m)$.

\item[$\eta_2$:]  For $m \in M(V)$, let $(\eta_2)_{M,V}(m)=(0,M(i_V)(m)\otimes 1)$.  Here $i_V$ is the natural embedding $V \hookrightarrow V\oplus k$.

\item[$\epsilon_1$:]  For $x=(n\otimes v,m\otimes1)\in \E\F(M)(V)$, let $(\epsilon_1)_{M,V}(x)=M(p_V)(m)$.  Here $p_V$ is the natural projection $V\oplus k \twoheadrightarrow V$.
\end{enumerate}
A routine computation shows that indeed the formulas above define natural transformations of functors.  This datum  can be derived from the abstract adjunctions for $\E$ and $\F$ proved in Propositions 5.1 and 5.3 in \cite{HTY}. The following is a restatement of these propositions.\begin{proposition}
The data $(\E,\F,\eta_1,\epsilon_1)$ and $(\F,\E,\eta_2,\epsilon_2)$ are adjunctions.
\end{proposition}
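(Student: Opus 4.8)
The plan is to verify directly that the four explicit natural transformations $\eta_1,\epsilon_1,\eta_2,\epsilon_2$ satisfy the two triangle (zig-zag) identities defining adjunctions, namely that the composites
$$
\E \xrightarrow{\eta_1 \E} \F\E\E \xrightarrow{\E\epsilon_2} \E \quad\text{and}\quad \F \xrightarrow{\F\eta_1} \F\E\F \xrightarrow{\epsilon_2\F} \F
$$
are the identity on $\E$ and $\F$ respectively (establishing that $(\E,\F,\eta_1,\epsilon_2)$ — equivalently $(\E,\F,\eta_1,\epsilon_1)$ — is an adjunction), and similarly for $(\F,\E,\eta_2,\epsilon_2)$. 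Since everything is given by concrete formulas on $M(V)$, $M(V\oplus k)_i$, etc., this reduces to a finite check evaluated on an arbitrary $M\in\PP$ and $V\in\V$ with a chosen basis $\{e_i\}$ and dual basis $\{\xi_i\}$. Alternatively — and this is the route I would actually present, since the excerpt says the data ``can be derived from the abstract adjunctions'' — I would invoke Propositions 5.1 and 5.3 of \cite{HTY} for the \emph{existence} of the adjunctions $(\E,\F)$ and $(\F,\E)$, and then show that the abstract unit/counit pairs coincide (up to the canonical identifications $\F\E(M)(V)=M(V\oplus k)_1\otimes V$ and $\E\F(M)(V)=M(V\oplus k)_1\otimes V\oplus M(V\oplus k)_0\otimes k$) with the explicit $\eta_i,\epsilon_i$ written above.

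First I would record the naturality of each of the four maps: for $\epsilon_2$, $\eta_2$, $\epsilon_1$ this is essentially immediate because they are built from $M$ applied to the structural maps $\vv$, $i_V$, $p_V$, which are natural in $V$ and functorial in $M$; for $\eta_1$ one checks that $\sum_i (M(\xxi_i)(m))_1\otimes e_i$ is independent of the choice of basis (it is the image of $m$ under a canonical map, the ``Casimir''-type element coming from the evaluation/coevaluation for $V$) and is natural in $V$. This is the bookkeeping the paper alludes to with ``a routine computation.''

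Next I would verify the triangle identities for $(\E,\F,\eta_1,\epsilon_2)$. Starting from $m\otimes v\in\E(M)(V)\subset M(V\oplus k)_1$ (after identification), apply $\eta_1$ at the level of $\E M$, then $\epsilon_2$, and unwind: the composite becomes $\sum_i M(\vv\text{-type map})\,M(\xxi_i)(\ldots)$ and collapses to the identity because $\sum_i \xi_i(w)\,e_i = w$ together with the fact that $\vv\circ\xxi_i$ and $\tilde{(\cdot)}$ telescope appropriately on $V\oplus k$. The other triangle, $\F\xrightarrow{\F\eta_1}\F\E\F\xrightarrow{\epsilon_2\F}\F$, is handled the same way, using that the weight-one component of $M(i_V\oplus 1_k)$ applied to the generator lands correctly. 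Then I would do the analogous two triangle identities for $(\F,\E,\eta_2,\epsilon_1)$, where the maps involved are $i_V$ (embedding) and $p_V$ (projection) with $p_V\circ i_V=1_V$, so the cancellations are even more transparent — one must only be careful about the weight-$0$ versus weight-$1$ summands in $\E\F(M)(V)$ and check that $\epsilon_1$, which only ``sees'' the $M(V\oplus k)_0\otimes k$ summand, composes with $\eta_2$ to the identity.

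The main obstacle I anticipate is purely organizational rather than conceptual: keeping the identifications straight between the abstract composite functors $\F\E$, $\E\F$ and their explicit descriptions $M(V\oplus k)_1\otimes V$ and $M(V\oplus k)_1\otimes V\oplus M(V\oplus k)_0\otimes k$, and correctly tracking which $GL(k)$-weight component each structural map $M(\vv)$, $M(\xxi)$, $M(i_V)$, $M(p_V)$ interacts with. In particular, in the triangle identities for $\E$ one must be attentive to the fact that $M$ applied to $\xxi_i:V\to V\oplus k$ does not preserve the weight grading, so that extracting $(\cdot)_1$ afterward is essential and one should check the $(\cdot)_1$ is compatible with the subsequent application of $\epsilon_2$. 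Once the notational dust settles, the identities are forced by the two elementary vector-space facts $\sum_i e_i\,\xi_i(w)=w$ and $p_V i_V=\mathrm{id}_V$, exactly as in the classical Schur–Weyl adjunction, so I would present the verification concisely, checking one triangle in detail and indicating that the remaining three follow by the same computation.
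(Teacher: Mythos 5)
Your second, ``actually presented'' route --- invoking Propositions 5.1 and 5.3 of [HTY] for the abstract adjunctions and then matching the abstract unit/counit against the explicit formulas --- is exactly what the paper does; the proposition is stated as a restatement of those cited results. So your strategy agrees with the paper's.

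However, the triangle identities you write down in the first part are wrong, and not just typographically. You pair $\eta_1$ with $\epsilon_2$ and (later) $\eta_2$ with $\epsilon_1$, but $\eta_1,\epsilon_2$ both live on the composite $\F\E$ (one goes in, one comes out), and $\eta_2,\epsilon_1$ both live on $\E\F$. That is not the shape of a unit/counit pair: for an adjunction $\E\dashv\F$ the unit must go $\I\to\F\E$ and the counit $\E\F\to\I$, so the correct pairing is $\eta_1$ with $\epsilon_1$ (giving $(\E,\F,\eta_1,\epsilon_1)$) and $\eta_2$ with $\epsilon_2$ (giving $(\F,\E,\eta_2,\epsilon_2)$), exactly as the proposition is stated. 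Concretely, the correct zig-zag identities are $\epsilon_1\E\circ\E\eta_1=1_\E$, $\F\epsilon_1\circ\eta_1\F=1_\F$, $\epsilon_2\F\circ\F\eta_2=1_\F$, and $\E\epsilon_2\circ\eta_2\E=1_\E$. Your written composites do not even typecheck: $\eta_1\E$ lands in $\F\E\E$ while $\E\epsilon_2$ starts from $\E\F\E$, so they cannot be composed. The composite $\epsilon_2\E\circ\eta_1\E$ that \emph{does} typecheck is just the whiskering of $\epsilon_2\circ\eta_1:\I\to\I$ by $\E$, which is a Frobenius-type condition, not a triangle identity, and is not generally required of a biadjunction. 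This confusion would derail the direct verification if you attempted it; compare with the paper's proof of the analogous statement for $\R$, where the identities checked are $\epsilon'_1\E'\circ\E'\eta'_1=1$ and $\F'\epsilon'_1\circ\eta'_1\F'=1$.
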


The next datum we need to introduce are the natural transformations $X$ and $\sigma$ acting on $\E$ and $\E^2$.

For any $V\in \V$, let $U(\mathfrak{gl}(V\oplus k))$ denote the enveloping algebra of $\mathfrak{gl}(V\oplus k)$, and let $X_V \in U(\mathfrak{gl}(V \oplus k))$ the \emph{normalized split Casimir operator}, which is defined as follows.

Fix a basis $V=\bigoplus_{i=1}^n ke_i$; this choice induces a basis of $V \oplus k$.  Let $x_{i,j} \in \mathfrak{gl}(V \oplus k)$ be the operator mapping $e_j$ to $e_i$ and $e_{\ell}$ to zero for all $\ell \neq j$.  Then $$X_V=\sum_{i=1}^{n} x_{n+1,i}x_{i,n+1}-n.$$
The element $X_V$ does not depend on the choice of basis. The universal enveloping algebra $U(\mathfrak{gl}(V\oplus k))$ acts on $M(V\oplus k)$, and we define $X_{M,V}$ to be the action of $X_V$ on $M(V\oplus k)$.

In \cite{HY} we proved that $X_V \in U(\mathfrak{gl}(V\oplus k))^{GL(V)\times GL(k)}$.  Hence $X_{M,V}$ defines an operator on $\E(M)(V)$, and this defines the natural transformation $X:\E\to\E$ (see Section 5.2 of \cite{HTY} for more details on this construction).

We next introduce the natural transformation $\sigma:\E^2 \to \E^2$.
Let $M\in\PP$ and $V\in\V$.  By definition, $$\E^2(M)(V)= M_{*,1,1}(V\oplus k\oplus k)=M(V\oplus k\oplus k)_{1,1},$$ where $M_{*,1,1}(V\oplus k\oplus k)$ denotes the $(1,1)$ weight space of $GL(k)\times GL(K)$ acting on $M(V\oplus k \oplus k)$.  Consider the map $V \oplus k \oplus k \to V \oplus k \oplus k$ given by $(v,a,b)\mapsto(v,b,a)$.  Applying $M$ to this map we obtain a morphism:
$$
\sigma_{M,V}:M_{*,1,1}(V\oplus k\oplus k)\to M_{*,1,1}(V\oplus k\oplus k).
$$
This defines the morphism $\sigma$ acting on $\E^2$.

Finally, we need to define the weight decomposition of $\PP$.  By a theorem of Donkin's, the blocks of $\PP$ are parameterized by pairs $(d,\tilde{\lambda})$, where $d$ is a nonnegative integer and $\tilde{\lambda}$ is a $p$-core of a partition of size $d$ \cite{D}.  Consequently the block decomposition of $\PP$ is  naturally indexed by the weight lattice of $\g$:
$$
\PP=\bigoplus_{\omega \in P}\PP_\omega
$$
See Section 5.3 of \cite{HTY} for further details and references.
We can now state Theorem 6.1 of \cite{HTY}:
\begin{theorem}
\label{HTYthm}
Let $p\neq2$.  The category $\PP$ along with the data of adjoint functors $\E$ and $\F$, operators $X\in End(\E)$ and $\sigma\in End(\E^2)$, and the weight decomposition $\PP=\bigoplus_{\omega\in P}\PP_{\omega}$ defines a $\g$-categorification which categorifies the Fock space representation of $\g$.
\end{theorem}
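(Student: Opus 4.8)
The plan is to verify, in turn, the eight conditions of Definition \ref{DefinitionCategorification} for the data $(\E,\F,X,\sigma,\PP=\bigoplus_{\omega\in P}\PP_\omega)$, and then to read off from the Grothendieck-group computation that the resulting representation is the Fock space representation of $\g$. (This is Theorem 6.1 of \cite{HTY}, so the present task is to recall how that argument goes.)

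First I would dispatch the exactness and adjunction statements. The functor $\F=\bold{T}_I$ is exact, and $\E=\Delta(\cdot)_{*,1}(\cdot,k)$ is exact since evaluation on $V\oplus k$ and projection onto a $GL(k)$-weight space are both exact, as observed in Section \ref{SectionBi}; this gives condition (1). The pairs $(\E,\F)$ and $(\F,\E)$ are adjunctions by the Proposition recalled just above (Propositions 5.1 and 5.3 of \cite{HTY}), and the second of these is precisely condition (7), that $\F$ is isomorphic to the left adjoint of $\E$. Passing to Grothendieck groups via $\varrho\colon K_0(\PP)\xrightarrow{\sim}B_\Z$, one has $[\F]=$ multiplication by $\varrho([I])=s_{(1)}$; and since $(\E,\F)$ is an adjunction of exact functors, $[\E]$ is the operator adjoint to $[\F]$ for the Euler form, which under $\varrho$ becomes the standard inner product (equivalently $\E\simeq\bold{T}_I^*$ with $I$ projective, cf.\ Lemma \ref{LemmaAdjoint}); hence $[\E]=s_{(1)}^*$. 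By the Pieri rule and its adjoint, $[\F][S_\lambda]=\sum_{\lambda\to\mu}[S_\mu]$ and $[\E][S_\lambda]=\sum_{\mu\to\lambda}[S_\mu]$.

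Next I would install the grading and the $i$-decompositions. The weight decomposition $\PP=\bigoplus_{\omega\in P}\PP_\omega$ (condition (3)) is Donkin's block decomposition \cite{D}, with blocks labelled by $p$-cores and identified with the weight lattice of $\g$ as in Section 5.3 of \cite{HTY}. The key computation is that the normalized split Casimir $X$ acts on the summand of $\E$ carrying $\PP_\omega$ to $\PP_{\omega+\alpha_i}$ — the part of $i$-restriction attached to boxes of content $i$ — with generalized eigenvalue $i\in\Z/p\Z$; since only finitely many residues occur in a fixed degree this yields $\E=\bigoplus_{i\in\Z/p\Z}\E_i$ (condition (4)) together with condition (5). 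Refining the Pieri computation above according to the content of the box added or removed then identifies $[\E_i]$ and $[\F_i]$ on $K_0(\PP)=B_\Z$ with $i$-restriction and $i$-induction, which are exactly the defining formulas for $e_i$ and $f_i$ on the Fock space; this gives condition (6) — integrability being the standard integrability of the Fock space, or a consequence of the $\mathfrak{sl}_2$-categorification structure that Definition \ref{DefinitionCategorification} produces on each $\C_i$ — and simultaneously identifies the categorified $\g$-module with the Fock space representation.

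The main obstacle is condition (8): that $y_i\mapsto\E^{n-i}X\E^{i-1}$ and $\sigma_i\mapsto\E^{n-i-1}\sigma\E^{i-1}$ define an action of $DH_n$ on $End(\E^n)$. Here I would check commutativity of the $y_i$ (which reduces to the commutativity of the split Casimirs attached to distinct copies of $k$ inside $V\oplus k^{\oplus n}$), the symmetric-group relations for the $\sigma_i$ (immediate, since $\sigma$ is induced by transposing two copies of $k$), and — the technical heart — the mixed relation $\sigma_iy_i-y_{i+1}\sigma_i=1$ of (\ref{HeckeRelation}). This last identity is an explicit but intricate computation with the formula $X_V=\sum_i x_{n+1,i}x_{i,n+1}-n$ for the normalized Casimir and the swap map defining $\sigma$, and it is in the verification of these Hecke-type relations that the hypothesis $p\neq2$ is used. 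Once (8) is in place, all conditions of Definition \ref{DefinitionCategorification} hold, and the Grothendieck-group computation above shows the categorified representation is the Fock space representation of $\g$, completing the argument.
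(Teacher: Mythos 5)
This paper does not in fact prove Theorem \ref{HTYthm}; it is recalled verbatim from the prequel \cite{HTY} (Theorem 6.1 there), and the present text supplies only the definitions of $\E$, $\F$, $X$, $\sigma$, and the weight decomposition, not a verification of Definition \ref{DefinitionCategorification}. So there is no in-paper proof to compare against. That said, your outline is a faithful reconstruction of the structure of the argument in \cite{HTY}: exactness and bi-adjunction of $(\E,\F)$ for conditions (1) and (7); Donkin's block decomposition indexed by $p$-cores for condition (3); the generalized eigenspace decomposition of $\E$ under the normalized split Casimir for conditions (4)--(5); the content-refined Pieri computation identifying $[\E_i],[\F_i]$ with $i$-restriction and $i$-induction on $B_\Z$ for condition (6) and for identification with Fock space; and the degenerate affine Hecke relations as the technical core of condition (8).

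One small caution: you locate the use of $p\neq 2$ inside the verification of the Hecke relation $\sigma_i y_i - y_{i+1}\sigma_i = 1$, and this is essentially correct, but the paper at hand makes the mechanism explicit in Lemma \ref{Two_Operators}, where semisimplicity of degree-$2$ polynomial $GL_2$-representations (which fails in characteristic $2$) is what lets one identify $ef-1$ with the transposition on the $\varpi_2$-weight space. It is worth stating this precisely rather than gesturing at an ``intricate computation,'' because it is the only place $p\neq 2$ enters. Also, for condition (6) you should not appeal to the $\mathfrak{sl}_2$-categorification structure as if it automatically supplied integrability; integrability is an input, not an output, of Definition \ref{DefinitionCategorification}, so the clean route is exactly the one you also mention, namely the direct identification of the $K_0$-action with the known integrable Fock space representation. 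With these two points sharpened, your sketch matches the architecture of the cited proof.
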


\subsection{The $\g$-categorification on $\R$}
\label{Section_gcatonR}

One of the first examples of $\g$-categorification is on the direct sum of representations of symmetric groups.  We recall this result now, recast in the language of linear species that's more suitable for our purposes.

We begin by defining functors $\E',\F':\R\to\R$. (All the data defined on the category $\R$ is ``primed'' since we reserve the non-primed notation for $\PP$.)  Let $\I:\R\to\R$ denote the identity functor.

 For any linear species $S\in \R$, set
 $$\E'(S)(J)=S(J\sqcup *).$$ On bijections $f:J\to J'$, $\E'(S)(f)=S(f\sqcup 1_*)$.  For a morphism $\phi:S\to T$ in $\R$, $\E'(\phi)_J:\E'(S)(J)\to\E'(T)(J)$ is given by $\phi_{J\sqcup *}$.

The functor $\F'$ is given by
 $$\F'(S)(J)=\bigoplus_{j\in J} S(J \minus j).$$
On bijections $f:J\to J'$, $\F'(S)(f)=\bigoplus_{j\in J} S(f^j)$.  For a morphism $\phi:S\to T$ in $\R$, $\F'(\phi)_J:F'(S)(J)\to F'(T)(J)$ is given by $\bigoplus_{j\in J}\phi_{J\minus j}$.

We now describe an explicit bi-adjunction between $\E'$ and $\F'$. To begin, let $S\in\R$ and $J\in\X$ and consider $\E'\F'(S)(J)$ and $\F'\E'(S)(J)$:
\begin{eqnarray*}
\E'\F'(S)(J) &=& \bigoplus_{j\in J} S((J\minus j) \sqcup *)\oplus S(J)
\end{eqnarray*}
and
\begin{eqnarray*}
\F'\E'(S)(J) &=& \bigoplus_{j\in J}S((J\minus j) \sqcup *).
\end{eqnarray*}

We construct the following candidate maps for the adjunctions between $\E'$ and $\F'$:
 $$\xymatrix{ \I  \ar[r]^{\eta'_1} & \F'\E' \ar[r]^{\epsilon'_2} & \I}.$$
 $$\xymatrix{ \I  \ar[r]^{\eta_2'} & \E'\F' \ar[r]^{\epsilon'_1} & \I}$$

For $S\in\R$ and $J\in\X$,
\begin{enumerate}
\item[] $(\eta'_1)_{S,J}:S(J)\to \F'\E'(S)(J)$ is given by $m \mapsto (S(u_{j})(m))_{j\in J}$,
\item[] $(\eta'_2)_{S,J}:S(J)\to\E'\F'(S)(J)$ is given by $m \mapsto (0,m)$,
\item[] $(\epsilon'_1)_{S,J}:\E'\F'(S)(J)\to S(J)$ is given by projection onto $S(J)$, and
\item[] $(\epsilon'_2)_{S,J}:\F'\E'(S)(J)\to S(J)$ is given by $(m_j)_{j\in J} \mapsto \sum_{j\in J} S(u_j^{-1})(m_j)$.
\end{enumerate}


\begin{theorem}
The data $(\E',\F',\eta'_1,\epsilon'_1)$ and $(\F',\E', \eta'_2, \epsilon'_2)$ are both adjunctions.
\end{theorem}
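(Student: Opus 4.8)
The plan is to check, directly from the explicit formulas for $\eta'_1,\eta'_2,\epsilon'_1,\epsilon'_2$, the two triangle (``zigzag'') identities for each of the two claimed adjunctions, together with the fact that these four families of maps are genuine natural transformations. Naturality --- commutativity with $S(f)$ for a bijection $f:J\to J'$ --- is routine using the elementary relations among $u_j,t_{i,j},s_{i,j}$ from Section~\ref{SectionR} (notably~\eqref{commute}), and I would dispatch it first and briefly. In the pair $(\E',\F',\eta'_1,\epsilon'_1)$ the functor $\E'$ is left adjoint to $\F'$, with unit $\eta'_1:\I\to\F'\E'$ and counit $\epsilon'_1:\E'\F'\to\I$; in $(\F',\E',\eta'_2,\epsilon'_2)$ the functor $\F'$ is left adjoint to $\E'$, with unit $\eta'_2:\I\to\E'\F'$ and counit $\epsilon'_2:\F'\E'\to\I$. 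Thus the identities to verify are
$$
(\epsilon'_1\E')\circ(\E'\eta'_1)=\mathrm{id}_{\E'},\qquad (\F'\epsilon'_1)\circ(\eta'_1\F')=\mathrm{id}_{\F'},
$$
$$
(\epsilon'_2\F')\circ(\F'\eta'_2)=\mathrm{id}_{\F'},\qquad (\E'\epsilon'_2)\circ(\eta'_2\E')=\mathrm{id}_{\E'}.
$$

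For $(\E',\F',\eta'_1,\epsilon'_1)$: evaluating the first identity at $S\in\R$, $J\in\X$, the composite $\E'(S)(J)=S(J\sqcup *)\to\E'\F'\E'(S)(J)\to\E'(S)(J)$ first sends $m$ to the tuple $(S(u_k)(m))_k$ produced by $\E'\eta'_1$ and then, via $\epsilon'_1\E'$, projects onto the summand indexed by the newly adjoined point; since the relevant $u_k$ is the evident renaming bijection, this recovers $m$. For the second identity, given $m=(m_j)_{j\in J}\in\bigoplus_j S(J\minus j)=\F'(S)(J)$, the unit $\eta'_1\F'$ places $\F'(S)(u_k)(m)$ in the $k$-th summand of $\F'\E'\F'(S)(J)$, and the component of this element at the adjoined point is $S(u_k^k)(m_k)=m_k$ because $u_k^k=1_{J\minus k}$; since $\F'\epsilon'_1$ projects the $k$-th summand onto exactly that component, the composite returns $(m_k)_j=m$.

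For $(\F',\E',\eta'_2,\epsilon'_2)$: for the first identity and $m=(m_j)_j\in\F'(S)(J)$, the unit $\F'\eta'_2$ puts $(0,m_k)$ in the $k$-th summand of $\F'\E'\F'(S)(J)$, so that $m_k$ sits in the ``$S(J\minus k)$'' part, and then $\epsilon'_2\F'$ applies $\F'(S)(u_k^{-1})$ to each summand and sums over $k$; as $u_k^{-1}$ restricts to the identity away from its adjoined point, the ``$S(J\minus k)$'' part contributes exactly $m_k$ to the $k$-th coordinate and nothing elsewhere, recovering $m$. For the second identity and $m\in S(J\sqcup *)=\E'(S)(J)$, the unit $\eta'_2\E'$ sends $m$ to $(0,m)$ with $m$ in the ``$\E'(S)(J)$'' summand of $\E'\F'\E'(S)(J)$ --- the one indexed by the newly adjoined point --- and $\E'\epsilon'_2$ then applies $S(u_k^{-1})$ to that summand for the corresponding $k$; since this $u_k^{-1}$ is the evident renaming bijection, the composite is the identity.

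The whole argument is mechanical; the only real work is the bookkeeping --- keeping the various direct-sum indices straight and tracking the points successively adjoined by iterated applications of $\E'$, together with the canonical renaming identifications between them --- and the single genuine combinatorial input is the pair of facts $u_j^j=1_{J\minus j}$ and ``$u_j^{-1}$ acts as the identity away from its adjoined point'', which are precisely what make the zigzag composites collapse. I would write out the $(\E',\F')$ case in full and present the $(\F',\E')$ case as an entirely parallel computation. Alternatively one could transport everything through the equivalence $\R\simeq\bigoplus_n Rep(\mathfrak{S}_n)$ and invoke the classical biadjointness of induction and restriction between $\mathfrak{S}_n$ and $\mathfrak{S}_{n+1}$, but identifying the explicit units and counits above with the classical ones costs essentially the same effort, so I would favour the direct verification.
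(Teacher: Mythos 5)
Your proposal is correct and follows essentially the same route as the paper: a direct verification of the triangle identities from the explicit formulas, with the collapse of the zigzag composites coming from the facts that $u_*$ and $u_i^i$ are identity maps. The paper writes out only the $(\E',\F',\eta'_1,\epsilon'_1)$ case and leaves the second adjunction "to the reader," so your worked-out $(\F',\E',\eta'_2,\epsilon'_2)$ computation is exactly the omitted half and is accurate.
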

\begin{proof}
To see that $(\E',\F',\eta'_1,\epsilon'_1)$ is an adjunction we need to check
\begin{eqnarray}\label{Adj1Eq1}
\epsilon'_1 \E' \circ \E' \eta'_1 &=& 1: \E'\to \E'\F'\E'\to \E' \\ \label{Adj1Eq2}
\F' \epsilon'_1 \circ \eta'_1 \F' &=& 1: \F'\to \F'\E'\F'\to \F'
\end{eqnarray}
When evaluated on $S\in\R$ and $J\in\X$, the left hand side of the first equation is the composition:
\begin{eqnarray*}
S(J\sqcup *) &\longrightarrow& \bigoplus_{j\in J} S((J\minus j)\sqcup * \sqcup *)\oplus S(J\sqcup *) \\ &\longrightarrow& S(J\sqcup *)
\end{eqnarray*}
given by$$
m\mapsto ((S(u_j)(m))_{j \in J},S(u_*)(m))\mapsto S(u_*)(m).
$$
Here $u_j:J\sqcup * \to (J\minus j) \sqcup * \sqcup *$ and $u_*:J\sqcup * \to (J\sqcup *) \minus * \sqcup *=J\sqcup *$.  Since $u_*$ is the identity map this proves the first equation.

Now we prove (\ref{Adj1Eq2}).  When evaluated on $S\in\R$ and $J\in\X$, the left hand side is the composition:
\begin{eqnarray*}
\bigoplus_{i\in J} S(J\minus i) &\longrightarrow& \bigoplus_{i,j\in J} S(J\minus \{i,j\}\sqcup *)\oplus \bigoplus_{j\in J}S(J\minus j) \\ &\longrightarrow& \bigoplus_{i\in J} S(J\minus i)
\end{eqnarray*}
where for $m\in S(J\minus i)$, $$
m\mapsto ((S(u_j^i)(m))_{j \in J})\mapsto S(u_i^i)(m).
$$
Here $u_j^i:J\minus i \longrightarrow J\minus \{j,u_j(i)\} \sqcup * $, and in particular $u_i^i:J\minus i \to J\minus i$ is the identity map.  This proves (\ref{Adj1Eq2}).

The proof of the second adjunction follows from a similar computation, which is left to the reader.
\end{proof}

Next we introduce the \emph{Jucys-Murphy morphism} $X'$ on $\E'$. For $S\in\R,J\in\X$, define $X'_{M,J}:\E'(S)(J)\to\E'(S)(J)$ by
$$
X'_{S,J}=\sum_{j\in J}S(s_{j,*}).
$$
\begin{lemma}
The Jucys-Murphy morphism $X'$ is an endomorphism of $\E'$.

\end{lemma}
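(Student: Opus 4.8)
The plan is to show that the collection of maps $X'_{S,J}$ assembles into a natural transformation $\E'\To\E'$, i.e.\ that it is natural both in the linear species $S$ and in the finite set $J$. Since $\E'(S)(J)=S(J\sqcup *)$ and $X'_{S,J}=\sum_{j\in J}S(s_{j,*})$, and since each $s_{j,*}$ is a bijection of $J\sqcup *$ (so $S(s_{j,*})$ is an endomorphism of $\E'(S)(J)$), the maps $X'_{S,J}$ are already well defined; what remains is to verify the two naturality squares, and each will follow from functoriality of $S$ together with the explicit description of the maps $s_{j,*}$.

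For naturality in $J$, let $f:J\to J'$ be a bijection in $\X$. The key point I would isolate first is the combinatorial identity
\begin{equation*}
(f\sqcup 1_*)\circ s_{j,*}=s_{f(j),*}\circ(f\sqcup 1_*)
\end{equation*}
of bijections $J\sqcup *\to J'\sqcup *$, which is in the same spirit as the commutation recorded in \eqref{commute} and is checked by evaluating both sides on an arbitrary element $\ell\in J\sqcup *$, distinguishing the cases $\ell=j$, $\ell=*$, and $\ell\notin\{j,*\}$. Applying $S$ and using functoriality turns this into $S(f\sqcup 1_*)\circ S(s_{j,*})=S(s_{f(j),*})\circ S(f\sqcup 1_*)$; summing over $j\in J$ and re-indexing the resulting sum along the bijection $j\mapsto f(j)$ of $J$ onto $J'$ yields exactly $\E'(S)(f)\circ X'_{S,J}=X'_{S,J'}\circ \E'(S)(f)$, which is the required square.

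For naturality in $S$, let $\phi:S\to T$ be a morphism in $\R$, i.e.\ a natural transformation of functors $\X\to\V$. Then $\phi_{J\sqcup *}\circ S(s_{j,*})=T(s_{j,*})\circ\phi_{J\sqcup *}$ for every $j\in J$, and summing over $j$ gives $\E'(\phi)_J\circ X'_{S,J}=X'_{T,J}\circ \E'(\phi)_J$. Combining the two checks shows that $X'$ is an endomorphism of $\E'$.

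There is no genuine obstacle here: the only steps requiring a little care are the verification of the displayed commutation identity and the subsequent re-indexing of the sum, both of which amount to bookkeeping the images of $*$ and of the elements of $J$ under the relevant bijections.
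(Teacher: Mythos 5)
Your proof is correct and follows essentially the same route as the paper's: the key step is the commutation $(f\sqcup 1_*)\circ s_{j,*}=s_{f(j),*}\circ(f\sqcup 1_*)$, which the paper records as the commutative square $S(f\sqcup 1_*)\circ S(s_{j,*})=S(s_{f(j),*})\circ S(f\sqcup 1_*)$, followed by summing over $j\in J$. You spell out the re-indexing of the sum and the naturality in $S$ more explicitly than the paper does, but the argument is the same.
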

\begin{proof}
Let $S\in\R$ and $J,J'\in\X$.  Given a bijection $f: J\to J'$, for any $j\in J$ we have the following commutative diagram:
$$\xymatrix{S(J\sqcup *) \ar[rr]^{S(s_{j,*})} \ar[d]_{S(f\sqcup 1_*)} && S(J\sqcup *) \ar[d]^{S(f\sqcup 1_*)} \\
S(J'\sqcup *) \ar[rr]_{S(s_{f(j),*})} && S(J'\sqcup *)
}
$$
Then summing up $j\in J$, we conclude that $X'_{S,J}$ is functorial in $J$.
It is clear $X'_S$ is functorial in $S$.
\end{proof}

Now note that $(E')^2(S)(J)=S(J\sqcup*\sqcup*)$.  There is a natural bijection
$\sigma_J':J\sqcup*\sqcup* \to J\sqcup*\sqcup*$ which switches the two auxiliary elements and leaves elements in $J$ fixed.  (Since we are using \emph{disjoint} unions here, this bijection is well-defined.)  Define $\sigma_{S,J}'=S(\sigma_J')$.  It is clear that this defines a natural transformation $\sigma':(\E')^2\to (\E')^2$.

Finally, we recall that the block decomposition of $\R$ can also be parameterized by weight of $\g$.  Indeed, by Nakayama's Conjecture \cite{BR}, the blocks of $\R$ (recall that $\R$ is equivalent to $\bigoplus_{d\geq 0} Rep(k\mathfrak{S}_d)$) are  also parameterized by pairs $(d,\tilde{\lambda})$, where $\tilde{\lambda}$ is a $p$-core of a partition of $d$.  Then by the same combinatorial rules as we applied in Section 5.3 of \cite{HTY}, we can associate to such a pair a well-defined weight $\omega\in P$.  In this way we obtain a decomposition
$$
\R=\bigoplus_{\omega\in P}\R_\omega.
$$

A ``weak'' version of the following theorem goes back to the works of Leclerc, Lascoux, and Thibon \cite{LLT}.  The ``strong'' version we state is essentially contained in \cite{CR}, although Chuang and Rouquier don't use the language of linear species and the definition of $\g$-categorification appears in \cite{R} later on.

\begin{theorem}
\label{Oldthm}
The category $\R$ along with the data of adjoint functors $\E'$ and $\F'$, operators $X'\in End(\E')$ and $\sigma'\in End((\E')^2)$, and the weight decomposition $\R=\bigoplus_{\omega\in P}\R_{\omega}$ defines a $\g$-categorification which categorifies the basic representation of $\g$.
\end{theorem}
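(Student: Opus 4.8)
The strategy is to verify the eight axioms of Definition~\ref{DefinitionCategorification} one at a time for the data $(\E',\F',X',\sigma',\R=\bigoplus_\omega\R_\omega)$, reducing as much as possible to the well-known statement for $\bigoplus_{d\geq0}\mathrm{Rep}(k\mathfrak{S}_d)$ under the equivalence $\R\simeq\bigoplus_d\mathrm{Rep}(k\mathfrak{S}_d)$, $S\mapsto\bigoplus_n S([n])$. First I would observe that $\E'$ and $\F'$ are exact, since they are built from direct sums and restriction/corestriction along the inclusions $\X_n\hookrightarrow\X_{n+1}$; on the symmetric-group side they become ordinary induction $\mathrm{Ind}_{\mathfrak{S}_n}^{\mathfrak{S}_{n+1}}$ and restriction $\mathrm{Res}^{\mathfrak{S}_{n+1}}_{\mathfrak{S}_n}$. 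Axiom (1), that $(\E',\F')$ is an adjoint pair, and axiom (7), that $\F'$ is also the \emph{left} adjoint of $\E'$, are exactly the biadjunction established just above in the theorem asserting $(\E',\F',\eta_1',\epsilon_1')$ and $(\F',\E',\eta_2',\epsilon_2')$ are both adjunctions (this is the species incarnation of the Mackey-type fact that induction and restriction between $\mathfrak{S}_n$ and $\mathfrak{S}_{n+1}$ are biadjoint).

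Next I would handle axioms (2), (4) and (8). For (8), I would check directly that in $\mathrm{End}((\E')^n)$ the assignments $y_i\mapsto (\E')^{n-i}X'(\E')^{i-1}$ and $\sigma_i\mapsto(\E')^{n-i-1}\sigma'(\E')^{i-1}$ satisfy the defining relations of $DH_n$: the braid and involutivity relations for the $S(\sigma_J')$'s are clear since they come from applying $S$ to the permutations of the $n$ auxiliary points (which generate a copy of $\mathfrak{S}_n$ acting on $J\sqcup *\sqcup\cdots\sqcup *$), the commutation $\sigma_iy_j=y_j\sigma_i$ for $|i-j|\geq1$ is immediate, and the key relation $\sigma_iy_i-y_{i+1}\sigma_i=1$ is a finite bijection bookkeeping: on $S(J\sqcup*_1\sqcup\cdots\sqcup*_n)$ one expands $X'$ as the sum of transpositions $s_{j,*_i}$ over $j\in J$ plus, crucially, contributions from the auxiliary points already present, and the single ``$+1$'' arises from the transposition $s_{*_i,*_{i+1}}$ being cancelled on the nose. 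This is precisely the computation that identifies $\E'$ with the ``$i$-induction'' functor refined by Jucys--Murphy elements; it is classical (Kleshchev, Grojnowski, Brundan) but I would still spell it out in the species language. Given (8), axiom (4), $\E'=\bigoplus_{i\in\Z/p\Z}\E'_i$, follows because $X'$ acts on $\E'(S)([n])$ as the Jucys--Murphy element $\sum_{j=1}^n(j\ n{+}1)$, whose eigenvalues on a representation of $\mathfrak{S}_{n+1}$ are integers, and working over a field of characteristic $p$ these collapse mod $p$ into the finitely many residues $\Z/p\Z$; the generalized eigenspace decomposition is then automatic.

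Then axioms (3), (5) and (6). Axiom (3) is the decomposition $\R=\bigoplus_{\omega\in P}\R_\omega$, which was just defined via Nakayama's conjecture (blocks $\leftrightarrow$ pairs $(d,\tilde\lambda)$ with $\tilde\lambda$ a $p$-core) together with the same combinatorial weight assignment used in Section~5.3 of \cite{HTY}. Axiom (5), that $\E'_i$ and $\F'_i$ shift the weight by $\mp\alpha_i$, is the statement that adding/removing an $i$-box to a partition changes its $p$-core's associated weight by $\mp\alpha_i$, which is the standard translation between the crystal combinatorics of $\widehat{\mathfrak{sl}}_p$ on partitions and the block theory of symmetric groups; I would cite the relevant combinatorial lemma. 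For axiom (6), the induced operators $[\E'_i],[\F'_i]$ on $K_0(\R)=\bigoplus_d R(\mathfrak{S}_d)\otimes\Z$ are exactly $i$-induction and $i$-restriction, and the classical theorem (Lascoux--Leclerc--Thibon \cite{LLT}, as made precise by Ariki/Grojnowski) says these generate an integrable action of $\g$ on $\bigoplus_d R(\mathfrak{S}_d)$ isomorphic to the basic representation $V(\Lambda_0)$, with the irreducibles (or their projective covers) forming the canonical basis; integrability is clear since each $\R_\omega$ is finitely generated in each degree and degrees are bounded below. This simultaneously identifies the categorified module as $V(\Lambda_0)$, completing the last clause of the theorem.

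\textbf{Main obstacle.} The routine-looking but genuinely delicate step is axiom (8) --- verifying the degenerate affine Hecke relation $\sigma_iy_i-y_{i+1}\sigma_i=1$ directly in $\mathrm{End}((\E')^n)$ in the species formalism. One must be careful about what $X'(\E')^{i-1}$ versus $(\E')^{n-i}X'$ actually is: the Jucys--Murphy morphism on the $i$-th slot sums transpositions $s_{j,*}$ over \emph{all} points $j$ in the current set, which by that stage includes the earlier auxiliary points, and it is the interaction between these ``old auxiliary'' terms and the swap $\sigma_i$ that produces the correction term. Getting the signs, the indexing of the auxiliary points, and the compatibility with the bijection-functoriality of $S$ exactly right is where the real work lies; everything else either quotes \cite{HTY}, \cite{CR}, \cite{LLT}, or the biadjunction proved immediately above, or is a short combinatorial translation.
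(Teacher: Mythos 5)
The paper itself gives no proof of this theorem: it explicitly defers to Lascoux--Leclerc--Thibon \cite{LLT} for the ``weak'' version and to Chuang--Rouquier \cite{CR} for the ``strong'' version, remarking only that the statement is a translation into the linear-species language. Your proposal therefore goes considerably beyond what the paper writes down: you lay out an axiom-by-axiom verification of Definition~\ref{DefinitionCategorification}, which is the honest content hiding behind the citations. The structure is sound and matches what the cited sources actually prove: axioms (1) and (7) are exactly the explicit biadjunction $(\E',\F',\eta_1',\epsilon_1')$, $(\F',\E',\eta_2',\epsilon_2')$ established in the theorem immediately preceding this one; axioms (3), (5), (6) are Nakayama's conjecture plus the standard $i$-box/$p$-core combinatorics plus the LLT theorem that $i$-induction and $i$-restriction realize $V(\Lambda_0)$ on $K_0$; and you correctly single out axiom (8) --- the degenerate affine Hecke relation --- as the only place where genuine computation in the species formalism is needed, with axiom (4) following once one knows $X'$ acts as a Jucys--Murphy element with integral eigenvalues.

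Two small points you should tighten. First, your sketch of the relation $\sigma_i y_i - y_{i+1}\sigma_i = 1$ is a bit loose about which auxiliary points enter $y_i$: when $X'$ sits in the middle slot of $(\E')^{n-i}\,X'\,(\E')^{i-1}$, it sums $S(s_{j,*})$ over $j$ ranging over the \emph{inner} set, i.e. $J$ together with the auxiliary points added by the $(\E')^{i-1}$ on its right, so the correction term comes from the transposition of the two consecutive auxiliary points that $\sigma_i$ exchanges. This is the standard Jucys--Murphy identity, but the indexing conventions in the species picture (innermost vs.\ outermost application of $\E'$) are exactly where one goes wrong, so it deserves a careful display rather than a sentence. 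Second, your argument for axiom (4) via ``eigenvalues collapse mod $p$'' is phrased for $p>0$ only; when $p=0$ (so $\g=\mathfrak{sl}_\infty$) the eigenvalues are honest integers and the decomposition is indexed by $\Z$. The paper's convention $\Z/p\Z$ is meant to include $\Z/0\Z=\Z$, but you should flag the case split explicitly. Neither issue is a gap in the approach --- both are known, and they are precisely the content being imported from \cite{CR} and \cite{LLT} --- but a self-contained write-up must spell them out.
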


\subsection{Schur-Weyl duality and the $\g$-categorifications}
\label{Section_Smorphofg}

We have two $\g$-categorifications, $(\PP,\E,\F,X,\sigma)$ and $(\R,\E',\F',X',\sigma')$, of $B$ and $V(\Lambda_0)$, respectively.  In this section we prove our first main theorem which is that the Schur-Weyl duality functor $\bold{S}:\PP\to\R$ can be naturally enriched to a morphism of these $\g$-categorifications, in the sense of Definition \ref{DefMorphism}.  We will see that $\Schur$ categorifies the standard projection $\pi:B\twoheadrightarrow V(\Lambda_0)$(cf. Section \ref{MoresymmetriesonB}).

\subsubsection{Preparatory lemmas}

For $V\in\V$ let $\iota_V: V\to V\oplus k$ be the embedding and $p_V:V\oplus k \to V$ be the projection.
\label{SectionPrep}
\begin{lemma}\label{Polynomial_Functor_Lemma_I}
\label{Restriction Lemma}
Let $V\in\V$ and $M\in\PP$.  Then
\begin{enumerate}
\item $M(\iota_V)$ maps $M(V)$ to $M(V\oplus k)_0$ and $M(\iota_V): M(V)\to M(V\oplus k)_0$ is an isomorphism.
 \item $M(p_V)$ maps $M(V\oplus k)_0$ isomorphically onto $M(V)$ and $M(p_V)(M(V\oplus k)^i)=\{0\}$ for all $i>0$.
\end{enumerate}
\end{lemma}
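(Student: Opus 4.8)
The plan is to work entirely with the explicit description of the weight decomposition $M(V\oplus k)=\bigoplus_{i\geq 0}M(V\oplus k)_i$ given before Lemma~\ref{thm-FS}, namely $M(V\oplus k)_i=\Delta M_{*,i}(V,k)$, the subspace on which the scalar $z\in GL(k)$ acts by $z^i$. First I would reduce to the homogeneous case: since $M=\oplus M_d$ and all the maps in sight are induced by $k$-linear maps of vector spaces (hence respect the degree decomposition), it suffices to treat $M$ homogeneous of some degree $d$, and then by Lemma~\ref{thm-FS} we may replace $M$ by the polynomial $GL$-module $M(V\oplus k)$ and argue with the $GL(V)\times GL(k)$-action. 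Here $\iota_V$ and $p_V$ are the maps $v\mapsto (v,0)$ and $(v,a)\mapsto v$, and $p_V\circ\iota_V=1_V$.

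The key computation is to track the $GL(k)$-weights. First, $M(p_V)$: since $p_V$ is $GL(k)$-equivariant when $GL(k)$ acts trivially on $V$ (the target), functoriality gives that $M(p_V):M(V\oplus k)\to M(V)$ intertwines the $GL(k)$-action with the trivial action, so it kills every weight space $M(V\oplus k)_i$ with $i>0$; this is part (2), second assertion. Dually, $M(\iota_V):M(V)\to M(V\oplus k)$ has image in the $GL(k)$-invariants, i.e.\ in $M(V\oplus k)_0$; this is the first assertion of part (1). Now I would prove both isomorphism claims at once from $p_V\circ\iota_V=1_V$: applying $M$ gives $M(p_V)\circ M(\iota_V)=1_{M(V)}$, so $M(\iota_V):M(V)\to M(V\oplus k)_0$ is a split injection and $M(p_V):M(V\oplus k)_0\to M(V)$ is a split surjection. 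It remains to see these are actual isomorphisms, i.e.\ that $M(\iota_V)$ is surjective onto $M(V\oplus k)_0$ (equivalently $M(p_V)|_{M(V\oplus k)_0}$ is injective). For this I would use the other composite: let $r_V:V\oplus k\to V\oplus k$ be $r_V=\iota_V\circ p_V$, i.e.\ $(v,a)\mapsto(v,0)$. Then $M(r_V)=M(\iota_V)\circ M(p_V)$ is an idempotent endomorphism of $M(V\oplus k)$ whose image is exactly $\mathrm{im}\,M(\iota_V)$. The point is to identify this idempotent with the projection onto $M(V\oplus k)_0$: the map $r_V$ is the limit (degenerate value at $0$) of the one-parameter family $z\mapsto \mathrm{diag}(1_V,z)\in GL(V\oplus k)$, and since $M$ restricted to $\mathrm{End}(V\oplus k)$ is polynomial, $M(\mathrm{diag}(1_V,z))=\sum_i z^i\,\pi_i$ where $\pi_i$ is the projection onto $M(V\oplus k)_i$; setting $z=0$ gives $M(r_V)=\pi_0$. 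Hence $\mathrm{im}\,M(\iota_V)=\mathrm{im}\,\pi_0=M(V\oplus k)_0$, giving part (1), and since $M(p_V)\circ M(\iota_V)=1$ and $M(\iota_V)$ surjects onto $M(V\oplus k)_0$, the restriction $M(p_V):M(V\oplus k)_0\to M(V)$ is an isomorphism, completing part (2).

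The only genuinely delicate point is the identification $M(r_V)=\pi_0$, i.e.\ that plugging $z=0$ into the polynomial map $z\mapsto M(\mathrm{diag}(1_V,z))$ yields the weight-zero projector; this is exactly the statement that the $GL(k)$-weight decomposition of $M(V\oplus k)$ is the decomposition into homogeneous components of this polynomial in $z$, which is how $M(V\oplus k)_i=\Delta M_{*,i}(V,k)$ was defined. Everything else is formal manipulation of split idempotents. I expect no real obstacle beyond being careful that $M$ is only polynomial, not linear, on endomorphisms, so that $M(\mathrm{diag}(1_V,z))$ is a polynomial — not linear — function of $z$, which is precisely why the $z=0$ specialization picks out the constant term $\pi_0$ rather than something else.
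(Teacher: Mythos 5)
Your proof is correct, but it takes a genuinely different route from the paper's. The paper reduces the isomorphism claim in part (1) to a statement about representing objects: by FS97 Theorem 2.10, $M(V)\cong \mathrm{Hom}_\PP(\Gamma^{V,d},M)$ (for $M$ homogeneous of degree $d$), so the claim becomes $\mathrm{Hom}(\Gamma^{V,d},M)\simeq \mathrm{Hom}(\Gamma^{V\oplus k,d},M)_0$, which follows from the exponential decomposition $\Gamma^{n,d}=\bigoplus_{d_1+\cdots+d_n=d}\Gamma^{d_1}\otimes\cdots\otimes\Gamma^{d_n}$ together with FS97 Corollary 2.12 identifying which summands represent which weight spaces; parts of (2) are then read off from $M(p_V)\circ M(\iota_V)=\mathrm{id}$ and degree considerations. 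You instead stay entirely at the level of evaluated modules and exploit the definition of ``strict polynomial'' directly: the key step is that $z\mapsto M(\mathrm{diag}(1_V,z))$ is a polynomial map $k\to\mathrm{End}(M(V\oplus k))$ which equals $\sum_i z^i\pi_i$ for $z\in k^\times$ (and hence, $k$ being infinite, for all $z$), so specializing at $z=0$ identifies the idempotent $M(\iota_V\circ p_V)$ with the weight-zero projector $\pi_0$. Your argument is more elementary in that it avoids the representability theorem (FS97 Thm.~2.10 and Cor.~2.12) entirely and only uses the defining polynomiality of $M$ plus formal manipulation of split idempotents; the paper's argument is more structural and slots naturally into the representability machinery that is used repeatedly elsewhere (e.g.\ in Lemma~\ref{Group_Compatibility}). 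Both are valid, and the observation that $M(\mathrm{diag}(1_V,z))$ extends polynomially across $z=0$ with constant term $\pi_0$ is a clean way to see the result.
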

\begin{proof}
We first prove that $M(\iota_V): M(V)\to M(V\oplus k)_0$ is an isomorphism.

It is easy to check $M(\iota_V)$ maps $M(V)$ to $M(V\oplus k)_0$.
Suppose $M$ is of degree $d$.  The group $GL(V)$ acts on the functor $\Gamma^{V,d}$, and this induces a representation of $GL(V)$ on the vector space $Hom(\Gamma^{V,d},M)$. By Theorem 2.10 in \cite{FS97}, $M(V)$ is canonically isomorphic to $Hom(\Gamma^{V,d},M)$ as $GL(V)$-modules.

We thus need to check that $Hom(\Gamma^{V,d},M)\simeq Hom(\Gamma^{V\oplus k,d},M)_0$. We choose a basis of $V$, such that $V=k^n$. Then
the functor $\Gamma^{{n},d}=\Gamma^{V,d}$ can be decomposed canonically as
$$\Gamma^{{n},d}=\bigoplus_{d_1+d_2\cdots +d_{n}=d} \Gamma^{d_1}\otimes\cdots \otimes \Gamma^{d_{n}}.$$
By Corollary 2.12 in \cite{FS97}, $\Gamma^{d_1}\otimes\cdots \otimes \Gamma^{d_{n}}$  represents the weight space of $M(k^n)$ with weight $(d_1,d_2,...,d_n)$. In other words,
\begin{equation}
\label{EquationFS}
Hom(\Gamma^{d_1}\otimes\cdots \otimes \Gamma^{d_n},M) \simeq M(k^n)_{(d_1,...,d_n)},
\end{equation}
where $M(k^n)_{(d_1,...,d_n)}$ is the weight space corresponding to the character $(d_1,...,d_n)$.
Hence
\begin{eqnarray*}
Hom(\Gamma^{{n},d},M) &\simeq& \bigoplus_{d_1+d_2\cdots +d_{n}=d} Hom(\Gamma^{d_1}\otimes\cdots \otimes \Gamma^{d_{n}},M) \\ &\simeq& Hom(\Gamma^{{n+1},d},M)_0.
\end{eqnarray*}

Secondly, since $M(p_V)\circ M(\iota_V)=id_{M(V)}$, it follows that $M(p_V):M(V\oplus k)_0\to M(V)$ is also an isomorphism.

Finally, by degree considerations,  when $i\geq 1$, $M(p_V)$ maps $M(V\oplus k)_i$ to zero.
\end{proof}

Recall the construction of $\tilde{\xi}:V\to V\oplus k$ from $\xi\in V^*$, which we introduced in Section \ref{Section_gcatonP}: $\tilde{\xi}(v)=(v,\xi(v))$.

\begin{lemma}\label{Xi_Lemma}
For any $\xi\in V^*$, the map $M(\tilde{\xi})_0:M(V)\to M(V\oplus k)_0$ is equal to $M(\iota_V)$, and so it is an isomorphism.
\end{lemma}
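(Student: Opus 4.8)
The plan is to reduce the statement for $\tilde\xi$ to the statement for $\iota_V$ already established in Lemma~\ref{Restriction Lemma}. The key observation is that $\tilde\xi$ and $\iota_V$ differ by an automorphism of $V\oplus k$: indeed, define $\psi_\xi:V\oplus k\to V\oplus k$ by $\psi_\xi(v,a)=(v,a+\xi(v))$. This is a linear automorphism (its inverse is $\psi_{-\xi}$), and one checks directly that $\psi_\xi\circ\iota_V=\tilde\xi$, since $\psi_\xi(v,0)=(v,\xi(v))$. Moreover $\psi_\xi$ lies in the unipotent radical of the parabolic subgroup of $GL(V\oplus k)$ stabilizing the line $k$, so in particular $\psi_\xi$ fixes $V$ pointwise and acts trivially modulo that line.

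Applying $M$ gives $M(\tilde\xi)=M(\psi_\xi)\circ M(\iota_V)$. Now I need to control how $M(\psi_\xi)$ interacts with the weight decomposition $M(V\oplus k)=\bigoplus_{i\geq 0}M(V\oplus k)_i$ coming from the $GL(k)$-action. The crucial point is that $\psi_\xi$ commutes with the diagonal torus acting only on the $k$-factor up to elements that shift weight \emph{upward}: more precisely, writing things in a basis, $\psi_\xi$ is a product of the identity plus strictly ``weight-raising'' root operators (the entries $x_{n+1,i}$ in the notation of Section~\ref{Section_gcatonP}, which raise the $GL(k)$-weight). Hence $M(\psi_\xi)$ preserves the ascending filtration $\bigoplus_{i\geq j}M(V\oplus k)_i$ and acts as the identity on the associated graded. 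In particular, on the bottom piece $M(V\oplus k)_0$ — which is the subspace annihilated by lowering the $k$-weight, equivalently the image of $M(\iota_V)$ by part (1) of Lemma~\ref{Restriction Lemma} — the map $M(\psi_\xi)$ restricts to the identity. Therefore $M(\tilde\xi)_0=M(\psi_\xi)|_{M(V\oplus k)_0}\circ M(\iota_V)=M(\iota_V)$, and this is an isomorphism by Lemma~\ref{Restriction Lemma}(1).

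An alternative and perhaps cleaner route avoids filtrations: since $M(\iota_V)$ is already an isomorphism onto $M(V\oplus k)_0$ by Lemma~\ref{Restriction Lemma}(1), it suffices to show $M(\psi_\xi)$ acts as the identity on $M(V\oplus k)_0$. For this, compose with the projection: $M(p_V)\circ M(\psi_\xi)=M(p_V\circ\psi_\xi)=M(p_V)$ because $p_V\circ\psi_\xi=p_V$ (projecting away the $k$-coordinate kills the correction term). Since $M(p_V)$ is injective on $M(V\oplus k)_0$ by Lemma~\ref{Restriction Lemma}(2), we conclude $M(\psi_\xi)$ is the identity on $M(V\oplus k)_0$, hence $M(\tilde\xi)_0 = M(\psi_\xi)\circ M(\iota_V) = M(\iota_V)$.

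The only genuine subtlety — the ``hard part,'' such as it is — is verifying that the image of $M(\iota_V)$ genuinely equals $M(V\oplus k)_0$ and that we are entitled to restrict $M(\psi_\xi)$ to it, but both are supplied directly by Lemma~\ref{Restriction Lemma}; the rest is the elementary factorization $\tilde\xi=\psi_\xi\circ\iota_V$ together with $p_V\circ\psi_\xi=p_V$. I would present the second route, since it uses only functoriality and the already-proven lemma, with no need to invoke the root-operator description of $\psi_\xi$.
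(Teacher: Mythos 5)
Your overall strategy --- compose with $M(p_V)$ and exploit the two properties of $M(p_V)$ from Lemma~\ref{Restriction Lemma} --- is exactly the paper's strategy, but the ``clean'' second route you recommend presenting contains a genuine error in the bookkeeping, and the first route has the same slip phrased more vaguely.

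The claim ``we conclude $M(\psi_\xi)$ is the identity on $M(V\oplus k)_0$'' is false, because $M(\psi_\xi)$ does not in general preserve the weight-$0$ subspace. The simplest counterexample is $M=I$: then $M(V\oplus k)_0=V$ and $M(\psi_\xi)|_V(v)=(v,\xi(v))$, which lies outside $V$ whenever $\xi(v)\neq 0$. Correspondingly your final line ``$M(\tilde\xi)_0 = M(\psi_\xi)\circ M(\iota_V) = M(\iota_V)$'' silently identifies $M(\tilde\xi)$ with $M(\tilde\xi)_0$, and $M(\tilde\xi)=M(\iota_V)$ is simply not true (again $M=I$ shows this). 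The reasoning ``$M(p_V)\circ M(\psi_\xi)=M(p_V)$ plus injectivity of $M(p_V)$ on weight $0$ implies $M(\psi_\xi)|_{M(V\oplus k)_0}=\mathrm{id}$'' fails because $M(\psi_\xi)(m)$ can have components in positive weight, which $M(p_V)$ kills; injectivity of $M(p_V)$ only on weight $0$ does not let you recover the full vector $M(\psi_\xi)(m)$. The same issue is latent in the first route: acting ``as the identity on the associated graded'' does not mean restricting to the identity on the subspace $M(V\oplus k)_0$.

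The fix is small but requires the second half of Lemma~\ref{Restriction Lemma}(2) explicitly, which you cite but do not use: since $M(p_V)$ annihilates every $M(V\oplus k)_j$ with $j>0$, one has $M(p_V)\circ M(\psi_\xi)=M(p_V)\circ\mathrm{proj}_0\circ M(\psi_\xi)$. Combined with $M(p_V)\circ M(\psi_\xi)=M(p_V)$, restriction to $M(V\oplus k)_0$ and injectivity there give $\mathrm{proj}_0\circ M(\psi_\xi)|_{M(V\oplus k)_0}=\mathrm{id}$, and hence $M(\tilde\xi)_0=\mathrm{proj}_0\circ M(\psi_\xi)\circ M(\iota_V)=M(\iota_V)$. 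At that point the detour through $\psi_\xi$ is superfluous: the paper applies the identical reasoning directly, noting $p_V\circ\tilde\xi=\mathrm{id}_V=p_V\circ\iota_V$, then $M(p_V)\circ M(\tilde\xi)_0=M(p_V)\circ M(\tilde\xi)$ (this is where ``kills positive weight'' is used), and concludes from injectivity of $M(p_V)$ on $M(V\oplus k)_0$. Once you track the projection $\mathrm{proj}_0$ carefully, your argument and the paper's coincide; the filtration discussion in your first route is a valid but heavier alternative once stated precisely.
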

\begin{proof}
To show $M(\tilde{\xi})_0=M(\iota_V)$, it is enough to show $M(p_V)\circ M(\tilde{\xi})_0=M(p_V)\circ M(\iota_V)$, since $M(p_V): M(V\oplus k)_0\to M(V)$ is an isomorphism by Lemma \ref{Polynomial_Functor_Lemma_I}. Moreover, also by Lemma \ref{Polynomial_Functor_Lemma_I}, $M(p_V)$ kills all $M(V\oplus k)_j$, if $j>0$. Hence $M(p_V)\circ (M(\tilde{\xi}))_0=M(p_V)\circ M(\tilde{\xi})=M(id_V)=M(p_V)\circ M(\iota_V)$.
\end{proof}

Let $\varpi_n=(1,...,1)$ be the character of $GL_n$ corresponding to the determinant representation. By Equation (\ref{EquationFS}), for any $M\in\PP$ and $n\geq0$,
\begin{equation}
\label{EqOmega}
Hom_{\PP}(\otimes^n,M)\simeq M(k^n)_{\varpi_n}
\end{equation}
There exists a canonical right action of $\mathfrak{S}_n$ on $\otimes^n$, which induces a left action of $\mathfrak{S}_n$ on the  left hand side of (\ref{EqOmega}). Moreover, the group of permutation matrices acts on the right hand side.

\begin{lemma}
\label{Group_Compatibility}
Equation (\ref{EqOmega}) is an isomorphism of $\mathfrak{S}_n$-modules.
\end{lemma}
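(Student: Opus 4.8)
The plan is to make both sides of \eqref{EqOmega} functorial in a way that intertwines the two actions, and then trace through the identifications in the proof of \eqref{EquationFS} (i.e. Corollary 2.12 in \cite{FS97}). Recall that the isomorphism \eqref{EqOmega} is the composite $Hom_\PP(\otimes^n, M) \simeq Hom_\PP(\Gamma^1 \otimes \cdots \otimes \Gamma^1, M) \simeq M(k^n)_{\varpi_n}$, using that $\otimes^n$ restricted to the degree $(1,\dots,1)$ component of divided powers is already $\Gamma^1 \otimes \cdots \otimes \Gamma^1$ on the nose (since $\Gamma^1 = I$). So really I need to understand the isomorphism $Hom_\PP(\Gamma^1 \otimes \cdots \otimes \Gamma^1, M) \simeq M(k^n)_{(1,\dots,1)}$ of Corollary 2.12 in \cite{FS97} and check it is $\mathfrak{S}_n$-equivariant, where $\mathfrak{S}_n$ acts on the source by permuting the $n$ tensor factors $\Gamma^1$ and on the target via permutation matrices in $GL_n$.

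First I would recall explicitly how the isomorphism of \cite{FS97} is realized: for a functor $F$ represented by $\Gamma^{V,d}$ one has $M(V) \simeq Hom_\PP(\Gamma^{V,d}, M)$ via the Yoneda-type map sending $m \in M(V)$ to the natural transformation whose value at $W$ is $\Gamma^d(Hom(V,W)) \to M(W)$, $u \mapsto M(-)$ applied appropriately to $m$ using $u$; concretely the universal element lives in $\Gamma^{V,d}(V) = \Gamma^d(End(V))$ and corresponds to a fixed element (the $d$-th divided power of $1_V$). Taking $V = k^n$ and extracting the weight $(1,\dots,1)$ space on both sides corresponds to restricting along the summand $\Gamma^1 \otimes \cdots \otimes \Gamma^1 \hookrightarrow \Gamma^{k^n, n}$ determined by the coordinate decomposition $k^n = \bigoplus_{i=1}^n k e_i$. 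The key point is that a permutation $w \in \mathfrak{S}_n$ acts on $k^n$ by the permutation matrix $P_w$, and $M(P_w): M(k^n) \to M(k^n)$ sends the weight-$\mu$ space to the weight-$(w\mu)$ space; since $\varpi_n$ is fixed by $w$, this gives the action on $M(k^n)_{\varpi_n}$. On the other side, conjugation by $P_w$ sends the chosen summand $\Gamma^1 \otimes \cdots \otimes \Gamma^1$ of $\Gamma^{k^n,n}$ to itself but permutes the tensor factors, and this is exactly the $\mathfrak{S}_n$-action on $\otimes^n$ coming from the canonical right action. So I would write down the commuting square: Yoneda identification on top and bottom, $M(P_w)$ on the right vertical, the factor-permutation on $\otimes^n \simeq \Gamma^1\otimes\cdots\otimes\Gamma^1$ on the left vertical, and check commutativity by chasing the universal element $(1_{k^n})^{[n]} \in \Gamma^n(End(k^n))$, whose image under the relevant projections is the standard generator of $(\Gamma^1 \otimes \cdots \otimes \Gamma^1)(k^n)_{(1,\dots,1)}$, and which is visibly compatible with conjugation by $P_w$.

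The main obstacle I anticipate is bookkeeping rather than conceptual: one has to be careful that the $\mathfrak{S}_n$-action on $Hom_\PP(\otimes^n, M)$ induced from the right action on $\otimes^n$ matches (after the identification $\otimes^n \simeq \Gamma^1 \otimes \cdots \otimes \Gamma^1$, and after passing to opposites because $Hom_\PP(-, M)$ is contravariant) the action coming from conjugation by permutation matrices on $M(k^n)$, with no spurious inverse or transpose. I would settle this by testing on the simplest nontrivial case — a transposition $s_i$ and the functor $M = \otimes^n$ itself, where both actions are completely explicit — and then invoke naturality in $M$ to conclude in general, since every $M \in \PP_n$ is a quotient of a sum of copies of $\otimes^n$ (or, more cleanly, since the identification \eqref{EqOmega} is natural in $M$ and $\otimes^n$ is a projective generator considerations let one reduce to the universal case). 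Once the signs/conventions are pinned down on generators of $\mathfrak{S}_n$, equivariance for the whole group follows formally.
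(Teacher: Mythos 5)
Your proposal is correct and takes essentially the same route as the paper: both rest on the $GL_n$-equivariance of the Friedlander--Suslin isomorphism $Hom_\PP(\Gamma^{n,d},M)\simeq M(k^n)$ restricted to the weight $\varpi_n$ space and the subgroup of permutation matrices in $GL_n$, together with the observation that the right $\mathfrak{S}_n$-action on $\otimes^n$ is the action of permutation matrices. The paper simply cites Theorem 2.10 of \cite{FS97} for the $GL_n$-equivariance and asserts the rest is straightforward, whereas you unpack that same equivariance explicitly by chasing the universal divided power through the Yoneda identification.
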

\begin{proof}
Suppose $M$ is of degree $d$. Theorem 2.10 in \cite{FS97} says that the natural isomorphism
$Hom(\Gamma^{n,d},M)\simeq M(k^{ n})$  is a $GL_n$-isomorphism, and  $\otimes^n$ represents the $\omega_n$ weight space. It is straightforward to see that the right action of $\mathfrak{S}_n$ on $\otimes^n$ coincides with the right action of group of permutation matrices in $GL_n$.
\end{proof}

We take the opportunity now to prove a lemma which will be useful later on.  Recall that we defined an operator $\sigma:\E^2\to\E^2$ in Section \ref{Section_gcatonP}.  There is also a natural operator $\tau:\F^2\to\F^2$ which just flips the two factors, i.e. for $M\in\PP$ and $V\in\V$ then $\F^2(M)(V)=M(V)\otimes V\otimes V$ and $\tau$ maps $m\otimes v \otimes w \mapsto m\otimes w \otimes v$.

\begin{lemma}\label{Permutation_Flip_Com}
The operators $\sigma$ and $\tau$ are compatible with respect to the adjunction $(\F,\E,\eta_2,\epsilon_2).$
\end{lemma}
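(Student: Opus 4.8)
The relation ``$\sigma$ and $\tau$ are compatible with respect to the adjunction $(\F,\E,\eta_2,\epsilon_2)$'' means that $\tau$ is the mate (adjoint transformation) of $\sigma$ under the composite adjunction $\F^2\dashv\E^2$, whose unit and counit are $\eta_2^{(2)}:=\E\eta_2\F\circ\eta_2\colon\I\to\E^2\F^2$ and $\epsilon_2^{(2)}:=\epsilon_2\circ\F\epsilon_2\E\colon\F^2\E^2\to\I$; equivalently, that $\sigma$ is the mate of $\tau$. So the plan is to establish the identity
$$
\tau\;=\;\bigl(\epsilon_2^{(2)}\F^2\bigr)\circ\bigl(\F^2\sigma\F^2\bigr)\circ\bigl(\F^2\eta_2^{(2)}\bigr)\colon\F^2\to\F^2 .
$$

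I would prove this by a direct computation from the explicit formulas for $\eta_2$, $\epsilon_2$, $\sigma$ and $\tau$ recalled in Section \ref{Section_gcatonP}. Fix $M\in\PP$ and $V\in\V$ and evaluate the right-hand side on a generator $m\otimes v\otimes w\in M(V)\otimes V\otimes V=\F^2(M)(V)$. Because $\F^2$ sits on the outside of both $\F^2\eta_2^{(2)}$ and $\epsilon_2^{(2)}\F^2$, the two outer tensor slots carrying $v$ and $w$ are transported unchanged and all the action happens in the $M$-slot. Unwinding $\eta_2^{(2)}$ from $(\eta_2)_{N,W}(n)=(0,N(\iota_W)(n)\otimes1)$, one finds that $\F^2\eta_2^{(2)}$ sends the $M$-slot to $M(\iota)(m)\otimes1_1\otimes1_2$ inside $\bigl(M(V\oplus k\oplus k)\otimes(V\oplus k\oplus k)\otimes(V\oplus k\oplus k)\bigr)_{(1,1)}$, where $\iota\colon V\hookrightarrow V\oplus k\oplus k$ is the inclusion and $1_1,1_2$ are the units of the two auxiliary copies of $k$. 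Next, $\F^2\sigma\F^2$ applies $M(\mathrm{sw})\otimes\mathrm{sw}\otimes\mathrm{sw}$, with $\mathrm{sw}\colon(x,a,b)\mapsto(x,b,a)$; since $\mathrm{sw}\circ\iota=\iota$ this fixes $M(\iota)(m)$ but interchanges $1_1$ and $1_2$. Finally $\epsilon_2^{(2)}\F^2$, built from $(\epsilon_2)_{N,W}(n\otimes u)=N(\tilde u)(n)$, contracts one auxiliary $k$-slot against $v$ and the other against $w$; because the $k$-slots have been swapped, the contractions are crossed, $M(\iota)(m)$ is carried back to $m$ (using Lemmas \ref{Polynomial_Functor_Lemma_I} and \ref{Xi_Lemma}), and the output is $m\otimes w\otimes v=\tau(m\otimes v\otimes w)$.

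The argument is purely formal, and the only real obstacle is bookkeeping. One has to (i) keep track of the $GL(k)\times GL(k)$-weight components that proliferate each time $\E$ is applied, using Lemma \ref{Polynomial_Functor_Lemma_I} and Lemma \ref{Xi_Lemma} to see that the embeddings $M(\iota_V)$ and $M(\tilde\xi)_0$ agree and that, after the contractions, only the weight-$(0,0)$ component of $M(V\oplus k\oplus k)$ survives; and (ii) keep straight which of the two auxiliary copies of $k$ is paired with which copy of $V$, since $\eta_2^{(2)}$ creates them one at a time and $\epsilon_2^{(2)}$ removes them one at a time. Once this pairing is pinned down, the transposition $\sigma$ of the two $k$'s corresponds precisely to the transposition $\tau$ of the two $V$'s. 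Alternatively, one may avoid element-chasing: each of the three arrows in the displayed composite is $M$ applied to a morphism of the auxiliary vector spaces (inclusions of $V$, the maps $\tilde v,\tilde w$, the transposition $\mathrm{sw}$) tensored with identity maps of $V$ on the outer slots, and composing these auxiliary morphisms yields exactly the transposition of the two copies of $V$; functoriality of $M$ and naturality then give $\widetilde\sigma=\tau$ directly.
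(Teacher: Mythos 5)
Your proof is correct, but it takes a genuinely different route from the paper's. You interpret compatibility as the mate identity $\tau=\epsilon_2^{(2)}\F^2\circ\F^2\sigma\F^2\circ\F^2\eta_2^{(2)}$ for the composite adjunction $\F^2\dashv\E^2$ and verify it by an explicit element-level computation, carefully tracking the two auxiliary copies of $k$ through the unit, the swap $\mathrm{sw}$, and the two contractions $\tilde v$, $\tilde w$; after $\sigma$ interchanges the auxiliary $1$'s, the two counit contractions cross and land on $m\otimes w\otimes v$. This is a valid, self-contained argument (and your more abstract ``alternative'' sketch at the end is a fair gesture at the same mechanism, though somewhat loose since $\eta_2$ and $\epsilon_2$ create and destroy tensor slots rather than being literally of the form ``$M$ applied to an auxiliary map''). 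The paper instead interprets compatibility as $\mathfrak{S}_2$-equivariance of the adjunction isomorphism $Hom(\F^2M,N)\simeq Hom(M,\E^2N)$ --- an equivalent reformulation of the mate identity --- and then, rather than computing anything, reduces to the case $M=\otimes^n$ (using that every object of $\PP$ is a sum of subquotients of $\otimes^n$'s) and invokes Lemma \ref{Group_Compatibility}, the $\mathfrak{S}_n$-equivariance of the Friedlander--Suslin isomorphism $Hom(\otimes^n,M)\simeq M(k^n)_{\varpi_n}$. The paper's route is shorter and leans on existing machinery; your route is longer and requires more bookkeeping, but it is elementary, avoids the reduction step, and makes the mechanism by which the two $k$-slots translate into the two $V$-slots completely concrete.
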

\begin{proof}
We need to check that given any $M,N\in \PP$  the  isomorphism induced by the adjunction $(\F,\E, \eta_2,\epsilon_2)$
$$Hom(\F^2 M,N)\simeq Hom(M,\E^2N),$$ intertwines the $\mathfrak{S}_2$-action induced by $\tau$ and $\sigma$ respectively. Since any object $M\in \PP$ is a sum of subquotients of functors of the form $\otimes^n$ (see Remark 4.4 in \cite{HTY}),  it is enough to check the lemma for $\otimes^n$, and this follows from Lemma \ref{Group_Compatibility}.
\end{proof}

By Kuhn duality the above lemma implies also the analogous result for other adjunction:

\begin{lemma}\label{Second_Adjunction}
The operators $\sigma$ and $\tau$ are compatible with respect to the adjunction $(\E,\F,\eta_1,\epsilon_1)$.
\end{lemma}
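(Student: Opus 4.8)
The plan is to deduce the statement from Lemma \ref{Permutation_Flip_Com} by applying Kuhn duality, exactly as the sentence preceding the lemma suggests. First I would recall that Kuhn duality $^\sharp:\PP\xrightarrow{\simeq}\PP^{\mathrm{op}}$ is an exact anti-equivalence respecting degrees, and that it interacts nicely with the functors $\E$ and $\F$: since $\F=\bold{T}_I$ is tensoring with $I$ and $I^\sharp\simeq I$, one has a natural isomorphism $\F(M^\sharp)\simeq \F(M)^\sharp$, and dually, because $\E$ is right (and left) adjoint to $\F$ with $\F$ bi-adjoint to $\E$, one gets $\E(M^\sharp)\simeq \E(M)^\sharp$ as well. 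In particular $^\sharp$ carries the adjunction $(\F,\E,\eta_2,\epsilon_2)$ on $\PP$ to the adjunction $(\E,\F,\eta_1,\epsilon_1)$ on $\PP$ (up to the canonical identifications), since the unit of one adjunction becomes the counit of the other under an anti-equivalence. I would make this correspondence of adjunction data precise first, as it is the technical heart of the argument.

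Next I would track the operators $\sigma$ and $\tau$ under $^\sharp$. Both are built from applying $M$ to the ``swap'' map on $V\oplus k\oplus k$ (for $\sigma$) or the swap on the two copies of $I$ (for $\tau$), and Kuhn duality, being functorial and compatible with the monoidal structure and with $\E,\F$, sends $\sigma$ on $\E^2$ to $\sigma$ on $\E^2$ and $\tau$ on $\F^2$ to $\tau$ on $\F^2$ — more precisely, $\sigma_{M^\sharp}$ corresponds to $\sigma_M^\sharp$ under $\E^2(M^\sharp)\simeq \E^2(M)^\sharp$, and likewise for $\tau$. Granting this, the compatibility of $\sigma$ and $\tau$ with respect to $(\E,\F,\eta_1,\epsilon_1)$ — i.e. that the adjunction isomorphism $Hom(\E^2 M, N)\simeq Hom(M,\F^2 N)$ intertwines the two $\mathfrak{S}_2$-actions — is obtained by applying $^\sharp$ to the corresponding statement of Lemma \ref{Permutation_Flip_Com} for $(\F,\E,\eta_2,\epsilon_2)$, evaluated on $N^\sharp$ and $M^\sharp$, and using that $^\sharp$ is an anti-equivalence (so it turns the isomorphism $Hom(\F^2 N^\sharp, M^\sharp)\simeq Hom(N^\sharp, \E^2 M^\sharp)$ into the desired one).

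Alternatively — and this may be cleaner to write — I would bypass the abstract duality bookkeeping and argue directly, mimicking the proof of Lemma \ref{Permutation_Flip_Com}: by Remark 4.4 in \cite{HTY} every $M\in\PP$ is a sum of subquotients of functors $\otimes^n$, so by naturality it suffices to verify the intertwining property on $M=N=\otimes^n$, where it again reduces to Lemma \ref{Group_Compatibility} identifying the $\mathfrak{S}_n$-action on $Hom_\PP(\otimes^n,-)$ with the action of permutation matrices on the corresponding weight space. I expect the main obstacle to be purely organizational: making the identifications $\E(M^\sharp)\simeq\E(M)^\sharp$ and the correspondence of the four units/counits under $^\sharp$ genuinely canonical rather than merely ``obvious,'' so that the diagram chase defining compatibility of $\sigma,\tau$ transports without sign or transposition errors. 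Given that the direct route via $\otimes^n$ sidesteps this entirely, I would present that one as the proof and only remark that it also follows formally from Lemma \ref{Permutation_Flip_Com} by Kuhn duality.
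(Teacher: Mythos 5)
Your first route — deducing the lemma from Lemma~\ref{Permutation_Flip_Com} via Kuhn duality — is exactly the paper's argument: the paper offers no separate proof, just the one-line remark ``By Kuhn duality the above lemma implies the analogous result,'' and the identifications you correctly flag as the technical heart (that $\sharp$ commutes with $\E$ and $\F$, exchanges the two adjunctions, and preserves $\sigma$ and $\tau$) are the ones recorded in the Appendix, most notably Lemma~\ref{Duality_Lemma_1}. On that branch you and the paper agree.

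The direct route you say you would actually present is not, however, a drop-in rerun of the proof of Lemma~\ref{Permutation_Flip_Com}, and the phrase ``again reduces to Lemma~\ref{Group_Compatibility}'' glosses over a genuine asymmetry. In the proof of Lemma~\ref{Permutation_Flip_Com}, setting $M=\otimes^n$ makes $\F^2M=\otimes^{n+2}$ a tensor power, so the $\tau$-action appears as pre-composition by a transposition on the \emph{source} of $Hom_\PP(\otimes^{n+2},N)$ — exactly the $\mathfrak{S}_n$-action that Lemma~\ref{Group_Compatibility} identifies with permutation matrices on a weight space. For the adjunction $(\E,\F,\eta_1,\epsilon_1)$, setting $M=\otimes^n$ instead produces $\E^2(\otimes^n)$, which is a direct sum of $n(n-1)$ copies of $\otimes^{n-2}$ indexed by ordered pairs of positions — not a tensor power — so $Hom_\PP(\E^2\otimes^n,N)$ is not in the representable form Lemma~\ref{Group_Compatibility} governs, and the relevant $\mathfrak{S}_2$-action on $Hom_\PP(\otimes^n,\F^2N)$ is post-composition by $\tau_N$ on the target, not pre-composition on the source. (There is also a small slip: with $M=N=\otimes^n$ both $Hom$-spaces vanish for degree reasons; one needs $\deg M=\deg N+2$.) The direct route can surely be pushed through by explicitly unwinding the adjunction $(\E,\F,\eta_1,\epsilon_1)$ and tracking the pair-indexed decomposition of $\E^2(\otimes^n)$, but that is more work, not less, than the duality transport. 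The paper's choice of Kuhn duality is the cleaner one here, and you should keep it as the primary argument rather than relegating it to a remark.
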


The following lemma will also be used.  

\begin{lemma}
\label{Two_Operators}
Suppose $char(k)\neq2$.  Let $V$ be a polynomial representation of degree 2 of $GL_2$, and let $e$ and $f$ be the standard Chevalley generators of $\mathfrak{gl}_2$. Then on the $\varpi_2$-weight space of $V$ the operator $ef-1$ coincides with the action of
 $$
s= \left(
 \begin{array}{cc}
 0 & 1 \\
 1 & 0 \\
 \end{array}
 \right).
 $$
\end{lemma}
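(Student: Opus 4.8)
The plan is to reduce the statement to an explicit computation on a $2$-dimensional space of highest/lowest weight vectors, using the classification of degree-$2$ polynomial representations of $GL_2$. First I would recall that the polynomial representations of $GL_2$ of degree $2$ are completely reducible (since $\mathrm{char}(k)\neq 2$, there is no extension issue between $S^2$ and $\Lambda^2$, or between $\Gamma^2$ and $\Lambda^2$), so it suffices to check the identity on each irreducible constituent: the simple module $L(2,0)$ (which is $S^2 k^2$) and the simple module $L(1,1)$ (which is $\Lambda^2 k^2$, the determinant). In fact the $\varpi_2 = (1,1)$-weight space is $1$-dimensional in each of these, so the whole claim amounts to comparing two scalars on a line in each case. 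By $k$-linearity and the block decomposition $V = \bigoplus (\text{copies of } L(2,0)) \oplus (\text{copies of } L(1,1))$, it is enough to treat $V = S^2 k^2$ and $V = \Lambda^2 k^2$ separately.

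Next I would carry out the two computations. Write $k^2 = ke_1 \oplus ke_2$ with $e(e_2)=e_1$, $e(e_1)=0$, $f(e_1)=e_2$, $f(e_2)=0$, and let $h = [e,f]$ act as $(1,-1)$ on $(e_1,e_2)$. For $V = \Lambda^2 k^2$: the $\varpi_2$-weight space is spanned by $e_1 \wedge e_2$, the permutation matrix $s$ sends it to $e_2 \wedge e_1 = -e_1 \wedge e_2$, and on the other side $f(e_1\wedge e_2) = 0$ (since $f e_1 = e_2$ gives $e_2 \wedge e_2 = 0$ and $f e_2 = 0$), hence $ef(e_1\wedge e_2) = 0$, so $(ef - 1)(e_1 \wedge e_2) = -e_1 \wedge e_2 = s(e_1\wedge e_2)$. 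For $V = S^2 k^2$: the $\varpi_2$-weight space is spanned by $e_1 e_2$, the permutation matrix $s$ fixes it, and $f(e_1 e_2) = e_2 e_2 = e_2^2$, then $e(e_2^2) = 2 e_1 e_2$, so $ef(e_1 e_2) = 2 e_1 e_2$ and $(ef-1)(e_1 e_2) = e_1 e_2 = s(e_1 e_2)$. In both cases $ef - 1$ and $s$ agree, which proves the lemma. (The hypothesis $\mathrm{char}(k)\neq 2$ is used both for complete reducibility and implicitly to ensure the two simple modules are nonisomorphic; note the computation itself did not need it beyond reducibility.)

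The main (and only mild) obstacle I anticipate is bookkeeping: making sure the normalization of $e,f$ and the identification of the $\varpi_2$-weight space are consistent with the conventions used elsewhere in the paper, and confirming that in characteristic $2$ the statement genuinely fails (e.g. on $\Gamma^2 k^2$ versus $S^2 k^2$, which are not semisimple there) so that the hypothesis is not spurious. One should also double-check that ``degree $2$ polynomial representation of $GL_2$'' is being read as ``a representation in $Pol_2(GL_2)$'' so that Lemma \ref{thm-FS} identifies it with an object of $\PP_2$; this is what licenses the reduction to the two Schur functors $S_{(2)}$ and $S_{(1,1)}$. Beyond that the argument is a direct finite calculation.
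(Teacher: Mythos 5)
Your proof is correct and follows essentially the same route as the paper's: invoke complete reducibility in characteristic $\neq 2$ to reduce to the two isotypic pieces $L(2,0)$ and $L(1,1)$, then verify $ef-1=s$ on the $(1,1)$-weight line of each. The only difference is that you write out the two one-line computations (on $e_1e_2\in S^2k^2$ and $e_1\wedge e_2\in\Lambda^2k^2$) that the paper leaves to the reader, and both are carried out correctly.
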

\begin{proof}
 We denote $V_{\omega_2}$ the $\omega_2$-weight space of $V$.   Since $char(k)\neq2$, any polynomial representation of degree $2$ is semisimple. Hence
$$
V_{\omega_2} \subset  V(1,1) \oplus V(2,0),
$$
where $V(i,j)$ is the isotypic component of $V$
corresponding to  the irreducible representation of $GL_2$ of
highest weight $(i,j)$.  Hence any  $v\in V_{\omega_2}$ decomposes as
$v=v'+v''$, where $v'\in V(1,1)$ and $v''\in V(2,0)$. Now one computes separately on $v'$ and $v''$ that the actions of $ef-1$ and $s$ coincide.
\end{proof}

\subsubsection{$\bold{S}$ is a morphism of $\g$-categorifications}
\label{SubSectionS_morphgcat}

We are now ready to enrich the functor $\Schur$ to a morphism of $\g$-categorifications (cf. Definition \ref{DefgMorphism}).
First we introduce the isomorphisms of functors
$
\zeta_+:\bold{S}\E\to\E'\bold{S}
$
and
$
\zeta_-:\bold{S}\F\to\F'\bold{S}.
$

By definition, the transformation $\zeta_+:\Schur\E\to\E'\Schur$ consists of a family of morphisms $(\zeta_+)_{M,J}:(\Schur\E) (M)(J)\to (\E'\Schur) (M)(J)$, where $M\in\PP$ and $J\in\X$.  Of course this family must satisfy the usual naturality conditions.  Now, by our constructions, $$(\Schur\E) (M)(J) =Hom_{\PP}(\otimes^J, \E(M)),$$ and $$(\E'\Schur) (M)(J)=Hom_{\PP}(\F(\otimes^J), M).$$  Therefore we can simply define $(\zeta_+)_{M,J}$ to be the isomorphism coming from the adjunction $(\F,\E)$.  It is straight-forward to check that indeed the family of morphisms $\{(\zeta_+)_{M,J}:M\in\PP,J\in\X\}$ satisfies the naturality conditions with respect to morphisms in $\PP$ and $\X$.

Similarly, we use the adjunction $(\E,\F)$ to define $\zeta_-$.  Let $M\in\PP$ and $J\in\X$.  Then
$$
(\Schur\F)(M)(J)=Hom_\PP(\otimes^J,\F(M)),
$$
while
\begin{eqnarray*}
(\F'\Schur)(M)(J)&=& \bigoplus_{j\in J}\Schur(M)(J \minus j) \\ &=& \bigoplus_{j\in J} Hom_\PP(\otimes^{J\minus j},M) \\ &=& Hom_\PP(\bigoplus_{j\in J}\otimes^{J\minus j},M) \\ &=& Hom_\PP(\E(\otimes^J),M)
\end{eqnarray*}
Therefore the adjunction $(\E,\F)$ provides an isomorphism, $$(\zeta_-)_{M,J}:(\Schur\F)(M)(J)\to (\F'\Schur)(M)(J).$$  The family of such isomorphisms define the natural isomorphism $\zeta_-$.

Now that we've introduced the data $\zeta_+$ and $\zeta_-$, we are ready to prove that $(\Schur,\zeta_+,\zeta_-):\PP\to\R$ satisfies the conditions making it a morphism of $\g$-categorifications.  The following series of lemmas prove these conditions.

\begin{lemma}\label{Condition_I}
The natural transformations $\zeta_+$ and $\zeta_-$ are compatible with respect to  adjunctions $(\F,\E,\eta_2,\epsilon_2) $ and $(\F',\E',\eta'_2,\epsilon'_2)$, i.e. the following diagram commutes:
$$
\xymatrix{
 & \Schur \ar[dl]_{\Schur \eta_2 } \ar[rd]^{\eta'_2 \Schur} & \\
\Schur \E\F \ar[r]^{\zeta_{+} \F } & \E'\Schur \F \ar[r]^{\E' \zeta_{-}} &   \E'\F'\Schur
}
$$
\end{lemma}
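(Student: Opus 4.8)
The plan is to verify the commutativity of the diagram by tracing through the definitions of $\zeta_+$, $\zeta_-$, $\eta_2$, and $\eta_2'$, and reducing everything to a statement about adjunctions of functors on $\PP$ evaluated on Hom-spaces. The key observation is that all four maps in the diagram are, at the level of $\PP$, built entirely out of the bi-adjunction data between $\E$ and $\F$ on $\PP$, composed with the Schur–Weyl functor. Concretely, for $M \in \PP$ and $J \in \X$, one has canonical identifications $(\Schur\,\E\F)(M)(J) = Hom_\PP(\otimes^J, \E\F(M))$ and $(\E'\F'\Schur)(M)(J) = Hom_\PP(\E(\otimes^J), \F(M)) = Hom_\PP(\F\E(\otimes^J), M)$, where the last step uses the adjunction $(\F,\E)$ applied to $\otimes^J$ as the first variable. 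So the whole perimeter of the diagram becomes a question about maps $Hom_\PP(\otimes^J, M) \to Hom_\PP(\F\E(\otimes^J), M)$.

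The first step is to identify the composite along the bottom of the square. Going $\Schur\,\eta_2$ followed by $\zeta_+\F$ followed by $\E'\zeta_-$: starting from $\psi \in Hom_\PP(\otimes^J, M)$, the map $\Schur\,\eta_2$ post-composes with the unit $\eta_2: \I \to \E\F$ of the adjunction $(\F,\E)$ on $\PP$, giving $(\eta_2)_M \circ \psi \in Hom_\PP(\otimes^J, \E\F(M))$. Then $\zeta_+\F$ is the adjunction isomorphism $Hom_\PP(\otimes^J, \E(\F M)) \cong Hom_\PP(\F(\otimes^J), \F M)$, which by the standard formula sends this to $\F(\psi)$ precomposed appropriately with $\eta_2$ at the level of $\otimes^J$ — that is, by the triangle identity for $(\F,\E)$, it sends $(\eta_2)_M \circ \psi$ to $\F(\psi) \circ (\text{something canonical on }\otimes^J)$. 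Then $\E'\zeta_-$ applies the $(\E,\F)$ adjunction isomorphism in the $\otimes^J$-slot. The second step is to identify the composite along the top: $\eta_2'\Schur$ is post-composition with the unit $\eta_2'$ of the adjunction $(\F',\E')$ on $\R$, which after unwinding through the definitions of $\E'$, $\F'$ and the identification $(\F'\E'\Schur)(M)(J) = Hom_\PP(\F\E(\otimes^J), M)$ becomes precomposition with the unit $\eta_2$ of $(\F,\E)$ at $\otimes^J$.

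The third step, which is where the actual content lies, is to check that these two descriptions agree. Both sides come out to: take $\psi: \otimes^J \to M$, and precompose with the canonical morphism $\F\E(\otimes^J) \to \otimes^J$ built from the triangle identities for the bi-adjunction $(\E,\F,\eta_1,\epsilon_1)$ and $(\F,\E,\eta_2,\epsilon_2)$ on $\PP$. The equality is then a formal consequence of the triangle identities for these two adjunctions together with the fact that $\zeta_+$ and $\zeta_-$ are *defined* as the adjunction isomorphisms — there is no characteristic-$p$ subtlety, no need for semisimplicity, and the operators $X$, $\sigma$ play no role here (those enter in later lemmas). I expect the main obstacle to be purely bookkeeping: carefully keeping straight which adjunction ($(\E,\F)$ versus $(\F,\E)$) is being invoked in which variable, and tracking the direct-sum decompositions defining $\E'$ and $\F'$ on linear species through the chain of natural isomorphisms $\E'\Schur \cong \Schur\,\F^{\vee}$-type identities used to set up $\zeta_\pm$ in the first place. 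Once the perimeter is rewritten as a single statement about precomposition by a canonical endomorphism-building map on $\otimes^J$, commutativity is immediate from naturality of the adjunction isomorphisms in the second variable $M$. As is typical, I would reduce to the case $M = \otimes^n$ (or a subquotient thereof) using Remark 4.4 of \cite{HTY} if any step seems to require more than formal manipulation, though I anticipate the diagram chase goes through at the level of general $M$ directly.
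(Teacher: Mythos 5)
Your overall strategy — evaluate on $M\in\PP$, $J\in\X$, express every corner of the diagram as a $Hom$-space in $\PP$ via the definitions of $\zeta_\pm$, and reduce the commutativity to the triangle identities for the bi-adjunction plus naturality — is exactly the paper's approach. However, the concrete identifications you write down are wrong, and this is not just a typo: it propagates through the rest of the argument.

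The problem is the line ``$(\E'\F'\Schur)(M)(J) = Hom_\PP(\E(\otimes^J), \F(M)) = Hom_\PP(\F\E(\otimes^J), M)$.'' Neither equality is right. Tracing the two arrows along the bottom of the diagram: $\zeta_+\F$ uses the adjunction $(\F,\E,\eta_2,\epsilon_2)$ with $\F(\otimes^J)=\otimes^{J\sqcup *}$ in the first slot, landing in $(\E'\Schur\F)(M)(J)=Hom_\PP(\otimes^{J\sqcup *},\F(M))$; then $\E'\zeta_-$ uses the \emph{other} adjunction $(\E,\F,\eta_1,\epsilon_1)$, landing in $(\E'\F'\Schur)(M)(J)=Hom_\PP(\E(\otimes^{J\sqcup *}),M)=Hom_\PP(\E\F(\otimes^J),M)$. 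So the correct identification has $\E\F(\otimes^J)$, not $\F\E(\otimes^J)$, and there is no intermediate $Hom_\PP(\E(\otimes^J),\F(M))$ anywhere in the chain. (You later also write the target as $\F'\E'\Schur$ instead of $\E'\F'\Schur$, which is the same confusion.) Consequently, the ``canonical morphism'' you invoke at the end should be $\epsilon_1 : \E\F(\otimes^J)\to\otimes^J$ — the counit of $(\E,\F)$, not a unit, and not a map out of $\F\E(\otimes^J)$. With that corrected, the computation is exactly the paper's: $\Schur\eta_2$ then $\zeta_+\F$ sends $f:\otimes^J\to M$ to $\F(f)=f\otimes 1$ (by the triangle identity $\epsilon_2\F\circ\F\eta_2=1$), and $\E'\zeta_-$ turns this into $f\circ(\epsilon_1)_{\otimes^J}$ by naturality of $\epsilon_1$; meanwhile $\eta_2'\Schur(f)=(0,f)$, which under $\E\F(\otimes^J)=\bigoplus_{i\in J\sqcup *}\otimes^{(J\sqcup *)\setminus i}$ is precisely $f$ on the $i=*$ summand, i.e.\ $f\circ(\epsilon_1)_{\otimes^J}$. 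You should also drop the hedge about reducing to $M=\otimes^n$: the diagram chase works for arbitrary $M$ once the bookkeeping is straightened out, and that reduction is not needed here (it \emph{is} used elsewhere, e.g.\ in Lemma \ref{Permutation_Flip_Com}).
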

\begin{proof}
Evaluating the above diagram on $M\in\PP$ and $J\in\X$ we obtain:
$$
\xymatrix{
& Hom_\PP(\otimes^J, M) \ar[dl] \ar[ddd] \\ Hom(\otimes^J, \E\F(M)) \ar[d] & \\ Hom(\otimes^{J\sqcup *},\F(M)) \ar[dr] & \\ & \bigoplus_{i\in J\sqcup *}Hom(\otimes^{(J\sqcup *)\minus i},M)
}
$$

So we need to check that the above diagram commutes. For any morphism $f:\otimes^{ J}\rightarrow M$, applying $\Schur \eta_2$ and then  $\zeta_{+} \F$, we get $\F(f):\otimes^{J\sqcup *}\rightarrow \F(M)$, which is equal to $f\otimes1$. 
Then applying  $\E' \zeta_{-}$ we obtain the map $\eta_2'\Schur(f)$.

\end{proof}

The next lemma can be proved  from the above lemmas using Lemmas \ref{Duality_Lemma_1} and \ref{Duality_Lemma_2} in the Appendix.

\begin{lemma}
\label{Condition_I'}
The natural transformations
$\zeta_+$ and $\zeta_-$ are compatible with respect to the adjunction $(\E,\F,\eta_1,\epsilon_1)$ and $(\E',\F',\eta'_1,\epsilon'_1)$, i.e. the diagram commutes:
$$
\xymatrix{
 & \Schur \ar[dl]_{\Schur \eta_1 } \ar[rd]^{\eta'_1 \Schur} & \\
\Schur \F\E \ar[r]^{\zeta_{+} \E } & \F'\Schur \E \ar[r]^{\F' \zeta_{-}} &   \F'\E'\Schur
}
$$
\end{lemma}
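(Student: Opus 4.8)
The plan is to derive Lemma~\ref{Condition_I'} from Lemma~\ref{Condition_I} by a purely formal argument, exploiting the fact that $(\E,\F)$ is a bi-adjoint pair on $\PP$ (with the two adjunctions $(\E,\F,\eta_1,\epsilon_1)$ and $(\F,\E,\eta_2,\epsilon_2)$) and likewise $(\E',\F')$ on $\R$. The key observation is that $\zeta_+$ and $\zeta_-$ were not defined independently: $\zeta_+:\Schur\E\to\E'\Schur$ was defined as a transpose along the adjunction $(\F,\E)$ on $\PP$ and $(\F',\E')$ on $\R$, while $\zeta_-:\Schur\F\to\F'\Schur$ was defined as a transpose along $(\E,\F)$ and $(\E',\F')$. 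In other words $\zeta_+$ and $\zeta_-$ are \emph{mates} of one another with respect to the two bi-adjunctions. Consequently the triangle of Lemma~\ref{Condition_I'}, which is exactly diagram~(1) of Definition~\ref{DefgMorphism} for the adjunction $(\E,\F)$, should be the mate of the triangle of Lemma~\ref{Condition_I}, which is the analogue of diagram~(1) for the adjunction $(\F,\E)$.

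Concretely I would proceed as follows. First, record the precise mate relations among the units and counits: using the explicit formulas for $\eta_1,\epsilon_1,\eta_2,\epsilon_2$ from Section~\ref{Section_gcatonP} and for $\eta_1',\epsilon_1',\eta_2',\epsilon_2'$ from Section~\ref{Section_gcatonR}, check that $\Schur$ carries each bi-adjunction on $\PP$ to the corresponding one on $\R$; this is essentially built into the very definitions of $\zeta_+$ and $\zeta_-$. Next, invoke Appendix Lemma~\ref{Duality_Lemma_2}, which identifies $\zeta_-$ with the mate of $\zeta_+$ obtained by transposing along $(\E,\F)$ on the source and $(\E',\F')$ on the target, i.e.\ it reconciles the two a~priori different recipes for the ``other'' natural isomorphism. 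Finally, apply Appendix Lemma~\ref{Duality_Lemma_1}: given a functor together with bi-adjoint pairs on source and target and mate-related natural isomorphisms $\zeta_+,\zeta_-$, the commutativity of the diagram~(1)-type triangle for one of the two unit/counit pairs implies its commutativity for the other. Feeding in the commutativity of the $\eta_2$-triangle from Lemma~\ref{Condition_I} then yields the commutativity of the $\eta_1$-triangle, which is the assertion of Lemma~\ref{Condition_I'}.

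The main obstacle is the middle step: unwinding the mate calculus so that the diagram of Lemma~\ref{Condition_I'} is literally identified with the mate of the diagram of Lemma~\ref{Condition_I}. This amounts to chasing the two triangles simultaneously, inserting the triangle (zig-zag) identities for both $\E\dashv\F$ and $\F\dashv\E$ on $\PP$ and their counterparts on $\R$, and checking that the naturality squares for $\zeta_+$ and $\zeta_-$ assemble as prescribed by Appendix Lemmas~\ref{Duality_Lemma_1} and~\ref{Duality_Lemma_2}. No further information about the specific functors $\E,\F,\E',\F'$ is needed beyond their bi-adjointness and the $\Schur$-compatibility of the bi-adjunctions; once the bookkeeping of units and counits is set up correctly the commutativity is forced.
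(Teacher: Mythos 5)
Your high-level plan—deduce Lemma~\ref{Condition_I'} from Lemma~\ref{Condition_I} via Appendix Lemmas~\ref{Duality_Lemma_1} and~\ref{Duality_Lemma_2}—matches the paper's intent, but your characterization of what those appendix lemmas actually say is wrong, and the ``mate calculus'' mechanism you substitute in their place does not work. Lemma~\ref{Duality_Lemma_1} says nothing about transposing a diagram from one adjunction of a bi-adjoint pair to the other; it records how the \emph{contravariant Kuhn duality} $\sharp:\PP\to\PP^{\mathrm{op}}$ (and $\sharp'$ on $\R$) interchanges the unit/counit pairs, sending $\eta_2\leftrightarrow\epsilon_1$ and $\epsilon_2\leftrightarrow\eta_1$. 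Lemma~\ref{Duality_Lemma_2} does not ``reconcile $\zeta_-$ as the mate of $\zeta_+$''; it says each of $\zeta_+$ and $\zeta_-$ is individually compatible with $\sharp$, $\sharp'$ and the isomorphism $\sharp'\Schur\simeq\Schur\sharp$. The intended argument is: apply the Kuhn dualities to the commuting triangle of Lemma~\ref{Condition_I}; use Lemma~\ref{Duality_Lemma_1} to convert the $\eta_2,\eta_2'$ edges into $\epsilon_1,\epsilon_1'$ edges, and use Lemma~\ref{Duality_Lemma_2} to see that the $\zeta_\pm$ edges carry over to themselves; this produces the counit analogue of the triangle in Lemma~\ref{Condition_I'}, which is equivalent to the unit form by the usual zig-zag manipulation (cf.\ Lemma~5.3 of \cite{CR}).

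The deeper problem with your proposal is the claim that the compatibility of $\Schur$ with both bi-adjunctions ``is essentially built into the very definitions of $\zeta_+$ and $\zeta_-$.'' It is not. By construction, $\zeta_+$ is defined using the $(\F,\E)$ adjunction and $\zeta_-$ using the $(\E,\F)$ adjunction; the compatibility expressed by Lemma~\ref{Condition_I} (the $\eta_2$ triangle) is the one that falls out of the definitions. The compatibility asserted by Lemma~\ref{Condition_I'} (the $\eta_1$ triangle) is a genuinely separate statement, because a bi-adjoint pair $(\E,\F)$ carries two independent adjunctions, and agreement for one does \emph{not} formally imply agreement for the other. If it did, Lemma~\ref{Condition_I'} would not need to be a lemma. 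The Kuhn duality is precisely the extra structure that makes the transfer possible; without it, the ``purely formal argument'' you describe is circular, since you are assuming what you need to prove.
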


\begin{lemma}\label{Condition_II}
The natural transformation $\zeta_+$ is compatible with $X$ and $X'$, i.e. the following diagram commutes:
$$\xymatrix{
\Schur \E \ar[r]^{\zeta_+} \ar[d]_{\Schur X}  & \E'\Schur \ar[d]^{X'\Schur}\\
\Schur \E \ar[r]^{\zeta_+}  &    \E'\Schur
}
$$
\end{lemma}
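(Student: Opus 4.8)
The plan is to reduce the statement to a computation on the standard objects $\otimes^J$, exactly as was done for Lemma \ref{Permutation_Flip_Com}, and then to identify both $X$ and $X'$ with a single familiar operator on those objects. First I would unwind the definitions: on $\Schur\E(M)(J)=Hom_\PP(\otimes^J,\E(M))\simeq Hom_\PP(\F(\otimes^J),M)$ the map $\zeta_+$ is just the adjunction isomorphism for $(\F,\E)$, and $\Schur X$ is post-composition with $X_{M}\in End(\E)$. On the other side, $\E'\Schur(M)(J)=Hom_\PP(\E(\otimes^J),M)$ (using $\F(\otimes^J)=\E(\otimes^J)=\bigoplus_{j\in J}\otimes^{J\minus j}$ as in the construction of $\zeta_-$; more precisely $\F(\otimes^J)(V)=\otimes^J(V)\otimes V$), and $X'\Schur$ is pre-composition with the Jucys–Murphy morphism $X'_{\otimes^J}$ acting on $\E'(\otimes^J)$. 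So the lemma amounts to the assertion that, under the adjunction, the operator $X$ on the $\E$-side of $Hom_\PP(\F(\otimes^J),M)$ corresponds to the transpose of $X'_{\otimes^J}$ on $\E(\otimes^J)$; equivalently, it suffices to prove the single identity of natural transformations
\begin{equation*}
\zeta_{+,\,\otimes^J}\circ\bigl(\Schur X\bigr)_{\otimes^J} = \bigl(X'\Schur\bigr)_{\otimes^J}\circ \zeta_{+,\,\otimes^J}
\end{equation*}
for $M=\otimes^J$ (and then invoke Remark 4.4 of \cite{HTY}, that every $M\in\PP$ is a subquotient-sum of tensor powers, together with naturality of all the maps involved).

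The key step is the following local comparison. Fix $J$ and $j\in J$; restricting $\E(\otimes^J)=\bigoplus_{j}\otimes^{J\minus j}$ to the $j$-th summand, the Jucys–Murphy morphism $X'_{\otimes^J}$ restricted there is $\sum_{j'\in J}\otimes^J(s_{j',*})$ evaluated in the species picture — but it is cleaner to pass through the $GL_n$-description of Section \ref{Section_gcatonP} and Lemma \ref{Group_Compatibility}. Using $Hom_\PP(\otimes^n,M)\simeq M(k^n)_{\varpi_n}$ and $\E(M)(V)=M(V\oplus k)_1$, I would show that on the weight space $M(k^{n}\oplus k)_{(\varpi_n,1)}$ the normalized split Casimir $X_V=\sum_i x_{n+1,i}x_{i,n+1}-n$ acts by the sum of transpositions $\sum_i s_{i,n+1}$ — this is precisely the content one expects from Lemma \ref{Two_Operators} applied pairwise to the coordinates $i$ and $n+1$ (each pair is a degree-$2$ $GL_2$-situation, and $x_{n+1,i}x_{i,n+1}-1$ restricted to the $\varpi_2$-weight space is the transposition $s$). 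Simultaneously, by Lemma \ref{Group_Compatibility} the Jucys–Murphy operator $X'$ on $\E'(\otimes^J)$ is identified with the same sum of transpositions acting on $(\otimes^J)(k^{n}\oplus k)_{(\varpi_n,1)}$ by permutation of tensor factors. Since both $X$ and $X'$ are thereby realized as one and the same operator $\sum_i s_{i,n+1}$ on the common space, and since $\zeta_+$ is literally the adjunction identification of these spaces, the square commutes.

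The main obstacle, and where I would spend the most care, is the precise bookkeeping in the identification $X_V \leftrightarrow \sum_i s_{i,n+1}$: one must be sure the normalization constant $-n$ is exactly absorbed by the $n$ summands each contributing a $-1$ (this is why $char(k)\neq2$ is needed — Lemma \ref{Two_Operators} fails otherwise, and indeed Theorem \ref{HTYthm} already excludes $p=2$), and one must check that the $GL(k)$-weight-one projection and the adjunction isomorphism are compatible in the way claimed (this is where Lemmas \ref{Polynomial_Functor_Lemma_I} and \ref{Xi_Lemma} get used to control the weight-$0$ versus weight-$1$ parts). Everything else — naturality in $J$, naturality in $M$, and the passage from $\otimes^J$ to general $M$ — is formal, following the template of Lemma \ref{Permutation_Flip_Com}.
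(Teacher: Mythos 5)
Your proposal follows essentially the same route as the paper's proof: translate via Lemma \ref{Group_Compatibility} to an identification $\Schur(M)(J\sqcup *)\cong M(k^{n+1})_{\varpi_{n+1}}$, then compare the normalized split Casimir $\sum_i x_{n+1,i}x_{i,n+1}-n$ with the Jucys--Murphy element $\sum_i (i,n+1)$ term-by-term using Lemma \ref{Two_Operators}. One remark: your preliminary reduction to the case $M=\otimes^J$ is unnecessary (the paper works directly with arbitrary $M$, since the comparison of operators happens on the weight space $M(k^{n+1})_{\varpi_{n+1}}$ for arbitrary $M$), and you have a notational slip in the unwinding step --- $\E'\Schur(M)(J)=Hom_\PP(\otimes^{J\sqcup *},M)=Hom_\PP(\F(\otimes^J),M)$, not $Hom_\PP(\E(\otimes^J),M)$, and $\F(\otimes^J)\ne\E(\otimes^J)$ (the latter is $\bigoplus_{j\in J}\otimes^{J\minus j}$, which appears in the construction of $\zeta_-$, not $\zeta_+$). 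Neither issue affects the core argument, which matches the paper.
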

\begin{proof}
Evaluating the above diagram on $M\in\PP$ and $J\in\X$ gives:
$$\xymatrix{Hom(\otimes^{ J}, \E(M) ) \ar[r] \ar[d] & Hom(\otimes^{ J\sqcup *}, M) \ar[d]\\
Hom(\otimes^{ J}, \E(M)) \ar[r] & Hom(\otimes^{J\sqcup *},M)
}
$$
By Lemma \ref{Group_Compatibility}, we are reduced to show that the split Casimir operator $\sum_{i=1}^{n}x_{n+1, i}x_{i, n+1}-n$ coincides with the Jucy-Murphy element $\sum_{i=1}^{n} (i, n+1) $ on the space $M(k^{n+1})_{\varpi_{n+1}}$. It suffices to check that $(i,n+1)$ coincides with $x_{n+1, i}x_{i, n+1}-1$ on the  $M(k^{n+1})_{\varpi_{n+1}}$; this is precisely the content of Lemma \ref{Two_Operators}.
\end{proof}

The next compatibility follows from Lemma \ref{Group_Compatibility}.

\begin{lemma}\label{Condition_III}
The natural transformation $\zeta_+$ is compatible with $\sigma$ and $\sigma'$, i.e.
the following diagram commutates:
$$\xymatrix{
\Schur \E^2 \ar[r]^{\zeta_+\E} \ar[d]^{\Schur \sigma} & \E' \Schur \E \ar[r]^{\E' \zeta_+} & \E'^2\Schur \ar[d]^{\sigma' \Schur}\\
\Schur \E^2 \ar[r]^{\zeta_+\E} & \E'\Schur \E \ar[r]^{\E' \zeta_+}  &\E'^2 \Schur
 }$$
\end{lemma}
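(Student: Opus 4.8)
The plan is to verify the square after evaluating all four natural transformations on an arbitrary $M\in\PP$ and $J\in\X$; this suffices, since a diagram of natural transformations commutes precisely when all of its components do.

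First I would record the identifications needed. From $\F(N)=N\otimes I$ one gets canonical isomorphisms $\F(\otimes^J)\cong\otimes^{J\sqcup *}$, and hence $\F^{2}(\otimes^J)\cong\otimes^{J\sqcup *\sqcup *}$, obtained by adjoining one, resp. two, new tensor factors. Under these the top row of the square reads $Hom_\PP(\otimes^J,\E^{2}M)\to Hom_\PP(\otimes^{J\sqcup *},\E M)\to Hom_\PP(\otimes^{J\sqcup *\sqcup *},M)$; and since $\zeta_+$ is, by construction, the isomorphism coming from the adjunction $(\F,\E,\eta_2,\epsilon_2)$, each of these two arrows is the corresponding hom-isomorphism. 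Because composing two adjunctions composes their hom-isomorphisms, the evaluated horizontal composite $(\E'\zeta_+)\circ(\zeta_+\E)$ is the isomorphism $\Theta_{M,J}\colon Hom_\PP(\otimes^J,\E^{2}M)\xrightarrow{\sim}Hom_\PP(\F^{2}\otimes^J,M)$ attached to the adjunction $(\F^{2},\E^{2})$.

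Next I would unwind the two vertical arrows. The left one, $\Schur\sigma$, acts on $Hom_\PP(\otimes^J,\E^{2}M)$ by postcomposition with $\sigma_M\in End(\E^{2}M)$. The right one, $\sigma'\Schur$, is $\sigma'_{\Schur(M),J}=\Schur(M)(\sigma'_J)$, which on $Hom_\PP(\otimes^{J\sqcup *\sqcup *},M)$ is precomposition with the permutation of $\otimes^{J\sqcup *\sqcup *}$ interchanging the two auxiliary tensor factors. The key observation — and it is a direct unwinding of the definitions of $\F$ and $\tau$ — is that, under the identification $\F^{2}\otimes^J\cong\otimes^{J\sqcup *\sqcup *}$, the flip $\tau_{\otimes^J}\in End(\F^{2}\otimes^J)$ \emph{is} precisely this permutation. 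Granting it, commutativity of the evaluated square is exactly the statement that $\Theta_{M,J}$ intertwines postcomposition by $\sigma_M$ with precomposition by $\tau_{\otimes^J}$, which is the content of Lemma \ref{Permutation_Flip_Com} (compatibility of $\sigma$ and $\tau$ with respect to the adjunction $(\F,\E,\eta_2,\epsilon_2)$) applied to the objects $\otimes^J$ and $M$. All the identifications used are natural in $M$ and $J$, so the original square of natural transformations commutes.

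I expect the only real difficulty to be bookkeeping: checking that $(\E'\zeta_+)\circ(\zeta_+\E)$ genuinely is the $(\F^{2},\E^{2})$-adjunction isomorphism with no spurious permutation interposed, and that $\tau_{\otimes^J}$ matches the auxiliary-swap permutation of $\otimes^{J\sqcup *\sqcup *}$ — neither point is deep, but both require care about how tensor slots are indexed. If one prefers to bypass Lemma \ref{Permutation_Flip_Com}, the same conclusion follows directly from Lemma \ref{Group_Compatibility} in the style of the proof of Lemma \ref{Condition_II}: writing $n=|J|$, identify both $\Schur\E^{2}(M)(J)$ and $\E'^{2}\Schur(M)(J)$ with the weight space $M(k^{n+2})_{\varpi_{n+2}}$ via $(\ref{EqOmega})$; under this identification $\Schur\sigma$ and $\sigma'\Schur$ are both realized by the permutation matrix transposing the last two coordinates, hence agree. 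This is the approach indicated by the comment preceding the lemma.
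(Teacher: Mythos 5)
Your proposal is correct. The paper offers no written proof for this lemma — only the one-line remark preceding its statement, "The next compatibility follows from Lemma \ref{Group_Compatibility}," which corresponds to the alternative you sketch at the end (identify both $\Schur\E^2(M)(J)$ and $(\E')^2\Schur(M)(J)$ with $M(k^{n+2})_{\varpi_{n+2}}$ via (\ref{EqOmega}), then observe via Lemma \ref{Group_Compatibility} that $\Schur\sigma$ and $\sigma'\Schur$ are each realized by the permutation matrix transposing the last two coordinates). Your primary argument instead routes through Lemma \ref{Permutation_Flip_Com}: you verify that the horizontal composite $(\E'\zeta_+)\circ(\zeta_+\E)$ is the $(\F^2,\E^2)$-adjunction isomorphism, that the left vertical is postcomposition by $\sigma_M$ and the right vertical is precomposition by the auxiliary-swap automorphism of $\otimes^{J\sqcup*\sqcup*}$, and that under $\F^2\otimes^J\cong\otimes^{J\sqcup*\sqcup*}$ this swap is precisely $\tau_{\otimes^J}$, whence the square commutes by the adjunction-compatibility of $\sigma$ and $\tau$. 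Since Lemma \ref{Permutation_Flip_Com} is itself proved from Lemma \ref{Group_Compatibility}, the two routes are equivalent in substance; yours has the virtue of isolating which adjunction identity carries the load and of making explicit the bookkeeping (identifying $\tau_{\otimes^J}$ with the auxiliary swap, and checking that no extraneous permutation enters the composed adjunction) that the paper's one-line remark leaves entirely to the reader.
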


\begin{theorem}
\label{Thm_Smorphgcat}
The data $(\Schur,\zeta_+,\zeta_-)$ is a morphism of $\g$-categorifications from $\PP$ to $\R$, categorifying the standard projection $\pi:B \twoheadrightarrow V(\Lambda_0)$.
\end{theorem}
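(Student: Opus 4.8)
The plan is to assemble the theorem from the lemmas already proved in Section~\ref{SubSectionS_morphgcat}, and then to verify the statement about the Grothendieck groups separately. The structure of Definition~\ref{DefgMorphism} requires three pieces of data --- the functor, the weight-compatibility condition~(\ref{Additional_Condition_Morphism}), and the isomorphisms $\zeta_+,\zeta_-$ --- together with the commutativity of three diagrams. The functor is $\Schur$ itself, and $\zeta_+,\zeta_-$ have already been constructed using the two adjunctions. The three diagrams are precisely Lemmas~\ref{Condition_I'}, \ref{Condition_II}, and \ref{Condition_III}: the first diagram in Definition~\ref{DefgMorphism} (the unit-compatibility triangle for the adjunction $(E,F)=(\E,\F)$) is Lemma~\ref{Condition_I'}; the second (compatibility of $\zeta_+$ with $X,X'$) is Lemma~\ref{Condition_II}; the third (compatibility of $\zeta_+$ with $\sigma,\sigma'$) is Lemma~\ref{Condition_III}. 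So the only genuinely new thing to check here is condition~(\ref{Additional_Condition_Morphism}), namely that $\Schur(\PP_\omega)\subset\R_\omega$.

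For the weight-compatibility, recall that the block decomposition of $\PP$ (via Donkin) and of $\R$ (via the Nakayama conjecture) are both indexed by pairs $(d,\tilde\lambda)$ with $\tilde\lambda$ a $p$-core of a partition of $d$, and in both cases the associated weight $\omega\in P$ is read off by the same combinatorial recipe (Section~5.3 of~\cite{HTY}). First I would reduce to checking the claim on the level of simple objects, or even just on the Schur functors $S_\lambda$: since $\Schur(S_\lambda)$ is (up to the equivalence $\R_n\cong Rep(\mathfrak{S}_n)$) the Specht module $S^\lambda$ --- this follows from classical Schur--Weyl duality, $\Schur(S_\lambda)([n])=Hom_\PP(\otimes^n,S_\lambda)\simeq S^\lambda$ for $n=|\lambda|$ --- the functor $\Schur$ sends the block of $\PP$ containing $S_\lambda$ into the block of $\R$ containing $S^\lambda$, and these blocks carry the same pair $(d,\tilde\lambda)$ by definition of the block combinatorics. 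Hence they carry the same weight $\omega$, which is exactly~(\ref{Additional_Condition_Morphism}). One should note $\Schur$ need not be exact, so one cannot argue purely on the level of $K_0$; but since $\Schur$ is additive and takes each $S_\lambda$ into the correct block, and every object of $\PP_\omega$ is built from simples lying in $\PP_\omega$, additivity of $\Schur$ together with the fact that $\Schur$ of a simple lies in a single block suffices. (If $\Schur$ of a simple fails to be concentrated in one block one must instead track the decomposition matrices; but the compatibility of the two parametrizations makes this routine.)

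Finally, for the claim that $\Schur$ categorifies the standard projection $\pi:B\twoheadrightarrow V(\Lambda_0)$: by Corollary~\ref{Morphism_Representation}, a morphism of $\g$-categorifications induces a $\g$-module map $[\Schur]:K_0(\PP)\to K_0(\R)$ --- here $\Schur$ is exact enough on the objects that matter, or one passes to $R\Schur$ if necessary. Under the identifications $K_0(\PP)\otimes\mathbb C\cong B$ (Fock space) and $K_0(\R)\otimes\mathbb C\cong V(\Lambda_0)$ (basic representation), $[\Schur]$ is a $\g$-equivariant surjection $B\to V(\Lambda_0)$; since $V(\Lambda_0)$ occurs in the semisimple $\g$-module $B$ with multiplicity one, any nonzero $\g$-map $B\to V(\Lambda_0)$ is, up to scalar, the standard projection $\pi$. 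To pin down the scalar, evaluate on the highest weight vector: $[\Schur(S_\emptyset)]=[\Schur(k)]$ corresponds to the trivial $\mathfrak S_0$-representation, i.e. the generator $1\in V(\Lambda_0)$, and $\pi(1)=1$, so $[\Schur]=\pi$ exactly.

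\emph{Main obstacle.} The one step that is not purely bookkeeping is the weight-compatibility~(\ref{Additional_Condition_Morphism}): one must be careful that the two independently-defined block parametrizations (Donkin for $\PP$, Nakayama for $\R$) are matched by $\Schur$ via the \emph{same} assignment of weights, and that $\Schur$ interacts correctly with simples versus the generating Schur functors given that $\Schur$ is not exact. Everything else is a direct appeal to Lemmas~\ref{Condition_I'}--\ref{Condition_III} and to the multiplicity-one statement for $V(\Lambda_0)$ inside $B$.
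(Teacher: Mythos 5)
Your outline of the structural part is right: Lemmas~\ref{Condition_I'}, \ref{Condition_II}, \ref{Condition_III} give the three diagrams, and the multiplicity-one argument pins down $[\Schur]$ as the standard projection. The paper also uses $\Schur(k)=U$ (the tensor units) rather than evaluating on $S_\emptyset$, but this is the same point.

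The genuine gap is in your treatment of the weight-compatibility condition~(\ref{Additional_Condition_Morphism}), and it stems from a factual error. You repeatedly caveat that ``$\Schur$ need not be exact'' and then try to route around this with ``additivity.'' In fact $\Schur$ \emph{is} exact: $\Schur(M)(J)=Hom_\PP(\otimes^J,M)$ and $\otimes^J\cong(\Gamma^1)^{\otimes|J|}$ is one of the Friedlander--Suslin projective generators of $\PP_{|J|}$, so $Hom_\PP(\otimes^J,-)$ is exact. Once you know this, the reduction to simple objects is immediate: $\Schur$ carries a composition series of $M\in\PP_\omega$ to a filtration of $\Schur(M)$ with subquotients among the $\Schur(L_\lambda)$, so it suffices to check the condition on simples. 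Your substitute argument --- that additivity plus ``$\Schur$ of a simple lies in a single block'' suffices --- does not work: additivity controls direct sums, not extensions, and block membership of $\Schur(M)$ is a statement about all composition factors, not about indecomposable summands. Without exactness you cannot conclude. (Your proposed detour to $R\Schur$ is harmless but unnecessary, and it signals the same confusion.)

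Related to this: your key computational input is that $\Schur(S_\lambda)$ is the Specht module $S^\lambda$, reducing the block-matching to Specht modules. The paper instead works directly with the simples $L_\lambda=\mathrm{soc}\,S_\lambda$ and cites Green's Theorem~6.4b: $\Schur(L_\lambda)$ is zero unless $\lambda$ is column $p$-restricted, in which case it is the irreducible $\mathfrak{S}_n$-module attached to $\lambda$. Since the block decompositions on both sides (Donkin for $\PP$, Nakayama for $\R$) are \emph{defined} in terms of composition factors, i.e.\ simples, the paper's route matches simples to simples and is the one that closes the argument cleanly. Your Specht-module route is not wrong --- it is classical Schur--Weyl --- but in characteristic $p$ the Schur functors $S_\lambda$ are not simple, so you would still need to pass from $S_\lambda$ to $L_\lambda$ (e.g.\ using left-exactness and the fact that $L_\lambda\hookrightarrow S_\lambda$), at which point you are back to the paper's argument. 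As written, your proof switches between ``simples'' and ``Schur functors'' without committing, and the crucial step (why block preservation on simples implies it on all objects) is not justified.
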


\begin{proof}
By Lemmas \ref{Condition_I'}, \ref{Condition_II}, and  \ref{Condition_III}, to prove that $(\Schur,\zeta_+,\zeta_-)$ is a morphism of $\g$-categorifications, it remains to show that  $\Schur:  \PP_\omega\to \R_\omega$.

For $\lambda \in \wp$, let $L_\lambda$ be the socle of $S_\lambda$.  Up to isomorphism, the polynomials functors $L_\lambda$ are the simple objects in $\PP$ (cf. Theorem 4.8 in \cite{HTY} and references therein). Therefore it suffices to show that for any $L_\lambda \in \PP_\omega$, $\Schur(L_\lambda)\in \R_\omega$.  Now suppose $\lambda$ is a partition of $n$.  Then $S_\lambda$, and hence $L_\lambda$, are homogenous of degree $n$.  Hence by (\ref{EqOmega}), $\Schur(L_\lambda)\cong L_\lambda(k^n)_{\omega_n}$ as $\mathfrak{S}_n$-modules.  Then by \cite[Theorem 6.4b]{G} $\Schur(L_\lambda)=0$ unless $\lambda$ is ``column $p$-restricted'', i.e. if $\lambda=(\lambda_1,\lambda_2,...)$ then $0\leq\lambda_i-\lambda_{i+1}<p$ for all $i$.  In the case that $\lambda$ is column $p$-restricted then $\Schur(L_\lambda)\cong L_\lambda'$, the irreducible $\mathfrak{S}_n$-module indexed by this partition.

Now recall that $\PP_\omega$ (resp. $\R_\omega$) is the block consisting of objects whose composition factors lie $\omega$. (Here weights of $\g$ correspond to certain classes of partitions.  See Section 5.3 of \cite{HTY} for the relevant combinatorics.)  So if $L_\lambda \in \PP_\omega$ then $\lambda \in \omega$, and therefore $\Schur(L_\lambda) \in \R_\omega$. It follows that  $\Schur(\PP_\omega)\subset \R_\omega$. This shows that $\Schur$ is a morphism of $\g$-categorifications.

By Proposition \ref{Morphism_Categorification}, this implies that $[\Schur]$ is a morphism of $\g$-modules.  Moreover, since $\Schur(k)=U$, where $k$ and $U$ are units in tensor categories $\PP$ and $\R$ respectively. Since the basic representation, as a $\g$-module, occurs with multiplicity one in $B$, we conclude that $[\Schur]$ must categorify the standard projection.

\end{proof}

\begin{remark}
Let $Proj \R$ be the subcategory of $\PP$ consisting of projective objects.
Let $\Schur^*: Proj \R\to \PP$ be the adjoint functor of $\Schur$.  By the machinery developed in Proposition \ref{Adjunction_Morphism_Categorification}, it follows that $\Schur^*$ is a morphism of $\g$-categorifications.  $\Schur^*$ categorifies the embedding from $V(\Lambda_0)$ to $B$. For details see \cite{H}.
\end{remark}

\section{Heisenberg categorification}
\label{SectionHCat}
In this section we undertake the study of Heisenberg categorifications on $\PP$.

In Section \ref{SectionDefHCat} we recall Khovanov's category $\HH$ which is used to define a notion of ``strong'' $H_\Z$-categorification.   This allows us to define morphisms of $H_\Z$-categorifications.  We also record a notion of ``weak'' $H_\Z$-categorification.

In Section \ref{SectionHCatonP} we construct a functor $\HH\to End_L(\PP)$, where $End_L(\PP)$ is the category of exact endo-functors on $\PP$ admitting a left adjoint.  This leads to two theorems; the first is a weak $H_\Z$-categorification of the Fock space representation (which holds for all $p\geq0$), and the second is an $H_\Z$-categorification on $\PP$ when $p=0$.

In Section \ref{SectionHCatonR} we briefly recall Khovanov's $H_\Z$-categorification on $\R$ when $p=0$.

We then show in Section \ref{SectionSmorphHCat} that the Schur-Weyl duality functor is an equivalence of $H_\Z$-categorifications.  This is expected, since $\PP$ and $\R$ are naturally equivalent when $p=0$.  Nevertheless, the fact that $\Schur$ preserves all the higher structure endowed on these categories by the $H_\Z$-categorifications is nontrivial.
\subsection{Definition of $H$-categorification}
\label{SectionDefHCat}

In \cite{Kh} Khovanov defines a (conjectural) categorification of an integral form of the Heisenberg algebra $H$.  This is a monoidal category $\HH$  which is additive and linear over a field of characteristic zero, and  there is an injective homomorphism from $H_{\Z}$ to $K_0(\HH)$.  (Recall that $H_\Z$ is the integral form of $H$ defined by generators $e_r,h_r^* $ and relations as in Example \ref{example_H}(2).)  Conjecturally this map is actually an isomorphism.

Let us now recall briefly Khovanov's constructions.  Following his work, we first introduce a preliminary category $\HH'$, and $\HH$ will be defined as the Karoubi envelope of $\HH'$.

The category $\HH'$ has two generating objects, $Q_+$ and $Q_-$.  Any object of $\HH'$ is a sum of products of these two objects.  Such a product is denoted $Q_\varepsilon$, where $\varepsilon$ is a sequence of $+'s$ and $-'s$.  The object indexed by the empty sequence $\emptyset$  serves as a unit $1=Q_\emptyset\in \HH'$.

The space of morphisms $Hom_{\HH'}(Q_\varepsilon,Q_{\varepsilon'})$ is the vector space generated by suitable planar diagram, modulo local relations.  For the graphical calculus see \cite{Kh}; we opt to define the morphisms algebraically.

The identity morphism of an object $Q_\varepsilon$ is denoted by $Id$.   In addition to the identity morphisms, we introduce distinguished morphisms:
\begin{eqnarray*}
&\eta_1& \in Hom_{\HH'}(1,Q_{+-}) , \epsilon_1 \in Hom_{\HH'}(Q_{-+},1), \\ &\eta_2& \in Hom_{\HH'}(1,Q_{-+}) ,  \epsilon_2 \in Hom_{\HH'}(Q_{+-},1), \\ &\tau& \in Hom_{\HH'}(Q_{++},Q_{++}).
\end{eqnarray*}
The space of all morphisms is generated by the identity morphisms along with the distinguished morphisms, such that the following \emph{local relations} hold:
\begin{enumerate}
\item[(R1)] $\epsilon_1\circ \eta_2=Id$
\item[(R2)] $\tau\circ \tau=Id$
\item[(R3)] $ Q_+\tau\ \circ\tau Q_+\circ Q_+\tau=\tau Q_+\circ Q_+\tau\circ\tau Q_+$
\item[(R4)] $\epsilon_1 Q_+\circ Q_- \tau\circ \eta_2Q_+=0$
\end{enumerate}
To describe the final two conditions set:
$$
\xymatrix{\triangle= Q_{+-} \ar[r]^{\eta_2 Q_{+-}} & Q_{-++-} \ar[r]^{Q_-\tau Q_-}&Q_{-++-} \ar[r]^{Q_{-+}\epsilon_2} & Q_{-+}\\
\square = Q_{-+} \ar[r]^{ Q_{-+}\eta_1} & Q_{-++-} \ar[r]^{Q_-\tau Q_-}& Q_{-++-} \ar[r]^{\epsilon_1 Q_{+-}}&Q_{+-}}
$$
Then
\begin{enumerate}
\item[(R5)] $\square \circ \triangle=Id$
\item[(R6)] $\triangle \circ \square=Id-\eta_2\circ \epsilon_1$
\end{enumerate}
The category $\HH=Kar(\HH')$ is the Karoubi envelope of $\HH'$.  Recall that objects of $\HH$ are pairs $(A,e)$, where $A\in\HH'$ and $e\in Hom_{\HH'}(A,A)$ is an idempotent.  The morphisms $f:(A,e)\to (A',e')$ are morphisms $f:A\to A'$ in $\HH'$ such that $f=f\circ e=e' \circ f$.  Notice that $e:(A,e)\to (A,e)$ is the identity morphism of $(A,e) \in \HH$.

Let $+^n$ (resp. $-^n$) denote the sequence of $n$ $+'s$ (resp. $-'s$).  There is an action of $\mathfrak{S}_n$ on $Q_{+^n}$.  Indeed, the simple transposition $\sigma_i$ acts by $Q_{+^{n-i-1}} \tau Q_{+^{i-1}}$.  By (R2) and (R3) this defines a representation of $\mathfrak{S}_n$ on $End_{\HH'}(Q_{+^n})$.  Similarly, there is an action of $\mathfrak{S}_n$ on $Q_{-^n}$.  We define $\sigma \in Hom_{\HH}(Q_{--},Q_{--})$ by the composition
$$
\xymatrix{Q_{--} \ar[rr]^{Q_-\eta_2\eta_1 Q_{-}} && Q_{--++--} \ar[rr]^{Q_{--}\tau Q_{--}}&&Q_{--++--} \ar[rr]^{Q_{-}\epsilon_1\epsilon_2Q_-} && Q_{--}}
$$
It's clear that $\sigma$ satisfies relations analogous to (R2) and (R3), and consequently we use this morphism to define an action of $\mathfrak{S}_n$ on $Q_{-^n}$.

 Now suppose $char(k)=0$. Then there are distinguished objects $S^n_{-}$ and $\Lambda^n_{+}$ in $\HH$ defined as follows.  Consider the symmetrization and anti-symmetrization idempotents:
\begin{eqnarray*}
e(n)&=&\frac{1}{n!}\sum_{\sigma\in \mathfrak{S}_n}\sigma \\ e'(n)&=&\frac{1}{n!}\sum_{\sigma\in \mathfrak{S}_n}sgn(\sigma)\sigma
\end{eqnarray*}
Set
$$
S^n_-=(Q_{-^n},e(n)) \text{  and }\Lambda^n_{+}=(Q_{+^n},e'(n)),
$$ objects in $\HH$.  Define $\gamma:H_{\Z}\to K_0(\HH)$ by $h_n^* \mapsto [S^n_-]$ and $e_n \mapsto [\Lambda^n_{+}]$.

\begin{theorem}[\cite{Kh}, Theorem 1]
The map $\gamma$ is an injective ring homomorphism.
\end{theorem}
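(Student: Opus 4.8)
We must show that $\gamma:H_\Z\to K_0(\HH)$, defined by $h_n^*\mapsto [S^n_-]$ and $e_n\mapsto[\Lambda^n_+]$, is a well-defined injective ring homomorphism, where $\HH$ is Khovanov's category (in characteristic zero) and $H_\Z$ has generators $e_r,h_r^*$ subject to the relations $e_re_s=e_se_r$, $h_r^*h_s^*=h_s^*h_r^*$, and $h_r^*e_s=e_sh_r^*+e_{s-1}h_{r-1}^*$.

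**Plan of proof.** The plan is to proceed in three stages: well-definedness, ring homomorphism, injectivity. For \emph{well-definedness}, I would first verify that the symmetrizer idempotents $e(n)\in\mathrm{End}_{\HH'}(Q_{-^n})$ and anti-symmetrizer idempotents $e'(n)\in\mathrm{End}_{\HH'}(Q_{+^n})$ make sense (they do, since $\mathrm{char}(k)=0$ and $n!$ is invertible) and that the objects $S^n_-,\Lambda^n_+$ are thus genuine objects of the Karoubi envelope $\HH=\mathrm{Kar}(\HH')$. Then I must check that the assignment respects the defining relations of $H_\Z$, i.e.\ that the corresponding identities hold among the classes $[S^n_-],[\Lambda^n_+]$ in $K_0(\HH)$. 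The commuting relations $[S^r_-][S^s_-]=[S^s_-][S^r_-]$ and $[\Lambda^r_+][\Lambda^s_+]=[\Lambda^s_+][\Lambda^r_+]$ follow from the existence of natural isomorphisms $S^r_-\otimes S^s_-\cong S^s_-\otimes S^r_-$ and likewise for $\Lambda$, which come from the $\mathfrak{S}_n$-action built from $\tau$ (and its analogue $\sigma$ on the $Q_-$ side) together with relations (R2), (R3). The crucial relation is $h_r^*e_s=e_sh_r^*+e_{s-1}h_{r-1}^*$: this is exactly where the relations (R1)--(R6) of $\HH'$ (especially (R5) and (R6), the "bubble" relations $\square\circ\triangle=Id$ and $\triangle\circ\square=Id-\eta_2\circ\epsilon_1$) enter. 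Concretely, I would decompose $Q_{-^r}\otimes Q_{+^s}$ using the relation that "moves" a strand of $Q_-$ past a strand of $Q_+$: each such move produces either a "through" term (giving $Q_{+^s}\otimes Q_{-^r}$, i.e.\ the $e_sh_r^*$ summand after applying idempotents) or a "cap-cup" term (shrinking both a $+$ and a $-$ strand, contributing the $e_{s-1}h_{r-1}^*$ summand). Tracking how the (anti)symmetrizer idempotents $e'(s)\otimes e(r)$ interact with this decomposition yields the desired identity in $K_0$.

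**Ring homomorphism and the main obstacle.** Once well-definedness is established, $\gamma$ is automatically a ring homomorphism: it sends the generators of $H_\Z$ to classes satisfying the defining relations, and $K_0(\HH)$ is a ring (with multiplication induced by the monoidal structure $\otimes$ on $\HH$, which descends to $K_0$ since $\otimes$ is biexact on split-exact/additive categories), so the universal property of the presentation of $H_\Z$ gives the unique extension to a ring map. The genuinely hard part — and the reason Khovanov's original argument is nontrivial — is the relation $h_r^*e_s=e_sh_r^*+e_{s-1}h_{r-1}^*$ at the level of \emph{objects} (or at least of $K_0$-classes): one needs a short exact / split decomposition in $\HH$ of the form
$$
S^r_-\otimes\Lambda^s_+\;\cong\;\bigl(\Lambda^s_+\otimes S^r_-\bigr)\;\oplus\;\bigl(\Lambda^{s-1}_+\otimes S^{r-1}_-\bigr),
$$
and verifying this requires a careful diagrammatic (or algebraic) computation with the morphisms $\eta_i,\epsilon_i,\tau$ modulo (R1)--(R6), in particular using the key relation (R6) to isolate the "curl"/bubble contribution that produces the second summand. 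I expect this object-level decomposition, and the bookkeeping of how it interacts with the idempotents $e(n),e'(n)$, to be the main obstacle.

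**Injectivity.** For injectivity I would use a faithful representation of $\HH$ to "detect" the image. The natural choice is the categorical action: one constructs a functor from $\HH$ (or from $\HH'$) to the category of endofunctors of $\R=\bigoplus_n\mathrm{Rep}(k\mathfrak{S}_n)$ (equivalently the category of linear species), sending $Q_+$ to induction and $Q_-$ to restriction; the morphisms $\eta_i,\epsilon_i,\tau$ go to the standard (co)units and to the transposition-swap natural transformation, and relations (R1)--(R6) are checked to hold for these functors. On $K_0$ this recovers the classical Fock space representation of $H_\Z$ on $B_\Z$, which is \emph{faithful} (the Fock space representation of the Heisenberg algebra is faithful: no nonzero element of $H_\Z$ acts as zero on $B_\Z$, as one sees by ordering monomials in the $e_r$ and $h_s^*$ and looking at the action on the vacuum). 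Hence the composite $H_\Z\xrightarrow{\gamma}K_0(\HH)\to\mathrm{End}(B_\Z)$ is injective, which forces $\gamma$ itself to be injective. This last step is where one genuinely uses $\mathrm{char}(k)=0$ again, since the strong categorification on $\R$ — and the identification of its $K_0$ with $B_\Z$ together with the Schur/monomial bases — is only available there; in positive characteristic only the weak statement survives, consistent with the discussion preceding the theorem in the excerpt.
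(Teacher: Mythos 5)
The paper does not give a proof of this theorem at all --- it is stated with the attribution ``[\cite{Kh}, Theorem 1]'' and the proof lives entirely in Khovanov's paper --- so there is no in-paper argument to compare against. That said, your outline is a faithful reconstruction of Khovanov's actual proof architecture: one first shows from (R1), (R5), (R6) that $Q_{-+}\cong Q_{+-}\oplus 1$ in $\HH$ (the idempotent $\eta_2\circ\epsilon_1$ splits off a copy of the unit from $Q_{-+}$ via (R1), and $\triangle,\square$ give the complementary summand $Q_{+-}$), then propagates this through the symmetrizer and antisymmetrizer idempotents to obtain
$$
S^r_-\otimes\Lambda^s_+\;\cong\;\bigl(\Lambda^s_+\otimes S^r_-\bigr)\;\oplus\;\bigl(\Lambda^{s-1}_+\otimes S^{r-1}_-\bigr)
$$
in $\HH$, which yields the defining relation of $H_\Z$ on $K_0$; and finally one gets injectivity by mapping $\HH$ to endofunctors of $\bigoplus_n\mathrm{Rep}(k\mathfrak{S}_n)$ (induction/restriction) and invoking faithfulness of the Fock space representation of $H_\Z$. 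Your identification of the main technical obstacle --- the object-level decomposition above and its interaction with $e(n),e'(n)$ --- is exactly where Khovanov's diagrammatic computation takes place. The only caveat is that your submission is a plan, not a proof: you defer the computation that establishes the displayed isomorphism (the heart of the matter) rather than carrying it out, and one small organizational point is that ``well-definedness'' of $\gamma$ and ``$\gamma$ is a ring homomorphism'' are not really separate steps --- verifying the relations among $[S^n_-],[\Lambda^n_+]$ in $K_0(\HH)$ \emph{is} verifying that the universal property applies.
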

\begin{remark}
Conjecture 1 in \cite{Kh} is that $\gamma$ is an isomorphism.
\end{remark}

For a $k$-linear abelian category $\mathcal{V}$, let $End(\mathcal{V})$ denote the category of exact endofunctors on $\mathcal{V}$.  As usual, morphisms are natural transformations of functors.

\begin{definition}
\label{DefHcat}
\begin{enumerate}
\item
Let $p=0$.   An \emph{$H_\Z$-categorification} of an $H_\Z$-module $V$ is  a pair $(\mathcal{V},\Omega)$ of a  $k$-linear abelian category $\mathcal{V}$ and a $k$-linear monoidal functor $\Omega:\HH \to End(\mathcal{V})$, such that the action of $H_\Z$ on $K_0(\mathcal{V})$, defined by $[\Omega]\circ \gamma$, is isomorphic to $V$.
\item A \emph{weak $H_\Z$-categorification} of an $H_\Z$-module $V$ is  an abelian category $\mathcal{V}$, a family of functors $A_n,B_n:\mathcal{V}\to\mathcal{V}$ ($n\geq 0$), and functorial isomorphisms
\begin{enumerate}
\item[] $A_n\circ B_m \cong B_m\circ A_n \oplus B_{m-1}\circ A_{n-1}$
\item[] $A_n \circ A_m\cong A_m\circ A_n$
\item[] $B_n\circ B_m \cong B_m\circ B_n$
\end{enumerate}
such that the map $H_\Z\to End(K_0(\mathcal{V}))$ given by $h_n^* \mapsto [A_n]$ and $e_n \mapsto [B_n]$ is a representation of $H_\Z$ isomorphic to $V$.
\end{enumerate}
\end{definition}

\begin{remark}
Note that the definition of $H_\Z$-categorification only makes sense when we are working over characteristic zero.  This is because only in this case do we know that $H_\Z$ embeds in $K_0(\HH)$.  On the other hand, the notion of weak $H_\Z$-categorification makes sense for all characteristics.
\end{remark}

Now that we've defined the notion of $H_\Z$-categorification, we can formulate a morphism of such categorifications.  Suppose $(\mathcal{V},\Omega)$ is an $H_\Z$-categorification.  The functor $\Omega$ induces the data of endo-functors $F=\Omega(Q_+),E=\Omega(Q_-)$,  unit/counits coming from the morphisms $\Omega(\epsilon_i),\Omega(\eta_i)$ ($i=1,2$), and the morphism $\Omega(\tau)$.

Conversely, given such data we can reconstruct the functor $\Omega$ by first constructing $\Omega':\HH'\to End(\mathcal{V})$, and then extending to the Karoubi envelope (which is possible since we are assuming that the functor $\Omega$ already exists).  Therefore an $H_\Z$-categorification can be alternatively presented as datum $(\mathcal{V},E,F,\epsilon_1,\eta_1,\epsilon_2,\eta_2,\tau)$ subject to the local relations, and in addition subject to the assumption that the functor $\Omega'$ constructed from this datum extends to the Karoubi envelope.  We write $(\mathcal{V},\Omega)=(\mathcal{V},E,F,\epsilon_1,\eta_1,\epsilon_2,\eta_2,\tau)$ to express this alternative formulation.
\begin{definition}
\label{DefMorphism}
Let $$(\mathcal{V},\Omega)=(\mathcal{V},E,F,\epsilon_1,\eta_1,\epsilon_2,\eta_2,\tau)$$ and  $$(\mathcal{V}',\Omega')=(\mathcal{V}',E',F',\epsilon_1',\eta_1',\epsilon_2',\eta_2',\tau')$$ be $H_\Z$-categorifications.  A \emph{morphism of $H_\Z$-categorifications} is the data of a functor
$$
\Phi:\mathcal{V} \rightarrow \mathcal{V'}
$$
along with isomorphisms of functors
\begin{eqnarray*}
\zeta_{+}:\Phi E \rightarrow E'  \Phi  \\ \zeta_{-}:\Phi  F \rightarrow F' \Phi
\end{eqnarray*}
such that that the following diagrams commute:

\begin{enumerate}

\item
$$
\xymatrix{
 & \Phi \ar[dl]_{\Phi \eta_1 } \ar[rd]^{\eta_1' \Phi} & \\
\Phi FE \ar[r]^{\zeta_{-}E} & F'\Phi E \ar[r]^{F' \zeta_{+}} &   F'E'\Phi
}
$$

\item

$$
\xymatrix{
 & \Phi \ar[dl]_{\Phi \eta_2 } \ar[rd]^{\eta_2' \Phi} & \\
\Phi EF \ar[r]^{\zeta_{+}F} & E'\Phi F \ar[r]^{E' \zeta_{-}} &   E'F'\Phi
}
$$

\item
$$
\xymatrix{
\Phi FF \ar[r]^{\zeta_-F} \ar[d]_{\Phi \tau} & F'\Phi F \ar[r]^{F\zeta_-} & F'F'\Phi \ar[d]^{\tau'\Phi} \\
\Phi FF \ar[r]^{\zeta_-F} & F'\Phi F \ar[r]^{F\zeta_-} & F'F'\Phi}
$$
\end{enumerate}
\end{definition}

The following proposition shows that a morphism of $H_\Z$-categorifications really categorifies a morphism of $H_\Z$-modules. For convenience of exposition, we assume the functor $\Phi$ in the following is exact.  The analogue of this proposition where $\Phi$ is right or left exact can also be proven once we pass to the derived functor.
\begin{proposition}
\label{PropHmorph}
Let $\Phi: (\mathcal{V},\Omega)\to (\mathcal{V}',\Omega')$ be a morphism of $H_\Z$-categorifications. Assume $\Phi$ is exact. Then $[\Phi]:[\mathcal{V}]\to [\mathcal{V}']$ is an $H_\Z$-morphism.
\end{proposition}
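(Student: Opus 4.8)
The plan is to upgrade the comparison isomorphisms $\zeta_+$ and $\zeta_-$, together with their iterates, to a compatibility between the two Heisenberg actions, and thereby force $[\Phi]$ to intertwine them. Recall that the $H_\Z$-action on $K_0(\mathcal{V})$ is $[\Omega]\circ\gamma$, so $h_n^{*}$ acts as $[\Omega(S^n_-)]$ and $e_n$ acts as $[\Omega(\Lambda^n_+)]$, where $S^n_-=(Q_{-^n},e(n))$, $\Lambda^n_+=(Q_{+^n},e'(n))$, with $e(n),e'(n)$ the symmetrization and anti-symmetrization idempotents; likewise for $\mathcal{V}'$. Writing $E=\Omega(Q_-)$, $F=\Omega(Q_+)$, $E'=\Omega'(Q_-)$, $F'=\Omega'(Q_+)$, the object $\Omega(S^n_-)(M)$ is the direct summand of $E^n(M)$ cut out by the idempotent $\Omega(e(n))_M$, and $\Omega(\Lambda^n_+)(M)$ is the summand of $F^n(M)$ cut out by $\Omega(e'(n))_M$. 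Since $\Phi$ is exact, $[\Phi]\colon K_0(\mathcal{V})\to K_0(\mathcal{V}')$ is well defined, and since $\{e_r,h_r^{*}\}_{r\ge 1}$ generates $H_\Z$, it is enough to construct isomorphisms of functors
$$\Phi\circ\Omega(S^n_-)\ \cong\ \Omega'(S^n_-)\circ\Phi,\qquad \Phi\circ\Omega(\Lambda^n_+)\ \cong\ \Omega'(\Lambda^n_+)\circ\Phi\qquad(n\ge 1);$$
applying $[\,\cdot\,]$ then gives $[\Phi]\,[\Omega(S^n_-)]=[\Omega'(S^n_-)]\,[\Phi]$ and the analogue for $\Lambda^n_+$, which is exactly the $H_\Z$-linearity of $[\Phi]$.

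First I would whisker $\zeta_+$ and $\zeta_-$ into isomorphisms $\zeta_+^{(n)}\colon\Phi E^n\xrightarrow{\sim}(E')^n\Phi$ and $\zeta_-^{(n)}\colon\Phi F^n\xrightarrow{\sim}(F')^n\Phi$. The heart of the matter is to show that $\zeta_+^{(n)}$ is equivariant for the $\mathfrak{S}_n$-actions on $E^n=\Omega(Q_{-^n})$ and $(E')^n=\Omega'(Q_{-^n})$, and that $\zeta_-^{(n)}$ is equivariant for the $\mathfrak{S}_n$-actions on $F^n$ and $(F')^n$; here the actions in question are the images under $\Omega$ and $\Omega'$ of the $\mathfrak{S}_n$-actions on $Q_{-^n}$ (built from the morphism $\sigma$) and on $Q_{+^n}$ (built from $\tau$) recalled in Section \ref{SectionDefHCat}. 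Since $\mathfrak{S}_n$ is generated by adjacent transpositions and the $\zeta^{(n)}_{\pm}$ are assembled slot by slot, an interchange argument reduces both claims to the case $n=2$: one must check that $\zeta_-^{(2)}$ conjugates $\Phi\tau$ to $\tau'\Phi$, and that $\zeta_+^{(2)}$ conjugates $\Phi\sigma$ to $\sigma'\Phi$, where $\tau,\tau',\sigma,\sigma'$ now denote the corresponding natural transformations ($\tau\in End(F^2)$, $\sigma\in End(E^2)$, and so on). The first identity is precisely diagram (3) of Definition \ref{DefMorphism}.

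The second identity is the step I expect to be the main obstacle. One uses that the morphism $\sigma\in End_{\HH}(Q_{--})$ is, by its construction in Section \ref{SectionDefHCat}, the explicit composite of whiskered copies of $\eta_1,\eta_2,\tau,\epsilon_1,\epsilon_2$, so that $\sigma=\Omega(\sigma)$ and $\sigma'=\Omega'(\sigma)$ are the corresponding composites of natural transformations. Compatibility of $(\Phi,\zeta_+,\zeta_-)$ with $\eta_1$, $\eta_2$ and $\tau$ is postulated in diagrams (1), (2) and (3) of Definition \ref{DefMorphism}; the compatibility with the counits $\epsilon_1,\epsilon_2$, which is not postulated, follows from the unit compatibilities by the mate (``duality'') calculus of Lemmas \ref{Duality_Lemma_1} and \ref{Duality_Lemma_2} in the Appendix, exactly as Lemma \ref{Condition_I'} is deduced from Lemma \ref{Condition_I}. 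Pasting these five compatibilities together along the defining expression for $\sigma$ then yields $\zeta_+^{(2)}\circ\Phi\sigma=\sigma'\Phi\circ\zeta_+^{(2)}$. The only genuine work is the string-diagram bookkeeping of this pasting, since, unlike $\tau$, the morphism $\sigma$ is not among the generators of $\HH$ but is manufactured from units, counits and $\tau$.

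Granting the equivariance of the $\zeta^{(n)}_{\pm}$, the conclusion is formal. Being additive, $\Phi$ sends the idempotent $\Omega(e(n))_M$ on $E^n(M)$ to the idempotent $\Phi(\Omega(e(n))_M)$ on $\Phi E^n(M)$, whose image is the summand $\Phi(\Omega(S^n_-)(M))$; and equivariance of $\zeta_+^{(n)}$ says exactly that $(\zeta_+^{(n)})_M$ conjugates $\Phi(\Omega(e(n))_M)$ to $\Omega'(e(n))_{\Phi M}$, so it restricts to an isomorphism $\Phi(\Omega(S^n_-)(M))\xrightarrow{\sim}\Omega'(S^n_-)(\Phi M)$, natural in $M$ because $\zeta_+$ is. This gives the first displayed isomorphism; the second follows identically from $\zeta_-^{(n)}$ and $e'(n)$. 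Passing to classes in $K_0$ completes the proof. The reductions in the last two paragraphs are routine once $\Phi$ is known to be additive and exact, so the real content is the $n=2$ computation with $\sigma$.
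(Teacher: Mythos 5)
Your proposal is essentially the paper's own proof, and the overall architecture is right: reduce to showing $\Phi\circ\Omega(\Lambda^n_+)\cong\Omega'(\Lambda^n_+)\circ\Phi$ and $\Phi\circ\Omega(S^n_-)\cong\Omega'(S^n_-)\circ\Phi$, get the first from diagram (3) and the $\mathfrak{S}_n$-equivariance of $\zeta_-^{(n)}$, and get the second from the compatibility of $\zeta_+^{(2)}$ with the transposition $\Omega(\sigma)$ on $E^2$. The paper reaches that last compatibility by citing diagrams (2), (3) and Lemma \ref{Abstract_Nonsense} (which packages the ``mate transfer'' of $\tau$ to $\tau^\vee$), whereas you propose to paste directly along the defining composite of $\sigma$ in terms of $\eta_1,\eta_2,\tau,\epsilon_1,\epsilon_2$; these are equivalent ways of organizing the same diagram chase.

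One inaccuracy worth fixing: you justify the compatibility with the counits $\epsilon_1,\epsilon_2$ by appealing to Lemmas \ref{Duality_Lemma_1} and \ref{Duality_Lemma_2} and to the derivation of Lemma \ref{Condition_I'} from Lemma \ref{Condition_I}. Those appendix lemmas are statements about the Kuhn/species dualities $\sharp,\sharp'$ on $\PP$ and $\R$ and about the specific functor $\Schur$; they are not abstract lemmas available in the setting of an arbitrary $H_\Z$-categorification, and they transfer data between the two \emph{different} adjunctions rather than between the unit and counit of a \emph{single} adjunction. The correct abstract statement you need — that for a fixed adjunction and invertible $\zeta_+,\zeta_-$, compatibility with the unit is equivalent to compatibility with the counit — is the mate calculus of \cite[Lemma 5.3]{CR}, which is exactly what is used in the proof of Lemma \ref{Abstract_Nonsense} and in Lemma \ref{Miracle_Lemma}. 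With that citation replaced, your pasting argument goes through.
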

\begin{proof}
From the data $\zeta_-$, for every $n$ we can produce the natural isomorphism $$\Phi F^n\simeq F^n \Phi. $$
By Condition 3 in Definition \ref{DefMorphism}, this isomorphism intertwines the action of symmetric group $\mathfrak{S}_n$. Therefore it induces a natural isomorphism $$\Phi\circ \Omega(\Lambda_+ ^n)\simeq \Omega'(\Lambda_+ ^n)\circ \Phi $$
By Conditions 2 and 3, and Lemma \ref{Abstract_Nonsense}, we  get the following commuting diagram
$$\xymatrix{
\Phi EE \ar[r]^{\zeta_+E} \ar[d]_{\Phi \tau^{\vee }} & E'\Phi E \ar[r]^{E'\zeta_+} & E'E'\Phi \ar[d]^{\tau'^{\vee}\Phi} \\
\Phi EE \ar[r]^{\zeta_+E} & E'\Phi E \ar[r]^{E'\zeta_+} & E'E'\Phi},
$$
where $\tau^\vee$ and $\tau'^\vee$ are the  operators on $E^2$ and $E'^2$ induced from $\tau$ and $\tau'$ by appropriate adjunctions.  Then as above, we get a natural isomorphism
$$\Phi \circ \Omega(S_-^n)\simeq \Omega'(S_-^n)\circ \Phi .$$
This shows that $[\Phi]$ intertwines the $H_\Z$-action.

\end{proof}

\subsection{The $H_\Z$-categorification on $\PP$}
\label{SectionHCatonP}

In this section we define an $H_\Z$-categorification in the case when $p=0$, and a weak $H_\Z$-categorification in the case when $p>0$.  In both cases these categorify the Fock space representation of $H_\Z$.

Let $End_L(\PP)$ be the category of endo-functors of $\PP$ that admit a left adjoint.  That is, the objects of $End_L(\PP)$ are exact functors from $\PP$ to $\PP$ that admit a left adjoint, and morphisms are natural transformations of functors.

We now define a $k$-linear monoidal functor  $\Omega':\HH' \to End_L(\PP)$. On objects $\Omega'$ is determined by $\Omega'(Q_+)=\F$ and $\Omega'(Q_-)=\E$.  The distinguished morphisms $\eta_1,\eta_2,\epsilon_1,\epsilon_2$ of $\HH'$ correspond under $\Omega'$ to the eponymous morphisms in $End_L(\PP)$ (defined in Section \ref{Section_gcatonP}).  The morphism $\tau \in \HH'$ corresponds to the  morphism $\tau:\F^2\to\F^2$  (defined in Section \ref{SectionPrep}).

\begin{proposition}
\label{PropWellDef}
The functor $\Omega':\HH'\to End_L(\PP)$ is well-defined.
\end{proposition}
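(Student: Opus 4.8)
The plan is to verify that the assignment $\Omega'$ respects the local relations (R1)--(R6) defining $\HH'$, since once this is checked the universal property of $\HH'$ guarantees that $\Omega'$ extends to a well-defined $k$-linear monoidal functor into $End_L(\PP)$. One preliminary point must be addressed first: we must check that $\Omega'$ actually lands in $End_L(\PP)$, i.e.\ that $\F$ and $\E$ are exact and admit left adjoints. Exactness is clear from the constructions in Section~\ref{Section_gcatonP} (tensoring by $I$ and extracting a weight space of an evaluation are both exact). For the left adjoints: the bi-adjunction $(\E,\F,\eta_1,\epsilon_1)$ and $(\F,\E,\eta_2,\epsilon_2)$ recalled there shows $\E$ is left adjoint to $\F$ and $\F$ is left adjoint to $\E$, so both lie in $End_L(\PP)$, and moreover the distinguished morphisms $\eta_i,\epsilon_i$ needed to state (R1)--(R6) are precisely the units/counits already written down explicitly in Section~\ref{Section_gcatonP}.

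Next I would go through the relations one at a time. Relation (R1), $\epsilon_1\circ\eta_2=\mathrm{Id}$, is exactly one of the triangle identities for the adjunction $(\F,\E,\eta_2,\epsilon_2)$ (equivalently $(\E,\F)$ with $\E$ on the left), which holds by the Proposition in Section~\ref{Section_gcatonP}. Relations (R2) and (R3), $\tau\circ\tau=\mathrm{Id}$ and the braid relation $Q_+\tau\circ\tau Q_+\circ Q_+\tau = \tau Q_+\circ Q_+\tau\circ\tau Q_+$, translate into the statements that the flip $\tau:\F^2\to\F^2$ (which on $M(V)\otimes V\otimes V$ swaps the two tensor factors $v\otimes w\mapsto w\otimes v$) is an involution and that these flips generate a symmetric-group action on $\F^n$; both are immediate from the fact that $\F^n(M)(V)=M(V)\otimes V^{\otimes n}$ and $\tau$ realizes the standard place-permutation action of $\mathfrak{S}_n$ on $V^{\otimes n}$, which famously satisfies the braid relations. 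Relation (R4), $\epsilon_1 Q_+\circ Q_-\tau\circ\eta_2 Q_+ = 0$, is a concrete computation: one evaluates the composite $\F \to \E\F\F \to \E\F\F \to \F$ on $M\in\PP$, $V\in\V$, using the explicit formulas for $\eta_2$ (insert a new $1$ in the $k$-slot), $\tau$ (swap the two copies of $V$), and $\epsilon_1$ (the projection $V\oplus k\twoheadrightarrow V$ applied via $M(p_V)$), and checks the result vanishes; the point is that inserting the canonical basis vector of $k$ into one slot and then projecting it away produces $0$ on the weight-one part, much as in Lemma~\ref{Polynomial_Functor_Lemma_I}. Relations (R5) and (R6), $\square\circ\triangle = \mathrm{Id}$ and $\triangle\circ\square = \mathrm{Id} - \eta_2\circ\epsilon_1$, are the most involved: one unwinds the definitions of $\triangle:Q_{+-}\to Q_{-+}$ and $\square:Q_{-+}\to Q_{+-}$ as composites built from $\eta_1,\eta_2,\epsilon_1,\epsilon_2,\tau$, evaluates on $M\in\PP$ and $V\in\V$ using the explicit formulas from Section~\ref{Section_gcatonP}, and tracks an element of $\F\E(M)(V)=M(V\oplus k)_1\otimes V$ resp.\ $\E\F(M)(V)=M(V\oplus k)_1\otimes V\oplus M(V\oplus k)_0\otimes k$ through both sides. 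The ``error term'' $\eta_2\circ\epsilon_1$ in (R6) is exactly the projection onto and inclusion of the $M(V\oplus k)_0\otimes k$ summand, which is what survives from the failure of the naive composite to be the identity.

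I expect the main obstacle to be the verification of (R6) (and to a lesser extent (R5)): one has to carry out the explicit adjunction bookkeeping for the long composites $\triangle$ and $\square$, keep careful track of the weight-grading by $GL(k)$ on $M(V\oplus k\oplus k)$, and recognize the surviving term precisely as $\eta_2\circ\epsilon_1$. Here Lemmas~\ref{Polynomial_Functor_Lemma_I} and~\ref{Xi_Lemma} (that $M(\iota_V)$, $M(p_V)$, and $M(\tilde\xi)_0$ all identify $M(V)$ with $M(V\oplus k)_0$) do most of the real work, and a degree/weight-counting argument isolates which summands appear. A clean way to organize all of this, which I would mention, is to note that $\F^n(M)(V) = M(V)\otimes V^{\otimes n}$ so that the combinatorics of $\E$, $\F$, and $\tau$ on $\PP$ mirrors exactly the combinatorics of induction, restriction, and transpositions for the tower of general linear groups $\{GL(V\oplus k^n)\}$; relations (R1)--(R6) are then the Heisenberg relations in that familiar setting, verified by the concrete computations above. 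Once all six relations are checked, $\Omega'$ factors through $\HH'$ by its defining universal property, which is the assertion of the proposition.
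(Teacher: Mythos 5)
Your overall strategy --- verifying relations (R1)--(R6) by direct computation using the explicit unit/counit and flip data from Section~\ref{Section_gcatonP} --- is exactly the paper's approach, and your outlines of (R2)--(R6) are accurate, including the correct identification of (R5)/(R6) as the place where Lemmas~\ref{Polynomial_Functor_Lemma_I} and~\ref{Xi_Lemma} do the real work and the observation that the $\eta_2\circ\epsilon_1$ correction term in (R6) corresponds exactly to the $M(V\oplus k)_0\otimes k$ summand of $\E\F(M)(V)$. Your preliminary check that $\E,\F$ actually lie in $End_L(\PP)$ is a worthwhile addition that the paper takes for granted.

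One small but genuine mischaracterization: (R1), i.e.\ $\epsilon_1\circ\eta_2=\mathrm{Id}$, is \emph{not} a triangle identity of the adjunction $(\F,\E,\eta_2,\epsilon_2)$. A triangle identity for that adjunction has the whiskered form $(\epsilon_2\F)\circ(\F\eta_2)=1_\F$ or $(\E\epsilon_2)\circ(\eta_2\E)=1_\E$. By contrast, $\epsilon_1\circ\eta_2:\I\to\E\F\to\I$ composes the unit $\eta_2$ of one adjunction in the bi-adjunction with the counit $\epsilon_1$ of the \emph{other}; it is a compatibility condition on the bi-adjunction data and does not follow formally from either adjunction alone. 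It must be checked directly from the formulas in Section~\ref{Section_gcatonP}: for $m\in M(V)$ one has $(\eta_2)_{M,V}(m)=(0,M(i_V)(m)\otimes 1)$ and then $(\epsilon_1)_{M,V}$ yields $M(p_V)M(i_V)(m)=m$. The verification is immediate (hence the paper's ``obvious''), but your proof would be incomplete if you stopped at ``this is a triangle identity,'' since that claim is false.
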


\begin{proof}  We need to check that the local relations (R1)-(R6) are satisfied.  Relations (R1)-(R3) are obvious.

Fix $M\in\PP$ and $V\in\V$.  To check relation (R4), first we note that $\E\F\F(M)(V)$ is equal to
$$
(M(V\oplus k)_1\otimes V\otimes V)\oplus(M(V\oplus k)_0\otimes k\otimes V)\oplus (M(V\oplus k)_0\otimes V\otimes k).
$$
Let $m\otimes v \in M(V)\otimes V$.  Then
$$
(\eta_2\F)_{M,V}(m\otimes v)=M(i_V)(m)\otimes v \otimes 1,
$$
which is in the component $M(V\oplus k)_0 \otimes V\otimes k$ of $\E\F\F(M)(V)$.  Applying $(\E\tau)_{M,V}$, we obtain
 $$
 M(i_V)(m)\otimes 1 \otimes v \in M(V\oplus k)_0\otimes k\otimes V.
 $$
 Now $(\epsilon_1\F)_{M,V}$ is the projection from $\E\F\F(M)(V)$ onto the component $M(V\oplus k)_0\otimes V\otimes k$, followed by the map $M(p_V)\otimes1_V$.  In particular, $M(i_V)(m)\otimes 1 \otimes v$ is mapped to zero, proving (R4).

In order to check (R5) and (R6), let us first note that
\begin{eqnarray*}
\F\E(M)(V) &=& M(V\oplus k)_1\otimes V, \\
\E\F(M)(V)&=&(M(V\oplus k)_1\otimes V) \oplus (M(V\oplus k)_0\otimes k),
\end{eqnarray*}
and $\E\F^2\E(M)(V)$ is equal to
$$
(M(V\oplus k \oplus k)_{1,1}\otimes V \otimes V)\oplus(M(V\oplus k \oplus k)_{0,1}\otimes k \otimes V)\oplus(M(V\oplus k \oplus k)_{0,1}\otimes V \otimes k).
$$

Now we compute $\triangle$.  Let $m\otimes v\in M(V\oplus k)_1\otimes V$.  The operator $(\eta_2\F\E)_{M,V}$ maps $m\otimes v$ to $M(i_{V\oplus k})(m)\otimes v\otimes1$, which is an element of $M(V\oplus k \oplus k)_{0,1}\otimes V \otimes k$.  Next, $(\E\tau\E)_{M,V}$ maps $M(i_{V\oplus k})(m)\otimes v\otimes1$ to $M(i_{V\oplus k})(m)\otimes 1\otimes v$.  Finally, $(\E\F\epsilon_2)_{M,V}$ maps $M(i_{V\oplus k})(m)\otimes 1\otimes v$ to $M(\tilde{1})M(i_{V\oplus k})(m)\otimes v$, where here we write $1$ for the vector $(0,1)\in V\oplus k$.  Now, $M(\tilde{1})M(i_{V\oplus k})=M(\tilde{1}\circ i_{V\oplus k})=M(1_{V\oplus k})$, and therefore $M(\tilde{1})M(i_{V\oplus k})(m)\otimes v=m\otimes v$.  The upshot is that
 $$
 \triangle_{M,V}:\F\E(M)(V)\to\E\F(M)(V)
 $$
is the natural inclusion of $M(V\oplus k)_1\otimes V$ into $(M(V\oplus k)_1\otimes V) \oplus (M(V\oplus k)_0\otimes k)$.

A similar computation shows that
$$
 \square_{M,V}:\E\F(M)(V)\to\F\E(M)(V)
 $$
 is the projection of $(M(V\oplus k)_1\otimes V) \oplus (M(V\oplus k)_0\otimes k)$ onto $M(V\oplus k)_1\otimes V$.

Relation (R5) is immediate from these computations.  To see (R6), note that $M(i_Vp_V)$ restricted to $M(V\oplus k)_0$ is the identity operator.  (This is clearly true for $M=\otimes^d$, and hence for all sums of subfunctors of such functors.)  Therefore, $(\eta_2\circ \epsilon_1)_{M,V}:\E\F(M)(V)\to\E\F(M)(V)$ maps
$$
(y,y')\in \E\F(M)(V)=(M(V\oplus k)_1\otimes V) \oplus (M(V\oplus k)_0\otimes k)
$$
to $(0,y')$.  Relation (R6) follows.
\end{proof}

In order to conclude that $\Omega'$ extends canonically to a functor $\Omega:\HH\to End_L(\PP)$, we need to show that $End_L(\PP)$ is idempotent complete.  Let $\PP^{[2]'}$ denote the category of bi-polynomial functors, which are \emph{contra-variant} in the first variable and \emph{co-variant} in the second variable.  This is an abelian category, and hence idempotent complete. {\color{magenta} }

\begin{proposition}
\label{PropIdem}
The category $End_L(\PP)$ is equivalent to $\PP^{[2]'}$.  In particular, $End_L(\PP)$ is idempotent complete and  $\Omega'$ extends canonically to a functor $\Omega:\HH\to End_L(\PP)$.
\end{proposition}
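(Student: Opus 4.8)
The plan is to exhibit an equivalence $End_L(\PP)\simeq\PP^{[2]'}$ by writing down mutually quasi-inverse functors, and then to read off idempotent completeness and the Karoubi extension. The underlying idea is ``Morita-theoretic'': an exact endofunctor of $\PP$ admitting a left adjoint should be the same thing as convolution with a bi-polynomial ``kernel'', the kernel being recovered by Friedlander--Suslin representability (Theorem~2.10 of \cite{FS97}, as used in Lemma~\ref{Polynomial_Functor_Lemma_I}), which identifies $\Gamma^{W,d}$ with the representing object of evaluation at $W$ on $\PP_d$.

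\emph{From a bifunctor to an endofunctor.} Given $B\in\PP^{[2]'}$ I would set, for $N\in\PP$ and $V\in\V$,
$$
\Psi(B)(N)(V)=\int^{W\in\V}B(W,V)\otimes N(W),
$$
the coend taken over all of $\V$. One first checks that the coend converges to a finite-dimensional space: decomposing $B,N$ into homogeneous summands, scaling $W$ by $\lambda\in k^\times$ forces $\lambda^{i}x=\lambda^{j}x$ in the coend of the part that is $i$-homogeneous contravariant in $W$ (the $B$-factor) and $j$-homogeneous covariant in $W$ (the $N$-factor), so only the diagonal $i=j$ contributes; only finitely many $i$ occur because $N$ has finite degree; and the $i$-th term is computed by $B_i(k^n,V)\otimes_{S_i}N_i(k^n)$ for $n\ge i$ ($S_i$ the Schur algebra), which is finite-dimensional. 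Exactness of $\Psi(B)$ follows from exactness of evaluation at $k^n$ and flatness of the relevant Schur-algebra module, and $\Psi(B)$ admits a left adjoint, realised by the companion end $N\mapsto\bigl(W\mapsto\int_{V}\mathrm{Hom}_k(B(W,V),N(V))\bigr)$ modulo Kuhn duality in the contravariant slot, the adjunction being verified by a Fubini computation for (co)ends; hence $\Psi(B)\in End_L(\PP)$.

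\emph{From an endofunctor to a bifunctor.} Given $F\in End_L(\PP)$ I would set
$$
\Phi(F)(W,V)=\bigoplus_{d\ge0}F\bigl(\Gamma^{V,d}\bigr)(W),
$$
which is contravariant in $V$ (from the contravariant functoriality of $\Gamma^{V,d}$) and covariant in $W$; a further Kuhn-duality adjustment may be needed to match variances with the convention for $\PP^{[2]'}$. Since $F$ is exact it is in particular right exact, hence is determined up to isomorphism by its values on the projective generators $\Gamma^{V,d}$ together with the action on morphisms, so $\Phi$ is fully faithful onto its image; the quasi-inverse relations $\Psi\Phi\simeq\mathrm{id}$ and $\Phi\Psi\simeq\mathrm{id}$ then follow from the density (co-Yoneda) presentation of each $M\in\PP_d$ as the canonical colimit, over the $\Gamma^{W,d}$, of the vector spaces $\mathrm{Hom}_\PP(\Gamma^{W,d},M)=M(W)$. \emph{The main obstacle} is the bookkeeping of finiteness: one must pin down the precise meaning of ``bi-polynomial'' in $\PP^{[2]'}$ and show it corresponds exactly to the conjunction ``exact $+$ admits a left adjoint''. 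The existence of the left adjoint is the mechanism here --- it is what forces the kernel $\Phi(F)$ to be genuinely bi-polynomial, with the required degree bounds, rather than merely an arbitrary functor $\V^{\mathrm{op}}\times\V\to\V$ --- and one has to check this matching together with the variance conventions and the compatibility of $\Psi,\Phi$ on morphisms as well as objects.

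\emph{Conclusion.} Being a category of functors into the abelian category $\V$, $\PP^{[2]'}$ is abelian, hence idempotent complete; transporting along the equivalence, so is $End_L(\PP)$. (Alternatively one sees this directly: an idempotent $e\in End(F)$ splits pointwise in the abelian category $\PP$, giving a functorial splitting $F\simeq\mathrm{im}(e)\oplus\ker(e)$ with $\mathrm{im}(e)$ exact as a summand of $F$ and admitting the image of the mate $\tilde e\in End(F^{\dagger})$ of $e$ as a left adjoint.) Finally $End_L(\PP)$ is a $k$-linear monoidal category under composition --- a composite of exact functors admitting left adjoints is again one, with left adjoint the composite of the left adjoints in the reverse order --- so the universal property of the Karoubi envelope $\HH=Kar(\HH')$, applied to the $k$-linear monoidal functor $\Omega'\colon\HH'\to End_L(\PP)$ of Proposition~\ref{PropWellDef}, yields a unique (up to isomorphism) $k$-linear monoidal extension $\Omega\colon\HH\to End_L(\PP)$, given on objects by $\Omega(A,e)=\mathrm{im}\bigl(\Omega'(e)\bigr)$.
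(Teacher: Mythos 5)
Your proposal takes a genuinely different route from the paper, dual to it in two respects. Where the paper passes from a bifunctor $B$ to an endofunctor by a \emph{Hom}-kernel, $T_B(M)(V)=Hom_\PP(B(V,\cdot),M)$, you use a \emph{tensor}-kernel (coend), $\Psi(B)(N)(V)=\int^W B(W,V)\otimes N(W)$. Correspondingly, where the paper recovers the bifunctor from $T\in End_L(\PP)$ by evaluating the \emph{left adjoint} on the projective generators, $B_T(V,W)=\bigoplus_d{}^*T(\Gamma^{V,d})(W)$, you evaluate $F$ itself, $\Phi(F)(V,W)=\bigoplus_d F(\Gamma^{V,d})(W)$. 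These two changes compensate for each other: in the paper's scheme $T_{B_T}\cong T$ falls out of the adjunction $Hom({}^*T(\Gamma^{V,d}),M)\cong Hom(\Gamma^{V,d},T(M))$ plus Friedlander--Suslin representability, while in yours $\Psi(\Phi(F))\cong F$ relies on $F$ commuting with the density coend $\int^W\Gamma^{W,d}\otimes N(W)\cong N_d$. So the paper works on the Yoneda/representability side with projectives and injectives ($\Gamma^{V,d}$, $S^{W,d}$), you work on the co-Yoneda/density side; both can in principle prove the same statement, and your version makes the ``kernel transform'' picture more explicit.

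That said, the proposal has concrete gaps. First, the claim that $\Psi(B)$ is exact ``by flatness of the relevant Schur-algebra module'' is asserted, not proved: the coend in bidegree $i$ is $B_i(k^n,V)\otimes_{S(n,i)}N_i(k^n)$, and over the finite-dimensional algebra $S(n,i)$ flat means projective, which does not hold for an arbitrary bifunctor $B$ (e.g.\ $B$ with $B(k^n,V)\cong\Lambda^i$ in small positive characteristic). The paper's Hom formula faces a symmetric issue, but its actual verification (via $S^{W,d}$ as injective cogenerators) is spelled out, whereas your coend side needs a genuine argument, not a flatness slogan. Second, the existence of a \emph{left} adjoint for $\Psi(B)$ is sketched (``companion end modulo Kuhn duality'') but the coend formula is a priori a \emph{left} adjoint; getting a further left adjoint requires the duality $^\sharp$ seriously and is not a Fubini formality, and you should note that the paper bypasses this entirely by writing $^*T_B(M)(V)=Hom(M,B(\cdot,V)^*)^*$ explicitly and verifying the adjunction on the cogenerators. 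Third, you flag but do not close the finiteness bookkeeping (whether $\bigoplus_d F(\Gamma^{V,d})$ is of bounded degree in the second slot and is actually an object of $\PP^{[2]'}$); this is the substance of the equivalence, not a side remark, and the paper's version has the same issue implicit. The concluding part about idempotent completeness (via abelianness of $\PP^{[2]'}$) and the Karoubi extension of $\Omega'$ matches the paper.
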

\begin{proof}
First, we construct the functors between the two categories. Given $T\in End_L(\PP)$ with left adjoint ${}^*T$, construct an associated bi-polynomial functor, $$B_T(V,W)=\bigoplus_{d}{^*T}(\Gamma^{V,d})(W).$$

Conversely, given a bi-polynomial functor $B\in \PP^{[2]'}$, we associate the endo-functor $T_B$ of $\PP$ which is defined by:  $$T_B(M)(V)=Hom_\PP(B(V,*),M(*)).$$
We first show that
$T_B\in End_L(\PP)$ admits the left adjoint $^*T_B$ given by the formula:
$$^*T_B(M)(V)= Hom(M,B(\cdot,V)^*)^* .$$

Recall that we defined $S^{W,d}\in\PP$ by $S^{W,d}(V)=S^d(Hom(W,V))$. Then one can check that $S^{W,d}=(\Gamma^{W^*,d})^\sharp$.  By duality, these are injective co-generators of $\PP$ \cite[Theorem 2.10]{FS97}.  They satisfy the property that for any $M\in\PP$,
$$Hom(M,S^{W,d})\cong M_d(W)^*.$$
Then we have
\begin{align*}
Hom(^*T_B(M),S^{W,d}) &\cong (^*T_B(M)_d)(W)^*\\
                      &=Hom(M,B(\cdot,W)^*_d)\\
                      &=Hom(M, T_B(S^{W,d}))
\end{align*}
Since functors of the form $S^{W,d}$ are injective co-generators in $\PP$,   we have the functorial isomorphism:
$$Hom(^*T_B(M),N)\simeq Hom(M,T_B(N)). $$
This shows that $^*T_B$ is left adjoint to $T_B$, and hence $T_B\in End_L(\PP)$.

We now show that the two functors $T\mapsto B_T$ and $B\mapsto T_B$ are inverse to each other, and therefore define an equivalence $End_L(\PP)\cong \PP^{[2]'}$.

Given $T:\PP\to \PP$ with left adjoint,  consider the new functor $T_{B_T}$.  Then for $M\in\PP$ and $V\in\V$:
\begin{align*}
T_{B_T}(M)(V)&=Hom(B_T(V,\cdot),M)\\
             &=Hom(\bigoplus_d {}^*T(\Gamma^{V,d}),M)\\
             &=\bigoplus_d Hom(^*T(\Gamma^{V,d}),M)\\
             &\cong\bigoplus_d Hom(\Gamma^{V,d},T(M))\\
             &\cong\bigoplus_d T(M)(V)_d\\
             &=T(M)(V)
\end{align*}
Therefore we obtain the functorial isomorphism $T_{B_T}\cong T$.

Conversely, given $B\in\PP^{[2]'}$,  consider the new bi-polynomial functor $B_{T_B}$. Then,
\begin{align*}
Hom(B_{T_B}(V,\cdot),M)&=Hom(\bigoplus_d {}^*T_B(\Gamma^{V,d}),M)\\
                       &=\bigoplus_{d}Hom(^*T_B(\Gamma^{V,d}),M)\\
                       &\simeq\bigoplus_{d} Hom(\Gamma^{V,d},T_B(M))\\
                       &\simeq T_B(M)(V)\\
                       &=Hom(B(V,\cdot),M)
\end{align*}
By the Yoneda Lemma $B_{T_B}\simeq B$.

\end{proof}

By the above proposition, and the universal property of the Karoubi envelope, we have a functor $\Omega:\HH\to End_L(\PP)$.  We will show that when $p=0$ this defines an $H_\Z$-categorification in the sense of Definition \ref{DefHcat}.  First, we introduce a weak $H_\Z$-categorification, which is valid for all characteristics.

Define functors $\bold{A}_r,\bold{B}_r :\PP\to\PP$ as follows: for $M\in \PP$ and $V\in \V$,
\begin{eqnarray*}
\bold{A}_r(M)(V)&=&M(V\oplus k)_r \\ \bold{B}_r(M)(V)&=&M(V)\otimes \Lambda^r.
\end{eqnarray*}
For the definition of $M(V\oplus k)_r$ see Section \ref{SectionBi}.

\begin{theorem}
\label{Thm_Hweakcat}
Let $p\geq0$.  The family of functors $\{\bold{A}_n,\bold{B}_n:n\geq1\}$ defines a weak $H_\Z$-categorification on $\PP$ in the sense of Definition \ref{DefHcat}(2), which categorifies the Fock space representation of $H_\Z$.
\end{theorem}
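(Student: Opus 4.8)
The plan is to verify directly the three families of functorial isomorphisms demanded by Definition \ref{DefHcat}(2), and then to pin down the induced operators on $K_0(\PP)$. The two structural facts I would isolate first are that $\bold{B}_r\cong\bold{T}_{\Lambda^r}$ is tensoring by the exterior power $\Lambda^r\in\PP$, and that $\bold{A}_r$ is naturally isomorphic to the right adjoint $\bold{T}^*_{\Gamma^r}$ of tensoring by the divided power $\Gamma^r$. The latter I would deduce as follows: since $\Gamma^r=\Gamma^{k,r}$ is homogeneous of degree $r$, for $N\in\PP$ and $V\in\V$ only the degree-$r$ homogeneous component $\Delta N_{*,r}(V,\cdot)$ of the functor $W\mapsto N(V\oplus W)$ contributes to $Hom_\PP\bigl(\Gamma^r,N(V\oplus\cdot)\bigr)$, and Theorem 2.10 of \cite{FS97} (which identifies $Hom_\PP(\Gamma^{k,r},-)$ with evaluation at $k$ on $\PP_r$) then gives
$$
\bold{T}^*_{\Gamma^r}(N)(V)=Hom_\PP\bigl(\Gamma^r,N(V\oplus\cdot)\bigr)\cong\Delta N_{*,r}(V,k)=N(V\oplus k)_r=\bold{A}_r(N)(V),
$$
naturally in $N$ and $V$. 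Both functors are then exact: $\bold{B}_r$ since tensoring is exact, and $\bold{A}_r$ since $\Gamma^r$ is projective.

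Next I would check the three isomorphisms. The relation $\bold{B}_n\circ\bold{B}_m\cong\bold{B}_m\circ\bold{B}_n$ is immediate from the symmetry of the tensor product of $\PP$ applied to $\bold{B}_n\bold{B}_m(M)=M\otimes\Lambda^m\otimes\Lambda^n$. For $\bold{A}_n\circ\bold{A}_m\cong\bold{A}_m\circ\bold{A}_n$, unwinding definitions gives $\bold{A}_n\bold{A}_m(M)(V)=M(V\oplus k\oplus k)_{n,m}$, the $(n,m)$ bi-weight space for $GL(k)\times GL(k)$; applying $M$ to the swap map $(v,a,b)\mapsto(v,b,a)$ interchanges it with the $(m,n)$ bi-weight space and yields the required natural isomorphism (exactly as in the construction of $\sigma$ in Section \ref{Section_gcatonP}). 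The one carrying content is $\bold{A}_n\circ\bold{B}_m\cong\bold{B}_m\circ\bold{A}_n\oplus\bold{B}_{m-1}\circ\bold{A}_{n-1}$: combining $\Lambda^m(V\oplus k)\cong\Lambda^m(V)\oplus\bigl(\Lambda^{m-1}(V)\otimes k\bigr)$ with the $GL(k)$-weight decomposition of $M(V\oplus k)$ gives
$$
\bold{A}_n\bold{B}_m(M)(V)=\bigl(M(V\oplus k)\otimes\Lambda^m(V\oplus k)\bigr)_n\cong\bigl(M(V\oplus k)_n\otimes\Lambda^m(V)\bigr)\oplus\bigl(M(V\oplus k)_{n-1}\otimes\Lambda^{m-1}(V)\otimes k\bigr),
$$
where the first summand is $\bold{B}_m(\bold{A}_n(M))(V)$ and the second is canonically $\bold{B}_{m-1}(\bold{A}_{n-1}(M))(V)$. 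All three decompositions are manifestly functorial in $M$ and $V$ (with the conventions $\bold{A}_{-1}=\bold{B}_{-1}=0$).

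Finally I would read off the representation on $K_0(\PP)$. Under the algebra isomorphism $\varrho:K_0(\PP)\to B_\Z$ one has $\varrho([\Lambda^r])=e_r$, so $[\bold{B}_r]=[\bold{T}_{\Lambda^r}]=e_r$ as a multiplication operator on $B_\Z$; and $\Gamma^r$ is projective with $\varrho([\Gamma^r])=h_r$, so Lemma \ref{LemmaAdjoint} applied to $M=\Gamma^r$ gives $[\bold{A}_r]=[\bold{T}^*_{\Gamma^r}]=h_r^*$. Applying $[\cdot]$ to the three functorial isomorphisms now produces exactly the defining relations $e_re_s=e_se_r$, $h_r^*h_s^*=h_s^*h_r^*$ and $h_r^*e_s=e_sh_r^*+e_{s-1}h_{r-1}^*$ of $H_\Z$ (cf. Example \ref{example_H}(2)), so the assignment $h_n^*\mapsto[\bold{A}_n]$, $e_n\mapsto[\bold{B}_n]$ extends to an action of $H_\Z$ on $K_0(\PP)$; transporting it along $\varrho$ identifies it with the Fock space representation of $H_\Z$ on $B_\Z$, which is the assertion.

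I expect the only genuine difficulty to be the identification $\bold{A}_r\cong\bold{T}^*_{\Gamma^r}$ and the ensuing computation $[\bold{A}_r]=h_r^*$, since it is this that fixes the $K_0$-level action; the three isomorphisms themselves are routine, the one point deserving care being to check that each is a natural isomorphism of functors rather than merely a pointwise one. No hypothesis on $p$ enters anywhere, which is exactly why the statement is valid for all $p\ge0$ — unlike Theorem \ref{Thm_HcatonP}, whose proof needs the idempotents $e(n),e'(n)$ and hence characteristic zero.
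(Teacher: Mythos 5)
Your proposal is correct and follows essentially the same route as the paper's own argument: the key cross-relation $\bold{A}_n\bold{B}_m\cong\bold{B}_m\bold{A}_n\oplus\bold{B}_{m-1}\bold{A}_{n-1}$ is established by the same direct computation with the $GL(k)$-weight decomposition of $M(V\oplus k)\otimes\Lambda^m(V\oplus k)$, and the $K_0$-level identification goes through $[\bold{B}_n]=e_n$, the projectivity of $\Gamma^n$, Lemma~\ref{LemmaAdjoint}, and the identification $\bold{A}_n\cong\bold{T}^*_{\Gamma^n}$ (which the paper cites from \cite[Corollary 2.12]{FS97} rather than rederiving via Theorem 2.10 as you do). You additionally spell out the commutativity isomorphisms for the $\bold{A}$'s and $\bold{B}$'s, which the paper treats as immediate; otherwise the two arguments coincide.
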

\begin{proof}
For $M\in\PP$ and $V\in\V$:
\begin{eqnarray*}
\bold{A}_n\circ\bold{B}_m(M)(V) &=& \bold{B}_m(M)(V\oplus k)_n \\ &= & [M(V\oplus k)\otimes\Lambda^m(V\oplus k)]_n \\&=& M(V\oplus k)_n\otimes\Lambda^m(V) \oplus M(V\oplus k)_{n-1}\otimes\Lambda^{m-1}(V)
\end{eqnarray*}
and
\begin{eqnarray*}
\bold{B}_m\circ \bold{A}_n(M)(V) &=& \bold{A}_n(M)(V)\otimes\Lambda^m(V) \\ &=& M(V\oplus k)_n\otimes\Lambda^m(V)
\end{eqnarray*}
Immediately we obtain the  isomorphism,
$$\bold{A}_n\circ \bold{B}_m\cong \bold{B}_m \circ \bold{A}_n \oplus \bold{B}_{m-1}\circ \bold{A}_{n-1}.$$Therefore the family of functors $\{\bold{A}_n,\bold{B}_n:n\geq1 \}$ is a weak $H_\Z$-categorification.

It remains to show that this categorifies the Fock space representation of $H_\Z$.  First note that under $\varrho$, $[\Lambda^n]\mapsto e_n$.  This implies that under this identification $[\bold{B}_n]=e_n$, where here $e_n$ is viewed as the operator $e_n:B\to B$.  Next, note that $[\Gamma^n]\mapsto h_n$  Since $\Gamma^n$ is projective (\cite[Theorem 2.10]{FS97}), by Lemma \ref{LemmaAdjoint} we have that $[\bold{T}_{\Gamma^n}^*]=h_n^*$.  But $\bold{T}_{\Gamma^n}^*=\bold{A}_n$ (\cite[Corollary 2.12]{FS97}), showing that indeed $\{\bold{A}_n,\bold{B}_n:n\geq1 \}$ categorifies the Fock space representation of $H_\Z$.
\end{proof}

\begin{theorem}
\label{Thm_HcatonP}
Let $p=0$ .  Then the functor $\Omega:\HH \to End_L(\PP)$ categorifies the Fock space representation of $H_\Z$ in the sense of Definition \ref{DefHcat}(1). In particular $\Omega(S_-^r)\cong \bold{A}_r$ and $\Omega(\Lambda_+^r)\cong \bold{B}_r$.
\end{theorem}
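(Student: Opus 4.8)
The plan is to deduce the statement from the weak categorification of Theorem~\ref{Thm_Hweakcat} by identifying $\Omega$ on Khovanov's objects $\Lambda_+^r$ and $S_-^r$. The monoidality and $k$-linearity of the functor $\Omega:\HH\to End_L(\PP)\subseteq End(\PP)$ are precisely the content of Propositions~\ref{PropWellDef} and~\ref{PropIdem} ($\Omega'$ is well defined on $\HH'$ and extends canonically over the Karoubi envelope), so for Definition~\ref{DefHcat}(1) the only remaining task is to compute the representation $[\Omega]\circ\gamma$ of $H_\Z$ on $K_0(\PP)\cong B_\Z$, and for this it suffices to prove the two isomorphisms $\Omega(S_-^r)\cong\bold{A}_r$ and $\Omega(\Lambda_+^r)\cong\bold{B}_r$.

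First I would identify $\Omega(\Lambda_+^r)$. Since $\Omega'(Q_+)=\F$, we have $\Omega'(Q_{+^r})=\F^r$ with $\F^r(M)(V)=M(V)\otimes V^{\otimes r}$, and by construction $\Omega'$ carries $\tau$ to the transposition of two tensor factors; hence the $\mathfrak{S}_r$-action on $Q_{+^r}$ is sent to the permutation action on $V^{\otimes r}$. Because $p=0$ the antisymmetrizer $e'(r)$ cuts $\Lambda^r(V)$ out of $V^{\otimes r}$, so $\Omega(\Lambda_+^r)=\bigl(\F^r,\Omega'(e'(r))\bigr)$ is isomorphic to the functor $M\mapsto M\otimes\Lambda^r$, which is $\bold{B}_r$.

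Next I would identify $\Omega(S_-^r)$. Since $\E$ is the right adjoint of $\F=\bold{T}_\I$, uniqueness of adjoints gives $\E\cong\bold{T}_\I^*$ and hence $\E^r\cong\bold{T}_{\otimes^r}^*$, explicitly $\E^r(M)(V)=M(V\oplus k^r)_{(1,\dots,1)}$, the multilinear weight space. The key point is that $\Omega$ carries the morphism $\sigma\in End_\HH(Q_{--})$ to the geometric swap $\sigma:\E^2\to\E^2$ of Section~\ref{Section_gcatonP}: indeed $\sigma$ is assembled in $\HH'$ from $\tau,\eta_1,\eta_2,\epsilon_1,\epsilon_2$, all of which $\Omega'$ sends to the eponymous morphisms on $\PP$, so $\Omega(\sigma)$ is the mate of the geometric $\tau:\F^2\to\F^2$ under the standard adjunctions, and by Lemmas~\ref{Permutation_Flip_Com} and~\ref{Second_Adjunction} this mate is exactly the geometric $\sigma$. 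Together with Lemma~\ref{Group_Compatibility}, it follows that $\Omega$ transports the $\mathfrak{S}_r$-action on $Q_{-^r}$ to the $\mathfrak{S}_r$-action on $\bold{T}_{\otimes^r}^*$ induced by the permutation action on $\otimes^r$ (equivalently, the permutation of the $r$ copies of $k$ in $M(V\oplus k^r)$). Since $p=0$, applying the symmetrizer $e(r)$ to $\otimes^r$ produces $\Gamma^r$, whence $\Omega(S_-^r)=\bigl(\E^r,\Omega'(e(r))\bigr)\cong\bold{T}_{\Gamma^r}^*$, which equals $\bold{A}_r$ by Corollary~2.12 of~\cite{FS97}.

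Finally, to conclude: $[\Omega]:K_0(\HH)\to End(K_0(\PP))$ is a ring homomorphism (monoidality of $\Omega$), and composing with Khovanov's ring homomorphism $\gamma:H_\Z\to K_0(\HH)$ (\cite{Kh}, Theorem~1) gives $h_r^*\mapsto[\Omega(S_-^r)]=[\bold{A}_r]$ and $e_r\mapsto[\Omega(\Lambda_+^r)]=[\bold{B}_r]$. By Theorem~\ref{Thm_Hweakcat} these are the Fock-space operators $h_r^*$ and $e_r$ on $B_\Z$, and since $H_\Z$ is generated by the $e_r$ and $h_r^*$, the homomorphism $[\Omega]\circ\gamma$ is the Fock space representation of $H_\Z$; this is exactly Definition~\ref{DefHcat}(1) for $(\PP,\Omega)$. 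I expect the main obstacle to be the identification $\Omega(S_-^r)\cong\bold{A}_r$: one must show both that $\Omega$ sends the formal morphism $\sigma$ of $\HH$ to the geometric swap (where the adjunction-compatibility Lemmas~\ref{Permutation_Flip_Com}/\ref{Second_Adjunction} and the $\mathfrak{S}_n$-equivariance Lemma~\ref{Group_Compatibility} are essential) and that symmetrizing the multilinear weight space recovers the weight-$r$ space $M(V\oplus k)_r$, i.e.\ that $\bigl(\E^r,e(r)\bigr)\cong\bold{T}_{\Gamma^r}^*$ in characteristic zero.
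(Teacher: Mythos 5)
Your proposal is correct and follows essentially the same route as the paper's proof: reduce to the two object identifications, observe $\Omega(\Lambda_+^r)\cong\bold{B}_r$ directly, and obtain $\Omega(S_-^r)\cong\bold{A}_r$ by matching it to the right adjoint of $\bold{T}_{\Gamma^r}=\bold{T}_{S^r}$ (using $S^r\cong\Gamma^r$ in characteristic zero) via the adjunction-compatibility of the $\mathfrak{S}_r$-actions. The one point where you go into more detail than the paper — checking that $\Omega$ sends Khovanov's formal $\sigma$ to the geometric swap on $\E^2$ so that $\Omega'(e(r))$ really is the geometric symmetrizer — is a gap the paper leaves implicit in the sentence ``the action of $\mathfrak{S}_r$ on these functors is compatible via the adjunction $(\F,\E)$,'' and your appeal to Lemmas \ref{Permutation_Flip_Com} and \ref{Second_Adjunction} is the right way to fill it.
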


\begin{proof}
It remains only to show that $[\Omega]\circ \gamma:H_\Z\to End(K_0(\PP))$ is isomorphic to the Fock space representation of $H_\Z$.  For this we will show that $\Omega (S_-^r)\cong \bold{A}_r$ and    $\Omega (\Lambda_+^r)\cong \bold{B}_r$; by the above theorem it will follow that $[\Omega]\circ\gamma$ is isomorphic to Fock space.

By  the construction of $\Omega'$, for  $M\in \PP$ and $V\in \V$,
\begin{eqnarray*}
\Omega(S_-^r)(M)(V)&=&e(r)\cdot \E^r(M)(V)\\&=& e(r)\cdot M(V\oplus k^r)_{\varpi_r},
\end{eqnarray*}
Note that  $\E^r$ is right adjoint to $\F^r$, and by Lemma \ref{Permutation_Flip_Com}, the action of $\mathfrak{S}_r$ on these functors is compatible via the adjunction $(\F,\E)$. It  follows that $\Omega(S_-^r)$ is right adjoint to $\bold{T}_{S^r}$. On the other hand, it is known that the functor $\bold{A}_r$ is right adjoint to the functor $T_{\Gamma^r}$. Since $S^r\cong \Gamma^r$ in characteristic zero,  by the Yoneda lemma we conclude $\Omega(S_-^r)\cong \bold{A}_r$.
It is immediate from the definition that $\Omega(\Lambda _+^r) \cong\bold{B}_r.$

\end{proof}

\subsection{Khovanov's $H$-categorification on $\R$}
\label{SectionHCatonR}

Throughout this section $p=0$.  In \cite{Kh}, Khovanov defines a functor $$\Phi:\HH\to End(\R),$$ which is a categorification of the Fock space representation of $H_\Z$, although he doesn't describe this categorification in terms of linear species.  We briefly recall Khovanov's functor, reformulated  in the setting of linear species.

In fact, we've already introduced most of the data required to define $\Phi$.  In Section \ref{Section_gcatonR} we defined functors $\E',\F'$ and unit/counit data expressing the bi-adjointness of these functors, which we denoted $\epsilon_1',\epsilon_2',\eta_1',\eta_2'$.  Then we define first $\Phi':\HH'\to End(\R)$ by $Q_+\mapsto\F'$ and $Q_-\mapsto\E'$.  We require $\Phi'$ to be a $k$-linear monoidal functor, and hence this determines $\Phi'$ on all objects of $\HH'$.

On morphisms, $\epsilon_i,\eta_i\mapsto\epsilon_i',\eta_i'$ for $i=1,2$.  It remains only to define the image $\tau'$ of the morphism $\tau\in Hom_{\HH'}(Q_{++},Q_{++})$.  For any $S\in \R$ and $J\in \X$,
$$(\F^{'})^2(S)(J)=\bigoplus_{j,\ell \in J} S(J\minus \{j,\ell\}) ^{\oplus 2}.$$
Let $\tau'_{S,J}$ be the map induced by the permutation on $S(J\minus\{j,\ell\}) ^{\oplus 2}$, for any pair $j,\ell \in J$.
This defines an operator $\tau'$ on $\F'^2$.

The following is a reformulation of a theorem of Khovanov's.
\begin{theorem}
[Khovanov]
\label{Thm_HcatonR}
The functor $\Phi':\HH'\to End(\R)$ is well-defined, and extends to a functor $\Phi:\HH\to End(\R)$ which is a categorification of the Fock space representation of $H_\Z$ in the sense of Definition \ref{DefHcat}(1). \end{theorem}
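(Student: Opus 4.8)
The plan is to deduce Theorem~\ref{Thm_HcatonR} from Khovanov's original result by transporting it along the equivalence of categories $\R\xrightarrow{\simeq}\bigoplus_{n\geq0}Rep(k\mathfrak{S}_n)$, $T\mapsto\bigoplus_nT([n])$, from Section~\ref{SectionR}. The real content is a dictionary: first I would check that under this equivalence the species functor $\E'$ corresponds to $\bigoplus_n\mathrm{Res}^{\mathfrak{S}_{n+1}}_{\mathfrak{S}_n}$ and $\F'$ corresponds to $\bigoplus_n\mathrm{Ind}_{\mathfrak{S}_n}^{\mathfrak{S}_{n+1}}$ --- the functors Khovanov uses. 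This amounts to unwinding definitions: $\E'(S)([n])=S([n]\sqcup*)$ with its residual $\mathfrak{S}_n$-action is exactly the restriction of the $\mathfrak{S}_{n+1}$-module $S([n+1])$, while $\F'(S)([n])=\bigoplus_{j\in[n]}S([n]\minus j)$, with the $\mathfrak{S}_n$-action permuting summands along the bijections $f^j$, is the induced module $\mathrm{Ind}_{\mathfrak{S}_{n-1}}^{\mathfrak{S}_n}S([n-1])$. Then I would match the four unit/counit morphisms $\eta_i',\epsilon_i'$ of Section~\ref{Section_gcatonR} with the standard (co)units of the two biadjunctions $(\mathrm{Ind},\mathrm{Res})$ and $(\mathrm{Res},\mathrm{Ind})$, and match $\tau'$ with the endomorphism of $\mathrm{Ind}_{\mathfrak{S}_{n-2}}^{\mathfrak{S}_n}$ given by right multiplication by the transposition $(n-1,n)$ --- precisely Khovanov's crossing morphism.

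Granting this dictionary, the transported functor sends $Q_+\mapsto\F'$, $Q_-\mapsto\E'$, $\eta_i\mapsto\eta_i'$, $\epsilon_i\mapsto\epsilon_i'$, $\tau\mapsto\tau'$, and therefore coincides with the functor $\Phi'$ defined in this section; since Khovanov proved his functor is well defined, it follows that $\Phi'$ is well defined, i.e.\ the local relations (R1)--(R6) hold for the species data. Because Khovanov's functor is already defined on the Karoubi envelope $\HH$, the transported functor $\Phi:\HH\to End(\R)$ is likewise defined, with no separate idempotent-completeness argument needed. As an alternative, self-contained route, one could instead verify (R1)--(R6) directly in the species model, in the style of the proof of Proposition~\ref{PropWellDef}: the computations are entirely parallel, using only the elementary combinatorics of the bijections $u_j,s_{i,j},t_{i,j}$ of Section~\ref{SectionR}, e.g.\ the commutation~(\ref{commute}); I would expect (R4)--(R6), which involve the composites $\triangle$ and $\square$, to be the only ones requiring care.

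It then remains to show that $\Phi$ categorifies the Fock space representation of $H_\Z$, i.e.\ that $[\Phi]\circ\gamma$ is the Fock space action on $K_0(\R)$. Here I would pass to $K_0(\R)\cong B_\Z$ via the classical identification sending the class of the simple (equivalently, since $p=0$, Specht) $\mathfrak{S}_n$-module indexed by $\lambda$ to $s_\lambda$ (the identification compatible with $\varrho$ via $\Schur$). Under it, $[\F']=[\mathrm{Ind}]$ is multiplication by $p_1=e_1$ and $[\E']=[\mathrm{Res}]$ is its adjoint $p_1^*$; chasing through the Karoubi envelope, $\Phi(\Lambda_+^n)$ induces multiplication by $e_n$, the object $(Q_{+^n},e(n))$ induces multiplication by $h_n$, and $\Phi(S_-^n)$, being the right adjoint of the latter, induces $h_n^*$. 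Hence $[\Phi]\circ\gamma$ acts on $B_\Z$ via $e_n$ and $h_n^*$ exactly as in Example~\ref{example_H}(2), which is by definition the Fock space representation of $H_\Z$; equivalently this is immediate from Khovanov's corresponding statement, since the equivalence $\R\simeq\bigoplus_nRep(k\mathfrak{S}_n)$ induces on $K_0$ precisely the standard identification with $B_\Z$.

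I expect the main obstacle to be the bookkeeping in the first paragraph --- pinning down the dictionary, especially correctly identifying the four (co)unit morphisms and the crossing $\tau'$ across the two languages, while keeping straight which of the two biadjunctions is which and the ordering of summands in $\F'$. Once that dictionary is established, everything else is formal, since all the genuinely nontrivial content, namely the relations (R1)--(R6) and the Fock-space identification, is already contained in~\cite{Kh}.
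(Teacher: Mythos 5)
Your proposal is correct and is essentially the approach the paper takes: the paper gives no proof of Theorem~\ref{Thm_HcatonR} at all, simply declaring it to be ``a reformulation of a theorem of Khovanov's'' and leaving the dictionary between the species data $(\E',\F',\eta'_i,\epsilon'_i,\tau')$ and Khovanov's $(\mathrm{Ind},\mathrm{Res},\dots)$ implicit. Your write-up makes that dictionary explicit (the identifications of $\E'$ with restriction and $\F'$ with induction, the matching of the (co)units, and the identification of $\tau'$ with Khovanov's crossing, i.e.\ right multiplication by a transposition in $\mathrm{Ind}_{\mathfrak{S}_{n-2}}^{\mathfrak{S}_n}$), which is exactly the verification the paper suppresses. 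One small wording slip: in the paragraph on $K_0$, ``$\Phi(S_-^n)$, being the right adjoint of the latter'' should be ``the image under $\Phi$ of the biadjoint of $(Q_{+^n},e(n))$'' (or, cleaner, just invoke Khovanov's own identification $[\Phi(S_-^n)]=h_n^*$ transported along the equivalence, as you note at the end of that paragraph); this doesn't affect the argument.
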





\subsection{$\Schur$ is a morphism of $H_\Z$-categorifications}
\label{SectionSmorphHCat}

We continue to assume that $p=0$.  We've introduced two $H_\Z$-categorifications of the Fock space representation of $H_\Z$, one via the category $\PP$ and the other via the category $\R$.  We now enrich the Schur-Weyl duality functor to an equivalence of these $H_\Z$-categorifications.

Let $(\PP,\Omega)$ be the $H_\Z$-categorification appearing in Theorem  \ref{Thm_HcatonP}.  Let $(\R,\Phi)$ be the $H_\Z$-categorification appearing in Theorem \ref{Thm_HcatonR}.  Let $(\Schur,\zeta_+,\zeta_-)$ be the data of Theorem \ref{Thm_Smorphgcat}.  We claim that this same data also defines a morphism of $H_\Z$-categorifications.

\begin{lemma}
\label{Adjunction_Flip_Prime}
The operators $\sigma'$ on $(\E')^2$ and $\tau'$ on $\F'^2$ are compatible with respect to the adjunction $(\E'^2, \F'^2)$.
\end{lemma}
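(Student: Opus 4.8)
The plan is to unwind the statement into a single identity of natural transformations and then verify that identity by reduction to representable linear species. Let $\eta\colon\mathrm{Id}_\R\to(\F')^2(\E')^2$ denote the unit of the adjunction obtained by composing $(\E',\F',\eta_1',\epsilon_1')$ with itself, so that the adjunction bijection is $\Theta_{S,T}\colon\mathrm{Hom}_\R((\E')^2S,T)\xrightarrow{\ \sim\ }\mathrm{Hom}_\R(S,(\F')^2T)$, $f\mapsto(\F')^2(f)\circ\eta_S$. Saying that $\sigma'$ and $\tau'$ are compatible with respect to this adjunction means precisely that $\Theta_{S,T}(f\circ\sigma'_S)=\tau'_T\circ\Theta_{S,T}(f)$ for all $S,T\in\R$ and all $f\colon(\E')^2S\to T$. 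Expanding both sides and using naturality of $\tau'$ in the form $\tau'_T\circ(\F')^2(f)=(\F')^2(f)\circ\tau'_{(\E')^2S}$, this is equivalent to the identity
$$(\F')^2(\sigma'_S)\circ\eta_S\;=\;\tau'_{(\E')^2S}\circ\eta_S\qquad\text{for all }S\in\R,$$
an identity of natural transformations $\mathrm{Id}_\R\to(\F')^2(\E')^2$.

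Since both sides of this identity are natural transformations out of the identity functor, and every object of $\R$ is a quotient of a finite direct sum of the representable species $h_n=k[\mathrm{Bij}([n],\cdot)]$ (equivalently, since $p=0$ here, of simple species), it suffices to check the identity on $S=h_n$, and then after evaluating at the finite set $[n]$. There $(\F')^2(\E')^2h_n$ evaluated at $[n]$ is a direct sum of copies of $k[\mathrm{Bij}([n],\,([n]\minus\{i,j\})\sqcup*\sqcup*)]$, one for each ordered pair of distinct $i,j\in[n]$; the map $\eta_{h_n}$ is read off from the formulas defining $\eta_1'$ (built from the bijections $u_j$ of Section \ref{SectionR}); $\sigma'$ is $S$ applied to the bijection of $J\sqcup*\sqcup*$ that swaps the two auxiliary points and fixes $J$; and $\tau'$ exchanges, for each two--element subset of $[n]$, the two summands indexed by its two orderings. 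Expanding both composites on a basis bijection $[n]\to[n]$ and comparing term by term yields the identity. This expansion is the only place where attention is needed, and it is where I expect the ``work'' to lie, though no genuine obstacle arises: one must carefully track which of the two copies attached to an unordered pair is treated as ``first'', and how the two auxiliary points introduced by $(\E')^2$ are threaded through the composite --- exactly the same bookkeeping as in the proof of Lemma \ref{Permutation_Flip_Com}.

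An alternative, which sidesteps this bookkeeping, is to transport the already-proved unprimed statement along Schur--Weyl duality. Lemmas \ref{Permutation_Flip_Com} and \ref{Second_Adjunction} give the corresponding compatibility for $(\E^2,\F^2)$ on $\PP$; since $p=0$, $\Schur$ is an equivalence, it is compatible with the relevant adjunctions (Lemmas \ref{Condition_I} and \ref{Condition_I'}), $\zeta_+$ carries $\sigma$ to $\sigma'$ (Lemma \ref{Condition_III}), and $\zeta_-$ carries $\tau$ to $\tau'$ (a direct check of the same nature as above); combining these transports the identity from $\PP$ to $\R$. The one caveat is to ensure that this route does not implicitly use anything established downstream of Lemma \ref{Adjunction_Flip_Prime}, so the direct computation above is the safer choice for a self-contained argument.
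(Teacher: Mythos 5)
Your primary argument is correct and takes essentially the same route as the paper: reduce the compatibility claim to an explicit computation with the adjunction data between $(\E')^2$ and $(\F')^2$, then verify by direct inspection. The paper packages this a little differently --- it describes the adjunction bijection $\theta$ concretely (for a fixed unordered pair $\{j,\ell\}\subset J$, the component of $\theta_J(f)$ is built from the two bijections $\{j,\ell\}\to *\sqcup *$, one per ordering, at which point $\sigma'$ swapping the auxiliary points visibly corresponds to $\tau'$ swapping the two orderings) rather than passing to the unit identity $(\F')^2(\sigma'_S)\circ\eta_S=\tau'_{(\E')^2S}\circ\eta_S$ and Yoneda-reducing to representables as you do, but this is only a repackaging, and both you and the paper elide the final term-by-term check. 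You are also right to reject the Schur--Weyl transport as a self-contained proof: this lemma feeds into Theorem \ref{Thm_SequivHcat}, which is precisely what establishes that $\zeta_-$ carries $\tau$ to $\tau'$, so that route would be circular.
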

\begin{proof}
Let $S,T\in\R$. The adjunction $(\E'^2, \F'^2)$ induces the following canonical isomorphism: $$\theta: Hom_\R(\E'^2(S),T)\cong Hom_\R(S,\F'^2(T)).$$
Let us describe $\theta$ explicitly.  Let $f: \E'^2(S)\to T$ be a morphism. Then for any $J\in \X$ we have collection of compatible maps $f_J:S(J\sqcup * \sqcup *)\to T(J)$. Now, if we fix a pair of elements $j,\ell \in J$, then $f_{J\minus \{j,\ell\} }:S(J\minus \{j,\ell\}\sqcup *\sqcup * )\to T(J\minus \{j,\ell\})$. The two bijections from $\{j,l\}$ to $*\sqcup*$ induce two bijections from $J$ to $J\minus \{j,\ell\}\sqcup *\sqcup *$. Hence we get two linear maps from $S(J)$ to $S(J\minus \{j,\ell\}\sqcup *\sqcup * )$. Composing with $f_{J\minus \{j,\ell\}} $ we get two  maps from $S(J)$ to $T(J\minus \{j,\ell\})$. This defines the map $$\theta_J(f): S(J)\to \F'^2(T)(J).$$
Using this description of  $\theta$ the lemma can be easily checked.

\end{proof}

\begin{theorem}
\label{Thm_SequivHcat}
The data $(\Schur,\zeta_+,\zeta_-)$ is an equivalence of $H_\Z$-categorifications in the sense of Definition \ref{DefMorphism}.
\end{theorem}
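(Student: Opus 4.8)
The plan is to verify the three commutative diagrams of Definition \ref{DefMorphism} for the data $(\Schur,\zeta_+,\zeta_-)$, and then to note separately that the underlying functor $\Schur$ is an equivalence of abelian categories in characteristic zero.

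First I would observe that two of the three diagrams are already in hand. Under the identifications $\Omega(Q_+)=\F$, $\Omega(Q_-)=\E$ for the $H_\Z$-categorification on $\PP$ and $\Phi(Q_+)=\F'$, $\Phi(Q_-)=\E'$ for the one on $\R$, diagram (1) of Definition \ref{DefMorphism} (compatibility of $\zeta_\pm$ with $\eta_1,\eta_1'$) is precisely the commutative diagram of Lemma \ref{Condition_I'}, and diagram (2) (compatibility with $\eta_2,\eta_2'$) is precisely the commutative diagram of Lemma \ref{Condition_I}. Both were established in the course of proving Theorem \ref{Thm_Smorphgcat}. So the only new point is diagram (3): that $\zeta_-$ intertwines the flip $\tau$ on $\F^2$ with the flip $\tau'$ on $\F'^2$, i.e. that $(\F'\zeta_-)\circ(\zeta_-\F)$ commutes with $\Schur\tau$ on the source and with $\tau'\Schur$ on the target.

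For diagram (3) I would deduce it from the corresponding statement for $\zeta_+$ and the operators $\sigma,\sigma'$ on $\E^2,\E'^2$, namely Lemma \ref{Condition_III}, by transporting that identity through adjunctions. The necessary inputs are: on the $\PP$ side, $\sigma$ and $\tau$ correspond to one another under the bi-adjunction $(\E,\F)$, $(\F,\E)$ (Lemmas \ref{Permutation_Flip_Com} and \ref{Second_Adjunction}); on the $\R$ side, $\sigma'$ and $\tau'$ correspond under $(\E'^2,\F'^2)$ (Lemma \ref{Adjunction_Flip_Prime}); and $\zeta_+$ is the mate of $\zeta_-$ with respect to the bi-adjunctions on both categories, which is exactly the content of Lemmas \ref{Condition_I} and \ref{Condition_I'} (applied twice to handle the squares $\E^2$ and $\F^2$). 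Passing to adjoints throughout the commuting square of Lemma \ref{Condition_III} via these identifications yields precisely diagram (3); this is the same type of formal manipulation carried out in the proof of Proposition \ref{PropHmorph} (which rests on Lemma \ref{Abstract_Nonsense}), and is of the same nature as Proposition \ref{Adjunction_Morphism_Categorification}. As an alternative I would check diagram (3) by hand after reducing to $M=\otimes^n$, every object of $\PP$ being a sum of subquotients of such functors: there both $\tau$ and $\tau'$ become the transposition of the two auxiliary letters, and the claim reduces to Lemma \ref{Group_Compatibility}.

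Finally, I would verify that $(\Schur,\zeta_+,\zeta_-)$ is an \emph{equivalence} and not merely a morphism, i.e. that $\Schur:\PP\to\R$ is an equivalence of categories when $p=0$. Since $\Schur(M)(J)=Hom_\PP(\otimes^J,M)$ vanishes unless $|J|$ equals the degree of the relevant homogeneous component of $M$, the functor decomposes as $\bigoplus_d\Schur_d$ with $\Schur_d:\PP_d\to\R_d$; via $\R_d\simeq Rep(k\mathfrak{S}_d)$ and Lemma \ref{thm-FS}, the functor $\Schur_d$ is identified with $M\mapsto M(k^d)_{\varpi_d}$, which by (\ref{EqOmega}) and Lemma \ref{Group_Compatibility} is the classical Schur functor $M\mapsto Hom_\PP(\otimes^d,M)$ as an $\mathfrak{S}_d$-module; in characteristic zero this realizes the equivalence $Pol_d(GL(k^d))\xrightarrow{\ \sim\ }Rep(k\mathfrak{S}_d)$. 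Hence each $\Schur_d$, and therefore $\Schur$, is an equivalence of abelian categories, and since $\zeta_\pm$ are isomorphisms one transports the structure to produce an inverse morphism, so $(\Schur,\zeta_+,\zeta_-)$ is an equivalence of $H_\Z$-categorifications. The only genuinely non-formal step is diagram (3) — moving the $\sigma$-compatibility of $\zeta_+$ across three distinct adjunctions (two on $\PP$, one on $\R$) to obtain the $\tau$-compatibility of $\zeta_-$ — whereas diagrams (1) and (2) are already proved and the equivalence is classical.
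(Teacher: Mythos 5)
Your proof follows the paper's route exactly: diagrams (1) and (2) are Lemmas \ref{Condition_I'} and \ref{Condition_I}, and diagram (3) is obtained by transporting the $\sigma$-compatibility of $\zeta_+$ (Lemma \ref{Condition_III}) through the bi-adjunctions using Lemmas \ref{Permutation_Flip_Com}, \ref{Adjunction_Flip_Prime}, and the formal Lemma \ref{Abstract_Nonsense}. The one thing you add that the paper leaves implicit is the explicit verification (via Lemma \ref{thm-FS} and the classical degree-$d$ Schur functor) that $\Schur$ is an equivalence of abelian categories in characteristic zero, which is a reasonable supplement to justify the word ``equivalence'' in the statement.
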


\begin{proof}
We must show that Diagrams 1-3 of Definition \ref{DefMorphism} commute. Diagram 1 commutes by Lemma \ref{Condition_I'}, and Diagram 2 commutes by Lemma \ref{Condition_I}.  It remains to show that Diagram 3 commutes.

From the adjunction $(\F,\E,\eta_1,\epsilon_1)$ we produce an adjunction $(\F^2,\E^2, \tilde{\eta}_1,\tilde{\epsilon}_1)$, where $\tilde{\eta}_1= \F\eta_1 \E \circ \eta_1$ and $\tilde{\epsilon}_1= \F\epsilon_1 \E \circ \epsilon_1$.  Similarly set $\tilde{\eta}'_1= \F'\eta'_1 \E' \circ \eta'_1$ and $\tilde{\epsilon'}_1= \F'\epsilon'_1 \E' \circ \epsilon'_1$.  This defines an adjunction $(\F'^2,\E'^2, \tilde{\eta}'_1,\tilde{\epsilon}'_1)$.
Let $\tilde{\zeta}_+=\E'\zeta_+\circ \zeta_+\E$ and $\tilde{\zeta}_-=\F'\zeta_-\circ \zeta_-\F$.

The commutativity of Diagram 1 implies that the following diagram commutes:

\begin{equation*}
\xymatrix{&  \Schur \ar[dl]_{\Schur \tilde{\eta}_1} \ar[rd]^{\tilde{\eta'}_1\Schur } \\
\Schur \F^2\E^2 \ar[r]_{\tilde{\zeta_-}\E^2} & \F'^2\Schur \E^2 \ar[r]_{\F'^2\tilde{\zeta_+}} & \F'^2\E'^2 \Schur
.}
\end{equation*}
By Lemma \ref{Condition_III} the following diagram commutates:
\begin{equation*}
\xymatrix{
\Schur \E^2 \ar[r]^{\tilde{\zeta_+}} \ar[d]^{\Schur \sigma} & \E'^2 \ar[d]^{\sigma'\Schur} \Schur \\
\Schur \E^2 \ar[r]_{\tilde{\zeta_+}} &  \E'^2 \Schur }
\end{equation*}
By Lemma \ref{Permutation_Flip_Com}, $\sigma$ and $\tau$ are compatible with respect to the adjunction $(\E^2,\F^2, \tilde{\eta}_1,\tilde{\epsilon}_1)$. By Lemma \ref{Adjunction_Flip_Prime}, $\sigma'$ and $\tau' $ are compatible with respect to the adjunction $(\E'^2,\F'^2, \tilde{\eta}'_1,\tilde{\epsilon}'_1)$.  Therefore we are in the setting of Lemma \ref{Abstract_Nonsense}, and so we  conclude that Diagram 3 commutes:

\begin{equation*}
\xymatrix{\Schur \F^2 \ar[r]^{\tilde{\zeta_-}} \ar[d]_{\Schur \tau} & \F'^2 \ar[d]^{\tau'\Schur} \Schur \\
\Schur \F^2 \ar[r]^{\tilde{\zeta_-}} & \F'^2 \Schur }
\end{equation*}
\end{proof}

\section{$\widehat{\mathfrak{gl}}_p$-categorification}

In this section we define an action of $H^{(1)}$ on $D^b(\PP)$, the bounded derived category of $\PP$, which categorifies the $H^{(1)}$ action on $B$. We also prove the commutativity between the action of $\g'$-action on $\PP$ and the $H^{(1)}$-action on $D^b(\PP)$. Hence it categorifies the action of $U(\g')\otimes H^{(1)}$ on $B.$

 Note that the $U(\g')\otimes H^{(1)}$-action on $B$ is essentially equivalent to the $U(\widehat{\mathfrak{gl}}_p)$-action on $B$; more precisely, as subalgebras of $End(B)$ they are equal.  Therefore, combining the results of this section with the $\g$-action on $\PP$ from Section \ref{Section_gcatonP}, we obtain a (weak) $\widehat{\mathfrak{gl}}_p$-categorification of the Fock space representation (cf. Remark \ref{Remarkglhat}).

\label{SectionTwist}

\subsection{Commutativity between two categorifications}

Recall that we defined $H^{(1)}$ to be the subalgebra of $End(B)$ generated by twisted operators $b^{(1)}$ and their adjoints $(b^{(1)})^*$, for all $b\in B$.  The key combinatorial lemma (Lemma \ref{Lemma_CommutingH}) that we cited was that such operators in fact lie in $End_{\g'}(B)$.  Our first order of business is to categorify this fact.

Suppose $\varrho([M])=b$.  Then this implies $\varrho([M^{(1)}])=b^{(1)}$, and therefore the operator $\bold{T}_{M^{(1)}}$ categorifies the operator $b^{(1)}$ (Lemma \ref{LemmaAdjoint}).  Consequently, to categorify the fact that $b^{(1)}\in End_{\g'}(B)$ we must show that $\bold{T}_{M^{(1)}}$ is a morphism of $\g'$-categorifications.  In order to properly formulate this we first introduce the relevant data.

First we construct $\zeta_+: \bold{T}_{M^{(1)}}\circ \E \cong \E \circ \bold{T}_{M^{(1)}}$ as follows: for any $N\in \PP$ and $V\in\V$ we compute that
$$(\bold{T}_{M^{(1)}}\circ \E (N))(V)=N(V\oplus k)_1\otimes M^{(1)}(V) $$
$$(\E\circ \bold{T}_{M^{(1)}})(N)(V)=(N(V\oplus k)\otimes M^{(1)}(V\oplus k))_1$$
The $GL(k)$-weights of $M^{(1)}(V\oplus k)$ are multiples of $p$, so by Lemma \ref{Polynomial_Functor_Lemma_I},
\begin{eqnarray*}
(N(V\oplus k)\otimes M^{(1)}(V\oplus k))_1 &=& N(V\oplus k)_1\otimes M^{(1)}(V\oplus k)_0 \\ &\cong& N(V\oplus k)_1\otimes M^{(1)}(V)
\end{eqnarray*}
and this isomorphism defines $\zeta_+$.

The isomorphism $\zeta_{-}:\bold{T}_{M^{(1)}}\circ \F \simeq \F \circ \bold{T}_{M^{(1)}}$ is simply given by the flip map.

\begin{theorem}\label{Commutativity_Theorem}
The data $(\bold{T}_{M^{(1)}}, \zeta_+,\zeta_{-})$ is a morphism of $\g'$-categorifications (cf. Remark \ref{gprimecateg}).
\end{theorem}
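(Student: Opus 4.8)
The plan is to check that $(\bold{T}_{M^{(1)}},\zeta_+,\zeta_-)$ makes the three diagrams of Definition \ref{DefgMorphism} commute; since the claim concerns $\g'$-categorifications, the weight-compatibility condition (\ref{Additional_Condition_Morphism}) is ignored (Remark \ref{gprimecateg}). Source and target are both $\PP$ equipped with the $\g$-categorification of Section \ref{Section_gcatonP}, and $\E$, $\F$, $\bold{T}_{M^{(1)}}$ all have transparent pointwise descriptions, so I would verify each diagram by evaluating on arbitrary $N\in\PP$, $V\in\V$ and tracing the formulas. The one structural input is that every $GL(k)$-weight occurring in $M^{(1)}(V\oplus k)$ is divisible by $p$: hence $M^{(1)}(V\oplus k)_1=0$, and by Lemma \ref{Polynomial_Functor_Lemma_I} and Lemma \ref{Xi_Lemma} the maps $M^{(1)}(\iota_V)$ and $M^{(1)}(\tilde{\xi})_0$ (for $\xi\in V^*$) all agree and furnish the isomorphism $M^{(1)}(V)\xrightarrow{\ \sim\ }M^{(1)}(V\oplus k)_0$. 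This is exactly the data used to define $\zeta_+$, and it forces the ``extra'' cross-terms to vanish in each computation.

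I would treat the diagrams for $X$ and $\sigma$ first. For $X$: on $\bold{T}_{M^{(1)}}\E(N)(V)=N(V\oplus k)_1\otimes M^{(1)}(V)$ the endomorphism $\bold{T}_{M^{(1)}}X$ is $X_{N,V}\otimes 1$; on the other side $\E\bold{T}_{M^{(1)}}(N)(V)=N(V\oplus k)_1\otimes M^{(1)}(V\oplus k)_0$ by weight divisibility, and $X\bold{T}_{M^{(1)}}$ is the normalized split Casimir $X_V$ acting on the tensor-product functor and restricted to $GL(k)$-weight $1$. Expanding $X_V=\sum_i x_{n+1,i}x_{i,n+1}-n$ through the coproduct, the summands involving $1\otimes x_{i,n+1}$ or $1\otimes x_{n+1,i}$ send $M^{(1)}(V\oplus k)_0$ into $GL(k)$-weights $-1$ and $1$, both zero, so only $X_{N,V}\otimes 1$ remains, and this is intertwined with $X_{N,V}\otimes 1$ by $\zeta_+=1\otimes M^{(1)}(\iota_V)$. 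For $\sigma$: composing the two copies of $\zeta_+$ identifies both composites with $N(V\oplus k\oplus k)_{1,1}\otimes M^{(1)}(V)$ — the $M^{(1)}$-factor being forced into bidegree $(0,0)$ — and the switch $s\colon V\oplus k\oplus k\to V\oplus k\oplus k$ of the two auxiliary lines satisfies $s\circ\iota=\iota$ for the inclusion $\iota\colon V\hookrightarrow V\oplus k\oplus k$, so $M^{(1)}(s)$ is the identity on $M^{(1)}(V\oplus k\oplus k)_{0,0}\cong M^{(1)}(V)$; thus $\sigma$ acts only on the $N$-factor on both sides and the square commutes.

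The diagram expressing compatibility of $\zeta_\pm$ with the unit $\eta_1\colon\I\to\F\E$ of the adjunction $(\E,\F)$ is where I expect the genuine work to lie, as $\eta_1$ is the most involved of the structure maps. Evaluating on $N$, $V$ and choosing a basis $\{e_i\}$ of $V$ with dual basis $\{\xi_i\}$, the composite $\F\zeta_+\circ\zeta_-\E\circ\bold{T}_{M^{(1)}}\eta_1$ sends $x\otimes y\in N(V)\otimes M^{(1)}(V)$ to $\sum_i\bigl(N(\tilde{\xi}_i)(x)\bigr)_1\otimes M^{(1)}(\iota_V)(y)\otimes e_i$, while $\eta_1\bold{T}_{M^{(1)}}$ sends $x\otimes y$ to $\sum_i\bigl((N\otimes M^{(1)})(\tilde{\xi}_i)(x\otimes y)\bigr)_1\otimes e_i$; the weight-$1$ truncation of $(N\otimes M^{(1)})(\tilde{\xi}_i)(x\otimes y)$ is, by weight divisibility, $\bigl(N(\tilde{\xi}_i)(x)\bigr)_1\otimes\bigl(M^{(1)}(\tilde{\xi}_i)(y)\bigr)_0$, and $\bigl(M^{(1)}(\tilde{\xi}_i)(y)\bigr)_0=M^{(1)}(\iota_V)(y)$ by Lemma \ref{Xi_Lemma}, so the two expressions coincide up to the reordering of tensor factors built into $\zeta_-$. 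The main obstacle is thus organizational rather than conceptual: carrying the tensor-factor permutations hidden in $\zeta_-$ and the identification $M^{(1)}(V\oplus k)_0\cong M^{(1)}(V)$ through the diagram chase, and confirming naturality of $\zeta_\pm$ in $N$. Once the three diagrams are established, Remark \ref{gprimecateg} yields the theorem.
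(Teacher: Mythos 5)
Your proposal follows essentially the same route as the paper: evaluate the three diagrams of Definition \ref{DefgMorphism} pointwise, use the divisibility by $p$ of the $GL(k)$-weights of $M^{(1)}(V\oplus k)$ to kill the cross-terms, and invoke Lemmas \ref{Polynomial_Functor_Lemma_I} and \ref{Xi_Lemma} to identify $M^{(1)}(\iota_V)$ with the degree-zero truncation of $M^{(1)}(\tilde\xi)$. The only difference is that you spell out the check for the $\sigma$-diagram, which the paper dismisses as routine; your observation that the switch $s$ satisfies $s\circ\iota=\iota$ and hence acts trivially on $M^{(1)}(V\oplus k\oplus k)_{0,0}$ is exactly the computation the paper omits.
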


\begin{proof}
To prove this theorem we must check conditions (1)-(3) of Definition \ref{DefgMorphism}.  To begin, we check condition $(1)$, i.e. we must check that for $N,M\in\PP$ and $V\in\V$ the following diagram commutes:
$$
\xymatrix{
 & N(V)\otimes M^{(1)}(V) \ar[dl]_{\bold{T}_{M^{(1)}} \eta_1 } \ar[ddd]^{\eta_1 \bold{T}_{M^{(1)}}}\\ N(V\oplus k)_1\otimes V\otimes M^{(1)}(V) \ar[d]_{\zeta_{-}\E} & \\ N(V\oplus k)_1\otimes M^{(1)}(V)\otimes V \ar[dr]_{\F \zeta_{+}} & \\ & N(V\oplus k)_1\otimes M^{(1)}(V\oplus k)_0\otimes V
}
$$
Let  $n\otimes m\in N(V)\otimes M^{(1)}(V)$.  Applying the map $\bold{T}_{M^{(1)}} \eta_1$ we obtain $\sum_i (N(\tilde{\xi_i})n)_1\otimes e_i \otimes m$. Then applying $\zeta_{-}\E$ we obtain $\sum_i (N(\tilde{\xi_i})n)_1\otimes m\otimes e_i$.  Finally, applying  $\F \zeta_{+}$ we obtain $\sum_i (N(\tilde{\xi_i})n)_1\otimes M^{(1)}(\iota_V)m\otimes e_i$.    On the other hand,  starting with $n\otimes m$ and applying the map $\eta_1 \bold{T}_{M^{(1)}}$ we obtain $ \sum_i ((N\otimes M^{(1)})\oplus(\tilde{\xi_i})(n\otimes m))_1\otimes e_i$.
For every $i$, we have $((N\otimes M^{(1)})\oplus(\tilde{\xi_i})(n\otimes m))_1$ equals
$$(N(\tilde{\xi}_i)n)_0\otimes (M^{(1)}(\tilde{\xi}_i)m)_1+(N(\tilde{\xi}_i)n)_1\otimes (M^{(1)}(\tilde{\xi}_i)m)_0.$$
Since in the space $M^{(1)}(V\oplus k)$, all weights are multiples by $p$, it forces $ (M^{(1)}(\tilde{\xi}_i)m)_1=0$. Hence  we get
$$((N\otimes M^{(1)})\oplus(\tilde{\xi_i})(n\otimes m))_1=(N(\tilde{\xi}_i)n)_1\otimes (M^{(1)}(\tilde{\xi}_i)m)_0 $$
Then, by Lemma \ref{Xi_Lemma}, $(M^{(1)}(\tilde{\xi}_i)m)_0=M^{(1)}(\iota_V)m$. This proves condition (1).

To  check condition $(2)$ we have to show that the following diagram commutes:

$$
\xymatrix{
N(V\oplus k)_1\otimes M^{(1)}(V) \ar[r]^<<<<<{\zeta_+} \ar[d]_{\bold{T}_{M^{(1)}} X} & N(V\oplus k)\otimes M^{(1)}(V\oplus k)_0 \ar[d]^{X \bold{T}_{M^{(1)}}} \\ N(V\oplus k)_1\otimes M^{(1)}(V) \ar[r]^{\zeta_+} &  N(V\oplus k)\otimes M^{(1)}(V\oplus k)_0
}.
$$
Let $n\otimes m\in N(V\oplus k)_1\otimes M^{(1)}(V)$.  Applying $\bold{T}_{M^{(1)}} X$ and then $\zeta_+$, we obtain $\sum_{1\leq i\leq d-1} (x_{d,i}x_{i,d}-d+1)n\otimes M^{(1)}(\iota_V)m$. On the other hand, if we first apply $\zeta_+$ and then  $X \bold{T}_{M^{(1)}}$,  we also get $\sum_{1\leq i\leq d-1} (x_{d,i}x_{i,d}-d+1)(n\otimes M^{(1)}(\iota_V)m$), proving that condition 2 holds.
Proving condition (3) is a routine computation.
\end{proof}

Suppose as above that $[M]\mapsto b$ under $\varrho$.  We've seen   that $\bold{T}_{M^{(1)}}$ categorifies the twisted operator $b^{(1)}$.  We'd now like to categorify the adjoint operator $(b^{(1)})^*$.  A natural candidate for this is the functor $\bold{T}_{M^{(1)}}^*$, the right adjoint to $\bold{T}_{M^{(1)}}$.  The problem is that $\bold{T}_{M^{(1)}}^*$ is not exact, since $M^{(1)}$ is not projective.  Therefore we are led to consider $D^b(\PP)$, the bounded derived category of $\PP$.

\begin{lemma}
Let $M\in \PP$ and let $R\bold{T}_M^{*}$ be the right derived functor of $\bold{T}_M^*$. Then for any $N\in \PP$, $R\bold{T}^*_M(N)$ is a bounded complex in $D^b(\PP)$, and therefore $R\bold{T}_M^*$ defines a functor on $D^b(\PP)$.
\end{lemma}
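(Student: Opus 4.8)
The plan is to reduce the statement to the single fact that every object of $\PP$ has a \emph{finite} injective resolution; granting that, the lemma becomes a formal consequence of the theory of derived functors. So first I would recall the decomposition $\PP=\bigoplus_{d\ge 0}\PP_d$ and observe that for a given $N\in\PP$ only finitely many homogeneous components $N_d\in\PP_d$ are nonzero, so it suffices to produce a finite injective resolution of each $N_d$ inside $\PP_d$ (a sum of blocks of $\PP$, so such a resolution is automatically one in $\PP$). By Lemma \ref{thm-FS}, evaluation on a vector space of dimension $n\ge d$ identifies $\PP_d$ with $Pol_d(GL(V))$, i.e. with the category of finite-dimensional modules over the Schur algebra $S(n,d)$. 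It is classical that $\PP_d$ is a highest weight category (equivalently, $S(n,d)$ is quasi-hereditary), and therefore of finite global dimension; in particular every object of $\PP_d$, and hence every $N\in\PP$, admits a finite injective resolution $0\to N\to I^0\to\cdots\to I^m\to 0$ in $\PP$.

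Next, since $\bold{T}_M^*$ is a right adjoint of $\bold{T}_M$ it is left exact, and $\PP$ has enough injectives, so the right derived functor $R\bold{T}_M^*$ is defined on $D^+(\PP)$ and may be computed using injective resolutions. Applying $\bold{T}_M^*$ termwise to the finite resolution $I^\bullet$ above yields the bounded complex $\bold{T}_M^*(I^\bullet)$, which represents $R\bold{T}_M^*(N)$; in particular $R^i\bold{T}_M^*(N)=0$ for $i>m$. This establishes the first assertion. To see that $R\bold{T}_M^*$ restricts to a functor $D^b(\PP)\to D^b(\PP)$, let $N^\bullet$ be a bounded complex: only finitely many blocks $\PP_d$ occur among the composition factors of its terms, so the injective dimensions involved are bounded by a common integer, and hence $N^\bullet$ is quasi-isomorphic to a bounded complex of injectives, whose image under $\bold{T}_M^*$ is a bounded complex computing $R\bold{T}_M^*(N^\bullet)$.

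The only substantive input is the finite global dimension of the Schur algebra (equivalently, the finite injective dimension of strict polynomial functors of a fixed degree); the remaining steps are routine homological algebra, so I expect no real obstacle beyond citing that fact correctly.
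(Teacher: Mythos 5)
Your argument is correct, and it rests on precisely the same key input as the paper's: the finite global dimension of each $\PP_d$ (which the paper states without attribution and which you justify via the quasi-hereditary Schur algebra). The \emph{route}, however, is genuinely different. You resolve the argument $N$ by a finite injective resolution in each degree and apply $\bold{T}^*_M$ termwise; this is the textbook computation of a right derived functor, and you add a short argument to extend from objects to bounded complexes. The paper instead exploits the bifunctorial formula $\bold{T}^*_M(N)(*)=\mathrm{Hom}(M(\cdot),N(*\oplus\cdot))$, takes a finite \emph{projective} resolution $P^{\bullet}\twoheadrightarrow M$, and shows (by the usual balancing argument, using that $\bold{T}^*_{P}$ is exact when $P$ is projective) that $R\bold{T}^*_M\simeq(\bold{T}^*_{P^{\bullet}})^{\cdot}$ as a fixed bounded complex of exact endofunctors. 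The paper's approach buys uniformity -- the boundedness of $R\bold{T}^*_M(N)$ is manifest for all $N$ at once, and the functor on $D^b(\PP)$ comes for free -- whereas your approach is more elementary (no balancing lemma needed) but requires a separate argument for passing from objects to bounded complexes, which you correctly supply. Both are sound; the paper's version is a little slicker, yours a little more self-contained.
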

\begin{proof}
By \cite[Theorem 2.10]{FS97} there exists  a projective resolution of $M$, $P^\cdot \twoheadrightarrow M$. We have a general formula : $$\bold{T}^*_M(N)(*)=Hom(M(\cdot),N(*\oplus \cdot)),  $$
By definition of derived functors, once we replace $M$ by $P^\cdot$, we obtain that $R\bold{T}^*_M(N)=(\bold{T}^*_P(N))^\cdot$. This formula does not depend on $N$, and hence in general $$R\bold{T}^*_M = (\bold{T}^*_P)^\cdot.$$
Since the global dimension of each $\PP_d$ is finite, on evaluation on $N\in \PP$,
$R\bold{T}^*_M(N)$ is  a complex whose cohomology vanishes almost everywhere.
Hence $R\bold{T}^*_M $ is defined on $D^b(\PP)$.
\end{proof}

Now we prove that the functors $R\bold{T}_{M^{(1)}}^*$ categorify $(b^{(1)})^*$. We first show that $\bold{T}^*_{M^{(1)}}$ is an endomorphism of  the $\g'$-categorification on $\PP$.

Let $\zeta_+^{\vee}:\bold{T}^*_{M^{(1)}}\circ\E \to \E\circ\bold{T}^*_{M^{(1)}}$ and  $\zeta_-^{\vee}:\bold{T}^*_{M^{(1)}}\circ\F \to \F\circ\bold{T}^*_{M^{(1)}}$ be the isomorphisms induced by the isomorphisms $\zeta_+,\zeta_-$ appearing in  Theorem \ref{Commutativity_Theorem}, the bi-adjunctions between $\E$ and $\F$, and also the adjunction between $\bold{T}_{M^{(1)}}$ and $\bold{T}^*_{M^{(1)}}$.

\begin{theorem}
\label{Adjunction_Commuting_Action}
Let $M\in\PP$.  The data $(\bold{T}^*_{M^{(1)}},\zeta_-^\vee, \zeta_+^\vee)$ induces a morphism of $\g'$-categorifications $\PP$. In particular  $R\bold{T}^*_{M^{(1)}}$ yields a $\g'$-endomorphism on $K(\PP)$.
\end{theorem}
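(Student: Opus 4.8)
The plan is to deduce this from Theorem \ref{Commutativity_Theorem} via the general principle, established in Proposition \ref{Adjunction_Morphism_Categorification}, that the adjoint of a (suitably enriched) morphism of $\g'$-categorifications is again such a morphism. Recall that $\bold{T}_{M^{(1)}}$ has $\bold{T}^*_{M^{(1)}}$ as right adjoint, and that Theorem \ref{Commutativity_Theorem} makes $(\bold{T}_{M^{(1)}},\zeta_+,\zeta_-)$ a morphism of $\g'$-categorifications on $\PP$ --- i.e.\ it already carries the full enriched data (the unit/counit, $X$-, and $\sigma$-diagrams of Definition \ref{DefgMorphism}, with the weight condition dropped as in Remark \ref{gprimecateg}). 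The crucial structural feature of the $\g'$-categorification on $\PP$ that makes the adjoint argument go through is that $\E$ and $\F$ are \emph{bi-adjoint}: by the adjunctions $(\E,\F,\eta_1,\epsilon_1)$ and $(\F,\E,\eta_2,\epsilon_2)$, $\E$ is both a left and a right adjoint of $\F$.

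First I would exhibit $\zeta_+^\vee$ and $\zeta_-^\vee$ as mates. Since $\E$ is right adjoint to $\F$, the composite $\bold{T}_{M^{(1)}}\F$ has right adjoint $\E\,\bold{T}^*_{M^{(1)}}$ while $\F\,\bold{T}_{M^{(1)}}$ has right adjoint $\bold{T}^*_{M^{(1)}}\E$; passing the isomorphism $\zeta_-\colon \bold{T}_{M^{(1)}}\F\to\F\,\bold{T}_{M^{(1)}}$ to right adjoints (using also $\bold{T}_{M^{(1)}}\dashv\bold{T}^*_{M^{(1)}}$) therefore produces the isomorphism $\zeta_+^\vee\colon\bold{T}^*_{M^{(1)}}\E\to\E\,\bold{T}^*_{M^{(1)}}$. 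Symmetrically, since $\F$ is right adjoint to $\E$, the isomorphism $\zeta_+\colon\bold{T}_{M^{(1)}}\E\to\E\,\bold{T}_{M^{(1)}}$ passes to $\zeta_-^\vee\colon\bold{T}^*_{M^{(1)}}\F\to\F\,\bold{T}^*_{M^{(1)}}$. A routine check identifies these with the isomorphisms defined just before the statement. Note that the bi-adjointness is exactly what causes the roles of $\zeta_+$ and $\zeta_-$ to be swapped in passing to the adjoint. With this data in hand, the three commuting diagrams of Definition \ref{DefgMorphism} for $(\bold{T}^*_{M^{(1)}},\zeta_-^\vee,\zeta_+^\vee)$ follow by taking adjoints of the corresponding diagrams for $(\bold{T}_{M^{(1)}},\zeta_+,\zeta_-)$, the passage being governed by the bi-adjunction of $\E,\F$ and the compatibilities of $X$ and $\sigma$ with these adjunctions. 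This is precisely what Proposition \ref{Adjunction_Morphism_Categorification} packages, so invoking it proves the first assertion.

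For the last assertion, I would note that $\bold{T}^*_{M^{(1)}}$ is left exact (it is given by a $Hom$ in $\PP$) and that, by the lemma preceding the statement, $R\bold{T}^*_{M^{(1)}}$ sends objects of $\PP$ to bounded complexes, so $R^i\bold{T}^*_{M^{(1)}}$ vanishes for $i\gg0$ and $R\bold{T}^*_{M^{(1)}}$ defines a functor on $D^b(\PP)$. One is then in the setting of Corollary \ref{Morphism_Representation} (in its $\g'$-version), which yields that $[R\bold{T}^*_{M^{(1)}}]$ is a $\g'$-module endomorphism of $K_0(\PP)$, hence of its complexification $K(\PP)$.

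The hard part will not be any single computation but rather checking that the mate data $(\zeta_+^\vee,\zeta_-^\vee)$ genuinely meets the hypotheses of Proposition \ref{Adjunction_Morphism_Categorification} --- concretely, that the interchange of $\zeta_+$ and $\zeta_-$ forced by the bi-adjointness $\E\dashv\F$, $\F\dashv\E$ is compatible with the $X$- and $\sigma$-diagrams. Once that appendix result is in place this becomes automatic, and the only $\PP$-specific inputs are the explicit bi-adjunction data for $\E$ and $\F$ from Section \ref{Section_gcatonP} and the compatibilities of $X$, $\sigma$, $\tau$ with those adjunctions recorded in Lemmas \ref{Permutation_Flip_Com} and \ref{Second_Adjunction}.
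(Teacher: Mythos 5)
Your overall strategy matches the paper: invoke Proposition \ref{Adjunction_Morphism_Categorification} (adjoint of a suitably enriched morphism of $\g'$-categorifications is again one) on the morphism from Theorem \ref{Commutativity_Theorem}, and then apply Corollary \ref{Morphism_Representation} to descend to an $H^{(1)}$-compatible endomorphism of $K(\PP)$. Your description of the mate construction for $\zeta_\pm^\vee$ and of the role of bi-adjointness is accurate and a bit more detailed than the paper's.

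There is, however, a genuine omission. You write that the diagrams of Definition \ref{DefgMorphism} for $(\bold{T}^*_{M^{(1)}},\zeta_-^\vee,\zeta_+^\vee)$ ``follow by taking adjoints of the corresponding diagrams for $(\bold{T}_{M^{(1)}},\zeta_+,\zeta_-)$,'' and that the only thing left is the compatibility of $X,\sigma$ with the bi-adjunctions. But Proposition \ref{Adjunction_Morphism_Categorification} has an additional hypothesis on $\Phi$ beyond it being a morphism of $\g'$-categorifications: $\Phi$ must also satisfy the $\eta_2$-compatibility square (\ref{Additional_Diagram}), i.e.\ compatibility of $\zeta_\pm$ with the \emph{second} adjunction unit $\eta_2:\I\to\E\F$. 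Theorem \ref{Commutativity_Theorem} only establishes diagram (1) of Definition \ref{DefgMorphism}, which involves $\eta_1:\I\to\F\E$; the $\eta_2$-diagram is not a formal consequence of it. Indeed, in the proof of Proposition \ref{Adjunction_Morphism_Categorification} it is precisely (\ref{Additional_Diagram}) for $\Phi$ — not diagram (1) for $\Phi$ — that gets transported (via Lemma \ref{Miracle_Lemma} and \cite[Lemma 5.3]{CR}) into diagram (1) for the adjoint $\Psi$. The paper therefore has to, and does, verify (\ref{Additional_Diagram}) for $\bold{T}_{M^{(1)}}$ by an explicit computation ``as in the proof of Theorem \ref{Commutativity_Theorem}.'' Your proposal skips this check; to be complete you should carry it out (it is a short computation using that the $GL(k)$-weights of $M^{(1)}(V\oplus k)$ are multiples of $p$, as in the proof of Theorem \ref{Commutativity_Theorem}). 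You should also cite Proposition \ref{Second_Compatibility} (compatibility of $X$ and $Y$ via the second adjunction) rather than only Lemmas \ref{Permutation_Flip_Com} and \ref{Second_Adjunction}, since it is the $X$-compatibility with the \emph{second} adjunction that feeds into the hypotheses of Proposition \ref{Adjunction_Morphism_Categorification}.
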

\begin{proof}
As in the proof of Theorem \ref{Commutativity_Theorem}, one checks that the following diagram commutes:
$$
\xymatrix{
 & \bold{T}_{M^{(1)}} \ar[dl]_{\bold{T}_{M^{(1)}} \eta_2 } \ar[rd]^{\eta_2 \bold{T}_{M^{(1)}}} & \\
\bold{T}_{M^{(1)}}\E\F \ar[r]^<<<<<<{\zeta_{+}\F} & \E \bold{T}_{M^{(1)}}\F \ar[r]^<<<<<{\E \zeta_{-}} &   \E \F \bold{T}_{M^{(1)}}
}
$$
 By Propositions  \ref{Second_Compatibility} the conditions of Proposition \ref{Adjunction_Morphism_Categorification} are satisfied. Hence we can conclude that  $\bold{T}^*_{M^{(1)}}$ is an endomorphism of $\g'$-categorification on $\PP$. Then by Corollary \ref{Morphism_Representation}, $R\bold{T}^*_{M^{(1)}}$ yields an endomorphism of $K(\PP)$ commuting with the $\g'$ action.
\end{proof}

\begin{remark}
To properly discuss $\g$-categorifications on triangulated categories one can employ Rouquier's 2-representation theory \cite{R}.  In this setting, the functor $R\bold{T}^*_{M^{(1)}}$ should be a morphism of 2-representations of $\mathfrak{A}(\g')$, the 2-category associated to $\g'$ in loc. cit.

\end{remark}

\subsection{Categorifying the twist Heisenberg action}

We are ready now to categorify the twisted Heisenberg algebra action on Fock space.  Note that under our identification, $H^{(1)}$ is generated by operators $[T_M]^{(1)}$ and $([T_M]^{(1)})^*$.
\begin{lemma}
\label{Thm_veryweakcat}
The assignment
\begin{eqnarray*}
 & [\bold{T}_M]^{(1)}\mapsto& [\bold{T}_{M^{(1)}}]  \\
 & ([\bold{T}_{M}]^{(1)}) ^* \mapsto& [R\bold{T}_{M^{(1)}}^*]
\end{eqnarray*}
defines a representation of $H^{(1)}$ on $K(\PP)$ isomorphic to $B$.
\end{lemma}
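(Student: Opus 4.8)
The plan is to transport everything across the ring isomorphism $\varrho\colon K(\PP)\xrightarrow{\ \sim\ }B$ and to show that, after conjugation by $\varrho$, the two operators $[\bold{T}_{M^{(1)}}]$ and $[R\bold{T}^{*}_{M^{(1)}}]$ on $K(\PP)$ become \emph{exactly} the operators $b^{(1)}$ and $(b^{(1)})^{*}$ on $B$, where $b=\varrho([M])$. Granting these two identifications, the assignment of the lemma, read on generators and conjugated by $\varrho$, is nothing but the defining inclusion $H^{(1)}\hookrightarrow\mathrm{End}(B)$; hence it is well defined, it extends (necessarily uniquely) to the algebra homomorphism $\Psi\colon H^{(1)}\to\mathrm{End}(K(\PP))$, $\Psi(h)=\varrho^{-1}\circ h\circ\varrho$, this $\Psi$ is a representation, and $\varrho$ is tautologically an isomorphism of $H^{(1)}$-modules from $(K(\PP),\Psi)$ onto the Fock space $B$. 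So the whole argument reduces to the two identifications.

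The first one, $\varrho\circ[\bold{T}_{M^{(1)}}]\circ\varrho^{-1}=b^{(1)}$, I would handle immediately: $\bold{T}_{M^{(1)}}$ is exact, $\varrho$ is a ring homomorphism, and $\varrho([M^{(1)}])=b^{(1)}$ since the Frobenius twist functor categorifies $b\mapsto b^{(1)}$; thus $\varrho\big([\bold{T}_{M^{(1)}}][N]\big)=\varrho([M^{(1)}\otimes N])=b^{(1)}\cdot\varrho([N])$ for every $N$.

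For the second identification I would argue as follows. By the lemma established just above, $R\bold{T}^{*}_{M^{(1)}}$ is a triangulated endofunctor of $D^{b}(\PP)$, so it induces an operator on $K(\PP)=K_{0}(D^{b}(\PP))$. Since each $\PP_{d}$ has finite global dimension and $M^{(1)}$ is a finite sum of homogeneous functors, I would choose a finite projective resolution $P^{\bullet}\twoheadrightarrow M^{(1)}$. By the formula $R\bold{T}^{*}_{M^{(1)}}=(\bold{T}^{*}_{P^{\bullet}})^{\bullet}$ recorded in the proof just above, $[R\bold{T}^{*}_{M^{(1)}}]=\sum_{i}(-1)^{i}[\bold{T}^{*}_{P^{i}}]$. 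Each $P^{i}$ is projective, so $\bold{T}^{*}_{P^{i}}$ is exact and Lemma~\ref{LemmaAdjoint} gives $\varrho\circ[\bold{T}^{*}_{P^{i}}]\circ\varrho^{-1}=b_{i}^{*}$ with $b_{i}=\varrho([P^{i}])$. Because $P^{\bullet}$ resolves $M^{(1)}$ we get $\sum_{i}(-1)^{i}b_{i}=\varrho([M^{(1)}])=b^{(1)}$, and since $b\mapsto b^{*}$ is linear, $\varrho\circ[R\bold{T}^{*}_{M^{(1)}}]\circ\varrho^{-1}=\sum_{i}(-1)^{i}b_{i}^{*}=(b^{(1)})^{*}$. (Alternatively one may invoke that $R\bold{T}^{*}_{M^{(1)}}$ is the derived right adjoint of $\bold{T}_{M^{(1)}}$ and that the Euler form $\langle\cdot,\cdot\rangle$ corresponds to $(\cdot,\cdot)$ under $\varrho$, so that $[R\bold{T}^{*}_{M^{(1)}}]$ is the $(\cdot,\cdot)$-adjoint of $[\bold{T}_{M^{(1)}}]=b^{(1)}$, namely $(b^{(1)})^{*}$.)

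Combining the two identifications finishes the proof along the lines of the first paragraph. The one place that genuinely needs care — the main obstacle in what is otherwise a bookkeeping argument — is the adjoint identification: because $M^{(1)}$ is never projective, $[R\bold{T}^{*}_{M^{(1)}}]$ must be computed in the derived category, and one has to confirm that this derived operator equals $(b^{(1)})^{*}$ on the nose rather than up to correction terms; the finite global dimension of $\PP_{d}$ together with the explicit resolution formula from the previous lemma are exactly what make this clean. Everything else — well-definedness of the assignment and preservation of the relations of $H^{(1)}$ — is automatic once both operators are recognized as $\varrho$-conjugates of the tautological generators of $H^{(1)}$.
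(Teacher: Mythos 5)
Your proof is correct. The parenthetical ``alternative'' you give at the end of the second identification --- using the derived adjunction
$Hom_{D^b(\PP)}(\bold{T}_{M^{(1)}}N,L)\simeq Hom_{D^b(\PP)}(N,R\bold{T}^{*}_{M^{(1)}}L)$ together with the fact that the Euler form $\langle\cdot,\cdot\rangle$ on $K_0(\PP)$ corresponds to $(\cdot,\cdot)$ on $B$ under $\varrho$ --- is exactly the paper's proof. Your primary route is different: you choose a finite projective resolution $P^{\bullet}\twoheadrightarrow M^{(1)}$ (available since each $\PP_d$ has finite global dimension), write $[R\bold{T}^{*}_{M^{(1)}}]=\sum_i(-1)^i[\bold{T}^{*}_{P^i}]$ using the formula $R\bold{T}^{*}_{M^{(1)}}=(\bold{T}^{*}_{P^{\bullet}})^{\bullet}$ from the preceding lemma, apply Lemma~\ref{LemmaAdjoint} to each exact $\bold{T}^{*}_{P^i}$, and conclude by linearity of $b\mapsto b^{*}$. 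This is a valid alternative: it reduces the derived-category identification to the already-established abelian-category statement (Lemma~\ref{LemmaAdjoint}) by explicit Euler-characteristic bookkeeping, whereas the paper's argument never opens up the resolution and instead applies the derived adjunction once and lets the Euler-form/inner-product compatibility do the rest. The paper's argument is shorter and more conceptual; yours makes the role of projectivity (the hypothesis of Lemma~\ref{LemmaAdjoint}) and the finite global dimension of $\PP_d$ entirely explicit.
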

\begin{proof}
By the adjunction of $\bold{T}_{M^{(1)}}$ and $R\bold{T}_{M^{(1)}}^*$ on $D^b(\PP)$, for any $M,L\in D^b(\PP)$,   we have
$$Hom_{D^b(\PP)}(\bold{T}_{M^{(1)}}N,L)\simeq Hom_{D^b(\PP)}(N,R\bold{T}_{M^{(1)}}^* L). $$
Under the identification of $K(D^b(\PP))=K(\PP)=B$, this descends to
$$([\bold{T}_{M^{(1)}}]([N]), [L])=([N], [R\bold{T}_{M^{(1)}}^* ][L]). $$
Therefore $[R\bold{T}_{M^{(1)}}^* ]=[\bold{T}_{M^{(1)}}]^*$. On the other hand, we have that $[\bold{T}_{M^{(1)}}]=[\bold{T}_{M}]^{(1)}$. Hence $[R\bold{T}_{M^{(1)}}^*]=([\bold{T}_{M}]^{(1)})^* $.
Since $[T_M]^{(1)}$ and their adjoints generate the algebra $H^{(1)}$, the proposition follows.
\end{proof}

Now we enrich the above lemma to a proper (weak) categorification.  In other words, we will distinguish endo-functors on $D^b(\PP)$ that lift the action of $H^{(1)}$ on $B$, \emph{and} functorial isomorphisms between these functors lifting defining relations of $H^{(1)}$.

Recall that for $M,N\in \PP$ we have a general formula
\begin{equation}
\bold T^*_M(N)(*)=Hom(M(\cdot),N(*\oplus \cdot)).
\end{equation}
It is quite easy to see that $^* \bold T_M\simeq\sharp \bold T^*_{M^\sharp} \sharp$.  Recall also that $R\bold T^*_M$ is the right derived functor of $\bold T^*_M$.  Then we have:
 \begin{equation}
R \bold T^*_M(N)(*)=RHom(M(\cdot),N(*\oplus \cdot)).
\end{equation}Let $\LT_M$ be the left derived functor of $^*\bold T_M$. We again have $\LT_M\simeq \sharp \RT_{M^\sharp} \sharp$.
\begin{lemma}
\label{Derived-Decomposition}
For $M,N,L \in \PP$ there is a canonical isomorphism
$$RHom(M,N\otimes L)\simeq RHom(\LT_N(M),L) .$$
\end{lemma}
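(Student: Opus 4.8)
The plan is to deduce this from the underived adjunction ${}^*\bold T_N\dashv \bold T_N$ together with the exactness of $\bold T_N$. Recall that $\bold T_N(L)=N\otimes L$ is exact, since evaluating at $V\in\V$ it is just tensoring the vector space $L(V)$ with $N(V)$, and that ${}^*\bold T_N$ is its left adjoint, so at the underived level $\mathrm{Hom}_\PP({}^*\bold T_N(M),L)\cong\mathrm{Hom}_\PP(M,N\otimes L)$, naturally in $M$ and $L$. The first observation I would record is that a left adjoint of an exact functor preserves projectives: if $P\in\PP$ is projective then $\mathrm{Hom}_\PP({}^*\bold T_N(P),-)\cong\mathrm{Hom}_\PP(P,N\otimes-)$ is exact, so ${}^*\bold T_N(P)$ is projective. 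Since each $\PP_d$ has finite global dimension, every $M\in\PP$ admits a bounded projective resolution $P^\bullet\xrightarrow{\ \sim\ }M$, and by definition of the left derived functor $\LT_N(M)\cong{}^*\bold T_N(P^\bullet)$ in $D^b(\PP)$; by the previous remark this is a bounded complex of projectives.

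Next I would compute both sides using this resolution. Since $P^\bullet$ is a bounded complex of projectives quasi-isomorphic to $M$, it is $K$-projective in $D^b(\PP)$, so $RHom(M,N\otimes L)\cong\mathrm{Hom}^\bullet_\PP(P^\bullet,N\otimes L)$. Likewise ${}^*\bold T_N(P^\bullet)$ is a bounded complex of projectives quasi-isomorphic to $\LT_N(M)$, so $RHom(\LT_N(M),L)\cong\mathrm{Hom}^\bullet_\PP({}^*\bold T_N(P^\bullet),L)$. Applying the underived adjunction in each cohomological degree gives isomorphisms $\mathrm{Hom}_\PP({}^*\bold T_N(P^i),L)\cong\mathrm{Hom}_\PP(P^i,N\otimes L)$ which, by naturality of the adjunction (compatibility with the unit and counit), commute with the differentials of the two complexes; hence they assemble into an isomorphism of complexes, and therefore an isomorphism $RHom(\LT_N(M),L)\cong RHom(M,N\otimes L)$ in the derived category. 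Canonicity is automatic, since the construction only uses the adjunction data and is independent of the chosen resolution (two projective resolutions are related by a homotopy equivalence of complexes of projectives, to which both ${}^*\bold T_N$ and $\mathrm{Hom}^\bullet_\PP$ respond functorially).

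The one point to handle with some care is the homological bookkeeping behind the phrase ``$RHom$ of a plain object is computed by resolving only the first variable by projectives'': one needs that a bounded complex of projectives is $K$-projective, so that $\mathrm{Hom}^\bullet_\PP(P^\bullet,-)$ already computes $RHom$ without any further resolution of the target, and dually that $RHom(\LT_N(M),L)$ requires only the complex of projectives ${}^*\bold T_N(P^\bullet)$ and not a resolution of $L$. Because of the finite global dimension of the $\PP_d$ everything remains in $D^b(\PP)$, so these are standard facts; the remainder is just the formal principle that the left derived functor of a right-exact functor is left adjoint to the (here exact) right adjoint, specialized to $\bold T_N$. One could alternatively invoke that principle as a black box, but carrying out the projective-resolution computation is the cleaner route here, and it also makes transparent the compatibility with the Kuhn-duality identity $\LT_M\simeq\sharp\,\RT_{M^\sharp}\,\sharp$ recorded just above.
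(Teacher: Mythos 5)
Your argument is correct and follows essentially the same route as the paper's: observe that ${}^*\bold T_N$ preserves projectives because its right adjoint $\bold T_N$ is exact, then derive the adjunction $\mathrm{Hom}({}^*\bold T_N(-),L)\simeq\mathrm{Hom}(-,N\otimes L)$ by replacing $M$ with a (bounded) projective resolution. You have simply spelled out the homological bookkeeping that the paper leaves implicit.
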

\begin{proof}
The functor $^*\bold T_N$ sends projective objects to projective objects, since it admits an exact right adjoint $\bold T_N$. Then the right derived functor of $Hom(^*\bold T_N(\cdot),L)$ is isomorphic to $RHom(\LT_N(\cdot),L)$. Thus the lemma follows from the adjunction $$Hom(M,N\otimes L)\simeq Hom(^*\bold T_N(M),L). $$
\end{proof}

Recall that $\Gamma^n$ is the $n$-th divided power and $\Lambda^m$ is the $m$-th exterior power. Let $\Gamma^n{^\Frob}$ and $\Lambda^m{^\Frob}$ be the Frobenius twist of these functors.

\begin{proposition}
\label{Twist_Commuting_Computation}
On the derived category $D^b(\PP)$, we have isomorphisms of endo-functors
\begin{equation}
\label{Categorifying-Twist}
\RT_{\Gamma^n{^\Frob}}\circ \bold T_{\Lambda^m {^\Frob}}\simeq \bigoplus_{0 \leq j \leq min\{m,n\}} C(j) \otimes \bold T_{\Lambda^{m-j}{^\Frob}}\circ \RT_{\Gamma^{n-j}{^\Frob}}, \end{equation}
where $C(j)=RHom(\Lambda^j {^\Frob}, S^j {^\Frob})$ is a complex of vector spaces, which has only nonzero cohomologies at even degrees. Moreover the alternating sum of the dimension of all cohomologies of $C(j)$ is equal to $\binom{p}{j}$.
\end{proposition}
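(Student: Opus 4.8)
The plan is to evaluate the left-hand endofunctor on an arbitrary $N\in\PP$ and unwind it, using three inputs recalled above: the general formula $R\bold T^*_M(N)(*)=RHom(M(\cdot),N(*\oplus\cdot))$, Lemma \ref{Derived-Decomposition}, and the identity $\LT_M\simeq\sharp\,\RT_{M^\sharp}\,\sharp$. First I would write
$$\bigl(\RT_{\Gamma^n{^\Frob}}\circ\bold T_{\Lambda^m{^\Frob}}\bigr)(N)(*)=RHom\bigl(\Gamma^n{^\Frob}(\cdot),\ \Lambda^m{^\Frob}(*\oplus\cdot)\otimes N(*\oplus\cdot)\bigr),$$
and expand $\Lambda^m{^\Frob}(*\oplus\cdot)=\bigoplus_{a+b=m}\Lambda^a{^\Frob}(*)\otimes\Lambda^b{^\Frob}(\cdot)$, since the Frobenius twist commutes with direct sums. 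For each fixed value of the first variable, $\Lambda^a{^\Frob}(*)$ is a finite dimensional vector space, so it pulls out of the $RHom$ (which is computed in the second variable); Lemma \ref{Derived-Decomposition} then rewrites the result as $\bigoplus_{a+b=m}\Lambda^a{^\Frob}(*)\otimes RHom\bigl(\LT_{\Lambda^b{^\Frob}}(\Gamma^n{^\Frob}),\,N(*\oplus\cdot)\bigr)$. So everything reduces to identifying the object $\LT_{\Lambda^b{^\Frob}}(\Gamma^n{^\Frob})$.

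For this I would use $\LT_M\simeq\sharp\,\RT_{M^\sharp}\,\sharp$ together with the elementary facts that Kuhn duality commutes with the Frobenius twist, fixes exterior powers ($(\Lambda^b)^\sharp\simeq\Lambda^b$), and interchanges divided and symmetric powers ($(\Gamma^n)^\sharp\simeq S^n$); thus $\LT_{\Lambda^b{^\Frob}}(\Gamma^n{^\Frob})\simeq\sharp\bigl(\RT_{\Lambda^b{^\Frob}}(S^n{^\Frob})\bigr)$. Applying the $R\bold T^*$-formula once more and splitting $S^n{^\Frob}(*\oplus\cdot)=\bigoplus_{c+d=n}S^c{^\Frob}(*)\otimes S^d{^\Frob}(\cdot)$ gives
$$\RT_{\Lambda^b{^\Frob}}(S^n{^\Frob})(*)=\bigoplus_{c+d=n}S^c{^\Frob}(*)\otimes RHom\bigl(\Lambda^b{^\Frob},S^d{^\Frob}\bigr).$$
Since $\Lambda^b{^\Frob}$ and $S^d{^\Frob}$ are homogeneous of degrees $pb$ and $pd$, orthogonality of the blocks $\PP=\bigoplus_e\PP_e$ forces $d=b$ (in particular $b\le n$), leaving $S^{n-b}{^\Frob}\otimes C(b)$ with $C(b)=RHom(\Lambda^b{^\Frob},S^b{^\Frob})$, and the whole thing vanishes when $b>n$. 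Dualizing back, $\LT_{\Lambda^b{^\Frob}}(\Gamma^n{^\Frob})\simeq\Gamma^{n-b}{^\Frob}\otimes C(b)^*$, so that $RHom\bigl(\LT_{\Lambda^b{^\Frob}}(\Gamma^n{^\Frob}),N(*\oplus\cdot)\bigr)\simeq C(b)\otimes\RT_{\Gamma^{n-b}{^\Frob}}(N)(*)$. Substituting back, writing $j=b$ so that $0\le j\le\min\{m,n\}$, and recognizing $\Lambda^{m-j}{^\Frob}(*)\otimes(-)=\bold T_{\Lambda^{m-j}{^\Frob}}(-)$, I obtain the asserted decomposition; all isomorphisms used are natural, so they glue to an isomorphism of endofunctors of $D^b(\PP)$.

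It remains to establish the two properties of $C(j)=RHom(\Lambda^j{^\Frob},S^j{^\Frob})$. For the Euler characteristic, $\sum_i(-1)^i\dim H^i(C(j))=\langle[\Lambda^j{^\Frob}],[S^j{^\Frob}]\rangle$; under $\varrho$ --- which carries $\langle\cdot,\cdot\rangle$ to the standard inner product on $B$, $[\Lambda^j]$ to $e_j$, $[S^j]$ to $h_j$, and the Frobenius twist to the operation $(\cdot)^{(1)}$ --- this equals $(e_j^{(1)},h_j^{(1)})$. Expanding in power sums, $e_j^{(1)}=\sum_{\lambda\vdash j}\varepsilon_\lambda z_\lambda^{-1}p_{p\lambda}$ and $h_j^{(1)}=\sum_{\lambda\vdash j}z_\lambda^{-1}p_{p\lambda}$ with $\varepsilon_\lambda=(-1)^{j-\ell(\lambda)}$, and using $z_{p\lambda}=p^{\ell(\lambda)}z_\lambda$ and the classical identity $\sum_{\lambda\vdash j}z_\lambda^{-1}v^{\ell(\lambda)}=[u^j](1-u)^{-v}$ one finds $(e_j^{(1)},h_j^{(1)})=(-1)^j[u^j](1-u)^p=\binom{p}{j}$. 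For concentration in even degrees I would invoke the explicit $Ext$-computations between once-twisted exponential functors: by Kuhn duality $C(j)\simeq RHom(\Gamma^j{^\Frob},\Lambda^j{^\Frob})$, and by the Franjou--Friedlander--Scorichenko--Suslin calculations \cite{FFSS} --- or, structurally, because $Ext^*_\PP(F^{(1)},G^{(1)})$ is computed by a complex built out of $Ext^*_\PP(I^{(1)},I^{(1)})$, which is concentrated in even degrees --- this graded space lies in even degrees only, in fact with a basis naturally indexed by the $j$-element subsets of $\{0,1,\dots,p-1\}$, which recovers the count $\binom{p}{j}$. I expect the main obstacle to be the bookkeeping in the first two paragraphs: keeping the two functor variables straight, tracking into which slot of each $RHom$ the parameter complex $C(j)$ lands (hence whether it is dualized), and verifying the summation ranges; once the decomposition is in place, the even-degree claim is just an appeal to the $Ext$-calculation of \cite{FFSS}.
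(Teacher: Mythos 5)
Your proof follows essentially the same route as the paper: evaluate on $N\in\PP$ and $V\in\V$, expand $\Lambda^m{^\Frob}(V\oplus\cdot)$ over bidegrees, apply Lemma \ref{Derived-Decomposition}, reduce to computing $\LT_{\Lambda^j{^\Frob}}(\Gamma^n{^\Frob})$ via $\LT_M\simeq\sharp\RT_{M^\sharp}\sharp$ and the identities $(\Lambda^j)^\sharp\simeq\Lambda^j$, $(\Gamma^n)^\sharp\simeq S^n$, then use degree (block) considerations to pin down the surviving summands. You are a bit more careful than the paper in explicitly tracking the Kuhn dual $C(j)^*$ on the constant complex and noting that it dualizes back when pulled out of the first slot of $RHom$, which is a useful clarification of a step the paper elides. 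The one genuine departure is that for the Euler characteristic $\chi(C(j))=\binom{p}{j}$ you give a self-contained symmetric-function computation $(e_j^{(1)},h_j^{(1)})=(-1)^j[u^j](1-u)^p=\binom{p}{j}$, whereas the paper simply cites \cite[Theorem~4.5]{FFSS} for both the Euler characteristic and the even-degree concentration; your argument is correct and arguably preferable for that part, though you still rely on \cite{FFSS} for the parity statement, as does the paper.
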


\begin{proof}

Given any $M\in \PP$ and $V\in \V$, then
$$\bold T_{\Lambda^m{^\Frob}}(\RT_{\Lambda^m{^\Frob}}(L))(V)= \Lambda^m(V{^\Frob})\otimes RHom(\Gamma^n {^\Frob}, L(V\oplus \cdot)). $$
Now consider the following chain of isomorphisms:
\begin{align*}
\RT_{\Gamma^n{^\Frob}} \bold T_{\Lambda^m {^\Frob}}(L)(V) &\simeq RHom(\Gamma^n {^\Frob}, \Lambda^m{^\Frob}(V\oplus \cdot)\otimes L(V\oplus \cdot) ) \\
  &\simeq  \bigoplus_{i+j=m}\Lambda^i(V^\Frob)\otimes RHom(\Gamma^n {^\Frob}, \Lambda^j{^\Frob} \otimes L(V\oplus \cdot))\\
&  \simeq \bigoplus_{i+j=m}  \Lambda^i(V{^\Frob})\otimes RHom(\LT_{\Lambda^j{^\Frob}}(\Gamma^n {^\Frob}),L(V\oplus \cdot) )\\
& \simeq \bigoplus_{0\leq j\leq min\{m,n\}}  \Lambda^{m-j}(V^\Frob)\otimes RHom(\LT_{\Lambda^j{^\Frob}}(\Gamma^n {^\Frob}),L(V\oplus \cdot) ).
\end{align*}
In the above chain of isomorphisms, the third isomorphism follows from Lemma \ref{Derived-Decomposition}, and the fourth isomorphism follows by degree consideration.
Now we want to compute $\LT_{\Lambda^j{^\Frob}}(\Gamma^n {^\Frob})$.
By the general formula $\LT_{M}\simeq \sharp \RT_{M^{\sharp}}\sharp$. In our case, since $(\Lambda^j{^\Frob})^\sharp=\Lambda^j{^\Frob}$, then we have $\LT_{\Lambda^j{^\Frob}}\simeq \sharp \RT_{\Lambda^j{^\Frob}}\sharp$. Moreover, $(\Gamma^n {^\Frob})^\sharp=S^n{^\Frob}$, so
$$\LT_{\Lambda^j{^\Frob}}(\Gamma^n {^\Frob})\simeq \sharp( \RT_{\Lambda^j{^\Frob}}(S^n{^\Frob})) $$
So we are reduced to compute  $\RT_{\Lambda^j{^\Frob}}(S^n{^\Frob})$. We claim it is isomorphic to $S^{n-j}{^\Frob}\otimes RHom(\Lambda^j{^\Frob},S^j{^\Frob} )$, since for any $W\in \V$,
 \begin{align*}
 \RT_{\Lambda^j{^\Frob}}(S^n{^\Frob})(W)& \simeq RHom(\Lambda^j{^\Frob}, S^n{^\Frob}(W\oplus \cdot)) \\
& \simeq \bigoplus _{k+\ell=n} S^k(W^\Frob)\otimes RHom(\Lambda^j{^\Frob},S^\ell{^\Frob})\\
& \simeq S^{n-j}(W^\Frob)\otimes RHom(\Lambda^j{^\Frob},S^j{^\Frob} ).
 \end{align*}
In the above chain of isomorphisms, the third isomorphism follows by degree considerations and the condition that $j\leq n$.

Therefore in the end we obtain $$\RT_{\Gamma^n{^\Frob}}\circ \bold T_{\Lambda^m {^\Frob}}\simeq \bigoplus_{0 \leq j \leq min\{m,n\}} C(j) \otimes \bold T_{\Lambda^{m-j}{^\Frob}}\circ \RT_{\Gamma^{n-j}{^\Frob}}, $$
where $C(j)=RHom(\Lambda^j {^\Frob}, S^j {^\Frob}) $. As to the computation of $C(j)$, i.e. the computation of $Ext^*(\Lambda^j {^\Frob}, S^j {^\Frob})$,  one can refer to \cite[Theorem 4.5]{FFSS}).

\end{proof}
From above proposition, we immediately obtain the following corollary.
\begin{corollary}
\label{Heisenberg_Level_p}
As linear operators on the space $B$ of symmetric functions,  for any $n,m\geq 0$ we have following equality
\begin{equation}
\label{Symmetric_Function_Twist}
(h_n^{(1)})^* e_m^{(1)}=\sum_{0\leq j \leq min\{m,n\}} \binom{p}{j}  e_{m-j}^{(1)} (h_{n-j}^{(1)})^* .
\end{equation}
\end{corollary}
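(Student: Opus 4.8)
The plan is to obtain this operator identity simply by passing to the Grothendieck group in the isomorphism of endofunctors of $D^b(\PP)$ established in Proposition \ref{Twist_Commuting_Computation}. All of the genuine work is contained in that proposition; what remains is a translation through the categorification dictionary.

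First I would recall the relevant correspondences. Under the algebra isomorphism $\varrho:K(\PP)\xrightarrow{\sim} B$ one has $[\Gamma^n]=h_n$ and $[\Lambda^m]=e_m$, and since post-composition with the Frobenius twist functor categorifies the twist $b\mapsto b^{(1)}$ (as noted in Section \ref{SectionP}), we get $[\Gamma^n{^\Frob}]=h_n^{(1)}$ and $[\Lambda^m{^\Frob}]=e_m^{(1)}$. Hence $[\bold T_{\Lambda^{m}{^\Frob}}]=e_m^{(1)}$ and $[\bold T_{\Gamma^{n}{^\Frob}}]=h_n^{(1)}$, viewed as operators on $B$. Next I would identify $[\RT_{\Gamma^{n}{^\Frob}}]$: working on $K(D^b(\PP))=K(\PP)=B$, the adjunction between $\bold T_{\Gamma^{n}{^\Frob}}$ and $\RT_{\Gamma^{n}{^\Frob}}$ descends, via the fact that $\langle\cdot,\cdot\rangle$ corresponds to $(\cdot,\cdot)$, to the statement that $[\RT_{\Gamma^{n}{^\Frob}}]$ is adjoint to $h_n^{(1)}$, so $[\RT_{\Gamma^{n}{^\Frob}}]=(h_n^{(1)})^*$. (This is exactly the content of Lemma \ref{Thm_veryweakcat} applied to $M=\Gamma^n$, and likewise $[\RT_{\Gamma^{n-j}{^\Frob}}]=(h_{n-j}^{(1)})^*$.)

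Then I would apply $[\cdot]$ to the functor isomorphism
$$\RT_{\Gamma^n{^\Frob}}\circ \bold T_{\Lambda^m {^\Frob}}\simeq \bigoplus_{0 \leq j \leq \min\{m,n\}} C(j) \otimes \bold T_{\Lambda^{m-j}{^\Frob}}\circ \RT_{\Gamma^{n-j}{^\Frob}}.$$
Taking classes turns composition of (derived) exact functors into composition of the induced operators and a direct sum into a sum; tensoring by the complex of vector spaces $C(j)=RHom(\Lambda^j{^\Frob},S^j{^\Frob})$ multiplies a class by the Euler characteristic $\sum_i(-1)^i\dim H^i(C(j))$, which equals $\binom{p}{j}$ by the last assertion of Proposition \ref{Twist_Commuting_Computation}. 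Thus the left side becomes $(h_n^{(1)})^*\,e_m^{(1)}$ and the right side becomes $\sum_{0\le j\le\min\{m,n\}}\binom{p}{j}\,e_{m-j}^{(1)}\,(h_{n-j}^{(1)})^*$, which is the claimed equality.

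In this argument there is no real obstacle; the only point deserving care is that the functors $\bold T^*_{\Gamma^{n}{^\Frob}}$ are not exact (since $\Gamma^{n}{^\Frob}$ is not projective), so one must genuinely work in $D^b(\PP)$ and use the identification $K(D^b(\PP))\cong K(\PP)$ together with the fact that it is $[\RT_{\Gamma^{n}{^\Frob}}]$, rather than an ill-defined $[\bold T^*_{\Gamma^{n}{^\Frob}}]$, that realizes the adjoint operator $(h_n^{(1)})^*$.
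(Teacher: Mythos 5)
Your proof is correct and is essentially the same argument the paper intends; the paper simply states that the corollary follows "immediately" from Proposition \ref{Twist_Commuting_Computation}, and you have unpacked exactly the dictionary (classes of $\Gamma^n{^\Frob}$ and $\Lambda^m{^\Frob}$, the identification $[\RT_{\Gamma^n{^\Frob}}]=(h_n^{(1)})^*$ via Lemma \ref{Thm_veryweakcat}, and the Euler characteristic of $C(j)$ giving $\binom{p}{j}$) needed to make that step explicit.
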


\begin{theorem}
\label{TheoremCommutingAction}
The family of functors
$$
\{ \RT_{\Gamma^n{^\Frob}},\bold T_{\Lambda^n {^\Frob}}|n\geq0 \}
$$
along with the functorial isomorphisms from Proposition \ref{Twist_Commuting_Computation} (weakly) categorify the representation of $H^{(1)}$ on $B$.

\end{theorem}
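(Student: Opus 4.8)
The plan is to obtain Theorem~\ref{TheoremCommutingAction} by assembling two ingredients already in hand: the Grothendieck-group computation of Lemma~\ref{Thm_veryweakcat}, which pins down the operators that the functors lift, and the functorial isomorphism of Proposition~\ref{Twist_Commuting_Computation}, which lifts the defining relation. Throughout we assume $p>0$, as in this section, so that the Frobenius twist and $H^{(1)}$ make sense.

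First I would observe that $\bold T_{\Lambda^n{^\Frob}}$ and $\RT_{\Gamma^n{^\Frob}}$ are triangulated endofunctors of $D^b(\PP)$: the former is the exact functor of tensoring, the latter is well-defined on $D^b(\PP)$ by the boundedness lemma for $R\bold T^*_M$ (applied with $M=\Gamma^n{^\Frob}$, using that each block $\PP_d$ has finite global dimension). Then, under $K(D^b(\PP))=K(\PP)=B$ and the isomorphism $\varrho$, the equalities $\varrho([\Lambda^n])=e_n$, $\varrho([\Gamma^n])=h_n$, together with the fact that the Frobenius-twist functor categorifies the twist on $B$, give $[\bold T_{\Lambda^n{^\Frob}}]=e_n^{(1)}$, while the adjunction between $\bold T_{M^{(1)}}$ and $R\bold T^*_{M^{(1)}}$ on $D^b(\PP)$ makes $[\RT_{\Gamma^n{^\Frob}}]$ the adjoint of $[\bold T_{\Gamma^n{^\Frob}}]$ with respect to $\langle\cdot,\cdot\rangle$, hence $[\RT_{\Gamma^n{^\Frob}}]=(h_n^{(1)})^*$; this is precisely Lemma~\ref{Thm_veryweakcat} for $M=\Lambda^n$ and $M=\Gamma^n$. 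Since $e_n^{(1)}$ and $(h_n^{(1)})^*$ generate $H^{(1)}\subseteq\mathrm{End}(B)$, Lemma~\ref{Thm_veryweakcat} already guarantees that these classes generate an action of $H^{(1)}$ on $K(\PP)$ isomorphic to the Fock space representation $B$, so the Grothendieck shadow is as required.

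Next I would collect the functorial isomorphisms. Proposition~\ref{Twist_Commuting_Computation} directly supplies
\begin{equation*}
\RT_{\Gamma^n{^\Frob}}\circ \bold T_{\Lambda^m{^\Frob}}\;\simeq\;\bigoplus_{0\le j\le \min\{m,n\}} C(j)\otimes \bold T_{\Lambda^{m-j}{^\Frob}}\circ \RT_{\Gamma^{n-j}{^\Frob}},
\end{equation*}
where $C(j)=RHom(\Lambda^j{^\Frob},S^j{^\Frob})$ has cohomology concentrated in even degrees with $\sum_i(-1)^i\dim H^i(C(j))=\binom{p}{j}$; being formal and even, $C(j)$ contributes honest split multiplicity spaces $\bigoplus_i H^{2i}(C(j))$ of total dimension $\binom{p}{j}$, with no sign cancellation on $K_0$. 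In addition, the commutativity isomorphisms $\bold T_{\Lambda^n{^\Frob}}\circ\bold T_{\Lambda^m{^\Frob}}\simeq\bold T_{\Lambda^m{^\Frob}}\circ\bold T_{\Lambda^n{^\Frob}}$ (both equal $\bold T_{\Lambda^n{^\Frob}\otimes\Lambda^m{^\Frob}}$, intertwined by the flip of tensor factors) and $\RT_{\Gamma^n{^\Frob}}\circ\RT_{\Gamma^m{^\Frob}}\simeq\RT_{\Gamma^m{^\Frob}}\circ\RT_{\Gamma^n{^\Frob}}$ (from uniqueness of right adjoints, $\bold T^*_A\bold T^*_B\simeq\bold T^*_{B\otimes A}$, passed to derived functors) are immediate. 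On $K_0$ these descend exactly to the relations of Corollary~\ref{Heisenberg_Level_p} together with $e_n^{(1)}e_m^{(1)}=e_m^{(1)}e_n^{(1)}$ and $(h_n^{(1)})^*(h_m^{(1)})^*=(h_m^{(1)})^*(h_n^{(1)})^*$ --- the defining relations of the level-$p$ presentation of $H^{(1)}$.

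Assembling these, the family $\{\RT_{\Gamma^n{^\Frob}},\bold T_{\Lambda^n{^\Frob}}:n\ge0\}$ of triangulated endofunctors of $D^b(\PP)$, together with the functorial isomorphisms above, is by definition a weak categorification --- in the sense of Definition~\ref{DefHcat}(2), adapted to the level-$p$ relation so that the summand $B_{m-1}\circ A_{n-1}$ is replaced by the graded multiplicity terms $C(j)\otimes\bold T_{\Lambda^{m-j}{^\Frob}}\circ\RT_{\Gamma^{n-j}{^\Frob}}$ --- of the representation of $H^{(1)}$ on $B$. The only step that is not formal is the passage to $D^b(\PP)$: because $M^{(1)}$ is never projective, $\bold T^*_{M^{(1)}}$ is genuinely inexact, so every statement must be phrased for the derived functors $\RT$, and one must verify (as is built into Proposition~\ref{Twist_Commuting_Computation}) that the multiplicity complexes $C(j)$ are formal and concentrated in even degrees, so that the triangulated direct-sum decomposition and the resulting $K_0$-identity carry no hidden signs. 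All the substantive content --- the explicit computation of $\RT_{\Gamma^n{^\Frob}}\circ\bold T_{\Lambda^m{^\Frob}}$ and of $Ext^*(\Lambda^j{^\Frob},S^j{^\Frob})$ via \cite{FFSS} --- is already contained in Proposition~\ref{Twist_Commuting_Computation}, so the proof of the theorem itself reduces to the bookkeeping above.
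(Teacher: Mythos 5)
Your assembly of the ingredients (the Grothendieck‑group computation of Lemma~\ref{Thm_veryweakcat}, the isomorphism of Proposition~\ref{Twist_Commuting_Computation}, and the obvious commutativity isomorphisms for $\bold T_{\Lambda^\bullet{^\Frob}}$ and $\RT_{\Gamma^\bullet{^\Frob}}$) is the right framework, and the passage to $D^b(\PP)$ is handled correctly. But there is one genuine gap: you assert, without argument, that the relations
\[
(h_n^{(1)})^* e_m^{(1)}=\sum_{0\leq j \leq \min\{m,n\}} \binom{p}{j}\, e_{m-j}^{(1)}\, (h_{n-j}^{(1)})^*,
\qquad e_n^{(1)}e_m^{(1)}=e_m^{(1)}e_n^{(1)},\qquad (h_n^{(1)})^*(h_m^{(1)})^*=(h_m^{(1)})^*(h_n^{(1)})^*
\]
are ``the defining relations of the level-$p$ presentation of $H^{(1)}$.'' This is precisely the nonformal content that needs a proof and that the paper supplies. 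A priori, the functorial isomorphisms you have collected only lift the relations of an abstract algebra $\tilde H$ generated by symbols $s_n, t_n$ modulo those relations, and what must be checked is that the natural surjection $\theta\colon \tilde H \to H^{(1)}$, $s_n\mapsto e_n^{(1)}$, $t_n\mapsto (h_n^{(1)})^*$, is in fact an isomorphism --- i.e.\ that no additional relations hold in $H^{(1)}$ that are not already consequences of the lifted ones, and conversely that $\tilde H$ does not collapse. Lemma~\ref{Thm_veryweakcat} tells you the operators act correctly on $K(\PP)$, but it does \emph{not} tell you that the relation package coming from the categorification is a complete presentation; both statements are needed for a weak categorification in the sense you invoke.

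The paper closes this gap by an associated-graded argument: one introduces compatible filtrations on $\tilde H$ and $H^{(1)}$ (with $t_n, s_n$ and $v_n, u_n$ in degree $n$, where $u_n, v_n$ are the standard generators from Leclerc's presentation $v_nu_m - u_nv_m = pn\,\delta_{n,m}$), shows $\theta$ is filtered, observes that both associated graded algebras are polynomial rings on the respective generators, and uses the fact that $\{e_n\},\{h_n\},\{p_n\}$ are three algebraically independent generating sets of symmetric functions to deduce that $\mathrm{gr}(\theta)$ is an isomorphism, hence so is $\theta$. Your proof would be complete if you added this step (or an equivalent argument identifying $\tilde H$ with $H^{(1)}$); as written, the claim that the lifted relations present $H^{(1)}$ is exactly what is at stake and cannot be taken for granted.
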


\begin{proof}
By Proposition \ref{Twist_Commuting_Computation}, this family of functors categorifies the action of an algebra $\tilde{H}$ acting on $B$, where $\tilde{H}$ is the unital algebra generated by $t_n, s_m, (n,m=0,1,\cdots)$ subject to relations
\begin{align}
\label{Twist_Heisenberg_Presentation}
t_nt_m=t_mt_n, \text{     } s_ns_m=s_ms_n \\
\label{Twist_Heisenberg_Presentation_II}
t_ns_m=\sum_{j\leq min\{n,m\}} \binom{p}{j} s_{m-j}t_{n-j} ,
\end{align}
and $t_0=s_0=1$.  So to prove the theorem it remains to show that $\tilde{H} \cong H^{(1)}$.

By \cite[Section 2.2.8]{Lec}, the twisted Heisenberg $H^{(1)}$ has a presentation with standard generators $u_n,v_m$, subject to relations
$$u_nu_m=u_mu_n, \ v_nv_m=v_mv_n $$
$$v_nu_m-u_nv_m=p n\delta_{n,m}, $$
and $u_0=v_0=1$.  (In our realization of $H^{(1)}$, $u_n$ corresponds to the the $(pn)^{th}$ power symmetric function, and $v_n$ is its adjoint.)

By Corollary \ref{Heisenberg_Level_p} and Lemma \ref{Lemma_CommutingH},   there is a natural map  $\theta:\tilde{H}\to End_{\g'}(B)=H^{(1)}$ by mapping $s_n$ to $e_n^{(1)}$ and mapping $t_n$ to $(h_n^{(1)})^*$. In the algebra $B$ of symmetric functions, $\{e_n\}$, $\{h_n\}$, $\{p_n\}$ are three algebraically independent basis. So it is clear that $\theta$ is surjective; we will show it is an isomorphism.

We introduce a filtration $\tilde{F}$ on $\tilde{H}$, by declaring that the degree of $t_n$ is $n$ and the degree of $s_m$ is  $m$; similarly we introduce a filtration $F$ on $H^{(1)}$ such that the degree of $u_n$ is $n$ and the degree of $v_m$ is $m$. Since $\theta(t_n),\theta(s_n)^*$ are homogeneous symmetric functions, it is easy to see that $\theta$ preserves the filtration.  Since $\theta: \mathbb{C}[t_1,t_2,\cdots]\to \mathbb{C}[v_1,v_2,\cdots]$ and $\theta: \mathbb{C}[s_1,s_2,\cdots]\to \mathbb{C}[u_1,u_2,\cdots]$ are isomorphisms, and $gr_{\tilde{F}}(\tilde{H})$ and $gr_{F}(H^{(1)})$ are both polynomial rings,  it implies that $gr(\theta):gr_{\tilde{F}}(\tilde{H})\to gr_{F}(H^{(1)})$ is an isomorphism. Therefore $\theta:\tilde{H}\to H^{(1)}$ is an isomorphism.
\end{proof}

\begin{remark}
\label{Remarkglhat}
By Theorem \ref{HTYthm} we have a $\g$-categorification on $\PP$, which can be derived to obtain a $\g$-categorification on $D^b(\PP)$.  In particular, we have functors $E_i,F_i:D^b(\PP)\to D^b(\PP)$ and functorial isomorphisms between these functors that define a categorification of the Fock space representation of $\g$.  By the above theorem we also have a family of functors on $D^b(\PP)$, along with functorial isomorphisms, that categorify the action of $H^{(1)}$ on Fock space.  Combining all this data and  the commutativity data from Theorem \ref{Commutativity_Theorem} and Theorem \ref{Adjunction_Commuting_Action}, we therefore obtain a (weak) categorification of the (irreducible) Fock space representation of $\widehat{\mathfrak{gl}}_p$.
\end{remark}

\section{Appendix}
\label{Appendix}

In this appendix we gather some useful lemmas.  In Section \ref{SubSectionDuality} we record some relations between various dualities on the categories $\PP$ and $\R$.  In Section \ref{SubSectionCompat} we describe an explicit formula for an operator on the functor $\F$ obtained from $X$ via adjunction.  In Section \ref{SubSectionAdj} we prove that the adjoint of a morphism of $\g$-categorifications, suitably enriched, is again a morphism of $\g$-categorifications.

\subsection{Duality}
\label{SubSectionDuality}
In this part, we collect some facts relating to Kuhn duality $\sharp: \PP\to \PP$, a duality $\sharp': \R \to \R$ defined below,  the bi-adjoint pairs $(\E,\F)$, $(\E',\F')$, and the Schur-Weyl duality functor $\Schur:\PP\to \R$. We refer the reader to \cite{H} for proofs and more detail.

Recall that $\sharp: \PP \to \PP$ is defined as follows: for $M\in \PP$ and $V\in\V$, $M^{\sharp}(V)=M(V^*)^*$.

The functor $\sharp': \R \to \R$ is given as follows.  Firstly, for $J\in\X$, define $\theta:End_\X(J)\to End_\X(J)$ by $\theta(s_{i,j})=s_{i,j}$ and $\theta(w_1w_2)=\theta(w_2)\theta(w_1)$. Then, for $S\in \R$ and $J\in\X$, set $S^{\sharp'}(J)=S(J)^*$. Given any invertible map $f: J\to J$, set $S^{\sharp'}(f)=S(\theta(f))^*$.

Recall from Sections \ref{Section_gcatonP} and \ref{Section_gcatonR}  that we have bi-adjunction data
$$\xymatrix{ \I  \ar[r]^{\eta_1} & \F\E \ar[r]^{\epsilon_2} & \I},\xymatrix{ \I  \ar[r]^{\eta_2} & \E\F \ar[r]^{\epsilon_1} & \I}$$
$$\xymatrix{ \I  \ar[r]^{\eta'_1} & \F'\E' \ar[r]^{\epsilon'_2} & \I},\xymatrix{ \I  \ar[r]^{\eta_2'} & \E'\F' \ar[r]^{\epsilon'_1} & \I}$$
In Section \ref{SubSectionS_morphgcat} we defined isomorphisms $\zeta_+:\Schur \E\to \E'\Schur $, and $\zeta_-:\Schur \F\to \F'\Schur$.

Let $\sharp \E\simeq \E\sharp$, $\sharp \F\simeq \F \sharp$, $\sharp \E'\sharp'$, $\sharp' \F\simeq \F' \sharp$ and  $\sharp' \Schur \simeq \Schur \sharp $ be the obvious natural isomorphisms. So as not to inundate the reader with more notation, will not name these isomorphisms.  By composition we also obtain natural isomorphisms  $\sharp'\Schur \E \simeq \Schur \E \sharp$, $\sharp '\Schur \F\simeq \Schur \F \sharp$,  $\sharp \E \F \simeq \E \F \sharp$, $\sharp' \E' \F'\simeq \F' \E'$ etc...
\begin{lemma}\label{Duality_Lemma_1}
We have the following commutative diagrams
\begin{equation*}
\xymatrix{\sharp & \sharp \E \F  \ar[l]_{\sharp \eta_2} \ar[d]\\
& \E\F \sharp \ar[lu]^{\epsilon_1 \sharp}  ,& }
\xymatrix{\sharp \ar[r]^{\sharp \epsilon_2}  \ar[rd]_{\eta_1 \sharp} & \sharp \F \E \ar[d] \\
& \F\E \sharp .}
\end{equation*}

\begin{equation*}
\xymatrix{\sharp' & \sharp' \E' \F'  \ar[l]_{\sharp' \eta_2} \ar[d]\\
& \E'\F' \sharp' \ar[lu]^{\epsilon'_1 \sharp'}  ,& }
\xymatrix{\sharp' \ar[r]^{\sharp' \epsilon'_2}  \ar[rd]_{\eta'_1 \sharp'} & \sharp' \F' \E' \ar[d] \\
& \F'\E' \sharp' .}
\end{equation*}
\end{lemma}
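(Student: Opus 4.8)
The plan is to organize all four squares around a single structural principle: Kuhn duality $\sharp\colon\PP\to\PP$ is a contravariant equivalence with $\sharp^2\simeq\mathrm{Id}$ which intertwines $\E$ and $\F$ with themselves via the obvious isomorphisms $\sharp\E\simeq\E\sharp$, $\sharp\F\simeq\F\sharp$, and which therefore carries the adjoint pair $(\E,\F)$ to the adjoint pair $(\F,\E)$ \emph{with the roles of unit and counit interchanged}. Concretely, dualizing the hom-isomorphism $\mathrm{Hom}_\PP(\E M,N)\cong\mathrm{Hom}_\PP(M,\F N)$ and transporting it along $\sharp$ produces an isomorphism $\mathrm{Hom}_\PP(\F A,B)\cong\mathrm{Hom}_\PP(A,\E B)$; tracing identity morphisms through, the image of the counit $\epsilon_1$ of $(\E,\F)$ becomes a unit $\I\to\E\F$ for the pair $(\F,\E)$, and the image of the unit $\eta_1$ becomes a counit $\F\E\to\I$. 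So the lemma reduces to identifying these $\sharp$-transported data with the \emph{particular} $\eta_2,\epsilon_2$ fixed in Section \ref{Section_gcatonP}, and then re-expressing the resulting identities as the displayed squares (applying $\sharp$ once more and using $\sharp^2\simeq\mathrm{Id}$ passes between the ``$\sharp\eta_i$'' and ``$\sharp\epsilon_i$'' forms).

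\textbf{The concrete verification.} For that identification I would argue directly on the explicit formulas of Section \ref{Section_gcatonP}, using the elementary observation that Kuhn duality exchanges the two families of building blocks those formulas are made of. Identifying $(V\oplus k)^{*}$ with $V^{*}\oplus k$ (via $k^{*}\cong k$), the embedding $\iota_V\colon V\hookrightarrow V\oplus k$ dualizes to the projection $p_{V^{*}}\colon V^{*}\oplus k\twoheadrightarrow V^{*}$ and $p_V$ dualizes to $\iota_{V^{*}}$, while for $\xi\in V^{*}$ the map $\tilde\xi\colon V\to V\oplus k$ dualizes to $(\phi,a)\mapsto\phi+a\xi$, which is a map of the ``$\tilde v$'' type with $v=\xi$. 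Feeding these into the duals $\eta_{1,M,V^{*}}^{*}$ and $\epsilon_{1,M,V^{*}}^{*}$ and matching weight components with Lemma \ref{Polynomial_Functor_Lemma_I} and Lemma \ref{Xi_Lemma}, one checks termwise that under the obvious isomorphisms $\sharp\E\F\simeq\E\F\sharp$ and $\sharp\F\E\simeq\F\E\sharp$ the transformation $\sharp\eta_1$ goes to $\epsilon_2\sharp$ and $\sharp\eta_2$ goes to $\epsilon_1\sharp$. Unwinding, these are precisely the commutativities of the two squares for $\PP$.

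\textbf{The $\R$ squares.} For the two squares involving $\sharp'\colon\R\to\R$, the argument is formally identical, now with the pair $(\E',\F')$ and the unit/counit data $\eta'_i,\epsilon'_i$ of Section \ref{Section_gcatonR}. The role of the exchanges $\iota_V^{*}=p_{V^{*}}$, $\tilde\xi^{*}=\tilde v$ is played here by three facts: the bijections $u_j$ and $u_j^{-1}$ entering $\eta'_1,\epsilon'_2$ are mutually inverse; the twist $\theta$ built into $\sharp'$ fixes the transpositions $s_{i,j}$ and reverses composition; and the summand inclusion and projection defining $\eta'_2,\epsilon'_1$ are mutually transpose. A termwise check then identifies $\sharp'\eta'_1$ with $\epsilon'_2\sharp'$ and $\sharp'\eta'_2$ with $\epsilon'_1\sharp'$, giving the two remaining squares.

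\textbf{Main obstacle.} I expect the only real difficulty to be bookkeeping: there are several layers of ``obvious'' natural isomorphisms in play ($\sharp^2\simeq\mathrm{Id}$, the commutation isomorphisms $\sharp\E\simeq\E\sharp$ and $\sharp\F\simeq\F\sharp$, and the composites $\sharp\E\F\simeq\E\F\sharp$, $\sharp\F\E\simeq\F\E\sharp$ they induce), and the content of the lemma is exactly that their composite is compatible with the chosen unit/counit data. No single verification is hard, but making the termwise comparison unambiguous requires pinning down each canonical identification precisely; this is why a careful write-up is carried out separately (see \cite{H}).
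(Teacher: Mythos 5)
The paper does not actually supply a proof of this lemma: the paragraph opening Section 8.1 says ``We refer the reader to \cite{H} for proofs and more detail,'' and Lemmas \ref{Duality_Lemma_1} and \ref{Duality_Lemma_2} are then stated without argument. So there is no in-paper proof to compare against.

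Judged on its own, your proposal is correct and its approach is the natural one. The organizing principle you state — that $\sharp$ carries the adjunction $(\E,\F)$ to $(\F,\E)$ with unit and counit interchanged, and the lemma is the identification of the transported data with the specific $\eta_2,\epsilon_2$ of Section \ref{Section_gcatonP} — is exactly the content, and the three dualization facts you single out ($\iota_V^\ast = p_{V^\ast}$, $p_V^\ast = \iota_{V^\ast}$, $\tilde\xi^\ast = \tilde v$ with $v=\xi$ under $(V\oplus k)^\ast\cong V^\ast\oplus k$) are what make the termwise verification go through. The one point that is glossed over but is genuinely used in the $\eta_1\leftrightarrow\epsilon_2$ direction is that for $m$ in the weight-one subspace $M(V\oplus k)_1$, the assignment $v\mapsto M(\tilde v)(m)$ is \emph{linear} in $v$ — this follows from $\tilde v\circ\mathrm{diag}(1_V,t)=\widetilde{tv}$ and weight-one homogeneity, and it is what lets one replace the dual basis sum $\sum_i(\cdot)\phi_i(v)$ by a single evaluation at $v$. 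It would be worth saying this explicitly, since without it the ``termwise match'' you invoke does not reduce to a formal rearrangement. Likewise, your plan to obtain the $\sharp\epsilon_2=\eta_1\sharp$ square from the $\sharp\eta_1=\epsilon_2\sharp$ one by applying $\sharp$ again quietly uses compatibility of $\sharp^2\simeq\mathrm{Id}$ with the commutation isomorphisms $\sharp\E\simeq\E\sharp$, $\sharp\F\simeq\F\sharp$; it is cleaner to just verify all four squares directly, which your framework already allows. On the $\R$ side your three observations (mutual inverseness of $u_j$ and $u_j^{-1}$, $\theta$ fixing transpositions and reversing composition so that $S^{\sharp'}(f)=S(f^{-1})^\ast$, and transposeness of the inclusion/projection in $\eta'_2,\epsilon'_1$) are exactly the right ones. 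Overall the sketch is sound; the missing ingredient is the explicit linearity-in-$v$ argument noted above.
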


\begin{lemma}\label{Duality_Lemma_2}
We have the following commutative diagrams
\begin{equation*}
\xymatrix{\sharp'\Schur \E  \ar[d] & \sharp' \E' \Schur \ar[l]_{\sharp' \zeta_+}\ar[d]\\ \Schur \E \sharp \ar[r]^{\zeta_+ \sharp}& \E'\Schur \sharp , &}
\xymatrix{\sharp'\Schur \F  \ar[d] & \sharp' \F' \Schur \ar[l]_{\sharp' \zeta_-}\ar[d]\\ \Schur \F \sharp \ar[r]^{\zeta_- \sharp}& \F'\Schur \sharp}
\end{equation*}
\end{lemma}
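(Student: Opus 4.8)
The plan is to reduce each of the two squares to a statement about maps of vector spaces by evaluating on an object $M\in\PP$ and a finite set $J\in\X$, and then to deduce commutativity from Lemma \ref{Duality_Lemma_1} together with the explicit descriptions of all the arrows involved. First I would recall that, by construction in Section \ref{SubSectionS_morphgcat}, the component $(\zeta_+)_{M,J}$ is the composite of the tautological identification $(\Schur\E)(M)(J)=Hom_\PP(\otimes^J,\E M)$, the adjunction isomorphism $Hom_\PP(\otimes^J,\E M)\simeq Hom_\PP(\F(\otimes^J),M)$ attached to the bi-adjunction $(\F,\E,\eta_2,\epsilon_2)$, and the identification $Hom_\PP(\F(\otimes^J),M)=Hom_\PP(\otimes^{J\sqcup *},M)=(\E'\Schur)(M)(J)$; similarly $(\zeta_-)_{M,J}$ is assembled from the adjunction $(\E,\F,\eta_1,\epsilon_1)$ and the identification $\E(\otimes^J)=\bigoplus_{j\in J}\otimes^{J\minus j}$. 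On the other hand, the ``obvious'' isomorphisms $\sharp'\Schur\simeq\Schur\sharp$, $\sharp\E\simeq\E\sharp$, the analogous ones for $\E',\F',\sharp'$, and all their composites, are induced from Kuhn duality's contravariant identification $Hom_\PP(A,B)\simeq Hom_\PP(B^\sharp,A^\sharp)$ (and its species counterpart for $\sharp'$), together with the canonical self-duality $(\otimes^J)^\sharp\cong\otimes^J$ and the fact that $\otimes^J$ is projective-injective (these provide the identification of $\Schur(M)^{\sharp'}$ with $\Schur(M^\sharp)$); see \cite{H}.

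Once these descriptions are in place, the first square, evaluated on $(M,J)$, asserts precisely that the adjunction isomorphism $Hom_\PP(\otimes^J,\E M)\simeq Hom_\PP(\F(\otimes^J),M)$ is carried, under Kuhn duality together with the self-duality of the tensor powers, to the corresponding adjunction isomorphism $Hom_\PP(\otimes^J,\E(M^\sharp))\simeq Hom_\PP(\F(\otimes^J),M^\sharp)$. Since an adjunction isomorphism is completely determined by the unit and counit producing it, this reduces to knowing how $\sharp$ intertwines the unit/counit data of the bi-adjunctions $(\E,\F)$ and how $\sharp'$ intertwines that of $(\E',\F')$ -- which is exactly the content of the four diagrams of Lemma \ref{Duality_Lemma_1} (two on $\PP$, two on $\R$). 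Feeding these into the diagram chase makes the first square commute formally; naturality in $M$ and $J$ is inherited from naturality of each constituent isomorphism. The second square is handled in the same way, now using the halves of Lemma \ref{Duality_Lemma_1} that govern $\eta_1,\epsilon_1$ and $\eta_1',\epsilon_1'$ together with the self-duality $\bigl(\bigoplus_{j\in J}\otimes^{J\minus j}\bigr)^\sharp\cong\bigoplus_{j\in J}\otimes^{J\minus j}$.

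The main obstacle is not conceptual but organizational: one must pin down, once and for all, compatible choices of the numerous ``obvious'' natural isomorphisms. In particular the self-duality $(\otimes^J)^\sharp\cong\otimes^J$ should be chosen $\mathfrak{S}_{|J|}$-equivariantly so that it is compatible with the $\mathfrak{S}_{|J|+1}$- and $\mathfrak{S}_{|J|-1}$-equivariant identifications of $\F(\otimes^J)$ and $\E(\otimes^J)$ used to build $\zeta_\pm$, and the identification $\Schur(M)^{\sharp'}\simeq\Schur(M^\sharp)$ must be matched with Kuhn duality on the $Hom_\PP$-spaces. With these choices fixed consistently, every elementary square in the resulting chase commutes and the lemma follows; since the verification is routine but notation-heavy, the details are left to \cite{H}.
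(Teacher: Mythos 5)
The paper does not supply its own proof of Lemma~\ref{Duality_Lemma_2}: in Section~\ref{SubSectionDuality} it simply says ``We refer the reader to \cite{H} for proofs and more detail,'' and then uses the lemma later (e.g.\ to deduce Lemma~\ref{Condition_I'}). So there is no internal argument to compare your sketch against directly.

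That said, your outline is sound and is the natural route. You correctly identify that $\zeta_+$ is, by construction, the adjunction isomorphism from $(\F,\E,\eta_2,\epsilon_2)$ applied to $Hom_\PP(\otimes^J,\E M)\simeq Hom_\PP(\F(\otimes^J),M)$ and $\zeta_-$ is the one from $(\E,\F,\eta_1,\epsilon_1)$ applied to $Hom_\PP(\otimes^J,\F M)\simeq Hom_\PP(\E(\otimes^J),M)$; that the unnamed vertical maps are assembled from Kuhn duality's contravariant isomorphism on $Hom$-spaces, the canonical self-duality $(\otimes^J)^\sharp\cong\otimes^J$, and the commutations $\sharp\E\simeq\E\sharp$, $\sharp\F\simeq\F\sharp$, $\sharp'\Schur\simeq\Schur\sharp$; and that once these are fixed consistently, commutativity of each square is exactly the assertion that $\sharp$ (resp.\ $\sharp'$) carries the relevant adjunction isomorphism to its primed/dualized counterpart, which is precisely what the four diagrams of Lemma~\ref{Duality_Lemma_1} encode at the level of units and counits. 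The one point you rightly flag as the real work is the consistent, $\mathfrak{S}_{|J|}$-equivariant choice of the ``obvious'' isomorphisms (in particular the identification $\bigl(\Schur M\bigr)^{\sharp'}\simeq\Schur(M^\sharp)$, which uses that the $\otimes^J$ are projective-injective); this is exactly the kind of bookkeeping the paper delegates to \cite{H}, and your sketch does the same. I see no gap in the reasoning, only the same deferral of routine verification that the authors themselves make.
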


\subsection{Compatibility of operators $X$ and $Y$ via bi-adjunction}
\label{SubSectionCompat}
Let  $d\pi_{M,V}$  be the representation of $\mathfrak{gl}(V)$ on $M(V)$.  Then
\begin{equation}
\label{Derivation_Dual}
d\pi_{M^\sharp,V}(A)=d\pi_{M,V^*}(A^*)^*,
\end{equation}
for any $A\in \mathfrak{gl}(V)$.  Let $X^\sharp_{M,V}$ be the operator on $ \E(M)^\sharp(V)=\E(M)(V^*)^*$ induced from the operator $X_{M,V^*}$ on $\E(M)(V^*)$.

\begin{lemma}
\label{Dual_E}
We have the following commutative diagram:
$$\xymatrix{\E(M^\sharp) \ar[r]^{\alpha} \ar[d]^{X_{M^{\sharp}}} & \E(M)^\sharp  \ar[d]^{X^\sharp_M}  \\
             \E(M^\sharp) \ar[r]^{\alpha} &  \E(M)^\sharp }
$$
\end{lemma}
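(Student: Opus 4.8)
The plan is to compute both vertical arrows explicitly and observe that, after transporting along $\alpha$, each of them is literally the transpose of the operator $X_{M,V^*}$ acting on $\E(M)(V^*)$, so the square commutes on the nose. First I would recall the three pieces of data. The natural isomorphism $\alpha:\E(M^\sharp)\to\E(M)^\sharp$ is the ``obvious'' one coming from the canonical identification $(V\oplus k)^*\cong V^*\oplus k^*\cong V^*\oplus k$, combined with the (routine) fact that Kuhn duality is compatible with the $GL(k)$-weight decomposition, so that $M^\sharp(V\oplus k)_1=\bigl(M(V^*\oplus k)_1\bigr)^*$. On the other hand, by construction $X_{M^\sharp,V}$ is the restriction to the weight-one space $M^\sharp(V\oplus k)_1$ of the action $d\pi_{M^\sharp,V\oplus k}(X_{V\oplus k})$ of the normalized split Casimir on $M^\sharp(V\oplus k)$; and $X^\sharp_{M,V}$ is, by definition of the lemma, the transpose of $X_{M,V^*}$ on $\E(M)(V^*)^*$. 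So the whole content is to identify the first operator with the transpose of the second.

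The heart of the argument is a short computation in the enveloping algebra. Let $\Theta\colon U(\mathfrak{gl}(V\oplus k))\to U(\mathfrak{gl}((V\oplus k)^*))$ be the anti-isomorphism extending the transpose $A\mapsto A^*$. Iterating (\ref{Derivation_Dual}) over products gives, for every $a\in U(\mathfrak{gl}(V\oplus k))$,
$$
d\pi_{M^\sharp,V\oplus k}(a)=\bigl(d\pi_{M,(V\oplus k)^*}(\Theta a)\bigr)^*.
$$
Next I would check that the split Casimir is self-dual under $\Theta$: writing $x_{i,j}$ for the matrix units of $\mathfrak{gl}(V\oplus k)$ and $x'_{i,j}$ for those of $\mathfrak{gl}((V\oplus k)^*)$ relative to the dual basis, one has $\Theta(x_{i,j})=x'_{j,i}$, hence $\Theta(x_{n+1,i}x_{i,n+1})=\Theta(x_{i,n+1})\Theta(x_{n+1,i})=x'_{n+1,i}x'_{i,n+1}$, and since $\Theta$ fixes the scalar $n$ we get $\Theta(X_{V\oplus k})=X_{(V\oplus k)^*}$. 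Therefore $d\pi_{M^\sharp,V\oplus k}(X_{V\oplus k})=\bigl(d\pi_{M,(V\oplus k)^*}(X_{(V\oplus k)^*})\bigr)^*=\bigl(X_{M,(V\oplus k)^*}\bigr)^*$, which under $(V\oplus k)^*\cong V^*\oplus k$ becomes $(X_{M,V^*\oplus k})^*$.

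Finally I would restrict to the weight-one subspace. Since $X_{M,V^*\oplus k}$ preserves the $GL(k)$-weight decomposition of $M(V^*\oplus k)$ and restricts on $M(V^*\oplus k)_1=\E(M)(V^*)$ precisely to $X_{M,V^*}$, its transpose preserves the dual decomposition and restricts on $\bigl(M(V^*\oplus k)_1\bigr)^*=\E(M)^\sharp(V)$ to $(X_{M,V^*})^*=X^\sharp_{M,V}$. Combining this with the previous paragraph yields $\alpha\circ X_{M^\sharp}=X^\sharp_{M}\circ\alpha$, which is the asserted commutativity. I expect the only delicate point to be the weight/duality bookkeeping: namely pinning down the (slightly unusual, because $\sharp$ is covariant) convention in (\ref{Derivation_Dual}) and verifying that Kuhn duality intertwines the $GL(k)$-weight decompositions with exactly the sign that is absorbed by the identification $k^*\cong k$, so that $M^\sharp(V\oplus k)_1$ really is the dual of $M(V^*\oplus k)_1$; the Casimir computation itself is immediate once those conventions are fixed.
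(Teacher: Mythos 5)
Your proposal is correct and is essentially the same argument as the paper's: expand the split Casimir in matrix units, apply the relation (\ref{Derivation_Dual}) to pass between $d\pi_{M^\sharp}$ and $d\pi_M$, and observe the self-duality of the Casimir under transpose; the paper does this in one terse line whereas you carry out the same computation more carefully by introducing the anti-isomorphism $\Theta$ on enveloping algebras and explicitly restricting to $GL(k)$-weight-one subspaces.
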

\begin{proof}
 Recall that $X_{M,V}=\sum_i d\pi_{M,V} (x_{n,i})d\pi_{M,V} (x_{i,n})-n$, where $n=dim V$. If we replace $V$ by $V^*$, then we have to exchange the role of $e_i$ and $\xi_i$. Hence $X^\sharp_{M,V}=\sum_i (d\pi_{M,V^*}(x_{i,n}^*)d\pi_{M,V^*}(x_{n,i}^*)) ^*-n$.  By (\ref{Derivation_Dual}) we have $X_{M^\sharp,V}=\sum_i d\pi_{M^\sharp,V}(x_{n,i})d\pi_{M^\sharp,V}(x_{i,n})-n=X_{M,V}^\sharp.$  \end{proof}

Let $Y$ be the operator on $\F$ induced from the operator $X$ on $\E$ via the adjunction $(\E,\F,\eta_1,\epsilon_1)$.  We describe the operator $Y$ explicitly in the following proposition (see \cite{H} for details).
\begin{proposition}
\label{Operator_Y}
Let $M\in \PP$ and $V\in \V$.  Choose a basis $e_i$ of $V$ and let $x_{ij}\in\mathfrak{gl}(V)$ be the operators $x_{ij}(e_k)=\delta_{jk}e_i$.  Then $Y_{M,V}=\sum_{i,j}x_{i,j}\otimes x_{j,i}$ on $\F(M)(V)=M(V)\otimes V$.
\end{proposition}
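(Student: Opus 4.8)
The plan is to compute the operator $Y$ directly from its definition as the operator on $\F$ induced from $X\in\operatorname{End}(\E)$ via the adjunction $(\E,\F,\eta_1,\epsilon_1)$, i.e. as the composite
\[
\F\xrightarrow{\ \eta_1\F\ }\F\E\F\xrightarrow{\ \F X\F\ }\F\E\F\xrightarrow{\ \F\epsilon_1\ }\F .
\]
Fixing $M\in\PP$ and $V\in\V$ with basis $e_1,\dots,e_n$ and dual basis $\xi_1,\dots,\xi_n$, and writing $e_{n+1}$ for the basis vector of the $k$-summand of $V\oplus k$ (so the $GL(k)$-weight-one line is $ke_{n+1}$), I would chase an element $m\otimes e_j\in M(V)\otimes V=\F(M)(V)$ through these three arrows and read off the formula.

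First I would apply $\eta_1\F$ at the object $\F M$. Using $\F M=M\otimes\I$, the explicit formula for $\eta_1$ from Section \ref{Section_gcatonP}, the equality $\tilde\xi_i(e_j)=e_j+\delta_{ij}e_{n+1}$, and Lemma \ref{Xi_Lemma} (which identifies $(M(\tilde\xi_i)(m))_0$ with $m_0:=M(\iota_V)(m)$), decomposing the $GL(k)$-weight-one part of $M(\tilde\xi_i)(m)\otimes\tilde\xi_i(e_j)$ should give $\eta_{1,\F M,V}(m\otimes e_j)=\sum_i (M(\tilde\xi_i)(m))_1\otimes e_j\otimes e_i+m_0\otimes e_{n+1}\otimes e_j$, an element of $\E\F(M)(V)\otimes V$ with $\E\F(M)(V)=M(V\oplus k)_1\otimes V\oplus M(V\oplus k)_0\otimes k$. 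Next I would apply $\F X\F$, which acts by the enveloping-algebra element $X_V=\sum_{a=1}^n x_{n+1,a}x_{a,n+1}-n$ on the $M(V\oplus k)\otimes(V\oplus k)$ factor (restricted to its weight-one part) via the comultiplication of $U(\mathfrak{gl}(V\oplus k))$, expanded term by term. Since $x_{a,n+1}$ annihilates $e_j$ and $x_{n+1,a}e_j=\delta_{aj}e_{n+1}$, acting on the summand $(M(\tilde\xi_i)(m))_1\otimes e_j$ produces a term in $M(V\oplus k)_1\otimes V$ together with the term $(x_{j,n+1}\cdot(M(\tilde\xi_i)(m))_1)\otimes e_{n+1}$ lying in $M(V\oplus k)_0\otimes k$, while the summand $m_0\otimes e_{n+1}$ contributes only to $M(V\oplus k)_1\otimes V$. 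Finally $\F\epsilon_1$ kills $M(V\oplus k)_1\otimes V$ and sends $m'\otimes e_{n+1}$ to $M(p_V)(m')$, so only one term survives and one is left with $Y_{M,V}(m\otimes e_j)=\sum_i M(p_V)\bigl(x_{j,n+1}\cdot(M(\tilde\xi_i)(m))_1\bigr)\otimes e_i$.

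To finish I would simplify this expression using a short Lie-theoretic computation. First I would establish the identity $(M(\tilde\xi_i)(m))_1=x_{n+1,i}\cdot m_0$, by noting that $\exp(t\,x_{n+1,i})\circ\iota_V$ equals $\tilde\xi_i$ at $t=1$ and that the weight-$r$ component of $M(\exp(t\,x_{n+1,i})\iota_V)(m)$ is $t^r$ times a $t$-independent constant, so its derivative at $t=0$ extracts the weight-one part. Then $x_{j,n+1}x_{n+1,i}\cdot m_0=x_{j,i}\cdot m_0$, since $[x_{j,n+1},x_{n+1,i}]=x_{j,i}-\delta_{ij}x_{n+1,n+1}$ and both $x_{j,n+1}$ and $x_{n+1,n+1}$ annihilate the weight-zero space; and $GL(V)$-equivariance of $\iota_V$ gives $x_{j,i}\cdot m_0=M(\iota_V)(x_{j,i}\cdot m)$, hence $M(p_V)(x_{j,i}\cdot m_0)=M(p_V\iota_V)(x_{j,i}\cdot m)=x_{j,i}\cdot m$. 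Combining, $Y_{M,V}(m\otimes e_j)=\sum_i(x_{j,i}\cdot m)\otimes e_i=\bigl(\sum_{i,\ell}x_{i,\ell}\otimes x_{\ell,i}\bigr)(m\otimes e_j)$, which is the assertion (full details are in \cite{H}).

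I expect the middle step to be the main obstacle: correctly applying the comultiplication of $U(\mathfrak{gl}(V\oplus k))$ to $X_V$ acting on a tensor product and sorting the resulting terms by $GL(k)$-weight, since this is precisely where the extra $V$-slot of $\F$ is generated from the extra $k$-slot of $\E$. The remaining manipulations --- the identity for $(M(\tilde\xi_i)(m))_1$, the bracket computation, and the collapse $M(p_V\iota_V)=\mathrm{id}$ --- are routine.
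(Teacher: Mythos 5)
Your proof is correct. The paper itself gives no argument for Proposition \ref{Operator_Y}; it simply states the formula and points the reader to \cite{H} for details, so there is no in-text proof to compare against. What you have written is the direct unwinding of the mate formula $Y=\F\epsilon_1\circ\F X\F\circ\eta_1\F$, and it is exactly the style of element-chasing the authors use elsewhere (e.g.\ in the verification of the local relations in Proposition \ref{PropWellDef}), so it is almost certainly the argument behind the reference.

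A few points I checked carefully and can confirm. In the first step, decomposing $M(\tilde\xi_i)(m)\otimes\tilde\xi_i(e_j)$ by $GL(k)$-weight correctly yields $(M(\tilde\xi_i)(m))_1\otimes e_j+\delta_{ij}m_0\otimes e_{n+1}$, using Lemma \ref{Xi_Lemma} for the weight-zero piece. In the middle step, expanding $\Delta(X_V)$ on $M(V\oplus k)\otimes(V\oplus k)$ and sorting by weight, the only contribution landing in $M(V\oplus k)_0\otimes k$ from a type-A summand $u\otimes e_j$ is the cross term $(x_{j,n+1}u)\otimes e_{n+1}$ (the other cross term $x_{a,n+1}e_j$ and the $-n$-term contribute only to $M(V\oplus k)_1\otimes V$), while the type-B summand $m_0\otimes e_{n+1}$ lands entirely in $M(V\oplus k)_1\otimes V$ after the $\pm n$-terms cancel; both are then killed by $\F\epsilon_1$. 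Your three final reductions --- $(M(\tilde\xi_i)(m))_1=x_{n+1,i}\cdot m_0$ via differentiation along $\exp(tx_{n+1,i})\iota_V$ (the $GL(k)$-weight argument you sketch also works uniformly in characteristic $p$ since $g_s\exp(tx_{n+1,i})g_s^{-1}\iota_V=\exp(stx_{n+1,i})\iota_V$, so the degree-$r$ coefficient in $t$ lives in weight $r$), the bracket identity $x_{j,n+1}x_{n+1,i}\cdot m_0=x_{j,i}\cdot m_0$ on the weight-zero space, and the collapse $M(p_V\iota_V)=\mathrm{id}$ together with $GL(V)$-equivariance of $\iota_V$ --- are all correct, and the final re-indexing does give $\sum_{i,j}x_{i,j}\otimes x_{j,i}$ on $M(V)\otimes V$.
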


There is  a canonical isomorphism $\F(M^\sharp)\simeq \F(M)^\sharp $, which we use to identify the two spaces. Let $Y^\sharp_{M,V}$ be the induced operator on $\F(M)(V^*)^*$ from the operator $Y_{M,V^*}$ on $\F(M)(V^*)$.  Using the explicit formula for $Y$, one can prove as in Lemma \ref{Dual_E} the following:
\begin{lemma}
\label{Dual_F}
The operator $Y_{M,V}^\sharp$ coincides with $Y_{M^\sharp,V}$
\end{lemma}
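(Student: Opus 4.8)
Lemma \ref{Dual_F}: the operator $Y_{M,V}^\sharp$ on $\F(M)(V^*)^*$ coincides with $Y_{M^\sharp,V}$, where $Y$ is the operator on $\F$ induced from $X$ on $\E$ via the adjunction $(\E,\F,\eta_1,\epsilon_1)$, and $Y_{M,V}=\sum_{i,j} x_{i,j}\otimes x_{j,i}$ acting on $\F(M)(V)=M(V)\otimes V$ (Proposition \ref{Operator_Y}).

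\medskip

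The plan is to imitate verbatim the proof of Lemma \ref{Dual_E}, using the explicit formula for $Y$ from Proposition \ref{Operator_Y} in place of the formula for $X$. First I would fix a basis $\{e_i\}$ of $V$, with dual basis $\{\xi_i\}$ of $V^*$; then $\{\xi_i\}$ serves as a basis of $V^*$ and $\{e_i\}$ (under the canonical identification $(V^*)^*\simeq V$) as its dual. By Proposition \ref{Operator_Y} applied to the vector space $V^*$, the operator $Y_{M,V^*}$ on $\F(M)(V^*)=M(V^*)\otimes V^*$ is $\sum_{i,j} x'_{i,j}\otimes x'_{j,i}$, where $x'_{i,j}\in\mathfrak{gl}(V^*)$ sends $\xi_k\mapsto \delta_{jk}\xi_i$; one checks immediately that $x'_{i,j}=x_{j,i}^*$, the adjoint of $x_{j,i}\in\mathfrak{gl}(V)$.

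\medskip

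Next I would compute the induced operator $Y^\sharp_{M,V}$ on $\F(M)(V^*)^*=M(V^*)^*\otimes (V^*)^*=M^\sharp(V)\otimes V$. Dualizing $Y_{M,V^*}=\sum_{i,j}(x_{j,i}^*)\otimes (x_{i,j}^*)$ gives $Y^\sharp_{M,V}=\sum_{i,j} d\pi_{M,V^*}(x_{j,i}^*)^*\otimes (x_{i,j}^*)^*$, where the second tensor factor now acts on $(V^*)^*\simeq V$, and $(x_{i,j}^*)^* = x_{i,j}$ under this identification. Applying the duality-of-derivations identity \eqref{Derivation_Dual}, $d\pi_{M^\sharp,V}(A)=d\pi_{M,V^*}(A^*)^*$, with $A=x_{j,i}$, turns the first tensor factor into $d\pi_{M^\sharp,V}(x_{j,i})$. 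Relabeling the summation indices ($i\leftrightarrow j$) we obtain $Y^\sharp_{M,V}=\sum_{i,j} d\pi_{M^\sharp,V}(x_{i,j})\otimes x_{j,i}$, which is exactly $Y_{M^\sharp,V}$ by Proposition \ref{Operator_Y} applied to $M^\sharp$. This is the desired equality, and the commutativity of the square with $\alpha:\F(M^\sharp)\simeq\F(M)^\sharp$ follows since $\alpha$ is the canonical identification we used throughout.

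\medskip

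I expect no serious obstacle here: the argument is a bookkeeping computation, and the only points requiring care are (i) correctly tracking which vector space each operator $x_{i,j}$ acts on after dualization, in particular the canonical isomorphism $(V^*)^*\simeq V$ and the identity $x'_{i,j}=x_{j,i}^*$, and (ii) the index relabeling that matches $\sum_{i,j}x_{j,i}\otimes x_{i,j}$ with $\sum_{i,j}x_{i,j}\otimes x_{j,i}$ — harmless since the sum is symmetric. The substantive input, namely that $d\pi$ intertwines $\sharp$ with adjoint-transpose, is already packaged in \eqref{Derivation_Dual}, and the explicit shape of $Y$ is given in Proposition \ref{Operator_Y}; so the lemma reduces to assembling these two facts exactly as Lemma \ref{Dual_E} assembles \eqref{Derivation_Dual} with the explicit formula for $X$.
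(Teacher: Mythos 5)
Your proof is correct and follows the same route the paper intends: the paper dispenses with the proof of Lemma \ref{Dual_F} by remarking ``Using the explicit formula for $Y$, one can prove as in Lemma \ref{Dual_E}'', and you carry out precisely that imitation, combining the formula $Y_{M,V}=\sum_{i,j}x_{i,j}\otimes x_{j,i}$ from Proposition \ref{Operator_Y} with the duality identity \eqref{Derivation_Dual}, tracking the $x'_{i,j}=x_{j,i}^*$ and $(x_{i,j}^*)^*=x_{i,j}$ identifications and an index swap.
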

\begin{proposition} \label{Second_Compatibility}
The operators $X$ on $\E$ and $Y$ on $\F$ are compatible via the second adjunction $(\F,\E,\eta_2,\epsilon_2)$.
\end{proposition}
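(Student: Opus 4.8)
The plan is to bootstrap from the already-established compatibility with respect to the \emph{first} adjunction $(\E,\F,\eta_1,\epsilon_1)$ — which holds essentially by definition of $Y$ — and transport it to the second adjunction $(\F,\E,\eta_2,\epsilon_2)$ by means of Kuhn duality. The point is that Kuhn duality $\sharp:\PP\to\PP^{\mathrm{op}}$ swaps the two bi-adjunctions: it interchanges the adjunction data $(\E,\F,\eta_1,\epsilon_1)$ with (the opposite of) $(\F,\E,\eta_2,\epsilon_2)$, via the natural isomorphisms $\sharp\E\simeq\E\sharp$ and $\sharp\F\simeq\F\sharp$ and the commuting diagrams recorded in Lemma \ref{Duality_Lemma_1}. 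Since compatibility of a pair of operators via an adjunction is a statement that is preserved under applying a contravariant equivalence (one simply dualizes the commuting square expressing that the induced map on $\mathrm{Hom}$-spaces intertwines the two $\mathfrak{S}_2$- or, here, operator-actions), it suffices to check that $X$ and $Y$ behave well under $\sharp$.

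Concretely, the steps I would carry out are the following. First, recall from Lemma \ref{Dual_E} that under the canonical identification $\alpha:\E(M^\sharp)\simeq\E(M)^\sharp$ the operator $X_{M^\sharp}$ corresponds to $X^\sharp_M$, and from Lemma \ref{Dual_F} that under $\F(M^\sharp)\simeq\F(M)^\sharp$ the operator $Y_{M^\sharp}$ corresponds to $Y^\sharp_M$; so both $X$ and $Y$ are ``self-dual'' in the appropriate sense. Second, observe that the statement ``$X$ on $\E$ and $Y$ on $\F$ are compatible via $(\E,\F,\eta_1,\epsilon_1)$'' is precisely how $Y$ was defined: $Y$ is \emph{the} operator on $\F$ induced from $X$ on $\E$ through that adjunction, so this compatibility is a tautology. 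Third, apply $\sharp$ to the commuting square expressing this first compatibility; using Lemma \ref{Duality_Lemma_1} to rewrite $\sharp\eta_1,\sharp\epsilon_1$ in terms of $\eta_2,\epsilon_2$ and the identifications of the previous step to rewrite $X^\sharp$ as $X$ and $Y^\sharp$ as $Y$, one reads off exactly the assertion that $X$ and $Y$ are compatible via $(\F,\E,\eta_2,\epsilon_2)$. This is the analogue, for the operators $X,Y$, of how Lemma \ref{Second_Adjunction} was deduced from Lemma \ref{Permutation_Flip_Com} by Kuhn duality.

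The main obstacle is bookkeeping rather than conceptual: one must check carefully that the chain of canonical isomorphisms $\sharp\E\simeq\E\sharp$, $\sharp\F\simeq\F\sharp$, $\sharp\E\F\simeq\E\F\sharp$, $\sharp\F\E\simeq\F\E\sharp$ is genuinely compatible with the unit/counit data in the way asserted in Lemma \ref{Duality_Lemma_1}, and that the various ``obvious'' identifications ($\alpha$ for $\E$, and the analogous one for $\F$) are the same ones implicitly used when dualizing the defining square for $Y$. Once these identifications are pinned down, the diagram chase is formal. An alternative, more hands-on route — which I would fall back on if the duality bookkeeping becomes unwieldy — is to use the explicit formula $Y_{M,V}=\sum_{i,j}x_{i,j}\otimes x_{j,i}$ from Proposition \ref{Operator_Y} together with the explicit description of $\eta_2,\epsilon_2$ in Section \ref{Section_gcatonP} and the fact (Remark 4.4 of \cite{HTY}) that every $M\in\PP$ is a subquotient of sums of $\otimes^n$, reducing as in Lemma \ref{Permutation_Flip_Com} to a direct computation on $\otimes^n$, where $X$ becomes the split Casimir and $Y$ a sum of transposition-like operators; but the duality argument is cleaner and is the one I would present.
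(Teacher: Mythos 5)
Your proposal is correct and follows essentially the same route as the paper: the paper's proof is precisely the chain of natural isomorphisms
$Hom(\F(M),N)\simeq Hom(N^\sharp,\F(M)^\sharp)\simeq Hom(N^\sharp,\F(M^\sharp))\simeq Hom(\E(N^\sharp),M^\sharp)\simeq Hom(M,\E(N^\sharp)^\sharp)\simeq Hom(M,\E(N))$,
i.e.\ Kuhn duality sandwiched around the first adjunction, followed by an appeal to Lemma \ref{Dual_E}, Proposition \ref{Operator_Y}, and Lemma \ref{Dual_F} to see that $X$ and $Y$ are carried along compatibly at each step. Your formulation of "dualize the commuting square for the first (tautological) compatibility and use Lemmas \ref{Dual_E}, \ref{Dual_F}, \ref{Duality_Lemma_1} to rewrite it as the second" is the same argument, if anything a bit more explicit than the paper about the bookkeeping point that the composite chain of isomorphisms really is the one given by $(\F,\E,\eta_2,\epsilon_2)$ — which is what Lemma \ref{Duality_Lemma_1} supplies.
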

\begin{proof}
Following the chain of natural isomorphisms
\begin{align*}
Hom(\F(M),N) & \simeq Hom(N^\sharp,\F(M)^\sharp) \simeq Hom(N^\sharp,\F(M^\sharp))\\
             & \simeq Hom(\E(N^\sharp),M^\sharp)
               \simeq Hom(M,\E(N^\sharp)^\sharp)\\
            & \simeq  Hom(M,\E(N)),
\end{align*}
 by Lemma \ref{Dual_E}, Proposition \ref{Operator_Y} and Lemma \ref{Dual_F}, one can conclude the proof of proposition.
\end{proof}

\subsection{Adjunction and morphisms of categorifictations}
\label{SubSectionAdj}

We first prove an abstract category-theoretic lemma.  Suppose we are in the situation where we have two categories $\C_i$, $i=1,2$.  Suppose we are also given  functors $E_i,F_i: \C_i\to \C_i$ and adjunctions $(E_i,F_i,\eta_i, \epsilon_i)$ for $i=1,2$. Moreover, assume $X_i$ is a natural transformation on $E_i$,  and let $Y_i$ be the induced natural transformation on $F_i$ via the adjunction $(E_i,F_i,\eta_i,\epsilon_i)$. Finally, let $\Phi:\C_1\to \C_2$ be a functor, along with natural isomorphisms $\zeta_+:\Phi E_1\simeq E_2\Phi$ and $\zeta_-:\Phi F_1\simeq F_2 \Phi$.

\begin{lemma}
\label{Abstract_Nonsense}
If the following diagrams commute
$$
\xymatrix{& \Phi \ar[rd]^{\eta_2 \Phi} \ar[dl]_{\Phi \eta_1}\\
\Phi F_1E_1 \ar[r]_{\zeta_-E_1} & F_2 \Phi E_1 \ar[r]_{F_2 \zeta_+} & F_2E_2 \Phi
}
$$
$$
\xymatrix{\Phi E_1 \ar[r]^{\zeta_+} \ar[d]^{\Phi X_1}    &E_2 \Phi \ar[d]^{X_2 \Phi}\\
\Phi E_1 \ar[r]^{\zeta_+}  &E_2 \Phi.}
$$
then this diagram also commutes
\begin{equation*}
\xymatrix{\Phi F_1 \ar[r]^{\zeta_-} \ar[d]^{\Phi Y_1}    &F_2 \Phi \ar[d]^{Y_2 \Phi}\\
\Phi F_1 \ar[r]^{\zeta_-}  &F_2 \Phi.}
\end{equation*}
\end{lemma}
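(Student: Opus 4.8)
The plan is to unwind the phrase ``$Y_i$ is induced from $X_i$ by the adjunction $(E_i,F_i,\eta_i,\epsilon_i)$''. Since $\eta_i\colon \mathrm{Id}\to F_iE_i$ is the unit and $\epsilon_i\colon E_iF_i\to \mathrm{Id}$ the counit, $Y_i$ is the mate of $X_i$, given explicitly by the composite
$$F_i \xrightarrow{\ \eta_iF_i\ } F_iE_iF_i \xrightarrow{\ F_iX_iF_i\ } F_iE_iF_i \xrightarrow{\ F_i\epsilon_i\ } F_i$$
(that this specializes correctly when $X_i=\mathrm{id}$ is one of the triangle identities). Before the main computation I would record one consequence of the hypotheses that is not literally among them: whiskering the given unit triangle and using that $\zeta_+$ and $\zeta_-$ are \emph{isomorphisms}, one obtains the ``dual'' compatibility of $\zeta_+,\zeta_-$ with $\epsilon_1,\epsilon_2$, i.e.\ the commutativity of the counit version of the first diagram. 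This is the standard fact that a morphism of adjunctions can be detected on either the unit or the counit.

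Next I would expand the square to be proved. Both $\zeta_-\circ \Phi Y_1$ and $Y_2\Phi\circ \zeta_-$ factor through $\Phi F_1E_1F_1$ and $F_2E_2F_2\Phi$ respectively, so the square is the outer boundary of three horizontally pasted squares: the top row is $\Phi F_1 \xrightarrow{\Phi(\eta_1F_1)} \Phi F_1E_1F_1 \xrightarrow{\Phi(F_1X_1F_1)} \Phi F_1E_1F_1 \xrightarrow{\Phi(F_1\epsilon_1)}\Phi F_1$, the bottom row is the analogous string for the index $2$ (with $\Phi$ on the right throughout), the two outer vertical arrows are $\zeta_-$, and the two inner vertical arrows are the composite isomorphism $\zeta := (F_2E_2\zeta_-)\circ(F_2\zeta_+F_1)\circ(\zeta_-E_1F_1)\colon \Phi F_1E_1F_1\to F_2E_2F_2\Phi$. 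It then suffices to check that each of the three squares commutes. The \emph{middle} square is purely formal: rewriting $\Phi(F_1X_1F_1)$ as $X_1$ whiskered by $\Phi F_1$ on the left and $F_1$ on the right, one pushes it through $\zeta$ one factor at a time; it passes through $\zeta_-E_1F_1$ and through $F_2E_2\zeta_-$ for free (disjoint whiskering slots, i.e.\ the interchange law), and passing it through $F_2\zeta_+F_1$ is \emph{precisely} the given compatibility of $\zeta_+$ with $X_1,X_2$, whiskered by $F_2$ and $F_1$. The \emph{left} square is the given unit triangle whiskered on the right by $F_1$, followed by the interchange law for $\eta_2$ and $\zeta_-$. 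The \emph{right} square is the counit triangle recorded in the first paragraph, whiskered by $F_1$, followed by the interchange law for $\epsilon_2$ and $\zeta_-$. Pasting the three squares gives $\zeta_-\circ \Phi Y_1 = Y_2\Phi\circ \zeta_-$.

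The only genuinely non-mechanical point is the preliminary step --- deriving $\epsilon$-compatibility from $\eta$-compatibility --- which fails without the isomorphism hypothesis on $\zeta_\pm$; everything else is bookkeeping of whiskerings via the interchange law. As an alternative that avoids the explicit mate formula, one can run the entire argument on hom-sets: the unit triangle says exactly that $\Phi$ intertwines the adjunction bijections $Hom_{\C_1}(E_1A,B)\cong Hom_{\C_1}(A,F_1B)$ and $Hom_{\C_2}(E_2A',B')\cong Hom_{\C_2}(A',F_2B')$ through $\zeta_+$ and $\zeta_-$, while $X_i$ corresponds to $Y_i$ under these bijections by the definition of $Y_i$; transporting the $X$-compatibility of $\zeta_+$ across the bijections and invoking Yoneda then produces the $Y$-compatibility of $\zeta_-$.
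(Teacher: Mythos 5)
Your proof is correct and takes essentially the same approach as the paper: you unwind $Y_i$ via the mate formula $Y_i=F_i\epsilon_i\circ F_iX_iF_i\circ \eta_iF_i$, paste three squares, and resolve the right-hand square by first deriving the counit compatibility from the unit compatibility (the paper organizes this as a cube diagram and cites \cite[Lemma 5.3]{CR} for exactly that counit-from-unit step, but the content is identical to your pasting of three horizontal squares plus interchange).
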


\begin{proof}
We look at the following diagram,
\begin{equation*}
\xymatrix{&   F_2\Phi E_1F_1 \ar[rr]^{F_2 \zeta_+ F_1} \ar@{-->} [dd]^>>>>>>>>>>>>>>>>>{F_2 X_1 F_1} & & F_2E_2\Phi F_1 \ar[rr]^{F_2E_2 \zeta_-} \ar@{-->}[dd]^>>>>>>>>>>>>>>>>>>{F_2 X_2 \Phi F_1} & & F_2E_2 F_{2}\Phi \ar[dd]_{F_2X_2 F_2 \Phi} \\
\Phi F_1E_1F_1 \ar[ur]^{\zeta_- E_1F_1} \ar[dd]_{\Phi F_1 X_1 F_1} & & \Phi F_1 \ar[ll]^>>>>>>>>>>>>>>>>>>>>>>>>>{\Phi \eta_1 F_1} \ar[ur]^{\eta_2 \Phi F_1} \ar[rr]^>>>>>>>>>{\zeta_-} \ar[dd]^>>>>>>>>>>>>>>>>{\Phi Y_1} & & F_2\Phi \ar[ur]^{\eta_2 F_2 \Phi} \ar[dd]^>>>>>>>>>>>>>>>>{Y_2 \Phi}\\
& F_2\Phi E_1F_1 \ar@{-->}[rrrd]^<<<<<<<<<<<<<<<<<<<<<{F_2\Phi \epsilon_1}   \ar@{-->}[rr]^>>>>>>>>>>>>>>>>>>>>{F_2 \zeta_+ F_1} & & F_2E_2\Phi F_1  \ar@{-->}[rr]_>>>>>>>>>>>>>>>>>>>>{F_2E_2 \zeta_-} & & F_2E_2F_2\Phi \ar[dl]^{F_{2}\epsilon_2 \Phi} \\
\Phi F_1E_1F_1 \ar[rr]^{\Phi F_1 \epsilon_1} \ar@{-->}[ur]^{\zeta_- E_1 F_1}  & & \Phi F_1  \ar[rr]_{\zeta_-} & & F_2\Phi
}
\end{equation*}
We want to show the right square in the front face commute. It follows from the commutativity of other squares, which follows from our assumptions and functoriality. Note that we apply \cite[Lemma 5.3]{CR} to show the right triangle diagram  in the bottom face commute.

\end{proof}

\begin{lemma}\label{Miracle_Lemma}
Given a bi-adjunction $(E,F,\eta_1,\epsilon_1)$ and $(F,E,\eta_2,\epsilon_2)$,  the induced map $\eta_2^\vee: EF\to id$ from $\eta_2: id\to E F$ by adjunctions is equal to $\epsilon_1$.
\end{lemma}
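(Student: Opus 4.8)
The plan is to unwind the definition of the transported $2$-morphism $\eta_2^\vee$ into an explicit composite of whiskered natural transformations, and then to recognise one of the triangle identities of the given bi-adjunction, which makes the composite collapse to $\epsilon_1$.

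Concretely, I would first recall how a $2$-morphism $\mathrm{id}\to EF$ is moved across the bi-adjunction to a $2$-morphism $EF\to\mathrm{id}$. Using the adjunction $(F,E,\eta_2,\epsilon_2)$ one strips the outer $E$: the isomorphism $\mathrm{Hom}(\mathrm{id},EF)\cong\mathrm{Hom}(F,F)$ sends $\eta_2$ to $\epsilon_2 F\circ F\eta_2$, which is $\mathrm{id}_F$ by a triangle identity for $(F,E,\eta_2,\epsilon_2)$. Then, using the adjunction $(E,F,\eta_1,\epsilon_1)$, one re-attaches $E$ on the other side of the Hom: the inverse of the isomorphism $\mathrm{Hom}(EF,\mathrm{id})\cong\mathrm{Hom}(F,F)$ sends a $2$-morphism $\psi\colon F\to F$ to $\epsilon_1\circ E\psi$, so it sends $\mathrm{id}_F$ to $\epsilon_1$. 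Composing these two isomorphisms is precisely the operation $(-)^\vee$, whence $\eta_2^\vee=\epsilon_1$. Equivalently, writing everything out, $\eta_2^\vee=\epsilon_1\circ E\epsilon_2 F\circ EF\eta_2\colon EF\to EFEF\to EF\to\mathrm{id}$, and the middle composite satisfies $E\epsilon_2 F\circ EF\eta_2=E(\epsilon_2 F\circ F\eta_2)=E(\mathrm{id}_F)=\mathrm{id}_{EF}$, leaving exactly $\epsilon_1$.

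I do not expect a genuine obstacle here; the only thing needing care is to pin down which composite of adjunction isomorphisms the notation $(-)^\vee$ refers to and to keep the left/right whiskerings consistent. If the convention transports across the two adjunctions in the opposite order, the same triangle-identity argument applies verbatim, using the $(E,F,\eta_1,\epsilon_1)$ identity $F\epsilon_1\circ\eta_1 F=\mathrm{id}_F$ in place of the one above. Should a pictorial argument be preferred, the entire computation is a single zig-zag (planar isotopy) of the corresponding string diagrams.
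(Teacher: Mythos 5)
Your proof is correct and follows essentially the same line as the paper: unwind $\eta_2^\vee$ into an explicit three-step composite and collapse the middle two arrows by a triangle identity of the bi-adjunction. The only cosmetic difference is which whiskering you unwind to — the paper writes the middle composite as $E\epsilon_2F\circ\eta_2 EF$ and invokes $E\epsilon_2\circ\eta_2 E=\mathrm{id}_E$, while you write it as $E\epsilon_2 F\circ EF\eta_2$ and invoke $\epsilon_2 F\circ F\eta_2=\mathrm{id}_F$; both middles equal $\mathrm{id}_{EF}$, so both give $\eta_2^\vee=\epsilon_1$.
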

\begin{proof}
By definition, the map $\eta_2 ^\vee$ is the composition of the following maps
$$\xymatrix{EF \ar[r]^{\eta_2EF} & EFEF \ar[r]^{E\epsilon_2F} & EF \ar[r]^{\epsilon_1} & id}. $$
Then note that $E\epsilon_2 \circ \eta_2E=1$, since $(F,E,\eta_2,\epsilon_2)$ is an adjunction. It is immediate that $\eta_2 ^\vee=\epsilon_1$.

\end{proof}

We would like now to prove the main result of the appendix, namely that the adjoint of a morphism of $\g$-categorifications is again a morphism of $\g$-categorifications. The proper formulation of this statement has to incorporate a fixed bi-adjunction between the functors $E,F$.

Suppose that $\C,\C'$ are two $\g$-categorifications with associated data $$(E,F,\eta_1,\epsilon_1, X, \sigma,\mathcal{C}=\oplus_\omega\mathcal{C}_\omega)$$ and $$(E',F',\eta_1',\epsilon_1', X', \sigma',\mathcal{C}'=\oplus_\omega\mathcal{C}'_\omega).$$ Moreover, suppose these data are enriched to include a bi-adjunction between $E,F$ and $E',F'$.  In other words, suppose we  fix $\eta_2,\epsilon_2$ and $\eta_2',\epsilon_2'$ as usual:
$$\xymatrix{ I  \ar[r]^{\eta_1} & FE \ar[r]^{\epsilon_2} & I},\xymatrix{ I  \ar[r]^{\eta_2} & EF \ar[r]^{\epsilon_1} & I}$$
$$\xymatrix{ I  \ar[r]^{\eta'_1} & F'E' \ar[r]^{\epsilon'_2} & I},\xymatrix{ I  \ar[r]^{\eta_2'} & E'F' \ar[r]^{\epsilon'_1} & I}$$

The bi-adjunction data are compatible in the following sense: let $X^\vee$ (resp. $X'^\vee $, $\sigma^\vee$, $\sigma'^\vee$) be the operator on $F$ (resp. $F'$, $F^2$, $F'^2$) induced from the operator $X$ (resp. $X'$,$\sigma$,$\sigma'$) on $E$ (resp. $E'$, $E^2$, $E'^2$) via the adjunction $(E,F,\eta_1,\epsilon_1)$
(resp. $(E',F',\eta_1',\epsilon'_1)$,...). Then we assume that the induced operator $(X^\vee)^\vee $ (resp. $(X'^\vee)^\vee$, $(\sigma^\vee)^\vee$,$(\sigma'^\vee) ^\vee$)  from $X^\vee$ (resp. $X'^\vee$,  $\sigma^\vee$, $\sigma'^\vee$) via the other adjunction $(F,E,\eta_2,\epsilon_2)$ (resp. $(F',E',\eta_2',\epsilon_2')$,...), coincides with $X$ (resp. $X'$,$\sigma$,$\sigma'$).

Now let $(\Phi,\zeta_+,\zeta_-):\C\to\C'$ be a morphism of $\g$-categorifications.  We assume that $\Phi$ preserves  the  bi-adjunction, i.e. the following diagram commutes
\begin{equation}\label{Additional_Diagram}
\xymatrix{
 & \Phi \ar[dl]_{\Phi \eta_2 } \ar[rd]^{\eta'_2 \Phi} & \\
\Phi EF \ar[r]^<<<<<<{\zeta_{+}F} & E' \Phi F \ar[r]^<<<<<{E' \zeta_{-}} &   E' F' \Phi.}
\end{equation}

\begin{proposition}
\label{Adjunction_Morphism_Categorification}
Let $\C,\C'$ be $\g$-categorifications with fixed compatible \emph{bi-adjunctions} as above.  Let $\Phi:\C\to\C'$ be a morphism of $\g$-categorifications satisfying (\ref{Additional_Diagram}).
Let $\Psi:\C'\to  \C$ be the right adjoint of $\Phi$. Let $\zeta^\vee _-: \Psi E' \to E \Psi$ and $\zeta^\vee_+: \Psi F'\to F \Psi$ be the induced isomorphisms from $\zeta_-$ and $\zeta_+$ by appropriate adjunctions. Then $(\Psi, \zeta_-^\vee, \zeta_+ ^\vee):\C'\to\C$ is a morphism from  the $\g$-categorifications. \end{proposition}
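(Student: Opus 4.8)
The statement is that the right adjoint $\Psi$ of a morphism of $\g$-categorifications $\Phi$ is again a morphism of $\g$-categorifications, once we equip $\Psi$ with the induced natural isomorphisms $\zeta_-^\vee:\Psi E'\to E\Psi$ and $\zeta_+^\vee:\Psi F'\to F\Psi$. The plan is to reduce the three defining axioms of Definition \ref{DefgMorphism} for $(\Psi,\zeta_-^\vee,\zeta_+^\vee)$ to the corresponding axioms for $(\Phi,\zeta_+,\zeta_-)$, together with the hypothesis that $\Phi$ preserves the bi-adjunction (diagram (\ref{Additional_Diagram})) and the hypothesis that the operators $X,\sigma$ are compatible with their $\vee$-counterparts on $F$ under the two adjunctions. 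Throughout I will write $(\alpha,\beta)$ for the unit/counit of the adjunction $(\Phi,\Psi)$. The first preliminary step is to make the definitions of $\zeta_-^\vee$ and $\zeta_+^\vee$ completely explicit: $\zeta_-^\vee:\Psi E'\to E\Psi$ is the mate of $\zeta_-:\Phi F\to F'\Phi$ under the adjunctions $(\Phi,\Psi)$ and $(E,F,\eta_1,\epsilon_1)$, $(E',F',\eta_1',\epsilon_1')$; that is, $\zeta_-^\vee$ is obtained by going around $\Psi E'\xrightarrow{E\eta_1\Psi E'} E F E\Psi E'\to\cdots$, the standard mate formula. Symmetrically $\zeta_+^\vee:\Psi F'\to F\Psi$ is the mate of $\zeta_+$ under $(\Phi,\Psi)$ and the \emph{other} bi-adjunctions $(F,E,\eta_2,\epsilon_2)$, $(F',E',\eta_2',\epsilon_2')$. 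Because the bi-adjunction data are assumed compatible, these mates are honest inverses-to-each-other in the appropriate sense and, in particular, are isomorphisms.

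\medskip

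The second step handles Condition (1) of Definition \ref{DefgMorphism} for $\Psi$, i.e.\ the compatibility of $\zeta_-^\vee$ and $\zeta_+^\vee$ with the units $\eta_1,\eta_1'$ (equivalently, with the bi-adjunction units/counits). This is where Lemma \ref{Abstract_Nonsense} and Lemma \ref{Miracle_Lemma} do the work. The idea is the same one used in the proof of Proposition \ref{Second_Compatibility} and of Lemma \ref{Abstract_Nonsense}: a diagram of the form ``$\Phi$ preserves the $\eta_2$-triangle'' is, by taking mates through the $(\Phi,\Psi)$-adjunction, equivalent to ``$\Psi$ preserves the $\epsilon_1$-triangle'', and by Lemma \ref{Miracle_Lemma} the induced map $\eta_2^\vee:EF\to \mathrm{id}$ is exactly $\epsilon_1$ (and similarly on the primed side and on $\C$). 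Concretely, I would start from diagram (\ref{Additional_Diagram}) (which says $\Phi$ preserves the $\eta_2$-bi-adjunction), apply the mate construction with respect to $(\Phi,\Psi)$ to each arrow, and use naturality of units/counits plus the triangle identities for $(\Phi,\Psi)$ to rewrite the resulting pasted diagram as Condition (1) for $(\Psi,\zeta_-^\vee,\zeta_+^\vee)$. The bookkeeping is exactly the ``chase around a cube'' of Lemma \ref{Abstract_Nonsense}, so I would cite that lemma (applied to $E_1=E',F_1=F',\ldots$ appropriately) rather than redraw the cube.

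\medskip

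The third step handles Conditions (2) and (3), the compatibility of $\zeta_-^\vee$ with $X,X'$ and of $\zeta_+^\vee$ with $\sigma,\sigma'$. Here I invoke Lemma \ref{Abstract_Nonsense} directly: it takes as input precisely ``$\Phi$ preserves the $\eta_1$-triangle'' and ``$\zeta_+$ intertwines $X_1=X$ and $X_2=X'$'' — which are Conditions (1) and (2) for $\Phi$ — and outputs ``$\zeta_-$ intertwines the induced operators $Y_1=X^\vee$ and $Y_2=X'^\vee$ on $F,F'$.'' Dualizing this statement through the $(\Phi,\Psi)$-adjunction (again via mates, using that $X$ is the $\vee$-of-$\vee$ of itself by the compatibility hypothesis) yields that $\zeta_-^\vee$ intertwines $X$ and $X'$ on $E\Psi$ and $\Psi E'$, which is Condition (2) for $\Psi$. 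Condition (3) for $\Psi$ (compatibility of $\zeta_+^\vee$ with $\sigma,\sigma'$) is obtained in exactly the same manner, applying Lemma \ref{Abstract_Nonsense} to the squared functors $E^2,F^2$ with the operators $\sigma,\sigma'$ in place of $X,X'$ — this is the same mechanism already used in the proof of Theorem \ref{Thm_SequivHcat} to deduce the $\tau$-compatibility diagram from the $\sigma$-compatibility diagram. Finally, I note that $\Psi$ automatically respects the weight decompositions: since $\Phi(\C_\omega)\subset\C'_\omega$ and the blocks are sent to blocks, the right adjoint $\Psi$ sends $\C'_\omega$ into $\C_\omega$, so condition (\ref{Additional_Condition_Morphism}) holds for $\Psi$ as well (if one wants a morphism of $\g$-categorifications rather than merely $\g'$-categorifications).

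\medskip

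\textbf{Main obstacle.} The genuine difficulty is not any single diagram but the mate/adjunction bookkeeping: one must be scrupulous about \emph{which} of the two bi-adjunctions is used to form each $\vee$, and about the compatibility hypothesis $(X^\vee)^\vee=X$ etc., because $\zeta_-^\vee$ is built from $\zeta_-$ using $(E,F,\eta_1,\epsilon_1)$ while $\zeta_+^\vee$ is built from $\zeta_+$ using $(F,E,\eta_2,\epsilon_2)$, and the proof of Condition (1) for $\Psi$ forces these two conventions to interact through Lemma \ref{Miracle_Lemma}. Once the conventions are pinned down, every step is an application of Lemma \ref{Abstract_Nonsense}, Lemma \ref{Miracle_Lemma}, and the triangle identities, with no new computation required.
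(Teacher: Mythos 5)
Your proposal is correct and follows essentially the same route as the paper: use diagram (\ref{Additional_Diagram}) together with Lemma \ref{Miracle_Lemma} and a mate-passing argument (the paper cites Lemma 5.3 of \cite{CR} for this step) to obtain Condition (1) for $\Psi$, then feed Conditions (1) and (2) for $\Phi$ into Lemma \ref{Abstract_Nonsense} to get the $X^\vee/X'^\vee$ square for $\zeta_-$, and dualize through the $(\Phi,\Psi)$-adjunction using $(X^\vee)^\vee = X$ to land on Condition (2) for $\Psi$, with Condition (3) by the same mechanism applied to $\sigma$. Your added observation that $\Psi$ respects the weight decomposition is a point the paper's proof leaves implicit, and is a welcome supplement.
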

\begin{proof}
From Diagram \ref{Additional_Diagram}, by adjunction and Lemma \ref{Miracle_Lemma} we get the following commutative diagram
\begin{equation*}
\xymatrix{
 & \Psi  & \\
 \Psi E'F'  \ar[ur]^{\Psi \epsilon'_1}   \ar[r]^<<<<<<{\zeta_{-}^\vee F'} & E \Phi F' \ar[r]^<<<<<{E \zeta_{+}^\vee} &   E F \Phi  \ar[lu]_{\epsilon_1 \Psi}.}
\end{equation*}
By Lemma 5.3 in \cite{CR} this commutative diagram is equivalent to the commutativity of:
\begin{equation}
\label{New_Diagram_1}
\xymatrix{
 & \Psi \ar[dl]_{\Phi \eta'_1 } \ar[rd]^{\eta_1 \Psi} & \\
\Psi F'E' \ar[r]^<<<<<<{\zeta_{+}^\vee E'} & F \Psi E' \ar[r]^<<<<<{F \zeta_{-}^\vee} &   F E \Psi.}
\end{equation}

From the the commutativity of diagram $(1)$ and diagram $(2)$ in the definition of morphism of $\g$-categorifications (Definition \ref{DefgMorphism}),  by Lemma \ref{Abstract_Nonsense}  we get the following commutative diagram
\begin{equation*}
\xymatrix{\Phi F \ar[r]^{\zeta_-} \ar[d]^{X^\vee }    &F' \Phi \ar[d]^{X'^\vee \Phi}\\
\Phi F \ar[r]^{\zeta_-}  &F' \Phi.}
\end{equation*}
Then we apply appropriate adjunctions on this diagram, we get the following commutative diagram
\begin{equation}
\label{Adjunction_Diagram_2}
\xymatrix{\Psi E' \ar[r]^{\zeta_-^\vee} \ar[d]^{\Psi X}    &E \Psi \ar[d]^{X' \Psi}\\
\Psi E' \ar[r]^{\zeta_-^\vee}  & E \Psi.}
\end{equation}
By similar argument, we can get the following commutative diagram.
\begin{equation}
\label{Adjunction_Diagram_3}
\xymatrix{\Psi E'E' \ar[r]^{\zeta_-^\vee E'} \ar[d]^{\Psi \sigma'} & E \Psi E' \ar[r]^{E \zeta_-^\vee}     & E E\Psi \ar[d]^{\sigma \Psi}\\
\Psi E'E' \ar[r]^{\zeta_-^\vee E'} &  E\Psi E' \ar[r]^{E \zeta_-^\vee}   & EE \Psi.}
\end{equation}
From cite commutative diagrams (\ref{New_Diagram_1}), (\ref{Adjunction_Diagram_2}) and (\ref{Adjunction_Diagram_3}), the proposition follows.

\end{proof}

Jiuzu Hong,
School of Mathematical Sciences
Tel Aviv University,
Tel Aviv
69978, Israel.\\
\texttt{hjzzjh@gmail.com}
\medskip\\
Oded Yacobi,
Department of Mathematics,
University of Toronto,
Toronto, ON, M5S 2E4
Canada.\\
\texttt{oyacobi@math.toronto.edu}


\begin{thebibliography}{99}

\bibitem[AM]{AM} M.Aguiar; S.Mahajan. Monoidal functors, species and Hopf algebras. With forewords by Kenneth Brown and Stephen Chase and Andr\'e Joyal. CRM Monograph Series, 29. American Mathematical Society, Providence, RI, 2010.

\bibitem[BR]{BR} R.Brauer; G. de B.Robinson. On a conjecture by Nakayama. Trans. Roy. Soc. Canada. Sect. III. (3) 41, (1947). 11-25.


\bibitem[CR]{CR}J.Chuang; R.Rouquier, Derived equivalences for symmetric groups and $\mathfrak{sl}_2$-categorification. Ann. of Math. (2) 167 (2008), no. 1, 245-298.

\bibitem[D]{D} S.Donkin. On Schur algebras and related algebras II. J. Algebra 111 (1987), no. 2, 354-364.




\bibitem[FS]{FS97}E.M.Friedlander; A.Suslin. Cohomology of finite group schemes over a field. Invent. Math. 127 (1997), no. 2, 209-270.

\bibitem[FFSS]{FFSS} V.Franjou; E.M.Friedlander; A.Scorichenko; A.Suslin. General linear and functor cohomology over finite fields. Ann. of Math. (2) 150 (1999), no. 2, 663-728.


\bibitem[G]{G} J.A.Green. Polynomial representations of $GL_n$. Lecture Notes in Mathematics, 830. Springer, Berlin, 2007.

\bibitem[H]{H} J.Hong, Polynomial functors and categorifications, PhD Thesis.  Tel Aviv University, 2012.

\bibitem[HTY]{HTY} J.Hong, A.Touz\'e, O.Yacobi. Polynomial functors and categorifications of Fock space,
\bibitem[HYI]{HY} J.Hong; O.Yacobi. Polynomial representations and categorifications of Fock Space,arXiv:1101.2456.
\bibitem[HYII]{HYII} J.Hong; O.Yacobi. Heisenberg categorification in positive characteristic and at roots of unity, in preparation.

\bibitem[Ka]{Ka} V.Kac. Infinite-dimensional Lie algebras. Third edition. Cambridge University Press, Cambridge, 1990.

\bibitem[Kh]{Kh} M.Khovanov. Heisenberg algebra and a graphical calculus, arXiv:1009.3295.

\bibitem[KL1]{KL1} M.Khovanov; A.Lauda. A diagrammatic approach to categorification of
quantum groups I, Representation Theory 13 (2009), 309-347.
\bibitem[KL2]{KL2} M.Khovanov; A.Lauda. A diagrammatic approach to categorification of
quantum groups II, Trans. Amer. Math. Soc 363 (2011), 2685-2700.
\bibitem[KL3]{KL3}  M.Khovanov; A.Lauda. A diagrammatic approach to categorification of quantum groups III,
Quantum Topology, Vol. 1, Issue 1, 2010, 1-92.


\bibitem[LLT]{LLT} A.Lascoux; B.Leclerc; J.Thibon. Hecke algebras at roots of unity and crystal bases of quantum affine algebras. Comm. Math. Phys. 181 (1996), no. 1, 205-263.

\bibitem[L]{Lec} Leclerc, B. Fock space representations.  Ecol\'e thematique. Institut Fourier (2008) 40p.


\bibitem[M]{Mac}  I.G.Macdonald. Symmetric functions and Hall polynomials. Second edition.  Oxford Mathematical Monographs. Oxford Science Publications. The Clarendon Press, Oxford University Press, New York, 1995.





\bibitem[R]{R}R.Rouquier, 2-Kac-Moody algebras, arXiv:0812.5023 [math.RT].
\bibitem[T]{T} A.Touz\'e. Koszul duality and derivatives of non-additive functors arXiv:1103.4580


\bibitem[Z]{Zel}A.Zelevinsky. Representations of finite classical groups. A Hopf algebra approach. Lecture Notes in Mathematics, 869. Springer-Verlag, Berlin-New York, 1981.



\end{thebibliography}
\end{document}